\documentclass[11pt,a4paper]{amsart}
\usepackage{a4wide}

\usepackage[english]{babel}
%%%%Fix spacing before colons.
\usepackage[T1]{fontenc}

\usepackage{amsfonts,amssymb,amsmath}
\usepackage{comment}
\usepackage{tikz-cd}

\usepackage{xcolor}
\usepackage[colorlinks,
    linkcolor={blue!40!black},
    citecolor={blue!40!black},
    urlcolor={blue!70!black}]{hyperref}

%%%%%% packages
\usepackage{url}
\usepackage[all]{xy}

\usepackage{enumerate}
\usepackage{amsfonts,amssymb,amsmath,pinlabel,array,hhline}
\usepackage{slashed}
\usepackage{tabulary}
\usepackage{fancyhdr}
\usepackage{a4wide}
\usepackage{bbm,dsfont}
\usepackage[position=b]{subcaption}
\usepackage{graphicx}
%\usetikzlibrary{calc, matrix, arrows, cd}

\newtheorem{theorem}{Theorem}[section]

\newtheorem{corollary}[theorem]{Corollary}
\newtheorem{lemma}[theorem]{Lemma}
\newtheorem{proposition}[theorem]{Proposition}

\theoremstyle{definition}
\newtheorem{definition}[theorem]{Definition}

\newtheorem{remark}[theorem]{Remark}
\newtheorem{construction}[theorem]{Construction}

\newtheorem{claim}{Claim}
\newtheorem*{claim*}{Claim}

\newcommand{\Z}{\mathbb{Z}}
\newcommand{\Q}{\mathbb{Q}}
\newcommand{\R}{\mathbb{R}}

\newcommand{\id}{\operatorname{Id}}
\newcommand{\Id}{\operatorname{Id}}
\newcommand{\Sp}{\operatorname{Sp}}
\newcommand{\Hom}{\operatorname{Hom}}
\newcommand{\coker}{\operatorname{coker}}
\newcommand{\Sesq}{\operatorname{Sesq}}
\newcommand{\ev}{\operatorname{ev}}
\newcommand{\pt}{\operatorname{pt}}

\newcommand{\PD}{\operatorname{PD}}
\newcommand{\Bl}{\operatorname{Bl}}
\newcommand{\Aut}{\operatorname{Aut}}

\newcommand{\Iso}{\operatorname{Iso}}

\newcommand{\Homeo}{\operatorname{Homeo}}

\newcommand{\im}{\operatorname{im}}
\newcommand{\rk}{\operatorname{rk}}
\newcommand{\incl}{\operatorname{incl}}
\newcommand{\sm}{\setminus}
\newcommand{\wh}{\widehat}
\newcommand{\wt}{\widetilde}
\newcommand{\ol}{\overline}
\newcommand{\la}{\langle}
\newcommand{\ra}{\rangle}

\newcommand{\La}{\Lambda}

\newcommand{\ks}{\operatorname{ks}}
\newcommand{\eps}{\varepsilon}

\newcommand{\bsm}{\left(\begin{smallmatrix}}
\newcommand{\esm}{\end{smallmatrix}\right)}

\begin{document}
\title{Embedded surfaces with infinite cyclic knot group}

\author{Anthony Conway}
\address{Max-Planck-Institut f\"ur Mathematik, Bonn, Germany}
\email{anthonyyconway@gmail.com}

\author{Mark Powell}
\address{Department of Mathematical Sciences, Durham University, Lower Mountjoy, United Kingdom}
\email{mark.a.powell@durham.ac.uk}

\begin{abstract}
We study locally flat, compact, oriented surfaces in~$4$-manifolds whose exteriors have infinite cyclic fundamental group.
We give algebraic topological criteria for two such surfaces, with the same genus~$g$, to be related by an ambient homeomorphism, and further criteria that imply they are ambiently isotopic.
Along the way, we provide a classification of a subset of the topological~$4$-manifolds with infinite cyclic fundamental group, and we apply our results to rim surgery.
\end{abstract}
\maketitle
%\tableofcontents

\section{Introduction}

We study locally flat embeddings of compact, orientable surfaces in compact, oriented, simply-connected topological~$4$-manifolds, where the complement of the surface has infinite cyclic fundamental group.  Extending the terminology for knotted spheres, we call this group the \emph{knot group}, so we shall study knotted surfaces with knot group~$\Z$, or \emph{$\Z$-surfaces}.
%{MP: I wouldn't necessarily write this throughout the introduction, but it does seem weird to first introduce it at first in the conventions, so perhaps if we mention it here that's better. }

We will present algebraic criteria for pairs of $\Z$-surfaces to be ambiently isotopic.
As part of the proof we obtain an algebraic classification of a certain subset of the 4-manifolds with boundary and fundamental group $\Z$, simultaneously generalising work of Freedman-Quinn~\cite{FreedmanQuinn} on the closed case and of Boyer~\cite{Boyer} on simply-connected 4-manifolds with nonempty boundary; see Section~\ref{sec:class-result-intro}.  We apply our results to show that, in simply-connected $4$-manifolds, 1-twisted rim surgery on a surface with knot group $\Z$ yields a topologically ambiently isotopic surface, extending results of Kim-Ruberman~\cite{KimRuberman-2} and Juh\'asz-Miller-Zemke~\cite{JuhaszMillerZemke}; see Section~\ref{sub:RimIntro}.

\subsection{Surfaces in \texorpdfstring{$S^4$}{the 4-sphere} and \texorpdfstring{$D^4$}{the 4-ball}}

We start with our main results on surfaces in~$S^4$ and~$D^4$ as the most important special cases, before going on to explain more general statements for surfaces in any simply-connected $4$-manifold.

\begin{theorem}
\label{thm:UnknottingIntro}
Any two locally flat, embedded, closed, orientable, genus~$g \neq 1,2$ surfaces $\Sigma_0,\Sigma_1 \subseteq S^4$ with~$\pi_1(S^4 \sm {\Sigma_i})\cong \Z$ for~$i=0,1$ are topologically ambiently  isotopic.
\end{theorem}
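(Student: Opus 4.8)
The plan is to reduce the statement to a homeomorphism classification of the surface exteriors and then to promote that homeomorphism to an ambient isotopy. Write $N_i := S^4 \sm \nu\Sigma_i$. Since $[\Sigma_i] = 0$ in $H_2(S^4)$, the topological normal bundle of $\Sigma_i$ is trivial, so $\nu\Sigma_i \cong \Sigma_g \times D^2$, the boundary is $\partial N_i \cong \Sigma_g \times S^1$ with $\{\pt\} \times S^1$ a meridian, and by hypothesis $\pi_1(N_i) \cong \Z$ is generated by that meridian. The decisive point is that $(S^4, \Sigma_i)$ is reconstructed from $N_i$ as $\big(N_i \cup_{\Sigma_g \times S^1} (\Sigma_g \times D^2),\ \Sigma_g \times \{0\}\big)$. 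Hence it suffices to build a homeomorphism $\Phi \colon N_0 \to N_1$ whose restriction to the boundary is isotopic to $\phi \times \id_{S^1}$ for some $\phi \in \Homeo(\Sigma_g)$; such a $\Phi$ then extends by $\phi \times \id_{D^2}$ to a homeomorphism of pairs $(S^4, \Sigma_0) \to (S^4, \Sigma_1)$.

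To construct $\Phi$, I would apply the algebraic classification of $4$-manifolds with fundamental group $\Z$ announced in the abstract. Both $N_0$ and $N_1$ are admissible inputs: they have the same fundamental group, homeomorphic boundary, vanishing Kirby--Siebenmann invariant (by additivity and $\ks(S^4) = 0 = \ks(\Sigma_g \times D^2)$), and --- the feature special to $S^4$ --- the same equivariant intersection form. Indeed, Alexander duality gives $H_1(N_i;\Z) \cong \Z$ and $H_2(N_i;\Z) \cong \Z^{2g}$, the hypothesis $\pi_1 = \Z$ gives $H_1(N_i;\La) = 0$, and a Wang sequence argument together with Poincar\'e--Lefschetz duality identifies $H_2(N_i;\La)$ as a standard free module whose equivariant intersection form, the same for $i = 0$ and $i = 1$, is hyperbolic of rank $2g$ --- reflecting the fact that $S^4$ splits along $\Sigma_g \times S^1$, so that the second homology is carried by the boundary. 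Feeding this coincidence of algebraic data, together with the fixed boundary linking data, into the classification produces a homeomorphism $N_0 \to N_1$, whose boundary restriction can be put in the desired form $\phi \times \id_{S^1}$ using an analysis of which automorphisms of $\Sigma_g \times S^1$ are realised by self-homeomorphisms of the $N_i$.

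The main obstacle is this surgery-theoretic step, and it is where the hypothesis $g \neq 1, 2$ enters. Realising an isometry of the equivariant intersection form by a homeomorphism rel boundary runs through a cancellation argument --- topological surgery and the $s$-cobordism theorem being available since $\Z$ is a good group \cite{FreedmanQuinn} --- and this requires the rank-$2g$ form to split off enough hyperbolic summands; it succeeds for $g = 0$ (where the statement specialises to Freedman's theorem that a $2$-sphere in $S^4$ with infinite cyclic complement is unknotted) and for $g \geq 3$, but the method does not reach $g = 1, 2$. One must also ensure the classification is sharp enough to pin the boundary identification down to the subgroup of $\Homeo(\Sigma_g \times S^1)$ extending over $\Sigma_g \times D^2$; this is the content of the boundary analysis mentioned above, and it is the other delicate point of the argument.

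Finally, to upgrade the homeomorphism of pairs $F \colon (S^4, \Sigma_0) \to (S^4, \Sigma_1)$ to an ambient isotopy: after possibly composing $F$ with an orientation-reversing self-homeomorphism of $S^4$ restricting to a symmetry of $\Sigma_1$ (one exists because, by the above, $N_1$ is homeomorphic to a standard model carrying a reflection symmetry --- concretely, for $\Sigma_g \subseteq \R^3 \times \{0\} \subseteq \R^4 \subseteq S^4$, reflect the last coordinate), we may assume $F$ is orientation-preserving on $S^4$. An orientation-preserving self-homeomorphism of $S^4$ is topologically isotopic to the identity, so there is an ambient isotopy $F_t$ of $S^4$ with $F_0 = \id$ and $F_1 = F$; then $F_1(\Sigma_0) = \Sigma_1$, which is the desired ambient isotopy.
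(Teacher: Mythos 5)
Your overall architecture matches the paper's: classify the surface exteriors as $4$-manifolds with fundamental group $\Z$, extend the resulting homeomorphism over $\Sigma_g \times D^2$ to a homeomorphism of pairs, and then use the triviality of the topological mapping class group of $S^4$ to promote it to an ambient isotopy. The boundary analysis you allude to (every relevant automorphism of the boundary data is realised by a homeomorphism of the form $j \times \id_{S^1}$ with $j$ a surface homeomorphism, which then extends over $\Sigma_g \times D^2$) and the final isotopy step are both as in the paper.

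The gap is in your second paragraph, where you assert that a Wang sequence together with Poincar\'e--Lefschetz duality identifies the equivariant intersection form of $N_i$ as a standard hyperbolic form of rank $2g$, ``the same for $i=0$ and $i=1$.'' Soft homological arguments only show that $H_2(N_i;\La)$ is a free $\La$-module of rank $2g$; they do not determine the form, and the second homology is not ``carried by the boundary'' over $\La$ (the image of $H_2(\partial N_i;\La)\cong\Z$ in the free module $H_2(N_i;\La)$ is zero). Establishing that $\lambda_{N_0}\cong\lambda_{N_1}$ --- equivalently that each is isometric to $\mathcal{H}_2^{\oplus g}$ with $\mathcal{H}_2=\bsm 0 & t-1 \\ t^{-1}-1 & 0 \esm$ --- is the hardest step of the paper and the only place $g\neq 1,2$ is used: one first shows via a (topological, rel.\ boundary) version of the Baykur--Sunukjian stabilisation theorem that $\lambda_{N_0}\oplus\mathcal{H}_2^{\oplus n}\cong\lambda_{N_1}\oplus\mathcal{H}_2^{\oplus n}$ for some $n$, and then cancels the $\mathcal{H}_2$ summands by passing to $(-t)$-quadratic refinements of the relative intersection pairing and invoking Bass's cancellation theorem, which requires Witt index at least $3$, i.e.\ $g\geq 3$ (with $g=0$ trivial since $H_2(N_i;\La)=0$). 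You do invoke a cancellation argument, but you attach it to the wrong step: realising a \emph{given} isometry by a homeomorphism rel.\ boundary (the compatible-pair/$s$-cobordism argument) works for every genus; what fails for $g=1,2$ is producing the isometry in the first place. If the isometry really followed from duality alone, the theorem would hold for all $g$, which is open; and for surfaces in $D^4$ with knotted boundary, examples of Oba show the forms genuinely need not be isometric.
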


\begin{theorem}
\label{thm:D4Intro}
Let~$K$ be an Alexander polynomial one knot in~$S^3$.
Any two locally flat, embedded, compact, orientable genus~$g \neq 1,2$ surfaces~$\Sigma_0,\Sigma_1 \subseteq D^4$ with boundary~$K$ and~$\pi_1(D^4 \sm {\Sigma_i})\cong \Z$ for~$i=0,1$ are topologically ambiently isotopic rel.\ boundary.
\end{theorem}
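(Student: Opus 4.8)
The plan is to split $D^4$ along a tubular neighborhood of each surface, apply the classification of $4$-manifolds with fundamental group $\Z$ from Section~\ref{sec:class-result-intro} to the two exteriors, reglue to obtain a homeomorphism of pairs $(D^4,\Sigma_0) \to (D^4,\Sigma_1)$ that restricts to the identity on $S^3 = \partial D^4$, and then upgrade this to an ambient isotopy rel boundary via the Alexander trick. For the splitting: each $\Sigma_i$ is connected (it has connected boundary $K$), orientable, and properly embedded, so $\nu\Sigma_i$ is an oriented $D^2$-bundle whose Euler number is the self-intersection, relative to the Seifert framing of $K$, of the relative class $[\Sigma_i] \in H_2(D^4,\partial D^4) = 0$, hence $\nu\Sigma_i \cong \Sigma_i \times D^2$ with $K \times D^2$ the $0$-framed tube about $K$ in $S^3$. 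Writing $E_i := D^4 \sm \mathrm{int}\,\nu\Sigma_i$, we have $\pi_1(E_i) = \pi_1(D^4 \sm \Sigma_i) = \Z$, and after choosing an orientation-preserving identification $\Sigma_i \cong \Sigma_g$ rel $K$ we obtain a homeomorphism of $\partial E_i$ with the fixed $3$-manifold $M_{g,K} := (\Sigma_g \times S^1) \cup_{K \times S^1}(S^3 \sm \nu K)$ which is the identity outside $\nu K$ and respects the product structure on $\Sigma_g \times S^1$.

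Next I would pin down the algebraic invariants of $E_i$. Since $K$ has Alexander polynomial one, $H_1$ of the infinite cyclic cover of $S^3 \sm \nu K$ vanishes; substituting this into the Mayer--Vietoris sequence for $D^4 = \nu\Sigma_i \cup_{\partial E_i} E_i$ with $\Z[\Z]$-coefficients, and using $\pi_1(D^4) = 1$ together with $\nu\Sigma_i \simeq \Sigma_g$, yields $H_1(E_i;\Z[\Z]) = 0$, identifies $H_2(E_i;\Z[\Z])$ with a free $\Z[\Z]$-module of rank $2g$, and shows that the equivariant intersection form $\lambda_{E_i}$ is isometric to the hyperbolic form $\bsm 0 & 1 \\ 1 & 0 \esm^{\oplus g}$ over $\Z[\Z]$; in particular these data are independent of $i$. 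Moreover $\ks(E_i) = \ks(D^4) = 0$, since a $D^2$-bundle over a surface is smoothable and $\ks$ is additive over the separating $3$-manifold $\partial E_i$. As $E_0$ and $E_1$ thus carry the same boundary and the same algebraic invariants, the classification of Section~\ref{sec:class-result-intro} produces a homeomorphism $\Phi : E_0 \to E_1$ whose restriction to $\partial E_i$ is the identity on $S^3 \sm \nu K$ and carries the circle-bundle structure of $\Sigma_0 \times S^1$ to that of $\Sigma_1 \times S^1$ up to a homeomorphism of $\Sigma_g \times S^1$ that extends over $\Sigma_g \times D^2$.

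Then I would reglue. Using precisely the freedom, noted above, to modify $\Phi$ on $\Sigma_g \times S^1$ by a homeomorphism that bounds a self-homeomorphism of $\Sigma_g \times D^2$, the map $\Phi$ extends over the tubular neighborhoods to a homeomorphism $F : (D^4, \Sigma_0) \to (D^4, \Sigma_1)$; on $S^3$ it is the identity outside $\nu K$ by construction, and it can be taken to be the identity on $K \times D^2$ as well, since it is already the identity on the corner $K \times S^1$, so $F|_{S^3} = \mathrm{id}$. Finally, any self-homeomorphism of $D^4$ fixing $\partial D^4$ is isotopic rel boundary to the identity by the Alexander trick, so $\Sigma_1 = F(\Sigma_0)$ is ambiently isotopic to $\Sigma_0$ rel boundary, as desired.

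The main obstacle is the appeal to the classification in the second paragraph: one needs a homeomorphism of the exteriors that is \emph{simultaneously} the identity on the prescribed portion $S^3 \sm \nu K$ of the boundary and compatible, on the remaining portion $\Sigma_g \times S^1$, with the normal circle bundle. This requires both a version of the $\pi_1 = \Z$ classification that is appropriately functorial in such boundary data and a straightening of the induced boundary homeomorphism using the mapping class group of $M_{g,K}$ (equivalently of $\Sigma_g \times S^1$ rel $K \times S^1$). I expect this is where the hypothesis $g \neq 1,2$ enters, via a stabilize-then-cancel argument whose range requires the rank-$2g$ module $H_2(E_i;\Z[\Z])$ to carry at least three hyperbolic summands, that is $g \ge 3$ --- with the remaining case $g = 0$ reduced instead to the uniqueness of topological $\Z$-disks bounded by Alexander polynomial one knots.
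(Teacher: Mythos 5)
Your overall architecture (split along the surface, classify the exteriors as $\pi_1=\Z$ manifolds with fixed boundary data, reglue, Alexander trick) matches the paper's, but there is a genuine gap in the second paragraph: the claim that the Mayer--Vietoris sequence for $D^4=\nu\Sigma_i\cup E_i$ identifies $\lambda_{E_i}$ with the standard hyperbolic form $\bsm 0&1\\1&0\esm^{\oplus g}$ over $\Z[t^{\pm 1}]$, "in particular independent of $i$." First, the answer itself is wrong: the intersection form of a surface exterior with nonempty boundary is nondegenerate but not nonsingular --- any matrix representing it presents the Alexander module of $\partial E_i$, which is $(\Lambda/(t-1))^{\oplus 2g}\neq 0$ for $g\geq 1$, so the form cannot be the nonsingular form $\bsm 0&1\\1&0\esm^{\oplus g}$. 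The correct model is $\mathcal{H}_2^{\oplus g}$ with $\mathcal{H}_2=\bsm 0&t-1\\ t^{-1}-1&0\esm$. Second, and more seriously, Mayer--Vietoris only determines the underlying module (free of rank $2g$), not the isometry class of the pairing; for general $K$ the two exteriors genuinely need not have isometric forms (Oba's examples), and even for $\Delta_K=1$ the isometry $\lambda_{D^4_{\Sigma_0}}\cong\lambda_{D^4_{\Sigma_1}}$ is precisely the hard step. The paper obtains it by first proving a \emph{stable} isometry $\lambda_{D^4_{\Sigma_0}}\oplus\mathcal{H}_2^{\oplus n}\cong\mathcal{H}_2^{\oplus(g+n)}$ geometrically (Baykur--Sunukjian stabilisation by trivial tubes), and then cancelling: since $\mathcal{H}_2$ is not hyperbolic over $\Lambda$, one must pass to the relative intersection pairing, which is $(-t)$-Hermitian, produce a $(-t)$-quadratic refinement, and invoke Bass's cancellation theorem, whose hypothesis on the Witt index is what forces $g\geq 3$.

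Relatedly, you locate the role of $g\neq 1,2$ in the wrong place. The boundary-compatibility issue you flag in your final paragraph --- arranging the homeomorphism of exteriors to respect the circle-bundle structure on $\Sigma_g\times S^1$ and to be the identity on $E_K$ --- is handled for \emph{all} $g$: every automorphism of the Blanchfield form of $\Sigma_{g,1}\times S^1$ is a symplectomorphism of $H_1(\Sigma_{g,1})$ and is realised by $j\times\id_{S^1}$ for a surface homeomorphism $j$ fixing $\partial\Sigma_{g,1}$, and $\Delta_K=1$ kills the $E_K$ factor. The genus restriction enters only in establishing the isometry of intersection forms, i.e.\ exactly at the step your proposal treats as a routine homology computation. (Your remark about $g=0$ reducing to uniqueness of $\Z$-discs is consistent with the paper: there $H_2(D^4_{\Sigma_i};\Lambda)=0$ and the isometry is vacuous.)
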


In a previous paper of ours, we proved Theorem~\ref{thm:D4Intro} in the genus zero case~\cite[Theorem~1.2]{ConwayPowell}.
With regards to Theorem~\ref{thm:UnknottingIntro}, a genus~$g$ surface~$\Sigma \subseteq S^4$ is \emph{unknotted} if it bounds a locally flat, embedded handlebody in~$S^4$.
The \emph{unknotting conjecture} for locally flat surfaces posits that a locally flat, embedded, closed, oriented surface~$\Sigma \subseteq S^4$ is unknotted if and only if~$\pi_1(S^4 \setminus \nu \Sigma)=\Z$. The forwards direction holds because any two embeddings of a handlebody in~$S^4$ are ambiently isotopic, so an unknotted surface is ambiently isotopic to a standard embedding.
For the reverse direction, Freedman and Quinn~\cite[Theorem~11.7A]{FreedmanQuinn} proved the~$g=0$ case of~$2$-knots, when~$\Sigma \cong S^2$, while Hillman and Kawauchi claimed it for~$g \geq 1$~\cite{KawauchiHillman}.
Theorem~\ref{thm:UnknottingIntro} offers a new proof for surfaces of genus~$g \geq 3$.
We also give new proofs for the $g=0$ cases, but it should be noted that this specialisation produces somewhat similar proofs to those of Freedman-Quinn and our previous work \cite{ConwayPowell}.

Our proof of Theorem~\ref{thm:UnknottingIntro} differs from that of Hillman-Kawauchi~\cite{KawauchiHillman}, who do not have any genus restrictions.
 In particular, in a key step in the proof, one considers a closed~$4$-manifold~$M$ built from the two surface exteriors, and shows that~$M$ is homeomorphic to~$S^1 \times S^3 \# _{i=1}^{2g} S^2 \times S^2.$
To see this, we control the~$\Z[\Z]$-valued
intersection form of~$M$, whereas~\cite[Proof of Lemma 3.1]{KawauchiHillman} just calculates
the~$\Z$-valued intersection form, and appeals to separate work of Kawauchi~\cite{KawauchiSplitting}, in
which it was claimed that every closed, spin~$4$-manifold with
fundamental group~$\Z$ splits as~$S^1 \times S^3 \# X$, where~$X$ is a closed, simply-connected 4-manifold.
It would follow from this claim that computing the~$\Z$-valued form suffices.
A mistake in \cite{KawauchiSplitting} was found by Hambleton-Teichner~\cite{HambletonTeichner}. Although Kawauchi later updated his theorem~\cite{KawauchiRevised} to include
the hypothesis that the~$\Z$-intersection form be indefinite, which is the
case for the 4-manifolds arising in~\cite{KawauchiHillman}, the consensus in the community seems to be that an independent account is also desirable.

\subsection{Surfaces in simply-connected~$4$-manifolds with boundary \texorpdfstring{$S^3$}{the 3-sphere}.}

To fix our terminology, throughout the article, by a \emph{$4$-manifold} we shall always mean a compact, connected, oriented, topological~$4$-manifold with connected boundary.
Surfaces will always be compact, connected, and orientable.
We will consider locally flat, embedded, closed, oriented surfaces~$\Sigma \subseteq X$ with~$X$ a closed~$4$-manifold, and locally flat properly embedded surfaces~$\Sigma \subseteq N$, where~$N$ is a~$4$-manifold with boundary~$S^3$ and~$\partial \Sigma=K \subseteq S^3$ is a fixed knot.
The exteriors of such surfaces will be denoted by~$X_\Sigma$ and~$N_\Sigma$ respectively.

Next we formulate our most general statement, for the nonempty boundary case, pairs of surfaces $\Sigma_i \subseteq N$, where $\partial \Sigma_i \subseteq \partial N=S^3$ is a fixed knot.
 Theorem~\ref{thm:D4Intro} is an immediate consequence of Theorem~\ref{thm:WithBoundaryIntro} below, which we state after introducing the prerequisites.
One quickly deduces consequences for closed surfaces in closed~$4$-manifolds~$X$, in particular Theorem~\ref{thm:UnknottingIntro}, by removing an unknotted~$(D^4,D^2)$ pair from~$(X,\Sigma)$, as we  will explain in Section~\ref{subsection:closed-4-manifolds}.

Given a compact, oriented~$4$-manifold~$M$ with~$\pi_1(M)\cong \Z$, we write $\pi_1(M)=\Z$ to mean that we have chosen an isomorphism between $\pi_1(M)$ and $\Z$. In the case of the exterior $X_\Sigma$ or $N_\Sigma$ of an oriented surface $\Sigma$, such an identification is determined by the orientations on $\Sigma$ and on $X$ or $N$ respectively.  Set~$\Lambda:=\Z[t^{\pm 1}]$ and write $H_*(M;\Lambda)$ for the homology of the infinite cyclic cover~$\widetilde{M}$ of~$M$, considered as a~$\Lambda$-module.
Taking signed intersections in~$\widetilde{M}$ endows the homology~$\Lambda$-module~$H_2(M;\Lambda)$ with a sesquilinear Hermitian intersection form
\[\lambda_M \colon  H_2(M;\Lambda) \times H_2(M;\Lambda) \to \Lambda.\]
The adjoint of~$\lambda_M$ is the~$\La$-module homomorphism
\[\wh{\lambda}_{M} \colon H_2(M;\La) \to \Hom_{\La}(H_2(M;\La),\La) =: H_2(M;\La)^*.\]
If two~$4$-manifolds~$M_0,M_1$ with infinite cyclic fundamental group are orientation-preserving homeomorphic, then their~$\Lambda$--intersection forms are isometric. That is, there exists an isomorphism~$F \colon H_2(M_0;\La) \xrightarrow{\cong} H_2(M_1;\La)$ such that~$\wh{\lambda}_{M_0} = F^* \wh{\lambda}_{M_1} F$.  We write~$F \colon \lambda_{M_0} \cong \lambda_{M_1}$  and call~$F$ an \emph{isometry}.

Let~$Y$ be a compact oriented 3-manifold, and let~$\varphi \colon \pi_1(Y) \twoheadrightarrow \Z$ be an epimorphism.
Associated with this data, there is a~$\Lambda$-module~$H_1(Y;\Lambda)$, called the \emph{Alexander module}, which is the first homology group of the infinite cyclic cover associated to~$\ker(\varphi)$.
If this module is torsion over~$\Lambda=\Z[t^{\pm 1}]$, then it is endowed with a sesquilinear Hermitian \emph{Blanchfield form}
$$ \Bl_Y \colon H_1(Y;\Lambda) \times H_1(Y;\Lambda) \to \Q(t)/\Lambda.~$$
More details on this pairing appear in Subsection~\ref{sub:BoundaryIntersectionForm}, but we note that~$\Bl_Y$ should be thought of as the analogue of the linking pairing of a~$\Q$-homology sphere on the level of infinite cyclic covers.

As we recall in Sections~\ref{sec:Union} and~\ref{sec:UnionIsUnion}, if~$M_0,M_1$ are~$4$-manifolds with~$\pi_1(M_i)=\Z$ whose boundaries~$\partial M_i$ have torsion Alexander modules and if~$\pi_1(\partial M_i) \to \pi_1(M_i)$ is surjective for~$i=0,1$, then an isometry~$F \colon \lambda_{M_0} \cong \lambda_{M_1}$ of the~$\Lambda$--intersection forms induces an isometry of the Blanchfield forms of the boundary
\[\partial F \colon (H_1(\partial M_0;\Lambda),\Bl_{\partial M_0}) \xrightarrow{\cong} (H_1(\partial M_1;\Lambda),\Bl_{\partial M_1}).\]
Here is the construction: via universal coefficients, Poincar\'e-Lefschetz duality, and the long exact sequence of a pair, the composition
\[H_2(M_i;\La)^* \cong H^2(M_i;\La) \cong H_2(M_i,\partial M_i;\La) \xrightarrow{\delta} H_1(\partial M_i;\La)\]
induces an identification
$\coker(\wh{\lambda}_{M_i}) \cong H_1(\partial M_i;\La)$,
 and the map
\[F^{-*} := (F^*)^{-1} \colon H_2(M_0;\La)^* \to H_2(M_1;\La)^*\]
induces~$\partial F \colon \coker(\wh{\lambda}_{M_0}) \to \coker(\wh{\lambda}_{M_1})$.

Now we focus on the case where~$M_i=N_{\Sigma_i}$ are surface exteriors, with~$N$ a compact, simply-connected~$4$-manifold with boundary~$S^3$.
The boundary~$\partial N_{\Sigma_i}$ is homeomorphic to \[M_{K,g}:=E_K \cup_\partial (\Sigma_{g,1} \times S^1),\] where~$E_K=S^3 \setminus \nu (K)$ is the knot exterior and~$\Sigma_{g,1}$ denotes the (abstract) genus~$g$ surface with one boundary component.
In Proposition~\ref{prop:AutomorphismInTheBoundaryCase}, we show that every automorphism~$h$ of the Blanchfield pairing~$\Bl_{M_{K,g}}$ decomposes as~$h_K \oplus h_\Sigma$, where~$h_K$ is an automorphism of~$\Bl_K:=\Bl_{E_K}$ and~$h_\Sigma$ is an automorphism of~$\Bl_{\Sigma_{g,1} \times S^1}$.

Let $f_K \colon E_K \to E_K$ be an orientation-preserving homeomorphism that is the identity on~$\partial E_K$. Extend $f_K$ via the identity on $\nu K$ to an orientation-preserving self-homeomorphism of $S^3$.  The mapping class group of $S^3$ is trivial, so there is an isotopy  $\Theta(f_K) \colon S^3 \times [0,1] \to S^3$ between the extension and the identity, such that $\Theta(f_K)|_{S^3 \times \{0\}} = \Id$ and $\Theta(f_K)|_{E_K \times \{1\}} = f_K$.

Here is the central theorem of the paper on embedded surfaces with knot group~$\Z$.

%\begin{theorem}
%\label{thm:WithBoundaryIntro}
%Let~$X$ be a closed, simply-connected~$4$-manifold, let~$N=X \setminus \mathring{D}^4$ be a punctured~$X$, and let~$K \subseteq S^3=\partial N$ be a knot.
%Let~$\Sigma_0,\Sigma_1 \subseteq N$ be two locally flat, embedded, compact, oriented genus~$g$ surfaces with boundary~$K$ and~$\pi_1(N_{\Sigma_i})=\Z$ for~$i=0,1$.
%%\begin{enumerate}
%%\item\label{item:main-thm-1}
%Suppose there is an isometry~$F \colon \lambda_{N_{\Sigma_0}} \cong \lambda_{N_{\Sigma_1}}$ and write~$\partial F=h_K \oplus h_\Sigma$.
%\begin{itemize}
%\item  If~$h_K$ is induced by an orientation-preserving homeomorphism~$f_K \colon E_K \to E_K$ that is the identity on~$\partial E_K$, then~$f_K$ extends to an orientation-preserving homeomorphism of pairs
%$$(N,\Sigma_0) \xrightarrow{\cong} (N,\Sigma_1)$$
%inducing the given isometry~$F \colon H_2(N_{\Sigma_0};\Lambda) \cong H_2(N_{\Sigma_1};\Lambda)$.
%{MP: This part should be improved to an if and only if.}
%If~$h_K=\id$, then this orientation-preserving homeomorphism of pairs can be assumed to fix the boundary of~$N$ pointwise.
%\item If in addition~$N=D^4$ and~$h_K=\id$, then~$\Sigma_0$ and~$\Sigma_1$ are topologically ambiently isotopic rel.\ boundary.
%{MP: This (the 4-ball statement) should work more generally if $h_K$ is realised by an isotopy of the knot to itself, but dropping the rel. boundary conclusion.}
%%\item If~$N=D^4$,~$\Delta_K \doteq 1$, and~$g \neq 1,2$, then~$\Sigma_0$ and~$\Sigma_1$ are topologically ambiently isotopic rel.\ boundary.
%\end{itemize}
%\end{theorem}

\begin{theorem}
\label{thm:WithBoundaryIntro}
Let~$X$ be a closed, simply-connected, oriented~$4$-manifold, let~$N=X \setminus \mathring{D}^4$ be a punctured~$X$, and let~$K \subseteq S^3=\partial N$ be a knot.
Let~$\Sigma_0,\Sigma_1 \subseteq N$ be two locally flat, embedded, compact, oriented genus~$g$ surfaces with the same oriented boundary~$K$ and~$\pi_1(N_{\Sigma_i})=\Z$ for~$i=0,1$.
Suppose there is an isometry~$F \colon \lambda_{N_{\Sigma_0}} \cong \lambda_{N_{\Sigma_1}}$ and write~$\partial F=h_K \oplus h_\Sigma$.
\begin{itemize}
\item  If~$h_K$ is induced by an orientation-preserving homeomorphism~$f_K \colon E_K \to E_K$ that is the identity on~$\partial E_K$, then~$f_K$ extends to an orientation-preserving homeomorphism of pairs
$$(N,\Sigma_0) \xrightarrow{\cong} (N,\Sigma_1)$$
inducing the given isometry~$F \colon H_2(N_{\Sigma_0};\Lambda) \cong H_2(N_{\Sigma_1};\Lambda)$.
\item
If in addition~$N=D^4$, then for any choice of isotopy $\Theta(f_K)$, the surfaces~$\Sigma_0$ and~$\Sigma_1$ are topologically ambiently isotopic via an ambient isotopy of $D^4$ extending $\Theta(f_K)$.
\end{itemize}
\end{theorem}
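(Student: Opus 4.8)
The plan is to upgrade the homeomorphism of pairs supplied by the first bullet to an ambient isotopy: I would use a collar sweep to install the prescribed boundary behaviour $\Theta(f_K)$, and the Alexander trick to fill in the interior. Concretely, assuming $N = D^4$ and that $h_K$ is induced by $f_K$, I would start by invoking the first bullet to get an orientation-preserving homeomorphism of pairs $\Phi \colon (D^4,\Sigma_0) \xrightarrow{\cong} (D^4,\Sigma_1)$ that extends $f_K$ and induces $F$. Since a neighbourhood of $\partial\Sigma_i$ in $(D^4,\Sigma_i)$ is homeomorphic to $(S^3,K) \times [0,1)$, I would first normalise, by a collar-supported isotopy absorbed into the final answer, so that $\Sigma_0$ and $\Sigma_1$ agree on a collar of $\partial D^4$, and then arrange $\Phi$ to be a product map there; this forces $\Phi|_{S^3}$ to be the homeomorphism $\widehat f_K$ obtained from $f_K$ by extending via the identity over $\nu K$. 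Note that $\widehat f_K(K) = K$ and that $\widehat f_K = \Theta(f_K)|_{S^3 \times \{1\}}$ by construction.

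Next, for the given isotopy $\Theta := \Theta(f_K)$, I would pick a collar $c \colon S^3 \times [0,1] \hookrightarrow D^4$ of $\partial D^4$ (Brown's collar theorem) with $c(x,0)=x$, and realise $\Theta$ by the ambient isotopy $H_t$ of $D^4$ equal to the collar sweep $c(x,u) \mapsto c(\Theta_{\max(t-u,0)}(x),u)$ on $\im(c)$ and the identity elsewhere, so that $H_0 = \id$ and $H_t|_{S^3} = \Theta_t$ for all $t$. Then $\Phi \circ H_1^{-1}$ is a self-homeomorphism of $D^4$ which restricts to $\widehat f_K \circ \widehat f_K^{-1} = \id$ on $S^3$; by the Alexander trick it is isotopic rel $\partial D^4$ to $\id_{D^4}$ through the coning isotopy $\xi_t$, with $\xi_0 = \id$, $\xi_1 = \Phi \circ H_1^{-1}$, and $\xi_t|_{S^3} = \id$ for all $t$. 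Setting $G_t := \xi_t \circ H_t$ then completes the argument: $G_0 = \id$, $G_1 = \xi_1 \circ H_1 = \Phi$ so that $G_1(\Sigma_0) = \Sigma_1$, and $G_t|_{S^3} = \xi_t|_{S^3} \circ H_t|_{S^3} = \Theta_t$, so $\{G_t\}$ is an ambient isotopy of $D^4$ extending $\Theta(f_K)$ and carrying $\Sigma_0$ to $\Sigma_1$.

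I expect essentially all of the difficulty to sit inside the first bullet, which this deduction treats as a black box; the remaining ingredients — existence of a boundary collar and the Alexander trick — are elementary and valid in the topological category in dimension four. The one point that needs a little care is the normalisation $\Phi|_{S^3} = \widehat f_K$, since the first bullet only asserts $\Phi|_{E_K} = f_K$: one must check that $\Phi$ can be taken to be the identity on $\nu K$, which is the reason for first making $\Sigma_0$ and $\Sigma_1$ coincide near $K$ (alternatively, this normalisation can be read off from the proof of the first bullet itself).
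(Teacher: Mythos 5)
Your deduction of the second bullet from the first is correct, and it is essentially the argument the paper gives: the paper cones the isotopy $\Theta(f_K)$ to an ambient isotopy $C(\Theta(f_K)_t)$ of $D^4$ and composes with the Alexander-trick isotopy of $C(\wt f_K)^{-1}\circ\Phi'$, whereas you realise $\Theta(f_K)$ by a collar sweep and compose in the other order; these are interchangeable. Your worry about the normalisation $\Phi|_{S^3}=\wt f_K$ also resolves itself: the homeomorphism of pairs produced in the proof of the first bullet restricts to $j\times\id_{D^2}$ on $\nu\Sigma_0$ with $j$ fixing $\partial\Sigma_{g,1}$ pointwise, so it is automatically the identity on $\nu K\subseteq S^3$.

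However, the statement you were asked to prove consists of both bullets, and your proposal proves only the second, explicitly black-boxing the first. That first bullet is where essentially all of the mathematical content of the theorem sits, and none of it appears in your argument. Concretely, the paper's proof must: (i) show that the boundary isometry $h_\Sigma$ of $\Bl_{\Sigma_{g,1}\times S^1}$ is realised by a homeomorphism of the form $j\times\id_{S^1}$ with $j$ fixing $\partial\Sigma_{g,1}$ (Proposition~\ref{prop:IsometriesBlanchfield}, which identifies $\Aut(\Bl_{\Sigma_{g,1}\times S^1})$ with $\Sp_{2g}(\Z)$); (ii) glue this to $f_K$ to obtain $f\in\Homeo^+_\varphi(M_{K,g})$ and check that $(f,F)$ is a compatible pair in the sense of Definition~\ref{def:Compatible}; (iii) verify the hypotheses of the classification theorem (ribbon boundary via Lemma~\ref{lem:RibbonBoundary}, torsion Alexander modules via Lemma~\ref{lem:AlexanderModuleSigmaS1}, equality of Kirby--Siebenmann invariants via additivity and smoothness of $\Sigma_i\times D^2$); and (iv) invoke Theorem~\ref{thm:MainTechnical}, whose proof rests on the union-of-forms computation of $\lambda_{M_0\cup_f-M_1}$, the classification of closed $4$-manifolds with fundamental group $\Z$, and the topological $s$-cobordism theorem, to extend $f$ over the surface exteriors inducing $F$, before capping off with $j\times\id_{D^2}$. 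Without some account of these steps the proposal is a correct reduction, not a proof.
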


%If~$h_K=\id$, then this orientation-preserving homeomorphism of pairs can be assumed to fix the boundary of~$N$ pointwise.

%If~$h_K=\id$, then the ambient isotopy of $D^4$ can be assumed to be rel.\ boundary.
%{MP: This (the 4-ball statement) should work more generally if $h_K$ is realised by an isotopy of the knot to itself, but dropping the rel. boundary conclusion.}
 %There is an ambient isotopy $G_t \colon S^3 \to S^3$ with $G_1(K) = K$ and $G_1|_{E_K} =f_K$.
%Suppose in addition~$N=D^4$. There is an ambient isotopy $G_t \colon S^3 \to S^3$ with $G_1(K) = K$ and $G_1|_{E_K} =f_K$. The surfaces~$\Sigma_0$ and~$\Sigma_1$ are topologically ambiently isotopic via an ambient isotopy of $D^4$ extending $G_t$.
% If~$h_K=\id$, then the ambient isotopy of $D^4$ can be assumed to be rel.\ boundary.
%{MP: This (the 4-ball statement) should work more generally if $h_K$ is realised by an isotopy of the knot to itself, but dropping the rel. boundary conclusion.}

In particular note that if $h_K =\id$, then we can take $f_K=\id$ and $\Theta_{f_K}$ the constant isotopy, so that the homeomorphism of pairs in the first item can be assumed to fix the boundary pointwise, and the ambient isotopy in the second item can be assumed to be rel.\ boundary.

In general,~$\lambda_{N_{\Sigma_0}}$ and~$\lambda_{N_{\Sigma_1}}$ need not be isometric, even for~$N = D^4$, as shown by examples due to Oba~\cite{Oba}.

The deduction of the last item uses Alexander's coning trick, which shows that every homeomorphism of $D^4$ that restricts to the identity on $\partial D^4$ is topologically isotopic to the identity. So in $D^4$, a  homeomorphism of pairs can be upgraded to a topological ambient isotopy. See Section~\ref{sub:SurfacesManifoldWithBoundary} for details.

As explained in Section~\ref{sub:RimIntro} below, we will apply Theorem~\ref{thm:WithBoundaryIntro} to 1-twisted rim surgery, a method which has been effective at producing exotic embeddings of surfaces.
In Section~\ref{sub:PushedInIntro}, we also apply Theorem~\ref{thm:WithBoundaryIntro} to study Seifert surfaces that are pushed in to $D^4$.
Further applications can be obtained by finding classes of knots $K$ for which every automorphism of the Blanchfield pairing is realised by a symmetry of the knot exterior $E_K$.

\subsection{Ambient isotopy in closed~$4$-manifolds.}\label{subsection:closed-4-manifolds}

For closed surfaces embedded in closed, simply-connected $4$-manifolds, by applying the classification of self-homeomorphisms due to Kreck, Perron, and Quinn~\cite{KreckIsotopy,Perron-Isotopy,QuinnIsotopy}, we can potentially upgrade a homeomorphism of pairs obtained from  Theorem~\ref{thm:WithBoundaryIntro} to an ambient isotopy.  The theorem (see \cite{QuinnIsotopy} or \cite[Theorem~10.1]{FreedmanQuinn}) is that two self-homeomorphisms of a closed, simply-connected~$4$-manifold are isotopic if and only if they induce the same self-isomorphism on second homology. An analogous classification of homeomorphisms for simply-connected~$4$-manifolds with boundary has not yet been proven.

\begin{theorem}
\label{thm:Unknotting4ManifoldIntro}
Let~$X$ be a closed, simply-connected, oriented~$4$-manifold.
Let~$\Sigma_0, \Sigma_1 \subseteq X$ be two locally flat, embedded, closed, oriented genus~$g$ surfaces with~$\pi_1(X_{\Sigma_i})=\Z$ for~$i=0,1$.
\begin{enumerate}
\item\label{item:closed-case-1}
If the intersection forms~$\lambda_{X_{\Sigma_0}}$ and~$\lambda_{X_{\Sigma_1}}$ are isometric via an isometry~$F$, then there is an orientation-preserving homeomorphism of pairs
$$\Phi \colon (X,\Sigma_0) \xrightarrow{\cong} (X,\Sigma_1)$$
inducing the given isometry~$\Phi_* = F \colon H_2(X_{\Sigma_0};\Lambda) \cong H_2(X_{\Sigma_1};\Lambda)$.
\item\label{item:closed-case-2}
The isometry~$F$ induces an isometry~$F_\Z \colon H_2(X) \to H_2(X)$ of the standard intersection form~$Q_X$ of~$X$.
The surfaces $\Sigma_0$ and~$\Sigma_1$ are topologically ambiently isotopic if and only if $F_\Z=\id$.
%In the case that~$X$ is~$S^4$ and~$g \geq 3$,~$\Sigma_0$ and~$\Sigma_1$ are topologically ambiently isotopic.
 \end{enumerate}
\end{theorem}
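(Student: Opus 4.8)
The plan is to deduce both items from Theorem~\ref{thm:WithBoundaryIntro} and the Kreck--Perron--Quinn isotopy theorem, the latter being the only new external input. For item~(\ref{item:closed-case-1}), I first pass to the relative setting: for each~$i$, by local flatness choose a flat ball pair in $(X,\Sigma_i)$ around a point of $\Sigma_i$ modelled on the standard $(D^4,D^2)$ and delete its interior, obtaining a pair $(N,\Sigma_i^{\circ})$ with $N=X\setminus\mathring D^4$ a punctured~$X$, $\partial N=S^3$, and $\Sigma_i^{\circ}$ a locally flat, compact, oriented genus~$g$ surface with boundary the unknot $U\subseteq S^3$. This loses no information: passing from $N_{\Sigma_i^{\circ}}$ back to $X_{\Sigma_i}$ glues the exterior $S^1\times D^3$ of the standard slice disk onto $N_{\Sigma_i^{\circ}}$ along the solid torus $E_U\subseteq\partial N_{\Sigma_i^{\circ}}$, and since a meridian of~$U$ generates $\pi_1(X_{\Sigma_i})=\Z$, van Kampen gives $\pi_1(N_{\Sigma_i^{\circ}})=\Z$, while vanishing of $H_*(E_U;\La)$ and of $H_2(S^1\times D^3;\La)$ in the relevant degrees makes Mayer--Vietoris produce an isometry $H_2(N_{\Sigma_i^{\circ}};\La)\cong H_2(X_{\Sigma_i};\La)$ of $\La$--intersection forms. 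Transporting~$F$ through these gives an isometry $\lambda_{N_{\Sigma_0^{\circ}}}\cong\lambda_{N_{\Sigma_1^{\circ}}}$. Now I apply Theorem~\ref{thm:WithBoundaryIntro} with $K=U$: the Blanchfield pairing $\Bl_U$ is the zero form on the zero module, so the knot summand of the induced boundary automorphism is the identity, realised by $f_K=\id_{E_U}$; the theorem then provides an orientation-preserving homeomorphism of pairs $(N,\Sigma_0^{\circ})\xrightarrow{\cong}(N,\Sigma_1^{\circ})$ realising this isometry, which by the remark after that theorem may be taken to fix $S^3=\partial N$ pointwise. Regluing the standard ball pair by the identity gives the desired orientation-preserving $\Phi\colon(X,\Sigma_0)\xrightarrow{\cong}(X,\Sigma_1)$, and since the glued-in pieces carry no $\La$--homology, $\Phi_*=F$.

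For item~(\ref{item:closed-case-2}) I first note that $\pi_1(X_{\Sigma_i})=\Z$ makes $H_1(X_{\Sigma_i};\Z)=\Z$ infinite, which forces the divisibility of $[\Sigma_i]\in H_2(X;\Z)$ to be zero; as~$X$ is simply connected this means $[\Sigma_i]=0$. A Mayer--Vietoris computation for $X=X_{\Sigma_i}\cup\nu\Sigma_i$ then shows the inclusion induces a surjection $H_2(X_{\Sigma_i};\Z)\twoheadrightarrow H_2(X;\Z)$ whose kernel lies in the radical of the ordinary intersection form of $X_{\Sigma_i}$, so $Q_X$ is the form induced on the quotient. Consequently the orientation-preserving self-homeomorphism~$\Phi$ of~$X$ from item~(\ref{item:closed-case-1}) induces an isometry $\Phi_*$ of $Q_X$ that depends only on~$F$ (two choices of~$\Phi$ differ by a self-homeomorphism of $(X,\Sigma_0)$ acting trivially on $H_2(X_{\Sigma_0};\La)$, hence on $H_2(X;\Z)$); this is the isometry $F_\Z$.

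If $F_\Z=\id$, then $\Phi$ is a self-homeomorphism of the closed, simply-connected $4$-manifold~$X$ inducing the identity on $H_2(X;\Z)$, so by the Kreck--Perron--Quinn isotopy theorem $\Phi$ is topologically isotopic to $\id_X$; an isotopy $\{\Phi_s\}$ from $\id_X$ to~$\Phi$ is an ambient isotopy of~$X$ carrying $\Sigma_0$ to $\Phi(\Sigma_0)=\Sigma_1$, so $\Sigma_0$ and $\Sigma_1$ are topologically ambiently isotopic. Conversely, an ambient isotopy carrying $\Sigma_0$ to $\Sigma_1$ has time-one map $\psi\colon(X,\Sigma_0)\xrightarrow{\cong}(X,\Sigma_1)$ topologically isotopic to $\id_X$; restricting $\psi$ to the exteriors gives an isometry of $\La$--intersection forms whose associated isometry of $Q_X$ is $\psi_*|_{H_2(X;\Z)}=\id$, so one may take $F=\psi_*$, for which $F_\Z=\id$.

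The main difficulty is not a single computation but a bookkeeping point in the converse direction: the isometry~$F$ supplied by the hypothesis of item~(\ref{item:closed-case-1}) need not itself satisfy $F_\Z=\id$ even when $\Sigma_0$ and $\Sigma_1$ are ambiently isotopic (for instance, when $\Sigma_0=\Sigma_1$ one may take~$F$ to be an arbitrary automorphism of $\lambda_{X_{\Sigma_0}}$, and such automorphisms need not act trivially on $H_2(X;\Z)$), so the equivalence must be understood as: $\Sigma_0$ and $\Sigma_1$ are topologically ambiently isotopic if and only if \emph{some} isometry of the $\La$--intersection forms satisfies $F_\Z=\id$. Pinning this down, together with the homological identifications used above --- in particular that the kernel of $H_2(X_{\Sigma_i};\Z)\to H_2(X;\Z)$ is exactly the radical of the ordinary intersection form of $X_{\Sigma_i}$, and that the reduction of~$F$ modulo $t-1$ descends to an honest isometry of $Q_X$ --- is where the care is required; once that is in place, the passage from homeomorphism of pairs to ambient isotopy is immediate from the Kreck--Perron--Quinn theorem.
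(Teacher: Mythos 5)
Your proposal is correct and follows essentially the same route as the paper: puncture along a standard ball pair to reduce to Theorem~\ref{thm:WithBoundaryIntro} with $K$ the unknot (so $h_K=\id$, $f_K=\id$), reglue to get the homeomorphism of pairs, and then invoke the Kreck--Perron--Quinn isotopy classification for item~(2); your justification that $F_\Z$ is well defined (kernel of $H_2(X_{\Sigma_i})\to H_2(X)$ equals the radical of the ordinary intersection form, so an isometry descends) and your remark about the existential quantifier on $F$ in the ``if and only if'' match what the paper does in Lemma~\ref{lem:InducedIsometryOfH2(X)} and in the converse step of its proof. The one small point to tighten is that you delete a ball pair from each $(X,\Sigma_i)$ separately, whereas to apply Theorem~\ref{thm:WithBoundaryIntro} the two punctured surfaces must share literally the same boundary unknot in the same $N$; as in the paper, first ambiently isotope $\Sigma_1$ so that the two surfaces (and their normal framings) coincide on a common disc and remove a single common $(\mathring{D}^4,\mathring{D}^2)$.
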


With regards to \eqref{item:closed-case-2}, we refer to Lemma~\ref{lem:InducedIsometryOfH2(X)} for details of how the isometry~$F \colon \lambda_{\Sigma_0} \cong \lambda_{\Sigma_1}$ induces an isometry~$F_\Z$ of the standard intersection form~$Q_X$.
Theorem~\ref{thm:Unknotting4ManifoldIntro} also has applications to rim surgery, as we explain in Section~\ref{sub:RimIntro}.

%Similarly to above,~\eqref{item:closed-case-2} implies Theorem~\ref{thm:UnknottingIntro} for~$g \geq 3$, while~\eqref{item:closed-case-1} reproves the~$g=0$ case.
%Using the same notations and assumptions as in Theorem~\ref{thm:Unknotting4ManifoldIntro}, we describe how the isometry~$F \colon \lambda_{\Sigma_0} \cong \lambda_{\Sigma_1}$ induces an isometry~$F_\Z$ of the standard intersection form~$Q_X$, referring to Lemma~\ref{lem:InducedIsometryOfH2(X)} for details.
%First, we show that~$H_2(X_{\Sigma_i};\Lambda) \otimes_\Lambda \Z=H_2(X_{\Sigma_i})$ for~$i=0,1$.
%This implies that~$F \otimes_\Lambda \id_{\Z}$ induces an isometry between the untwisted intersections forms~$Q_{X_{\Sigma_0}}$ and~$Q_{X_{\Sigma_1}}$.
%We then show that for~$i=0,1$ there is a split short exact sequence
%\begin{equation}
%\label{eq:SplitExact}
% 0 \to \Z^{2g} \to H_2(X_{\Sigma_i};\Z) \xrightarrow{p_i} H_2(X;\Z) \to 0.\end{equation}
%The isometry~$F_\Z$ is defined by picking a splitting~$s_0$ of~$p_0$ and setting~$F_\Z:=p_1 \circ (F \otimes_\Lambda \id_{\Z} )\circ s_0$. The result is independent of the choice of splitting.
%Thus,~$F_\Z$ can be computed from a matrix for the isometry~$F$, after picking a splitting for~\eqref{eq:SplitExact}.

Theorem~\ref{thm:Unknotting4ManifoldIntro}~\eqref{item:closed-case-1} follows from Theorem~\ref{thm:WithBoundaryIntro}. Here is a short outline; see Section~\ref{subsection:surfaces-in-closed-4-manifolds} for details.
After an isotopy, we assume $\Sigma_0$ and $\Sigma_1$ coincide on a disc~$D^2$. Remove an open neighbourhood of this common disc $(\mathring{D}^4,\mathring{D}^2)$ from~$(X,\Sigma_i)$ to obtain a pair $(N,\widetilde{\Sigma}_i)$, with $\partial N \cong S^3$ and $\partial \wt{\Sigma}_i$ an unknot~$K$.
The exterior of~$\Sigma_i$ in~$X$ equals the exterior of~$\widetilde{\Sigma}_i$ in~$N$, so the assumptions of Theorem~\ref{thm:WithBoundaryIntro} hold. Here, since the unknot has trivial Alexander module, $h_K = \id$ and so we take $f_K = \id$. Then Theorem~\ref{thm:WithBoundaryIntro} produces a homeomorphism of pairs~$(N,\widetilde{\Sigma}_0) \cong (N,\widetilde{\Sigma}_1)$ rel.\ boundary, which we complete with the identity on the missing 4-ball to prove~\eqref{item:closed-case-1}.  Deducing \eqref{item:closed-case-2} uses the classification of homeomorphisms from~\cite{QuinnIsotopy}, \cite[Theorem~10.1]{FreedmanQuinn} mentioned above.

\begin{remark}
\label{rem:Sunukjian}
It is worth contrasting Theorem~\ref{thm:Unknotting4ManifoldIntro} with a result of Sunukjian.
Indeed,~\cite[Theorem 7.2]{Sunukjianpi1Z} shows that closed surfaces $\Sigma_0,\Sigma_1 \subseteq X$ of the same genus with knot group $\Z$ are topologically isotopic provided $b_2(X) \geq |\sigma(X)|+2$.
Thus, when $X$ is ``big enough'', restrictions on the equivariant intersection form are not needed to establish isotopy.
\end{remark}

%  assumptions on the intersection form of~$X_{\Sigma_i}$ yield the same conclusions on~$N_{\Sigma_i}$.  Also the complement of the unknot in~$S^3$ has trivial Alexander module.
%Therefore~$h_K=\id$ automatically and so we may take~$f_K = \id \colon E_K \to E_K$. Theorem~\ref{thm:WithBoundaryIntro}~\eqref{item:main-thm-1} yields a homeomorphism of pairs~$(N,\widetilde{\Sigma}_0) \cong (N,\widetilde{\Sigma}_1)$ restricting to the identity on the boundary.  Capping this off with the identity on~$(D^4,D^2)$ yields the homeomorphism of pairs in~\eqref{item:closed-case-1}.

%See the proof of Theorem~\ref{thm:Unknotting4Manifold} for more details.

%For~\eqref{item:closed-case-2}, since~$K$ is the unknot we have that~$\Delta_K \doteq 1$, so Theorem~\ref{thm:WithBoundaryIntro}~\eqref{item:main-thm-3} applies,
%giving a rel. boundary topological ambient isotopy between $\widetilde{\Sigma}_0$ and $\widetilde{\Sigma}_1$.
%We then conclude as above by capping this off with the identity on~$(D^4,D^2)$,
%homeomorphism of pairs~$(N,\widetilde{\Sigma}_0) \cong (N,\widetilde{\Sigma}_1)$ restricting to the identity on the boundary.  We then conclude as above, noting that
%~$X = S^4$ implies~$N = D^4$, so that the Alexander trick produces an ambient isotopy,
%without needing to appeal to Kreck-Perron-Quinn.
%\end{proof}

\subsection{Deducing Theorems~\ref{thm:UnknottingIntro} and~\ref{thm:D4Intro} from Theorems~\ref{thm:WithBoundaryIntro} and~\ref{thm:Unknotting4ManifoldIntro}}
\label{sub:DeducingIntro}

In the closed case, with $X=S^4$ if we have an isometry $F$ of the intersection form of $X_{\Sigma_0} = S^4_{\Sigma_0}$ and $X_{\Sigma_1} = S^4_{\Sigma_1}$ then the map $F_{\Z}\colon H_2(S^4) \to H_2(S^4)$ is automatically the identity automorphism. %So it suffices to find an isometry of the intersection form.

For the case with nonempty boundary, with $N=D^4$, Alexander polynomial one implies that $H_1(E_K;\Lambda)=0$, so $h_K=\Id$ automatically, and we may take $f_K \colon E_K \to E_K$ also to be the identity.

Therefore, in both cases, it suffices to find an isometry between the intersection forms of the surface exteriors.  It is an open question whether this is true in general.  For genus zero, i.e.\ for discs and spheres, this is automatically the case since the $\Lambda$-coefficient second homology of the surface exterior vanishes.  Therefore the genus zero cases of Theorems~\ref{thm:UnknottingIntro} and \ref{thm:D4Intro}, due to \cite[Theorem~11.7A]{FreedmanQuinn} and \cite[Theorem~1.2]{ConwayPowell} respectively, follow from Theorems~\ref{thm:WithBoundaryIntro} and~\ref{thm:Unknotting4ManifoldIntro}.

For genus at least three, our strategy to show that the intersection forms are isometric is as follows.
As explained above, the exterior of a closed surface in $S^4$ can be considered as the exterior of a properly embedded surface in $D^4$, so we discuss only the latter case.

By \cite[Theorem~5]{BaykurSunukjian} (see also Theorem~\ref{thm:BS}), $\Sigma_0$ and $\Sigma_1$ are stably equivalent, meaning that they become isotopic after adding some number of trivial tubes.  Therefore the intersection forms of~$\lambda_{D^4_{\Sigma_0}}$ and~$\lambda_{D^4_{\Sigma_1}}$ satisfy~$\lambda_{D^4_{\Sigma_0}} \oplus \mathcal{H}_2^{\oplus n}\cong \lambda_{D^4_{\Sigma_1}} \oplus \mathcal{H}_2^{\oplus n}$ for some~$n \geq 0$, where
\[\mathcal{H}_2 := \begin{pmatrix} 0&t-1 \\ t^{-1}-1 &0 \end{pmatrix}.\]
  Then provided $g$ is at least three we are able to leverage algebraic cancellation results for hyperbolic forms from \cite{Bass} (see also~\cite{HambletonTeichner,Khan,Magurn-vdKallen-Vaserstein,CrowleySixt}) to improve such a stable isometry to an isometry.
While we refer to Subsection~\ref{sub:intersection-forms-in-D4} for details, we record one of the aforementioned intermediate results as it might be of independent interest.

\begin{proposition}
\label{prop:StabiliseToUnknotIntro}
%For $\Z$-surfaces $\Sigma_0,\Sigma_1 \subseteq N$ of the same genus with boundary $K$,
%Let~$X$ be a closed, simply-connected~$4$-manifold, let~$N=X \setminus \mathring{D}^4$ be a punctured~$X$, and let~$K \subseteq S^3=\partial N$ be a knot.
Let~$X$ be a closed, simply-connected, oriented~$4$-manifold.
Let~$\Sigma_0,\Sigma_1 \subseteq N=X\setminus \mathring{D}^4$ be two locally flat, properly embedded, compact, oriented genus~$g$ surfaces with boundary the same oriented knot $K \subseteq S^3$ and~$\pi_1(N_{\Sigma_i})=\Z$ for~$i=0,1$.
There exists an integer~$n \geq 0$ and an isometry
\[\lambda_{N_{\Sigma_0}} \oplus \mathcal{H}_2^{\oplus n} \cong \lambda_{N_{\Sigma_1}}\oplus \mathcal{H}_2^{\oplus n}. \]
%Q_X \oplus \mathcal{H}_2^{\oplus n}.$$
\end{proposition}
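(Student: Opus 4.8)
The plan is to start from the Baykur--Sunukjian stable equivalence theorem \cite[Theorem~5]{BaykurSunukjian} (restated as Theorem~\ref{thm:BS}), which tells us that, after tubing each of $\Sigma_0$ and $\Sigma_1$ with some number of trivial tubes, the resulting surfaces become ambiently isotopic in $N$. A trivial tube increases the genus by one and, on the level of exteriors, replaces $N_{\Sigma_i}$ by $N_{\Sigma_i} \setminus \nu(\text{tube})$; since the tube is trivial (its core bounds a disc in $N_{\Sigma_i}$ meeting the tube in a meridian), a Mayer--Vietoris computation shows that this operation takes $\lambda_{N_{\Sigma_i}}$ to $\lambda_{N_{\Sigma_i}} \oplus \mathcal{H}_2$, where $\mathcal{H}_2 = \bsm 0 & t-1 \\ t^{-1}-1 & 0 \esm$ is the standard hyperbolic form over $\Lambda$ coming from the meridian--longitude pair of the tube. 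Thus after tubing $\Sigma_0$ up $a$ times and $\Sigma_1$ up $b$ times, we obtain locally flat surfaces $\Sigma_0'$ and $\Sigma_1'$ of the same genus $g + a = g + b$ (so in fact $a = b =: n$) which are ambiently isotopic; an ambient homeomorphism carrying $\Sigma_0'$ to $\Sigma_1'$ restricts to a homeomorphism of exteriors $N_{\Sigma_0'} \cong N_{\Sigma_1'}$, and hence induces an isometry $\lambda_{N_{\Sigma_0'}} \cong \lambda_{N_{\Sigma_1'}}$. Combining the two computations gives
\[
\lambda_{N_{\Sigma_0}} \oplus \mathcal{H}_2^{\oplus n} \;\cong\; \lambda_{N_{\Sigma_0'}} \;\cong\; \lambda_{N_{\Sigma_1'}} \;\cong\; \lambda_{N_{\Sigma_1}} \oplus \mathcal{H}_2^{\oplus n},
\]
which is exactly the claimed stable isometry.

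The key steps, in order, are: (1) state the Baykur--Sunukjian stable equivalence result in the form needed and check that the hypothesis $\pi_1(N_{\Sigma_i}) = \Z$ is preserved under trivial tubing (tubing does not change $\pi_1$ of the exterior, as one sees from a van Kampen argument, so the knot group stays $\Z$ and the tubed surfaces are again $\Z$-surfaces); (2) carry out the Mayer--Vietoris / excision computation identifying the effect of a single trivial tube on $H_2(N_{\Sigma_i}; \Lambda)$ and on the intersection pairing, producing the orthogonal summand $\mathcal{H}_2$ — the point being that the new homology classes are the meridian sphere and the longitude torus of the tube, which pair together by $(t-1)$ (the difference of the two ways the longitude crosses the meridian in the infinite cyclic cover) and pair trivially with everything else and with themselves; (3) match up the genera to see that the number of tubes added on each side agrees, namely equals $n$; and (4) assemble the three isometries as displayed.

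The main obstacle is step (2): getting the equivariant intersection form of the tubed exterior exactly right, i.e. verifying that the extra rank-two summand is precisely $\mathcal{H}_2$ (and in particular that it splits off \emph{orthogonally}, with the off-diagonal entry $t-1$ rather than, say, $1$ or $t^{-1}-1$). This requires care with the infinite cyclic cover: one must understand $\widetilde{N_{\Sigma_i}'}$ as $\widetilde{N_{\Sigma_i}}$ with a copy of the $\Lambda$-thickened complement of a tube glued in, track how the generators of $H_2$ of the glued-in piece sit inside $\widetilde{N_{\Sigma_i}'}$, and compute their self- and mutual intersections, along with the (vanishing) intersections against the image of $H_2(\widetilde{N_{\Sigma_i}})$. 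Orientation and basepoint bookkeeping in the cover is where sign and $t$-power errors creep in; everything else (van Kampen for $\pi_1$, Mayer--Vietoris for the module structure, genus count) is routine. I would also remark that this computation is essentially the $\Z$-coefficient analogue of the familiar fact that connect-summing with $S^2 \times S^2$ adds a hyperbolic summand to the intersection form, and that it already appears implicitly in the discussion of stable equivalence in the surfaces literature, so the novelty here is only in spelling out the $\Lambda$-equivariant version.
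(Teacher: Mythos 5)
Your proposal is correct and follows essentially the same route as the paper: apply the Baykur--Sunukjian stable equivalence theorem (Theorem~\ref{thm:BS}) to obtain trivially stabilised surfaces that are ambiently isotopic, and combine this with the fact that each trivial $1$-handle stabilisation adds an $\mathcal{H}_2$ summand to the $\Lambda$-intersection form of the exterior. The computation you flag as the main obstacle is exactly Lemma~\ref{lem:Handle} of the paper, which sidesteps the cover bookkeeping you worry about by writing the stabilisation as an interior connected sum $(N,\Sigma)\#(S^4,T^2)$ with the unknotted torus, so that the new summand is $\lambda_{S^4_{T^2}}\cong\mathcal{H}_2$, computed from an explicit handle diagram of the infinite cyclic cover (Lemma~\ref{lem:Unknot}).
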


Whereas Theorems~\ref{thm:UnknottingIntro} and~\ref{thm:D4Intro} hold for $g \neq 1,2$, Proposition~\ref{prop:StabiliseToUnknotIntro} leads to some results for arbitrary $g$.
Indeed, Corollary~\ref{cor:IntersectionFormAlexanderPolynomial1} shows that for any genus $g$ surface $\Sigma \subseteq N=X \setminus \mathring{D^4}$ with knot group~$\Z$ and $\partial \Sigma$ an Alexander polynomial one knot, $\lambda_{N_\Sigma} \oplus \mathcal{H}_2^{\oplus n}\cong Q_X \oplus \mathcal{H}_2^{\oplus (g+n)}$ for some~$n \geq 0$, where $Q_X$ denotes the standard intersection form of $X$.
The same result holds for closed surfaces~$\Sigma \subseteq X$ with knot group~$\Z$.
%More generally, for $\Sigma \subseteq N=X \setminus \mathring{D^4}$ with~$\Delta_{\partial \Sigma} \doteq 1$ (and for $\Sigma \subseteq X$), a similar proof shows that up to $\mathcal{H}_2$-stabilisation,  the equivariant intersection form of the surface exterior is isometric to $Q_X \oplus \mathcal{H}_2^{\oplus g}$, where $Q_X$ denotes the standard intersection form of $X$.

%Interestingly, in the closed case, when $b_2(X) \geq \sigma(X)+6$, a theorem of Sunukjian (which also relies on some cancelation results) shows that the result holds unstably.

%Note that~\eqref{item:main-thm-3} implies Theorem~\ref{thm:D4Intro}, except for the genus zero case. But the genus zero case follows vacuously from~\eqref{item:main-thm-1} since in that case~$H_2(N_{\Sigma_i};\La)=0= H_1(\partial N_{\Sigma_i};\La)$.
%For both statements in~$D^4$, we apply the Alexander trick that every homeomorphism~$h \colon D^4 \to D^4$ with~$h|_{S^3} = \id$ is topologically isotopic to the identity, in order to upgrade a homeomorphism of pairs to an ambient isotopy.
%Under the assumptions of Theorem~\ref{thm:WithBoundaryIntro}~\eqref{item:main-thm-3}, we prove that the~$\Lambda$--intersection forms~$\lambda_{N_{\Sigma_0}}$ and~$\lambda_{N_{\Sigma_1}}$ are isometric, which explains why no isometry appears in the hypotheses.

\subsection{Application to rim surgery}
\label{sub:RimIntro}

Rim surgery is an effective way to produce smoothly inequivalent embeddings of surfaces~\cite{FintushelStern,KimHJ,KimRuberman-2,Mark-Thomas,JuhaszMillerZemke}.
Given a locally flat embedded closed oriented surface~$\Sigma$ in a closed~$4$-manifold~$X$, a knot~$J \subseteq~S^3$ and simple closed curve~$\alpha \subseteq \Sigma$, \emph{$n$-roll~$m$-twist rim surgery} outputs another locally flat embedded closed oriented surface~$\Sigma_{n}^m(\alpha,J) \subseteq X$.
The case where~$m=0,n=0$ was first introduced by Fintushel-Stern~\cite{FintushelStern}, while~$m$-twisted rim surgery was introduced by Kim~\cite{KimHJ}, and~$n$-roll~$m$-twist rim surgery first appeared in~\cite{KimRuberman}.

Kim and Ruberman showed that if~$X$ is a simply-connected~$4$-manifold,~$\pi_1(X_\Sigma)=\Z_d$ is a finite cyclic group and~$(m,d)=1$, then~$\Sigma_n^m(\alpha,J)$ and~$\Sigma$ are topologically isotopic~\cite[Theorem 1.3 and Proposition 3.1]{KimRuberman}.
In order to extend this to infinite cyclic fundamental groups, we consider~$1$-twist rim surgery,  and set~$\Sigma(\alpha,J):=\Sigma_{0}^1(\alpha,J)$. 
In this case if~$\pi_1(X_\Sigma)=\Z$, then also~$\pi_1(X_{\Sigma(\alpha,J)})=\Z$.
Our first result on rim surgery then reads as follows.

\begin{theorem}
\label{thm:RimSurgeryClosedIntro}
Let~$X$ be a closed, simply-connected~$4$-manifold, let~$\Sigma \subseteq X$ be a locally flat, embedded, orientable surface with knot group~$\Z$, let~$\alpha \subseteq \Sigma$ be a simple closed curve and let~$J$ be a knot.
Then the surfaces~$\Sigma$ and~$\Sigma(\alpha,J)$ are topologically ambiently isotopic.
\end{theorem}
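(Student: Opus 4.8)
The plan is to deduce Theorem~\ref{thm:RimSurgeryClosedIntro} from Theorem~\ref{thm:Unknotting4ManifoldIntro}~\eqref{item:closed-case-2}, so the entire task reduces to verifying its two hypotheses for the pair $\Sigma_0 := \Sigma$ and $\Sigma_1 := \Sigma_n(\alpha,J)$: first, that the $\Lambda$-intersection forms of the two exteriors are isometric via some isometry $F$; and second, that the induced isometry $F_\Z$ of $Q_X$ can be arranged to be the identity. The key geometric input is that $n$-roll $1$-twist rim surgery is performed inside a tubular neighbourhood of $\alpha$, so it only alters $(X,\Sigma)$ within a piece homeomorphic to $\alpha \times D^3$ (or more precisely a neighbourhood of the rim torus $\alpha \times \mu$, where $\mu$ is a meridian of $\Sigma$). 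First I would make this precise: there is a compact $4$-manifold-with-corners $P \subseteq X$, a neighbourhood of the rim torus, such that $(X,\Sigma)$ and $(X,\Sigma_n(\alpha,J))$ agree outside $P$, and both surface exteriors are obtained by gluing the common piece $X_\Sigma \setminus P$ to a standard piece inside $P$; the effect of the surgery is a controlled modification of that standard piece, governed by the knot $J$.

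The central computation is that this local modification does not change the $\Lambda$-coefficient homology or intersection form. For $1$-twisted rim surgery this is precisely the mechanism behind the Kim--Ruberman and Juh\'asz--Miller--Zemke results: the meridian $\mu$ of $\Sigma$, which generates $\pi_1(X_\Sigma)=\Z$, also bounds in the region affected by the surgery, and the $1$-twist condition ensures that after surgery the fundamental group is still $\Z$ and, more importantly, that the infinite cyclic cover changes only by a cobordism that is a $\Lambda$-homology product. Concretely I would compute $H_*(X_{\Sigma_n(\alpha,J)};\Lambda)$ via a Mayer--Vietoris sequence for the decomposition into $X_\Sigma \setminus P$ and the surgered piece, using that over $\Lambda$ the relevant pieces of the surgery region have the homology of their unsurgered counterparts because $J$'s Alexander module contributions are killed by the $1$-twist (this is where the hypothesis $m=1$ is essential — a $1$-twist curve has a neighbourhood in which the relevant homology classes of the $J$-knotting become trivial after tensoring with $\Lambda$). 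This yields a canonical isomorphism $F \colon H_2(X_\Sigma;\Lambda) \xrightarrow{\cong} H_2(X_{\Sigma_n(\alpha,J)};\Lambda)$, and since all the modification happens away from surfaces representing $H_2$-classes, $F$ is an isometry of $\lambda$.

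For the second hypothesis, I would observe that the same Mayer--Vietoris argument runs with $\Z$-coefficients and identifies $H_2(X_\Sigma;\Z) \cong H_2(X_{\Sigma_n(\alpha,J)};\Z)$ compatibly with the map $H_2(X_\Sigma) \to H_2(X)$; chasing through the construction of $F_\Z$ in Lemma~\ref{lem:InducedIsometryOfH2(X)} (which factors $F$ through $H_2(X)$ by capping off with the surface) shows the resulting automorphism of $Q_X$ is the one induced by the identity on the common part, hence $F_\Z = \id$. Then Theorem~\ref{thm:Unknotting4ManifoldIntro}~\eqref{item:closed-case-2} applies directly to give a topological ambient isotopy from $\Sigma$ to $\Sigma_n(\alpha,J)$. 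The main obstacle I anticipate is the bookkeeping in the local model: one must pin down exactly what the surgered piece $P$ and its exterior-within-$P$ look like for $n$-roll $1$-twist rim surgery (the roll parameter $n$ affects the gluing but, I expect, not the homology), and then verify rigorously that over $\Lambda$ the inclusion of the unsurgered boundary piece into the surgered piece is a homology isomorphism — this is the step that genuinely uses $m=1$ rather than general $m$, and getting the $\Lambda$-module computation right (as opposed to the easier $\Z$-coefficient or $\Z[\Z_d]$-coefficient versions handled in the literature) is the technical heart of the argument. The roll parameter and the choice of $\alpha$ and $J$ should all wash out once one sees that the relevant relative homology groups vanish.
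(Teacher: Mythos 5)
Your proposal is correct and follows essentially the same route as the paper: reduce to Theorem~\ref{thm:Unknotting4ManifoldIntro}~\eqref{item:closed-case-2}, show the $\Lambda$-intersection forms of the exteriors are isometric by exploiting that rim surgery only replaces a neighbourhood of the rim torus by $S^1\times E_J$, whose $\Lambda$-homology is trivial in degree $2$ precisely because the $1$-twist makes $t-1$ act invertibly on the Alexander module of $J$ (the paper's Lemma~\ref{lem:KunnethSS}, packaged via a degree-one map $E_J\to E_U$), and conclude $F_\Z=\id$ because every class of $H_2$ is carried by the common piece $N_\Sigma\setminus\nu(T)$ (Lemma~\ref{lem:RimLemma2}). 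The technical heart you identify --- that the surgered piece is a $\Lambda$-homology equivalence onto its unsurgered counterpart, and that this is where $m=1$ enters --- is exactly the content of the paper's K\"unneth spectral sequence computation.
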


For good fundamental groups $\pi_1(X \sm \Sigma)$ and $J$ a slice knot, Kim and Ruberman showed in~\cite[Theorem~4.5]{KimRuberman-2} that there is a homeomorphism of pairs $(X,\Sigma) \cong (X,\Sigma(\alpha,J))$, so for $\pi_1(X \sm \Sigma) \cong \Z$ our result is an extension.

Rim surgery is also defined for a properly embedded surface in a~$4$-manifold~$N$ with boundary.
Juh\'asz, Miller, and Zemke showed that if~$\Sigma \subseteq N$ is a properly embedded locally flat oriented surface, and if~$\alpha$ bounds a locally flat disc in~$N_{\Sigma}$, then~$\Sigma(\alpha,J)$ is topologically isotopic to~$\Sigma$~\cite[Corollary 2.7]{JuhaszMillerZemke}.
In the case where~$\Sigma$ has knot group~$\Z$, we generalise this result as follows.

\begin{theorem}
\label{thm:RimSurgeryIntro}
Let~$X$ be a closed, simply-connected~$4$-manifold, and let~$N:=X \setminus \mathring{D}^4$ be a punctured~$X$.
Let~$\Sigma \subseteq N$ be a locally flat, properly embedded, orientable surface with knot group~$\Z$, let~$\alpha \subseteq \Sigma$ be a simple closed curve and let~$J$ be a knot.
% and let~$n \in \Z$ be an integer.
There is a rel.\ boundary orientation-preserving homeomorphism of pairs
$$ (N,\Sigma) \xrightarrow{\cong}(N,\Sigma(\alpha,J)).$$
If~$N=D^4$ the surfaces~$\Sigma$ and~$\Sigma(\alpha,J)$ are topologically ambiently isotopic rel.\ boundary.
\end{theorem}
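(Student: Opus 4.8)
The plan is to deduce Theorem~\ref{thm:RimSurgeryIntro} from Theorem~\ref{thm:WithBoundaryIntro}, reducing the whole problem to the construction of a single isometry of equivariant intersection forms. Write $\Sigma' := \Sigma_n(\alpha,J)$. Since $n$-roll $1$-twist rim surgery is performed inside a tubular neighbourhood $\nu\alpha\cong S^1\times D^3$ of $\alpha$ and merely replaces the annulus $\Sigma\cap\nu\alpha$ by a knotted one, $\Sigma'$ is again a properly embedded genus~$g$ surface with $\partial\Sigma'=\partial\Sigma$, and $\pi_1(N_{\Sigma'})=\Z$ (see Section~\ref{sub:RimIntro}). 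Because the surgery is disjoint from $\partial N=S^3$, the exteriors $N_\Sigma$ and $N_{\Sigma'}$ are literally equal outside $\nu\alpha$; in particular their boundaries come with a canonical common identification with $M_{K,g}$. Hence it suffices to produce an isometry $F\colon\lambda_{N_\Sigma}\cong\lambda_{N_{\Sigma'}}$ whose induced boundary automorphism $\partial F$ of $\Bl_{M_{K,g}}$ is the identity. Granting this, the $h_K$-summand of $\partial F$ is the identity, realised by $f_K=\Id_{E_K}$ with $\Theta(f_K)$ the constant isotopy, so the first item of Theorem~\ref{thm:WithBoundaryIntro} gives a rel.\ boundary orientation-preserving homeomorphism of pairs $(N,\Sigma)\xrightarrow{\cong}(N,\Sigma')$, and when $N=D^4$ the second item upgrades it to a rel.\ boundary topological ambient isotopy. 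Note that since we obtain an honest isometry rather than a merely stable one, no genus restriction is required here, in contrast to Theorems~\ref{thm:UnknottingIntro} and~\ref{thm:D4Intro}.

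To build $F$ I would exploit the decompositions $N_\Sigma = W\cup C_0$ and $N_{\Sigma'}=W\cup C_J$ along a common copy of $T^2\times[0,1]$, where $W=\overline{N_\Sigma\sm\nu\alpha}=\overline{N_{\Sigma'}\sm\nu\alpha}$, $C_0=S^1\times(D^3\sm\nu\kappa_0)$ is a neighbourhood of the rim torus $T_\alpha$ (with $\kappa_0\subseteq D^3$ the unknotted diameter, so $D^3\sm\nu\kappa_0$ a solid torus), and $C_J=S^1\times(D^3\sm\nu\kappa_J)$ with $\kappa_J$ a properly embedded arc knotted along $J$; the $1$-twist and the $n$-roll only alter the gluing homeomorphism on the $C_J$-side. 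The hypothesis $\pi_1(N_\Sigma)=\Z$ enters decisively: the curve $\alpha$ is nullhomologous in $N_\Sigma$ (as a curve on $\Sigma$ it has linking number zero with $\Sigma$), hence nullhomotopic since $\pi_1(N_\Sigma)=H_1(N_\Sigma)=\Z$, and this supplies the ``vanishing loop'' on $T_\alpha$ which is exactly what makes the passage from $C_0$ to $C_J$ a $\pi_1$-preserving operation. Combined with the $1$-twist framing --- the point of using the $1$-twist rather than the $0$-twist --- it also makes this passage $H_*(-;\Lambda)$-preserving: the $1$-twist forces the $S^1$-factor of $C_J$ to represent the generator of $\pi_1(N_{\Sigma'})=\Z$, so that the infinite cyclic cover of $C_J$ unwinds the $S^1$- and meridian-directions simultaneously and carries the same $\Lambda$-homology as that of $C_0$, with the Alexander module of $J$ playing no role (the $n$-roll changing $C_J$ only up to self-homeomorphism). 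A Mayer--Vietoris comparison then yields a natural isomorphism $F\colon H_2(N_\Sigma;\Lambda)\xrightarrow{\cong}H_2(N_{\Sigma'};\Lambda)$ that is the identity on classes carried by $W$; since such classes are represented by surfaces pushed off $C_0$ and off $C_J$, $F$ is an isometry, and since $\partial N_\Sigma\subseteq W$ it satisfies $\partial F=\Id$.

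The main obstacle is precisely this $\Lambda$-homology comparison. Carrying it out requires one to: (i) record, for the $n$-roll $1$-twist conventions, exactly which curves on $\partial C_J$ are glued to $\partial W$ and how they interact with the vanishing loop; (ii) verify that the Alexander module $H_1(E_J;\Lambda)$ of $J$ genuinely cancels --- the entire reason for using the $1$-twist --- so that neither $H_*(N_\Sigma;\Lambda)$ nor the intersection form changes; (iii) handle the fact that the rim torus $T_\alpha$ does not lift to the infinite cyclic cover of $N_\Sigma$, its meridian mapping to the generator, so that the Mayer--Vietoris pieces upstairs are noncompact and the argument is cleanest carried out directly over $\Lambda$; and (iv) check that $n$-rolling affects the gluing only by a self-homeomorphism of $C_J$ and hence leaves $F$ and $\partial F$ unchanged. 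With the isometry $F$ satisfying $\partial F=\Id$ in hand, both conclusions follow immediately from the two items of Theorem~\ref{thm:WithBoundaryIntro}, the statement for $N=D^4$ being packaged into that theorem via Alexander's coning trick.
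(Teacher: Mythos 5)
Your proposal is correct and follows essentially the same route as the paper: reduce to Theorem~\ref{thm:WithBoundaryIntro} by producing an isometry $F\colon\lambda_{N_\Sigma}\cong\lambda_{N_{\Sigma_n(\alpha,J)}}$ with $\partial F=\id$, obtained by comparing the two exteriors via Mayer--Vietoris across the common piece $N_\Sigma\sm\nu(T)$ and verifying that $H_2(S^1\times E_J;\Lambda)=0$ and $H_1(S^1\times E_J;\Lambda)=\Z$ (the key Tor-vanishing coming from $t-1$ acting invertibly on $H_1(E_J;\Lambda)$, exactly the cancellation you flag in (ii)). The only real difference is cosmetic: the paper certifies the isometry property by mediating the comparison through an explicit degree one map $N_{\Sigma_n(\alpha,J)}\to N_{\Sigma_n(\alpha,U)}\cong N_\Sigma$ built from a degree one map $E_J\to E_U$, rather than your direct argument that classes carried by the common piece intersect identically in both exteriors; both work.
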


Note that in both of the previous two results, we have no restrictions on the genera of the surfaces.
This is because we show that the~$\Lambda$-intersection forms of~$N_{\Sigma(\alpha,J)}$ and~$N_\Sigma$ coincide, allowing us to apply the first item of Theorem~\ref{thm:WithBoundaryIntro}, for which there is no genus restriction.

For the case~$N=D^4$, Theorem~\ref{thm:RimSurgeryIntro} means that the construction of exotic surfaces in~\cite{JuhaszMillerZemke} applies more generally than realised in that article, because the condition that~$\alpha$ bounds a locally flat embedded disc in~$N_{\Sigma}$ is not needed.

Finally, we refer the interested reader to subsequent work of Juh\'{a}sz and the second named author for a related result involving $n$-roll~$m$-twist surgery on (non-rim) tori in $S^4$~\cite[Theorem~1.3]{JuhaszPowell}.

\subsection{Application to pushed-in Seifert surfaces.}
\label{sub:PushedInIntro}

Pushing a Seifert surface for a knot $K$ into $D^4$ yields a surface with knot group $\Z$.
It is then intriguing to wonder whether any two Seifert surfaces of the same genus for a knot $K$ become isotopic once pushed into $D^4$; see for instance~\cite[Problem 1.20 (C)]{KirbyProblemList} and~\cite[Section~6]{LivingstonSurfaces}.
In Theorem~\ref{thm:PushedIn}, relying on Theorem~\ref{thm:WithBoundaryIntro}, we show that this is the case for Alexander polynomial one knots.
\begin{theorem}
\label{thm:PushedInIntro}
If~$F_0,F_1 \subseteq D^4$ are genus~$g$ pushed-in Seifert surfaces for an Alexander polynomial one knot~$K$, then they are topologically ambiently isotopic rel.\ boundary.
\end{theorem}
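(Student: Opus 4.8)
The plan is to reduce Theorem~\ref{thm:PushedInIntro} to Theorem~\ref{thm:WithBoundaryIntro} (in the case $N = D^4$), so the main work is to verify the hypotheses of that theorem for the two pushed-in Seifert surfaces $F_0, F_1$. First I would note that since $K$ has Alexander polynomial one, it is well known that pushing a Seifert surface of genus $g$ into $D^4$ produces a surface whose exterior has fundamental group $\Z$; indeed, a meridian normally generates, and a Mayer--Vietoris / half-lives-half-dies argument using that the Seifert form is unimodular over $\Lambda$ (equivalently $\Delta_K \doteq 1$) shows that $\pi_1$ of the pushed-in exterior is exactly $\Z$. So $F_0$ and $F_1$ are genuinely $\Z$-surfaces in $D^4$ with boundary $K$, and Theorem~\ref{thm:D4Intro} applies directly once $g \neq 1,2$; but since the statement has no genus restriction I would instead go through Theorem~\ref{thm:WithBoundaryIntro}, which needs the extra input of an isometry of $\Lambda$-intersection forms.

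The second and crucial step is to produce an isometry $F \colon \lambda_{D^4_{F_0}} \cong \lambda_{D^4_{F_1}}$. Here the key simplification is again $\Delta_K \doteq 1$. Combined with $\pi_1 = \Z$ and a homology computation of the infinite cyclic cover, this forces $H_2(D^4_{F_i}; \Lambda)$ to be a free $\Lambda$-module of rank $2g$ and the intersection form $\lambda_{D^4_{F_i}}$ to be nonsingular (since the boundary Blanchfield form of $M_{K,g}$ has the Alexander-polynomial-one knot exterior contributing nothing, the cokernel of $\wh\lambda$ matches $H_1$ of the $\Sigma_{g,1}\times S^1$ piece, which is standard). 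By Corollary~\ref{cor:IntersectionFormAlexanderPolynomial1} cited in Section~\ref{sub:DeducingIntro}, for an Alexander polynomial one knot one has $\lambda_{N_{F_i}} \oplus \mathcal H_2^{\oplus n} \cong Q_{D^4} \oplus \mathcal H_2^{\oplus(g+n)}$ for some $n \ge 0$; since $Q_{D^4}$ is trivial (rank $0$), this reads $\lambda_{D^4_{F_i}} \oplus \mathcal H_2^{\oplus n} \cong \mathcal H_2^{\oplus (g+n)}$ for both $i=0,1$, hence $\lambda_{D^4_{F_0}}$ and $\lambda_{D^4_{F_1}}$ are stably isometric, and Proposition~\ref{prop:StabiliseToUnknotIntro} gives a common value of $n$. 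To upgrade the stable isometry to an honest isometry I would invoke the cancellation results for hyperbolic $\Lambda$-forms referenced after Proposition~\ref{prop:StabiliseToUnknotIntro} (Bass-type cancellation over $\Lambda = \Z[t^{\pm1}]$); the Alexander-polynomial-one hypothesis is exactly what makes the forms in play hyperbolic, so no genus restriction is needed for this particular cancellation.

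With an isometry $F$ in hand, I would apply Theorem~\ref{thm:WithBoundaryIntro} with $N = D^4$: writing $\partial F = h_K \oplus h_\Sigma$, the knot-exterior summand $\Bl_K = \Bl_{E_K}$ is trivial because $H_1(E_K;\Lambda) = 0$ for an Alexander polynomial one knot, so $h_K = \id$ is automatically realised by $f_K = \id$, with the constant isotopy $\Theta(f_K)$. The second bullet of Theorem~\ref{thm:WithBoundaryIntro} then yields that $F_0$ and $F_1$ are topologically ambiently isotopic via an ambient isotopy of $D^4$ extending the constant isotopy, i.e.\ rel.\ boundary, which is the claim. The main obstacle I anticipate is the second step: carefully pinning down that $\lambda_{D^4_{F_i}}$ is nonsingular and extracting the stable isometry in the exact form needed to feed Proposition~\ref{prop:StabiliseToUnknotIntro} and the cancellation theorem; the reduction in the first and third steps is comparatively formal once those algebraic inputs are in place.
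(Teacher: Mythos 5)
Your reduction to Theorem~\ref{thm:WithBoundaryIntro} with $f_K=\id$ is the right frame, and your first and third steps match the paper. The problem is the second step, where you produce the isometry $\lambda_{D^4_{F_0}}\cong\lambda_{D^4_{F_1}}$. You propose to get it from the stable isometry $\lambda_{D^4_{F_i}}\oplus\mathcal H_2^{\oplus n}\cong\mathcal H_2^{\oplus(g+n)}$ of Corollary~\ref{cor:IntersectionFormAlexanderPolynomial1} followed by Bass-type cancellation, asserting that ``no genus restriction is needed for this particular cancellation'' because the forms are hyperbolic. That is not correct: the cancellation theorem (Proposition~\ref{prop:StablyHyperbolicImpliesHyperbolic}, i.e.\ Bass's \cite[Corollary~IV.3.6]{Bass}) requires the Witt index of the target form to be at least $d_\Lambda+1=3$. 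Here the target is (after passing to the $(-t)$-quadratic refinement of the relative pairing, which is itself a nontrivial step you elide) $H_{-t}(\Lambda)^{\oplus g}$, whose Witt index is $g$. So your argument only closes for $g\geq 3$ --- exactly the range already covered by Theorem~\ref{thm:D4Intro} --- and the whole point of Theorem~\ref{thm:PushedInIntro} is that it also handles $g=1,2$. Hyperbolicity of the \emph{stabilised} form does not let you conclude hyperbolicity of the unstabilised form without cancellation; that inference is what the Witt index hypothesis is guarding against.

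The paper avoids stabilisation and cancellation entirely for pushed-in Seifert surfaces. It uses the explicit formula $\lambda_{D^4_{F_i}}\cong\bigl(H\otimes_\Z\Lambda,\;(1-t)A_i^T+(1-t^{-1})A_i\bigr)$ for a Seifert matrix $A_i$, and then shows directly (Propositions~\ref{prop:Lagrangian} and~\ref{prop:Hyperbolic}) that this form is isometric to $\mathcal H_2^{\oplus g}$ for \emph{every} $g$: since $\Delta_K\doteq 1$ the knot is algebraically slice, a metaboliser $L$ of the Seifert form tensors up to a half-rank sublagrangian, hence a Lagrangian (Lemma~\ref{lem:HalfRankLemma}), of the \emph{nonsingular} $(-t)$-quadratic form $(H\otimes\Lambda,A_i-tA_i^T,A_i)$, and a nonsingular quadratic form with a Lagrangian is hyperbolic. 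To repair your proof you would need to replace the stabilise-and-cancel step with this direct computation (or some other genus-free identification of $\lambda_{D^4_{F_i}}$ with $\mathcal H_2^{\oplus g}$). As a minor point, $\pi_1(D^4\smallsetminus F_i)\cong\Z$ holds for any pushed-in Seifert surface, with no need for the unimodularity argument you sketch.
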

This result provides another setting where we are able to obtain results for~$g=1,2$.
In particular, it rules out the most naive potential counter examples to the conjecture that Theorem~\ref{thm:D4Intro} holds for $\Z$-surfaces of any genus.

\subsection{A classification result for topological 4-manifolds}\label{sec:class-result-intro}

Several of the steps from the proof of Theorem~\ref{thm:WithBoundaryIntro} fit in a classification scheme that applies to a wider class of~$4$-manifolds than surface exteriors.
We state the resulting theorem, which is analogous to Boyer's classification for the simply-connected case~\cite{Boyer}.
\medbreak

We say that a compact, oriented, topological~$4$-manifold~$M$ with nonempty connected boundary has \emph{ribbon boundary} if the inclusion induced map~$\iota \colon \pi_1(\partial M)\to \pi_1(M)$ is surjective.
For example, in simply-connected 4-manifolds, (connected) surface exteriors with infinite cyclic fundamental group have ribbon boundaries.
Set~$\Lambda:=\Z[t^{\pm 1}]$.
If we have an identification~$\pi_1(M)=\Z$, then we have infinite cyclic covers~$\widetilde{M} \to M$ and~$\partial \widetilde{M} \to \partial M$ and~$\Lambda$-modules
$H_*(M;\Lambda)=H_*(\widetilde{M})$  and~$H_*(\partial M;\Lambda)=H_*(\partial \widetilde{M}).$

Let~$M_0,M_1$ be two~$4$-manifolds with infinite cyclic fundamental group whose boundaries are ribbon and have~$\Lambda$-torsion Alexander modules.
Fix an orientation-preserving homeomorphism~$f \colon \partial M_0 \to \partial M_1$ that intertwines the inclusion induced maps~$\varphi_i \colon \pi_1(\partial M_i) \twoheadrightarrow  \Z$, and an isometry~$F \colon (H_2(M_0;\Lambda),\lambda_{M_0}) \to (H_2(M_1;\Lambda),\lambda_{M_1})$ of the~$\Lambda$-valued intersection forms~$\lambda_{M_0}$ and~$\lambda_{M_1}$.
As described in Subsection~\ref{sub:Compatible} below,~$f$ and~$F$ induce isometries of the boundary Blanchfield forms:
$$ f_*,\partial F \colon (H_1(\partial M_0;\Lambda),\Bl_{\partial M_0}) \to (H_1(\partial M_1;\Lambda),\Bl_{\partial M_1}).$$
We call~$(f,F)$ a \emph{compatible pair} if~$f$ and~$F$ induce the same isometry.
For conciseness, we write
$$ \Homeo_\varphi^+(\partial M_0,\partial M_1) := \lbrace f \in  \Homeo^+(\partial M_0,\partial M_1) \mid \varphi_1 \circ f_*=\varphi_0  \rbrace~$$
for the set of orientation-preserving homeomorphisms of the boundaries that intertwine the maps to~$\Z$.

Recall that the Kirby-Siebenmann invariant~$\ks(M_i) \in \Z/2$ is the unique obstruction for the stable tangent bundle~$M_i \to \operatorname{BTOP}$ to lift to~$M_i \to \operatorname{BPL}$, or equivalently for~$M_i$ to be smoothable after adding copies of~$S^2 \times S^2$~\cite[Theorem~8.6]{FreedmanQuinn}, \cite[Section~8]{FriedlNagelOrsonPowell}.
This is relevant in the next theorem for the non-spin case; note that for~$4$-manifolds with fundamental group~$\Z$, whether or not they are spin is determined by the intersection pairing.

\begin{theorem}
\label{thm:Boyer}
Let~$M_0$ and~$M_1$ be two compact, oriented~$4$-manifolds, with $\pi_1(M_i)=\Z$ for $i=0,1$, whose boundaries are ribbon and have~$\Lambda$-torsion Alexander modules.
Let~$f \in \Homeo_{\varphi}^+(\partial M_0,\partial M_1)$ be a homeomorphism
and let~$F  \colon \lambda_{M_0} \cong \lambda_{M_1}$ be an isometry.  If~$M_0$ and~$M_1$ are not spin, then assume that the Kirby-Siebenmann invariants satisfy~$\ks(M_0)=\ks(M_1) \in \Z/2$. Then the following assertions are equivalent:
\begin{enumerate}
\item the pair~$(f,F)$ is compatible;
\item the homeomorphism~$f$ extends to an orientation-preserving homeomorphism
$$ \Phi \colon M_0 \xrightarrow{\cong} M_1$$
inducing the given isometry~$F \colon H_2(M_0;\Lambda) \cong H_2(M_1;\Lambda)$.
\end{enumerate}
\end{theorem}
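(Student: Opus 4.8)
The implication $(2)\Rightarrow(1)$ is the easy direction: if $\Phi\colon M_0\to M_1$ is an orientation-preserving homeomorphism extending $f$ and inducing $F$ on $H_2(-;\Lambda)$, then by naturality of the cap product and of the connecting homomorphism $\delta$ in the long exact sequence of the pair, the isometry $\partial F$ of Blanchfield forms induced by $F$ via the composite $H_2(M_i;\La)^*\cong H^2(M_i;\La)\cong H_2(M_i,\partial M_i;\La)\xrightarrow{\delta}H_1(\partial M_i;\La)$ agrees with the map on $H_1(\partial M_i;\La)$ induced by $\Phi|_{\partial M_0}=f$, i.e.\ with $\widetilde f_*$; hence $(f,F)$ is compatible. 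I would present this as a short diagram chase.

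The substance is $(1)\Rightarrow(2)$, and the plan is a standard surgery-theoretic argument, structured exactly as in Boyer's treatment of the simply-connected case but carried out over $\Lambda=\Z[t^{\pm1}]$ and keeping track of the boundary. First I would form the closed-up object: glue $M_0$ and $-M_1$ along $f$ to obtain a closed $4$-manifold $W:=M_0\cup_f(-M_1)$ with $\pi_1(W)=\Z$ (using that the boundaries are ribbon, so $\pi_1(\partial M_i)\to\pi_1(M_i)$ is onto and a van Kampen argument identifies $\pi_1(W)$). A Mayer--Vietoris computation with $\Lambda$-coefficients, together with the compatibility hypothesis (which says precisely that $\widetilde f_*=\partial F$, so that the two halves of the boundary Blanchfield data cancel), shows that $\lambda_W$ is isometric to $\lambda_{M_0}\oplus(-\lambda_{M_1})\cong\lambda_{M_0}\oplus(-\lambda_{M_0})$ as $\Lambda$-forms; in particular $W$ is, after the $F$-identification, homology equivalent over $\Lambda$ to a connected sum of copies of $S^2\times S^2$ with $S^1\times S^3$. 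The point of compatibility is exactly to guarantee that the Blanchfield form of the (empty) boundary of $W$ imposes no further constraint, so that $\lambda_W$ is metabolic/hyperbolic with a $\Lambda$-lagrangian coming from the ``diagonal''.

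Next I would run the uniqueness part of the surgery exact sequence, or rather a direct geometric argument, to conclude that the cobordism from $\partial M_0$ to $\partial M_1$ realized by $W$ cut along a suitable hypersurface is an $s$-cobordism, and then invoke topological surgery and the $s$-cobordism theorem for the fundamental group $\Z$ — which is good in the sense of Freedman--Quinn, so these tools are available. Concretely: the spin case is settled by showing the closed manifold obtained is homeomorphic to $\#_{i=1}^r S^2\times S^2\,\#\,S^1\times S^3$ via Freedman--Quinn's classification of closed spin $4$-manifolds with $\pi_1=\Z$ together with the computation of $\lambda_W$; in the non-spin case one additionally needs $\ks(M_0)=\ks(M_1)$ so that $\ks(W)=0$ and the analogous non-spin closed classification applies. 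Splitting this homeomorphism back along (an isotoped copy of) $\partial M_0=\partial M_1$ produces the desired $\Phi\colon M_0\to M_1$; one then checks that $\Phi$ can be arranged to extend $f$ and to induce $F$ on $H_2(-;\Lambda)$, using that any two splitting hypersurfaces are related by an $h$-cobordism (hence a homeomorphism by the $s$-cobordism theorem, $\operatorname{Wh}(\Z)=0$) and that the action of self-homeomorphisms of $M_i$ fixing the boundary on $\lambda_{M_i}$ is understood.

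The main obstacle I expect is precisely this last bookkeeping step: upgrading ``$W$ is homeomorphic to the standard model'' to ``the homeomorphism splits compatibly with the boundary and realizes $F$ rather than merely some isometry''. This requires controlling the indeterminacy — which self-isometries of $\lambda_{M_0}\oplus(-\lambda_{M_1})$ preserving the diagonal Lagrangian are realized by self-homeomorphisms of $W$ fixing the splitting hypersurface — and showing that composing with such a correction one can hit $F$ on the nose and $f$ on the boundary. This is where the ribbon-boundary and $\Lambda$-torsion hypotheses do real work (they make the relevant homology $\Lambda$-modules finitely generated with the right duality so that the algebraic cancellation is available), and it is the part of the argument where the earlier sections of the paper on the Blanchfield form of the union and on compatible pairs (Subsection~\ref{sub:Compatible}, Sections~\ref{sec:Union} and~\ref{sec:UnionIsUnion}) are used in an essential way.
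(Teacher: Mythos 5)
Your proposal gets the easy direction and the first half of the hard direction essentially right: forming $M:=M_0\cup_f -M_1$, using compatibility to show its $\Lambda$-intersection form is metabolic with a Lagrangian coming from the graph of $F$, and invoking the Freedman--Quinn classification of closed $4$-manifolds with fundamental group $\Z$ (plus $\ks(M_0)=\ks(M_1)$ in the non-spin case) to identify $M$ with $S^1\times S^3\#_a S^2\times S^2\#_b S^2\widetilde{\times}S^2$. One imprecision here: $\lambda_M$ is \emph{not} isometric to $\lambda_{M_0}\oplus(-\lambda_{M_1})$ --- the direct sum is only nondegenerate, with cokernel the boundary linking form, whereas $\lambda_M$ is nonsingular; the correct statement is that $\lambda_M$ is the \emph{union} $\lambda_{M_0}\cup_{\widetilde{f}_*}(-\lambda_{M_1})$ of Section~\ref{sec:Union}, which contains the direct sum as a finite-index subform. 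Compatibility enters precisely to replace $\widetilde{f}_*$ by $\partial F$ so that $\Gamma_{F^{-*}}$ is a Lagrangian complement (Proposition~\ref{prop:IsElementary}).

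The genuine gap is in your final step. You propose to ``split the homeomorphism $M\cong S^1\times S^3\#\cdots$ back along (an isotoped copy of) $\partial M_0$'' and to handle the indeterminacy by claiming that any two splitting hypersurfaces are $h$-cobordant and hence homeomorphic by the $s$-cobordism theorem. That cobordism would be a \emph{four}-dimensional $h$-cobordism between $3$-manifolds, for which no $s$-cobordism theorem is available (this is open, and false smoothly); and even granting it, knowing that the union is standard does not by itself produce a homeomorphism $M_0\to M_1$ extending $f$ and inducing $F$. The mechanism that actually works is five-dimensional: take the standard null-bordism $W=S^1\times D^4\natural_a S^2\times D^3\natural_b S^2\widetilde{\times}D^3$ of $M$, and use the fact that the chosen Lagrangian $\widehat{\lambda}_M^{-1}(\Gamma_{F^{-*}})$ is exactly the kernel of $H_2(M;\Lambda)\to H_2(W;\Lambda)$ together with the decomposition $H_2(M;\Lambda)=\widehat{\lambda}_0(H_2(M_0;\Lambda))\oplus\Gamma_{F^{-*}}$ to prove that $H_*(W,M_0;\Lambda)=0$, i.e.\ that $W$ is a relative $s$-cobordism from $M_0$ to $M_1$ (here $\operatorname{Wh}(\Z)=0$ and $\Z$ is good). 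The resulting homeomorphism automatically extends $f$, and it induces $F$ because the graph $\Gamma_{-F}$ lies in $\ker(H_2(M;\Lambda)\to H_2(W;\Lambda))$ (Lemma~\ref{rem:ContainsGraph}). Without this (or an equivalent handle-by-handle argument \`a la Boyer), the step you yourself flag as ``the main obstacle'' remains unproved.
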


\begin{remark}
\label{rem:BoyerFreedmanQuinn}
Theorem~\ref{thm:Boyer} should be compared with both Boyer's classification of simply-connected 4-manifolds with boundary~\cite{Boyer} and with Freedman-Quinn's classification of closed~$4$-manifolds with infinite cyclic fundamental group~\cite{FreedmanQuinn}.
Boyer uses a notion similar to our compatible pairs that he calls \emph{morphisms}.
In a nutshell, given simply-connected~$4$-manifolds~$M_0,M_1$ with rational homology spheres as their boundaries, Boyer shows that a morphism~$(f,F)$ can be extended to a homeomorphism~$M_0 \to M_1$~\cite[Theorem~0.7 and Proposition 0.8]{Boyer}.
Note that Boyer's methods differ from ours, as he does not use the union of forms.
On the other hand, Freedman-Quinn show that closed~$4$-manifolds with infinite cyclic fundamental groups are classified by their~$\Lambda$--intersection form and their Kirby-Siebenmann invariant~\cite[Theorem~10.7A]{FreedmanQuinn},~\cite{Stong-Wang-2000},~\cite{HambletonKreckTeichner}.
\end{remark}

A major step in the proof of Theorem~\ref{thm:Boyer} is the following intermediate result that might be of independent interest;
%which was hinted at in the third step of the outline in Subsection~\ref{sub:OutlineUnknotting};
a more detailed statement and a proof can be found in Theorem~\ref{thm:UnionIsS1S3ConnectSumSimplyConnected}.
%We state the result here as it might be of independent interest.

\begin{theorem}
\label{thm:UnionIsS1S3ConnectSumSimplyConnectedIntro}
Let~$M_0$ and~$M_1$ be two compact, oriented~$4$-manifolds, with $\pi_1(M_i)=\Z$ for $i=0,1$, whose boundaries are ribbon and have~$\Lambda$-torsion Alexander modules.
Let~$(f,F)$ be a compatible pair. If~$M_0$ and~$M_1$ are not spin, then assume that the Kirby-Siebenmann invariants satisfy~$\ks(M_0)=\ks(M_1) \in \Z/2$.
Then there is an orientation-preserving homeomorphism
$$M_0 \cup_f -M_1 \cong S^1 \times S^3 \#_{i=1}^{a} S^2 \times S^2 \#_{j=1}^b S^2 \wt{\times} S^2$$
%\end{enumerate}
for some~$a,b$ with~$a+b=b_2(M_0)$. If~$M_0$ and~$M_1$ are spin then~$b=0$.
\end{theorem}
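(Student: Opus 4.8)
The plan is to identify the closed $4$-manifold $W := M_0 \cup_f -M_1$ using the surgery-theoretic classification of closed $4$-manifolds with fundamental group $\Z$ — which, since $\Z$ is a good group, is known by Freedman–Quinn (see Remark~\ref{rem:BoyerFreedmanQuinn}): such manifolds are classified by the $\Lambda$-intersection form together with the Kirby–Siebenmann invariant. So the strategy splits into three tasks: (i) compute $\pi_1(W)=\Z$; (ii) compute the $\Lambda$-intersection form $\lambda_W$ and show it is the standard hyperbolic/odd form of the model, i.e.\ isometric to $\mathcal{H}_2^{\oplus a}\oplus(\mathcal{H}_2^{E_8\text{-type twist}})\cdots$ — more precisely to the form of $S^1\times S^3\#_{i=1}^a S^2\times S^2\#_{j=1}^b S^2\wt\times S^2$; and (iii) match the Kirby–Siebenmann invariant, using the hypothesis $\ks(M_0)=\ks(M_1)$ in the non-spin case.

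For (i), a Mayer–Vietoris/van Kampen argument using that $\partial M_i$ is ribbon (so $\pi_1(\partial M_i)\twoheadrightarrow \pi_1(M_i)=\Z$) and that $f$ intertwines the maps $\varphi_i$ shows $\pi_1(W)=\Z$ and that the infinite cyclic cover $\wt W$ decomposes as $\wt M_0\cup_{\wt f}-\wt M_1$ along $\partial\wt M_0$. For (ii), the key input is that the boundaries have $\Lambda$-torsion Alexander modules, so that $H_*(\partial M_i;\Lambda)$ is torsion; the Mayer–Vietoris sequence for $\wt W$ with $\Lambda$-coefficients then sandwiches $H_2(W;\Lambda)$ between $H_2(M_0;\Lambda)\oplus H_2(M_1;\Lambda)$ and shows (after accounting for the torsion boundary contributions) that $H_2(W;\Lambda)\cong \coker$-type gluing of the two intersection forms. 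This is exactly the ``union of forms'' construction that the paper develops in Sections~\ref{sec:Union} and~\ref{sec:UnionIsUnion}: the compatibility hypothesis on $(f,F)$ — that $\wt f_*$ and $\partial F$ agree on $(\,H_1(\partial M_0;\Lambda),\Bl_{\partial M_0})$ — is precisely what is needed for the glued form to be unimodular and to be the hyperbolic form on $H_2(M_0;\Lambda)\oplus H_2(M_0;\Lambda)^*$ glued to itself, which algebraically is $\mathcal{H}_2^{\oplus b_2(M_0)}$ up to the even/odd dichotomy. The signature of $W$ is $\sigma(M_0)-\sigma(M_1)=0$ by Novikov additivity (the boundary signature defect vanishes since $\partial M_0\cong\partial M_1$), pinning down that no definite $E_8$-summands appear and forcing the form to be the standard even or odd indefinite form of rank $2b_2(M_0)$, i.e.\ $\mathcal H_2^{\oplus a}$ in the spin case and a mix $\mathcal H_2^{\oplus a}\oplus(\text{odd})^{\oplus b}$ matching $S^2\times S^2$ and $S^2\wt\times S^2$ summands otherwise, with $a+b=b_2(M_0)$ and $b=0$ when $W$ (equivalently $M_0$) is spin.

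I expect the main obstacle to be (ii): showing that the union of the two Hermitian forms along the compatible boundary identification is genuinely unimodular over $\Lambda$ and isometric to the standard model form, rather than merely stably so. The subtlety is that $\Lambda=\Z[t^{\pm1}]$ is not a PID, so one cannot diagonalize or cancel hyperbolics freely; one must argue directly that the boundary Blanchfield form being nondegenerate and the pair being compatible makes the relevant connecting maps in Mayer–Vietoris fit together into an isomorphism, and then that the resulting unimodular Hermitian form of vanishing signature over $\Lambda$ with the correct type (even vs.\ odd, detected by whether $w_2$ vanishes, which over $\Z$ reduction is governed by the intersection form mod $2$) is standard. The even/odd and $S^2\times S^2$ vs.\ $S^2\wt\times S^2$ bookkeeping — and confirming $b=0$ in the spin case — is then a matter of comparing the mod-$2$ reduction of $\lambda_W$ with $Q_W\otimes\Z/2$ and invoking the classification of unimodular forms over $\Z$; this part is routine once (ii) is in hand. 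Finally (iii) is immediate: Kirby–Siebenmann is additive under gluing along a homeomorphism, $\ks(S^1\times S^3\#\cdots)=0$, and $\ks(W)=\ks(M_0)+\ks(M_1)=0$ by the hypothesis (in the spin case it is automatic since spin $4$-manifolds with $\pi_1=\Z$ and the relevant forms are smoothable). Then Freedman–Quinn's classification yields the asserted homeomorphism.
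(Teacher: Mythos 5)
Your overall architecture --- $\pi_1(W)\cong\Z$ by van Kampen, $\lambda_W$ computed as a union of forms via Mayer--Vietoris, Kirby--Siebenmann additivity, then the Freedman--Quinn classification --- is the same as the paper's, but the step you yourself flag as the main obstacle is genuinely not closed, and the way you propose to close it would not work. First, the union form is not $\mathcal{H}_2^{\oplus b_2(M_0)}$: the form $\mathcal{H}_2$ has determinant $t+t^{-1}-2$ and is not unimodular, whereas $\lambda_W$ is nonsingular by Poincar\'e duality for the closed manifold $W$. What the compatibility of $(f,F)$ actually buys is that the topological gluing isometry $\widetilde{f}_*$ agrees with the algebraic one $\partial F$, so that the union $\lambda_{M_0}\cup_{\widetilde{f}_*}-\lambda_{M_1}$ admits the graph $\Gamma_{F^{-*}}$ of $F^{-*}$ as an explicit Lagrangian (Proposition~\ref{prop:IsElementary}); the conclusion at this stage is only that $\lambda_W$ is \emph{metabolic}, not hyperbolic.

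Second, your proposed resolution of (ii) --- compute the signature, reduce mod $2$, and invoke the classification of unimodular forms over $\Z$ --- only determines the $\Z$-valued form obtained from $\lambda_W$ by setting $t=1$, and over $\Lambda$ a nonsingular metabolic form with the correct $\Z$-reduction need not be standard: as you note, $\Lambda$ is not a PID, and no classification of unimodular $\Lambda$-forms is available to finish this way. The missing input is the part of Freedman--Quinn's Theorem~10.7A asserting that the $\Lambda$-intersection form of a closed $4$-manifold with fundamental group $\Z$ is extended from the integers, so that metabolicity of the $\Z$-form (which already forces signature zero, with no need for Novikov additivity) together with $\ks(W)=0$ suffices. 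Relatedly, the assertion that $b=0$ in the spin case is not routine bookkeeping: the paper proves it by identifying $\lambda_W$ with the form of the double $M_1\cup_{\id}-M_1$ via Proposition~\ref{prop:IsometriesOfUnion}, deducing that $\lambda_W$ is even because the double is spin with free $\pi_2$, and then invoking the algebraic fact that an even metabolic nonsingular form over $\Lambda$ is hyperbolic. Without one of these mechanisms your argument stops at the $\Z$-level and cannot produce the isometry of $\Lambda$-intersection forms that the Freedman--Quinn classification requires.
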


\begin{remark}
\label{rem:HambletonTeichner}
We compare Theorem~\ref{thm:UnionIsS1S3ConnectSumSimplyConnectedIntro} with a particular case of a result due to Hambleton-Teichner~\cite{HambletonTeichner}.
Hambleton and Teichner show that if~$M$ is a closed, oriented, topological 4-manifold with infinite cyclic fundamental group and with~$b_2(M)-|\sigma(M)| \geq 6$, then~$M$ is homeomorphic to the connected sum of~$S^1 \times S^3$ with a unique closed, simply-connected 4-manifold~\cite[Corollary~3]{HambletonTeichner}.
In most cases, this result is stronger than Theorem~\ref{thm:UnionIsS1S3ConnectSumSimplyConnectedIntro}.
However, there are some situations (such as the union of two genus one surfaces exteriors~$X_{\Sigma_0},X_{\Sigma_1} \subseteq S^4$ with~$\pi_1(X_{\Sigma_i})=\Z$ for~$i=0,1$) where Theorem~\ref{thm:UnionIsS1S3ConnectSumSimplyConnectedIntro} applies while~\cite[Corollary~3]{HambletonTeichner} does not.
Finally, note that Sunukjian has used these results to study closed surfaces~$\Sigma \subseteq X$ with~$\pi_1(X \setminus \nu \Sigma)=\Z$ in closed 4-manifolds that satisfy~$b_2(X) \geq |\sigma(X)|+6$~\cite[Theorem~7.2]{Sunukjianpi1Z}.
\end{remark}

\subsection{Isometries of the Blanchfield form}
\label{sub:Programme}
Theorem~\ref{thm:Boyer} provides a criterion for extending a homeomorphism~$f \colon \partial M_0 \to \partial M_1$ to a homeomorphism~$M_0 \to M_1$: one must fit~$f$ into a compatible pair.
However, in practice, finding compatible pairs is difficult and so we provide some sufficient conditions for their existence.
%This will also provide a more conceptual take on the outline from Subsection~\ref{sub:OutlineUnknotting}.
\medbreak
Recall that if~$V$ is a~$4$-manifold with~$\pi_1(V)=\Z$ whose boundary~$Y=\partial V$ is ribbon and has torsion Alexander module, then an isometry~$F$ of the~$\Lambda$--intersection form~$\lambda_V$ induces an isometry
$$\partial F \colon (H_1(Y;\Lambda),\Bl_Y) \xrightarrow{\cong} (H_1(Y;\Lambda),\Bl_Y).$$
We write~$\Aut(\lambda_V)$ and~$\Aut(\Bl_Y)$ for the groups of isometries of~$\lambda_V$ and~$\Bl_Y$, and note that  there is a left action of~$\Aut(\lambda_V)$ on~$\Aut(\Bl_Y)$ given by~$F \cdot h=h \circ \partial F^{-1}$.
We will be interested in a quotient of the orbit set
$$ \Aut(\Bl_Y)/\Aut(\lambda_V).$$
In order to find compatible pairs, we need to know when an isometry of the Blanchfield form is induced by a homeomorphism.
As we have already alluded to, we note in Proposition~\ref{prop:HomeoInduceIsometry} that any homeomorphism~$f \in \Homeo_{\varphi}^+(Y)$ induces a~$\Lambda$-isometry~$f_*$ of~$\Bl_Y$ by lifting~$f$ to the infinite cyclic covers.
Here, $\Homeo^+_{\varphi}(Y) := \Homeo^+_{\varphi}(Y,Y)$ denotes the set of orientation-preserving self-homeomorphisms of~$Y$ that intertwine the map $\pi_1(Y) \to \Z$.
The assignment~$(f,F) \cdot h= f_* \circ h \circ \partial F^{-1}$ then gives rise to a left action
$$\Homeo^+_{\varphi}(Y) \times \Aut(\lambda_V) \curvearrowright \Aut(\Bl_Y).$$
We return to our~$4$-manifolds~$M_0,M_1$ whose boundaries are ribbon and have torsion Alexander module.
If~$M_0$ and~$M_1$ are orientation-preserving homeomorphic, then a compatible pair exists; recall Theorem~\ref{thm:Boyer}.
Assuming that we are given a homeomorphism~$f' \in \Homeo_{\varphi}^+(\partial M_0,\partial M_1)$ and an isometry~$F' \colon \lambda_0 \cong \lambda_1$,
 Proposition~\ref{prop:FindCompatible} notes that~$(f',F')$ gives rise to a compatible pair~$(f,F)$ if the composition~$f'_* \circ \partial {F'}^{-1}$ is trivial in~$ \Aut(\Bl_{\partial M_1})/  \Homeo^+_{\varphi}(\partial M_1) \times \Aut(\lambda_{M_1}).$
Using this result, we obtain the following consequence of Theorem~\ref{thm:Boyer}.

\begin{theorem}
\label{thm:FindCompatible}
Let~$M_0$ and~$M_1$ be two compact, oriented~$4$-manifolds, with $\pi_1(M_i)=\Z$ for $i=0,1$, whose boundaries are ribbon and have~$\Lambda$-torsion Alexander modules.
Assume that~$M_0$ and $M_1$ have orientation-preserving homeomorphic boundaries, via a homeomorphism that intertwines the maps to~$\Z$, and isometric intersection forms.
If the orbit set
$$\Aut(\Bl_{\partial M_1})/\Homeo^+_{\varphi}(\partial M_1) \times \Aut(\lambda_{M_1})$$ is trivial,
then there is an orientation-preserving homeomorphism~$M_0 \cong M_1$.
%that extends the homeomorphism~$f$ and induces the isometry~$F$.
\end{theorem}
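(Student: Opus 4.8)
The plan is to deduce Theorem~\ref{thm:FindCompatible} directly from Theorem~\ref{thm:Boyer}, whose only input beyond the standing hypotheses (together with, in the non-spin case, equality of the Kirby--Siebenmann invariants, which one should include among the assumptions in parallel with Theorem~\ref{thm:Boyer}) is the existence of a \emph{compatible} pair $(f,F)$ with $f\in\Homeo^+_\varphi(\partial M_0,\partial M_1)$ and $F\colon\lambda_{M_0}\cong\lambda_{M_1}$. So the whole point is to manufacture such a compatible pair, and the role of the assumed triviality of the orbit set is precisely to make this possible.

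By hypothesis we are handed \emph{some} $f'\in\Homeo^+_\varphi(\partial M_0,\partial M_1)$ and \emph{some} isometry $F'\colon\lambda_{M_0}\cong\lambda_{M_1}$, but the pair $(f',F')$ need not be compatible. Lifting $f'$ to the infinite cyclic covers produces the induced isometry $\wt{f'}_*$ of the boundary Blanchfield forms (Proposition~\ref{prop:HomeoInduceIsometry}), and $F'$ induces a second isometry $\partial F'$ of those forms via the identification $\coker(\wh{\lambda}_{M_i})\cong H_1(\partial M_i;\La)$ recalled above. Both are isomorphisms $(H_1(\partial M_0;\La),\Bl_{\partial M_0})\xrightarrow{\cong}(H_1(\partial M_1;\La),\Bl_{\partial M_1})$, so their comparison
\[ h:=\wt{f'}_*\circ(\partial F')^{-1}\in\Aut(\Bl_{\partial M_1}) \]
is a well-defined self-isometry of $\Bl_{\partial M_1}$, and $(f',F')$ is compatible precisely when $h=\Id$.

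Next I would invoke the hypothesis that the orbit set $\Aut(\Bl_{\partial M_1})/\Homeo^+_\varphi(\partial M_1)\times\Aut(\lambda_{M_1})$ is a single point. Since $\Id\in\Aut(\Bl_{\partial M_1})$, that point is the orbit of $\Id$, so $h$ lies in the orbit of $\Id$; equivalently $h$ is trivial in the orbit set. This is exactly the condition in Proposition~\ref{prop:FindCompatible}, which therefore upgrades $(f',F')$ to a genuine compatible pair $(f,F)$ with $f\in\Homeo^+_\varphi(\partial M_0,\partial M_1)$ and $F\colon\lambda_{M_0}\cong\lambda_{M_1}$. Concretely, writing $h=\wt{g}_*\circ(\partial G)^{-1}$ for suitable $g\in\Homeo^+_\varphi(\partial M_1)$ and $G\in\Aut(\lambda_{M_1})$ and setting $f:=g^{-1}\circ f'$ and $F:=G^{-1}\circ F'$, one checks from the compatibility of $\wt{(\cdot)}_*$ and of $\partial(\cdot)$ with composition that $\wt{f}_*=\partial F$, so that $(f,F)$ is compatible.

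Finally I would feed $(f,F)$ into Theorem~\ref{thm:Boyer}. Since $\lambda_{M_0}\cong\lambda_{M_1}$ and, for a $4$-manifold with fundamental group $\Z$, spinness is detected by the intersection pairing, $M_0$ and $M_1$ are both spin or both non-spin; in the latter case the Kirby--Siebenmann invariants agree by assumption, so all hypotheses of Theorem~\ref{thm:Boyer} are in force. That theorem then produces an orientation-preserving homeomorphism $M_0\xrightarrow{\cong}M_1$ (in fact one extending $f$ and inducing $F$ on $H_2(-;\La)$), which is the assertion. I do not expect a genuine obstacle in this deduction: the substance is entirely imported from Proposition~\ref{prop:FindCompatible} and Theorem~\ref{thm:Boyer} (equivalently Theorem~\ref{thm:UnionIsS1S3ConnectSumSimplyConnectedIntro}), and the hypothesis on the orbit set is tailored precisely so that the obstruction class $h$ automatically vanishes in it; the only care needed is in tracking the two induced actions on the boundary Blanchfield form and checking that they behave well under composition.
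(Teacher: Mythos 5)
Your proof is correct and is exactly the paper's intended argument: Proposition~\ref{prop:FindCompatible} upgrades the given pair $(f',F')$ to a compatible pair using the triviality of the orbit set, and Theorem~\ref{thm:Boyer} then supplies the homeomorphism extending it. You are also right to flag that in the non-spin case the hypothesis $\ks(M_0)=\ks(M_1)$ needs to be added for the deduction from Theorem~\ref{thm:Boyer} to go through; the statement as written omits it.
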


Theorem~\ref{thm:FindCompatible} is helpful to summarise a programme to prove that two 4-manifolds~$M_0,M_1$ (with infinite cyclic fundamental group and identical ribbon boundaries with torsion Alexander modules) are homeomorphic:
\begin{enumerate}
\item decide whether the~$\Lambda$--intersection forms~$\lambda_{M_0}$ and~$\lambda_{M_1}$ are isometric;
\item show that the orbit set~$\Aut(\Bl_{\partial M_1})/\Homeo^+_{\varphi}(\partial M_1) \times \Aut(\lambda_{M_1})$ is trivial.
\end{enumerate}

To illustrate this paradigm, we explain how Theorem~\ref{thm:WithBoundaryIntro} can be applied to classify $\Z$-surfaces for some knots purely in terms of their intersection pairings.  For any knot $K \subseteq S^3$, multiplication by a monomial $\pm t^k$ gives rise to an automorphism of its Blanchfield pairing $\Bl_{E_K}$ on the exterior of the knot. For some knots $K$, these are the only automorphisms, and in this case we obtain a simpler classification of $\Z$-surfaces, i.e.\ locally flat, oriented surface~$\Sigma$ in~$N$ with~$\pi_1(N_{\Sigma}) \cong \Z$,  with boundary~$K$.

\begin{corollary}\label{corollary:classn-without-boundary-condition-intro}
 Let~$X$ be a closed, simply-connected, oriented~$4$-manifold, and let~$N:=X \setminus \mathring{D}^4$ be a punctured~$X$.
 Let $\Sigma_0$ and $\Sigma_1$ be two $\Z$-surfaces in $N$ with the same genus and oriented boundary $\partial \Sigma_0 = \partial \Sigma_1 = K \subseteq S^3$.
 Suppose that $\Aut(\Bl_{E_K}) \subseteq \{\pm t^k \mid k \in \Z\}$.
 \begin{enumerate}[(i)]
   \item There is a rel.\ boundary homeomorphism of pairs $(N,\Sigma_0) \cong (N,\Sigma_1)$ if and only if there is an isometry $\lambda_{N_{\Sigma_0}} \cong \lambda_{N_{\Sigma_1}}$.
   \item  If $N=D^4$ then $\Sigma_0$ and $\Sigma_1$ are ambiently isotopic rel.\ boundary if and only if there is an isometry  $\lambda_{D^4_{\Sigma_0}} \cong \lambda_{D^4_{\Sigma_1}}$.
 \end{enumerate}
\end{corollary}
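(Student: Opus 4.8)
The plan is to deduce both equivalences from Theorem~\ref{thm:WithBoundaryIntro} applied to this $N$ and $K$. The two ``only if'' directions are routine and I would dispose of them first: if $G \colon (N,\Sigma_0) \to (N,\Sigma_1)$ is a rel.-boundary homeomorphism of pairs, then after an isotopy (using uniqueness of tubular neighbourhoods) we may assume $G(\nu\Sigma_0) = \nu\Sigma_1$, so $G$ restricts to an orientation-preserving homeomorphism $N_{\Sigma_0} \to N_{\Sigma_1}$ and hence induces an isometry $\lambda_{N_{\Sigma_0}} \cong \lambda_{N_{\Sigma_1}}$; an ambient isotopy rel.\ boundary gives in particular such a homeomorphism of pairs, which handles the corresponding direction of (ii) as well. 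The substance is the reverse implication in (i) and (ii), and here the idea is to take the hypothesized isometry and adjust it within its $\Aut(\lambda_{N_{\Sigma_0}})$-orbit so that its boundary part becomes trivial on the knot summand, at which point Theorem~\ref{thm:WithBoundaryIntro} applies with $f_K = \id$ and the constant isotopy.

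In detail: let $F \colon \lambda_{N_{\Sigma_0}} \cong \lambda_{N_{\Sigma_1}}$ be the given isometry. By Proposition~\ref{prop:AutomorphismInTheBoundaryCase} the induced isometry of $\Bl_{M_{K,g}}$ decomposes as $\partial F = h_K \oplus h_\Sigma$ with $h_K \in \Aut(\Bl_{E_K})$, and the hypothesis $\Aut(\Bl_{E_K}) \subseteq \{\pm t^k \mid k \in \Z\}$ forces $h_K = \pm t^k$ for some $k$. The key point I would use is that $-\id$ and multiplication by $t$ are isometries of $\lambda_{N_{\Sigma_0}}$: they are $\Lambda$-module automorphisms of $H_2(N_{\Sigma_0};\Lambda)$ because $-1$ and $t$ are units of $\Lambda$, and they preserve the sesquilinear Hermitian form since, for instance, $\lambda_{N_{\Sigma_0}}(tx,ty) = t\bar{t}\,\lambda_{N_{\Sigma_0}}(x,y) = \lambda_{N_{\Sigma_0}}(x,y)$ because $\bar{t} = t^{-1}$. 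Tracing through the construction of $\partial$ recalled before Theorem~\ref{thm:WithBoundaryIntro}, the automorphisms of $\Bl_{M_{K,g}}$ induced by $-\id$ and by $t \cdot \id$ are multiplication by $-1$ and by $t^{\pm 1}$; as multiplication by a unit of $\Lambda$ is central it acts diagonally with respect to the decomposition of Proposition~\ref{prop:AutomorphismInTheBoundaryCase}, so the $h_K$-components of these are the automorphisms $-1$ and $t^{\pm 1}$ of $\Bl_{E_K}$. Consequently the set of restrictions to the $E_K$-summand of $\partial S$, as $S$ ranges over $\Aut(\lambda_{N_{\Sigma_0}})$, is a subgroup of $\Aut(\Bl_{E_K})$ containing $-1$ and $t^{\pm 1}$, hence all of $\{\pm t^k\}$. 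Pick $S \in \Aut(\lambda_{N_{\Sigma_0}})$ whose boundary part restricts to $h_K^{-1}$ on the $E_K$-summand and replace $F$ by $F \circ S$; this is still an isometry $\lambda_{N_{\Sigma_0}} \cong \lambda_{N_{\Sigma_1}}$, and now its boundary part has trivial knot summand. (This is exactly the statement that the relevant class in the orbit set of Theorem~\ref{thm:FindCompatible} vanishes.)

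Finally, with the knot summand of $\partial F$ trivial, Theorem~\ref{thm:WithBoundaryIntro} applies with $h_K = \id$, $f_K = \id_{E_K}$, and $\Theta(f_K)$ the constant isotopy: its first bullet produces an orientation-preserving homeomorphism of pairs $(N,\Sigma_0) \cong (N,\Sigma_1)$ which, by the remark following that theorem, can be taken to fix $\partial N$ pointwise, giving the reverse implication of (i); and when $N = D^4$ its second bullet upgrades this to a topological ambient isotopy of $D^4$ extending the constant isotopy of $S^3$, i.e.\ an ambient isotopy rel.\ boundary, giving the reverse implication of (ii). No hypothesis on the genus $g$ is used, consistent with the statement. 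I do not anticipate a genuine obstacle, since all the hard work is already packaged into Theorem~\ref{thm:WithBoundaryIntro} and Proposition~\ref{prop:AutomorphismInTheBoundaryCase}; the only step that needs care is the bookkeeping verifying that $-\id$ and $t\cdot\id$ induce the monomials $-1$ and $t^{\pm 1}$ on $H_1(E_K;\Lambda)$ compatibly with the $h_K \oplus h_\Sigma$ splitting, which follows from the naturality in $F$ of the identification $\coker(\wh{\lambda}_V) \cong H_1(\partial V;\Lambda)$.
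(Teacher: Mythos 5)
Your proposal is correct and follows essentially the same route as the paper: the paper's proof also observes that the knot component of $\partial F$ is $\pm t^k$ by hypothesis, precomposes $F$ with the isometry ``multiplication by $\pm t^{-k}$'' of $\lambda_{N_{\Sigma_0}}$ to make the induced map on $H_1(E_K;\Lambda)$ the identity, and then applies Theorem~\ref{thm:WithBoundaryIntro} with $f_K=\id$. Your extra bookkeeping (generating $\{\pm t^k\}$ from $-\id$ and $t\cdot\id$, and the explicit treatment of the ``only if'' directions) is just a more verbose packaging of the same argument.
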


\begin{proof}
  Let $F \colon \lambda_{N_{\Sigma_0}} \cong \lambda_{N_{\Sigma_1}}$ be an isometry.  Consider the isomorphism $g:= \partial F|_{H_1(E_K;\Lambda)}$ induced on the Alexander module of $K$. It is an automorphism of the Blanchfield form and therefore by the hypothesis on $\Aut(\Bl_{E_K})$, $g$ is multiplication by $\pm t^k$ for some $k \in \Z$.  Precompose $F$ with multiplication by $\pm t^{-k}$, to obtain a new isometry inducing $\id$ on $H_1(E_K;\Lambda)$. Then apply Theorem~\ref{thm:WithBoundaryIntro} with $f_K=\id$.
\end{proof}

\begin{remark}\label{remark:examples-where-corollary-applies-unitary-units}
  If the Alexander module $H_1(E_K;\Lambda)$ is cyclic with order $\Delta_K$, then the automorphisms of the Blanchfield pairing can be computed directly as follows. Suppose that $\Bl_K(1,1) = b/\Delta_K \in \Q(t)/\Lambda$ for some $b \in \Lambda$. Then $\Aut(\Bl_{E_K}) = \{[p] \in \Lambda/\Delta \Lambda \mid [p\cdot \ol{p} \cdot b] = [b] \in \Lambda / \Delta \Lambda\}$.   In particular for $K = \pm T_{2,3}$ a trefoil, it is not too hard to compute that $\Aut(\Bl_{E_K}) = \{\pm t^k \mid k =-1,0,1 \}$, so that Corollary~\ref{corollary:classn-without-boundary-condition-intro} applies to classify all $\Z$-surfaces with boundary a trefoil in terms of the isometry type of the intersection pairing.
\end{remark}

  %Further computations of $\Aut(\Bl_{E_K})$ for more knots are possible, but we have just should yield applications to more knots.

\subsection*{Organisation}

In Section~\ref{sec:Union}, we review some notions on linking forms and define the union of Hermitian forms over~$\Lambda$.
In Section~\ref{sec:UnionIsUnion}, we show how this algebraic union can be used to express the intersection form of a union of two 4-manifolds; we also prove Theorem~\ref{thm:UnionIsS1S3ConnectSumSimplyConnectedIntro}.
In Section~\ref{sec:Proof}, we prove Theorem~\ref{thm:Boyer}, our partial classification result for~$4$-manifolds with infinite cyclic fundamental group.
In Section~\ref{sec:Knotted}, we prove Theorems~\ref{thm:WithBoundaryIntro}~and \ref{thm:Unknotting4ManifoldIntro}, our main results on properly embedded surfaces in simply-connected 4-manifolds.
In Section~\ref{sec:IntersectionForms}, we discuss the equivariant intersection forms for surfaces with knot group $\Z$, before focusing on surfaces in $S^4$ and $D^4$ in Section~\ref{sec:S4D4}, where we prove Theorems~\ref{thm:UnknottingIntro} and~\ref{thm:D4Intro}.
In Section~\ref{sec:RimSurgery}, we prove Theorems~\ref{thm:RimSurgeryClosedIntro} and~\ref{thm:RimSurgeryIntro} on rim surgery.
In Appendix~\ref{sec:BS}, we adapt the work of Baykur-Sunukjian to properly embedded surfaces in~$4$-manifolds with boundary.

\subsection*{Acknowledgments}
We thank Peter Teichner for pointing out a mistake in a previous version of this work and for stressing the importance of~$(-t)$-Hermitian forms in the proof of Theorem~\ref{thm:HyperbolicIntersectionForm}.
We are indebted to anonymous referees for several very useful suggestions which helped us to improve the exposition and simplify some proofs.
We thank Lisa Piccirillo for helpful discussions, and \.{I}nan\c{c} Baykur and Nathan Sunukjian for discussions on their stabilisation theorem, which helped us when writing the appendix.
Part of this work was completed while the authors were visitors at the Max Planck Institute for Mathematics in Bonn.
MP was partially supported by EPSRC New Investigator grant EP/T028335/1 and EPSRC New Horizons grant EP/V04821X/1.
In the published version of this article, Theorems~\ref{thm:RimSurgeryClosedIntro} and~\ref{thm:RimSurgeryIntro} are stated for $n$-roll $1$-twist rim surgeries for $n \in \Z$.
For $n \neq 0$, in general, $n$-roll $1$-twist rim surgery need not preserve the knot group of a $\Z$-surface~\cite{Teragaito}, and so these theorems were incorrect as stated.
The statements and proofs remain valid for $n=0$, i.e. for $1$-twist rim surgery.
We are grateful to Yujie Lin for bringing this error to our attention.

\subsection*{Conventions}
\begin{enumerate}
\item From now on, all manifolds are assumed to be compact, connected, based and oriented; if a manifold has a nonempty boundary, then the basepoint is assumed to be in the boundary.
We work in the topological category with locally flat embeddings unless otherwise stated.
\item $X$ will always denote a closed, simply-connected, oriented $4$-manifold and we will write $N:=X \setminus \mathring{D}^4$ for the complement of an open ball $\mathring{D}^4 \subseteq X$.
% instead of the less standard $S^3_K$.
\item A \emph{$\Z$-surface} $\Sigma$ will always refer to a compact, connected, oriented, locally flat, embedded surface in a $4$-manifold, whose knot group is isomorphic to $\Z$; a surface $\Sigma \subseteq X$ is understood to be closed, while a surface $\Sigma \subseteq N$ is understood to be properly embedded.
\item If $P$ is manifold and $Q \subseteq P$ is a submanifold with tubular neighborhood $\nu Q \subseteq P$, then $P_Q := P \sm \nu Q$ will always denote the exterior of $Q$ in $P$.
The only exception to this is that the exterior of a knot $K$ in $S^3$ will be denoted $E_K$ instead of $S^3_K$.
\item Throughout the article, we set~$\Lambda:=\Z[t^{\pm 1}]$ for the ring of Laurent polynomials and~$Q:=\Q(t)$ for its field of fractions.
\item For a manifold $X$, we write $\pi_1(X) =\Z$ to mean that there is an isomorphism $\pi_1(X) \cong \Z$, and that we have fixed a choice of such an isomorphism.  For the exterior of a $\Z$-surface, the orientations on $X$ and $\Sigma$ determine an identification of the fundamental group with~$\Z$. The choice of map $\pi_1(X)=\Z$ determines an isomorphism $\Z[\pi_1(X)] \to \Lambda$, which we use to define homology and cohomology with $\Lambda$ coefficients.
\item We write~$p \mapsto \overline{p}$ for the involution on~$\Lambda$ induced by~$t \mapsto t^{-1}$.
Given a~$\Lambda$-module~$H$, we write~$\overline{H}$ for the~$\Lambda$-module whose underlying abelian group is~$H$ but with module structure given by~$p \cdot h=\overline{p}h$ for~$h \in H$ and~$p \in \Lambda$.
\item We write $H^*:=\overline{\Hom_\Lambda(H,\Lambda)}$.
\item For any ring~$R$, elements of~$R^n$ are considered as column vectors.
\item Let~$f \colon X \to Y$ be a morphism in a category~$\mathcal{C}$, and let~$F \colon \mathcal{C} \to \mathcal{D}$ be a contravariant functor. Assume that~$F(f)$ is invertible.  We shall often write the induced morphism in~$\mathcal{D}$ as~$f^* = F(f)$,  and denote its inverse by~$f^{-*} := (f^*)^{-1} = F(f)^{-1}$.
\end{enumerate}

\section{The union of forms along isometries of their boundary linking forms}
\label{sec:Union}

We develop the theory required for computing the intersection form of a closed~$4$-manifold obtained as the union of two compact~$4$-manifolds along their common boundary.
Here is a summary of this section.
In Subsection~\ref{sub:Boundary}, we review the boundary linking form of a Hermitian form.
In Subsection~\ref{sub:Isometries}, we discuss isometries of Hermitian forms and linking forms.
In Subsection~\ref{sub:Gluing}, we study the union of two Hermitian forms over~$\Lambda:=\Z[t^{\pm 1}]$.
In Subsection~\ref{sub:LagrangianComplement}, we provide a condition for this union to admit a metaboliser.

\subsection{The boundary of Hermitian forms and their isometries}
\label{sub:Boundary}
We fix our terminology on Hermitian forms, linking forms and their boundaries.
References include~\cite[Section~3.4]{RanickiExact} and~\cite[Section 6]{CrowleySixt}.
\medbreak
A \emph{Hermitian form} over~$\Lambda$ is a pair~$(H,\lambda)$, where~$H$ is a free~$\Lambda$-module and~$\lambda \colon H \times H \to \Lambda$ is a sesquilinear Hermitian pairing.
Here by sesquilinear, we mean that~$\lambda(px,qy)=p\lambda(x,y)\overline{q}$ for all~$x,y \in H$ and all~$p,q \in \Lambda$.
By Hermitian, we mean that~$\lambda(y,x) = \overline{\lambda(x,y)}$ for all~$x,y \in H$.
The \emph{adjoint} of~$\lambda$ is the~$\Lambda$-linear map
$\widehat{\lambda} \colon H \to \overline{\Hom_\Lambda(H,\Lambda)} =: H^*$ such that~$\widehat{\lambda}(y)(x)=\lambda(x,y)$.
A Hermitian form is \textit{nondegenerate} if its adjoint is injective and \emph{nonsingular} if its adjoint is an isomorphism.
The \emph{standard hyperbolic form} is
\[H^+(\La) := \bigg(\La \oplus \La, \begin{pmatrix}
  0 & 1 \\ 1 & 0
\end{pmatrix}\bigg).\]

%We will write~$\lambda$ both for a form and for its adjoint.
\begin{remark}
\label{rem:Torsion}
If~$(H,\lambda)$ is a nondegenerate Hermitian form, then~$\coker(\widehat{\lambda})$ is a torsion~$\Lambda$-module.
This can be seen by tensoring the following exact sequence with the field of fractions~$Q:=\Q(t)$~of~$\Lambda$:
$$ 0 \to H \stackrel{\widehat{\lambda}}{\hookrightarrow} H^* \to \coker(\widehat{\lambda}) \to 0. ~$$
\end{remark}
A \emph{linking form}~$(T,\beta)$ over~$\Lambda$ consists of a torsion~$\Lambda$-module~$T$ together with a sesquilinear Hermitian form~$\beta \colon T \times T \to Q/\Lambda$.
The \emph{adjoint} of~$\beta$ is the~$\Lambda$-linear map
$\widehat{\beta} \colon T \to \overline{\Hom_\Lambda(T,Q/\Lambda)}$ such that~$\widehat{\beta}(y)(x)=\beta(x,y)$.
A linking form is \textit{nondegenerate} if its adjoint is injective and \emph{nonsingular} if its adjoint is an isomorphism.

\begin{definition}
\label{def:BoundaryLinkingForm}
The \emph{boundary linking form} of a nondegenerate Hermitian form~$(H,\lambda)$ over~$\Lambda$ is the linking form~$(\coker(\widehat{\lambda}),\partial \lambda)$, where~$\partial \lambda$ is defined as
\begin{align*}
 \partial \lambda \colon \coker(\widehat{\lambda}) \times \coker(\widehat{\lambda}) &\to Q/\Lambda  \\
 ([x],[y]) &\mapsto  \frac{1}{s}(y(z)),
\end{align*}
where, since~$\coker(\widehat{\lambda})$ is~$\Lambda$-torsion, there exists an~$s \in \Lambda$ and an~$z \in H$ such that~$sx=\widehat{\lambda}(z)$.
\end{definition}

It is not difficult to show that~$\partial \lambda$ is independent of the choices involved, is sesquilinear and Hermitian.
We conclude with two remarks that we will use throughout this section.
\begin{remark}
\label{rem:TensorQ}
Let~$(H,\lambda)$ be a Hermitian form, and set~$H_Q:=H \otimes_\Lambda Q$.
Since~$H$ is free, we can identify~$\overline{\Hom_\Lambda(H,\Lambda)} \otimes_\Lambda Q$ with~$H_Q^*:=\overline{\Hom_Q(H_Q,Q)}$.
As stated in Remark \ref{rem:Torsion}, if~$(H,\lambda)$ is nondegenerate, then~$\coker(\widehat{\lambda})$ is a torsion~$\Lambda$-module and therefore~$\widehat{\lambda}_Q:=\widehat{\lambda} \otimes_\Lambda \id_Q: H_Q \to H_Q^*$ is a nonsingular Hermitian form over~$Q$.
\end{remark}

Using this remark, we describe an equivalent definition of the boundary linking form.

\begin{remark}
\label{rem:AlternativeDefinitionBoundaryLinking}
The boundary linking form of a nondegenerate Hermitian form~$(H,\lambda)$ can be described as~$\partial \lambda([x],[y])=y (\widehat{\lambda}_Q^{-1}(x))$ for~$[x],[y] \in \coker(\widehat{\lambda})$.
Choose a basis~$\mathbf{b}=(e_i)_{i=1}^n$ for~$H$ and endow~$\overline{\Hom_\Lambda(H,\Lambda)}$ with the dual basis~$\mathbf{b}^*$.
If~$A_{ij}=\lambda(e_i,e_j)$ is a Hermitian matrix representing~$\lambda$, then~$\overline{A}$ is a matrix for~$\widehat{\lambda}$ and~$\partial \lambda([x],[y])=y (\widehat{\lambda}_Q^{-1}(x))=(\overline{A}^{-1}x)^T\overline{y}=x^TA^{-1}\overline{y}$.
For future reference, note that since $A$ is Hermitian, we have $\overline{A}=A^T$.
 \end{remark}

\subsection{Isometries of forms}
\label{sub:Isometries}
We discuss isometries of Hermitian forms and linking forms.
References include~\cite[Section 3.4]{RanickiExact} and~\cite[Section 6]{CrowleySixt}.
\medbreak

Let~$(H_0,\lambda_0)$ and~$(H_1,\lambda_1)$ be Hermitian forms over~$\Lambda$.
A~$\Lambda$-linear isomorphism~$F \colon H_0 \to H_1$ is an \emph{isometry} if~$ \lambda_1(F(x),F(y))=\lambda_0(x,y)$ for all~$x,y \in H_0$.
Let~$\Iso(\lambda_0,\lambda_1)$ denote the set of isometries between the Hermitian forms~$(H_0,\lambda_0)$ and~$(H_1,\lambda_1)$, and let~$\Aut(\lambda) := \Iso(\lambda,\lambda)$ denote the group of self-isometries of a Hermitian form~$(H,\lambda)$.

We make the analogous definitions for linking forms.
Let~$(T_0,\beta_0)$ and~$(T_1,\beta_1)$ be two linking forms over~$\Lambda$.
A~$\Lambda$-linear isomorphism~$h \colon T_0 \to T_1$ is an \emph{isometry} if~$ \beta_1(h(x),h(y))=\beta_0(x,y)$ for all~$x,y \in T_0$.
We let~$\Iso(\beta_0,\beta_1)$ denote the set of isometries between the linking forms~$(T_0,\beta_0)$ and~$(T_1,\beta_1)$, and let~$\Aut(\beta) := \Iso(\beta,\beta)$ denote the group of self-isometries of a linking form~$(T,\beta)$.

An isomorphism~$F \colon H_0 \to H_1$ induces an isomorphism~$F^{-*}:= (F^*)^{-1} \colon H_0^* \to H_1^*$ .
If additionally, the isomorphism~$F \in \Iso(\lambda_0,\lambda_1)$ is an isometry, then~$F^{-*}$ descends to an isomorphism
%An isometry~$F \in \Iso(\lambda_0,\lambda_1)$ of Hermitian forms induces an isomorphism~$F^{-*} \colon H_0^* \to H_1^*$ which descends to an isomorphism
$$\partial F:=F^{-*} \colon \coker(\widehat{\lambda}_0)\to \coker(\widehat{\lambda}_1).$$
For later use, note that~$F \colon (H_0,\lambda_0) \to (H_1,\lambda_1)$ is an isometry if and only if~$\widehat{\lambda}_0=F^*\widehat{\lambda}_1F$.
Next, we verify that~$\partial F$ is an isometry of the boundary linking forms.

\begin{lemma}
\label{lem:BoundaryIsometry}
If~$F \colon (H_0,\lambda_0) \to (H_1,\lambda_1)$ is an isometry of nondegenerate Hermitian forms, then~$\partial F$ is an isometry of linking forms:
$$\partial F \colon  (\coker(\widehat{\lambda}_0),\partial \lambda_0) \to (\coker(\widehat{\lambda}_1),\partial \lambda_1).$$
This construction provides a map
$$ \partial \colon \Iso(\lambda_0,\lambda_1) \to \Iso(\partial \lambda_0,\partial \lambda_1)$$
which is a homomorphism on automorphism groups.
\end{lemma}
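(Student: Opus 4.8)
The plan is to establish the three assertions successively, using throughout the reformulation of the boundary linking form recorded in Remark~\ref{rem:AlternativeDefinitionBoundaryLinking}, namely $\partial\lambda([x],[y]) = y\big(\widehat{\lambda}_Q^{-1}(x)\big)$, which transports everything into the nonsingular $Q$-form $\widehat{\lambda}_Q$; the latter is invertible precisely because $\lambda$ is nondegenerate, see Remark~\ref{rem:TensorQ}.

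First I would record that $\partial F := F^{-*}$ is well defined on cokernels. Since $F$ is an isometry, $\widehat{\lambda}_0 = F^*\widehat{\lambda}_1 F$, so for every $x \in H_0$ we get $F^{-*}\big(\widehat{\lambda}_0(x)\big) = \widehat{\lambda}_1(F(x)) \in \im(\widehat{\lambda}_1)$. Hence the isomorphism $F^{-*}\colon H_0^* \to H_1^*$ sends $\im(\widehat{\lambda}_0)$ into $\im(\widehat{\lambda}_1)$, and therefore descends to an isomorphism $\partial F \colon \coker(\widehat{\lambda}_0) \xrightarrow{\cong} \coker(\widehat{\lambda}_1)$.

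Next I would check that $\partial F$ preserves the boundary linking forms. Rewriting the isometry condition as $\widehat{\lambda}_1 = F^{-*}\widehat{\lambda}_0 F^{-1}$ and tensoring with $Q$ yields $(\widehat{\lambda}_1)_Q^{-1} = F_Q\,(\widehat{\lambda}_0)_Q^{-1}\,F_Q^*$. Together with the elementary identity $(F^{-*}y)(w) = y\big(F^{-1}(w)\big)$, valid for $y \in H_0^*$ and $w \in H_1$, this gives, for all $[x],[y] \in \coker(\widehat{\lambda}_0)$,
\begin{align*}
\partial\lambda_1\big(\partial F[x],\partial F[y]\big) &= (F^{-*}y)\big((\widehat{\lambda}_1)_Q^{-1}(F^{-*}x)\big) = (F^{-*}y)\big(F_Q\,(\widehat{\lambda}_0)_Q^{-1}(x)\big) \\
&= y\big((\widehat{\lambda}_0)_Q^{-1}(x)\big) = \partial\lambda_0([x],[y]),
\end{align*}
using $F_Q^* \circ F_Q^{-*} = \id$ for the second equality and $F^{-1}_Q \circ F_Q = \id$ for the third. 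So $\partial F \in \Iso(\partial\lambda_0,\partial\lambda_1)$, and the first two steps together produce the map $\partial \colon \Iso(\lambda_0,\lambda_1) \to \Iso(\partial\lambda_0,\partial\lambda_1)$.

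Finally, for functoriality, given isometries $F \colon (H_0,\lambda_0) \to (H_1,\lambda_1)$ and $G \colon (H_1,\lambda_1) \to (H_2,\lambda_2)$, contravariance of the dual gives $(G\circ F)^* = F^*\circ G^*$, whence $\partial(G\circ F) = ((G\circ F)^*)^{-1} = (G^*)^{-1}\circ(F^*)^{-1} = \partial G\circ\partial F$, and clearly $\partial(\id) = \id$; restricting to a single form shows that $\partial\colon \Aut(\lambda) \to \Aut(\partial\lambda)$ is a group homomorphism. Every step here is routine; the only thing demanding care is the bookkeeping of variances --- keeping the maps $H_0 \to H_1$ apart from the induced maps $H_1^* \to H_0^*$ --- together with the observation that passing to $Q$-coefficients is harmless, which is exactly where nondegeneracy of the $\lambda_i$ enters so that the $(\widehat{\lambda}_i)_Q$ are invertible and the defining formula for $\partial\lambda_i$ makes sense.
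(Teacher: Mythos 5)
Your proposal is correct and follows essentially the same route as the paper: the paper's proof works directly with the defining formula $\partial\lambda([x],[y])=\tfrac{1}{s}y(z)$ where $sx=\widehat{\lambda}_0(z)$, while you use the equivalent reformulation $\partial\lambda([x],[y])=y(\widehat{\lambda}_Q^{-1}(x))$ from Remark~\ref{rem:AlternativeDefinitionBoundaryLinking}, and both reduce the isometry check to the identity $F^{-*}\widehat{\lambda}_0=\widehat{\lambda}_1F$ and conclude functoriality from $(G\circ F)^{-*}=G^{-*}\circ F^{-*}$. The only cosmetic difference is that you spell out the well-definedness of $\partial F$ on cokernels inside the proof, whereas the paper establishes this in the discussion preceding the lemma.
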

\begin{proof}
Given~$[x],[y] \in \coker(\widehat{\lambda}_0)$, there exists~$s \in \Lambda$ and~$z \in H_0$ such that~$sx=\widehat{\lambda}_0(z)$ .
%for some~$s \in \Lambda$ and some~$z \in H_0$.
Since~$F$ is an isometry, it follows that~$sF^{-*}(x)=F^{-*}\widehat{\lambda}_0(z)=\widehat{\lambda}_1F(z)$.
We can now conclude that~$\partial F$ is an isometry because
$$\partial \lambda_1 (\partial F([x]),\partial F([y]))
=\partial \lambda_1 (F^{-*}([x]),F^{-*}([y]))
=\frac{1}{s}F^{-*}(y)F(z)
=\frac{1}{s}y(z)
=\partial \lambda_0([x],[y]).$$
The last assertion follows from the equality~$(G \circ F)^{-*}=G^{-*} \circ F^{-*}$.
\end{proof}

Using Lemma~\ref{lem:BoundaryIsometry}, we are led to the following definition.

\begin{definition}
\label{def:BoundaryOfAnIsometry}
The \emph{boundary of an isometry}~$F \colon (H_0,\lambda_0) \to (H_1,\lambda_1)$ of nondegenerate Hermitian forms is the isometry of linking forms
$$ \partial F \colon \partial (H_0,\lambda_0) \to \partial (H_1,\lambda_1).~$$
\end{definition}

\subsection{The union of nondegenerate Hermitian forms}
\label{sub:Gluing}

We define the union of two Hermitian forms~$(H_0,\lambda_0)$ and~$(H_1,\lambda_1)$ over~$\Lambda$ along an isometry of their boundary linking forms.
The definition is inspired by~\cite[Chapter~3]{CrowleyThesis}, which was concerned with forms over the integers.
\medbreak

We describe the main construction of this subsection.
In what follows, for~$i=0,1$, we use~$\pi_i \colon H_i^* \to \coker(\widehat{\lambda}_i)$ to denote the canonical projections.

\begin{construction}
\label{construction:Gluing}
Let~$(H_0,\lambda_0)$ and~$(H_1,\lambda_1)$ be two nondegenerate Hermitian forms over~$\Lambda$, and
let~$h \colon (\coker(\widehat{\lambda}_0),\partial \lambda_0) \to (\coker(\widehat{\lambda}_1),\partial \lambda_1)$ be an isometry of their boundary linking forms.
Consider the pair~$(H,\lambda)$ with
\begin{align*}
&H:=\ker \big(h \pi_0-\pi_1 \colon H_0^* \oplus H_1^* \to \coker(\widehat{\lambda}_1) \big)  \\
 &\lambda \left( \begin{pmatrix} x_0 \\ x_1 \end{pmatrix}, \begin{pmatrix} y_0 \\ y_1  \end{pmatrix} \right)
 =\frac{1}{s_0}y_0(z_0)-\frac{1}{s_1}y_1(z_1) \in Q,
 \end{align*}
 %\lambda \left( \begin{pmatrix} x_0 \\ x_1 \end{pmatrix}, \begin{pmatrix} y_0 \\ y_1  \end{pmatrix} \right)&=y_0(\widehat{\lambda}_{0,Q}^{-1}(x_0))-y_1(\widehat{\lambda}_{1,Q}^{-1}(x_1))
%=\lambda_{0,Q}^{-1}(x_0,y_0)-\lambda_{1,Q}^{-1}(x_1,y_1).
%%\lambda \left( \begin{pmatrix} x_0 \\ x_1 \end{pmatrix}, \begin{pmatrix} y_0 \\ y_1  \end{pmatrix} \right)&=\frac{1}{s_0}x_0(z_0)-\frac{1}{s_1}x_1(z_1) \in Q.
where since~$\coker(\widehat{\lambda}_i)$ is torsion, there exists~$s_i \in \Lambda$ and~$z_i \in H_i$ such that~$s_ix_i=\widehat{\lambda}_i(z_i)$.
Since the Hermitian forms~$\lambda_0$ and~$\lambda_1$ are nondegenerate, it is not difficult to prove that the pairing~$\lambda$ does not depend on the choice of~$s_0,s_1,z_0,z_1$.
\end{construction}

The next proposition establishes some facts about the pairing~$(H,\lambda)$.

\begin{proposition}
\label{prop:PropertiesAlgebraicGluing}
The pair~$(H,\lambda)$ from Construction~\ref{construction:Gluing} satisfies the following properties.
\begin{enumerate}
\item For~$(x_0,x_1),(y_0,y_1) \in H$, the pairing~$\lambda$ can equivalently be defined as
$$ \lambda \left( \begin{pmatrix} x_0 \\ x_1 \end{pmatrix}, \begin{pmatrix} y_0 \\ y_1  \end{pmatrix} \right)
=y_0(\widehat{\lambda}_{0,Q}^{-1}(x_0))-y_1(\widehat{\lambda}_{1,Q}^{-1}(x_1)).$$
%$$ \lambda \left( \begin{pmatrix} x_0 \\ x_1 \end{pmatrix}, \begin{pmatrix} y_0 \\ y_1  \end{pmatrix} \right)=\frac{1}{s_0}y_0(z_0)-\frac{1}{s_1}y_1(z_1) \in Q,$$
%where since~$\coker(\widehat{\lambda}_i)$ is torsion, there exists~$s_i \in \Lambda$ and~$z_i \in H_i$ such that~$s_ix_i=\widehat{\lambda}_i(z_i)$.
%%Torsion means that s_i[x_i]=0 in the cokernel, whence the existence of z_i.
\item If we choose bases for~$H_0,H_1$ and dual bases for~$H_0^*,H_1^*$ and let~$A_0,A_1$ be Hermitian matrices representing~$\lambda_0,\lambda_1$, then
$$  \lambda \left( \begin{pmatrix} x_0 \\ x_1 \end{pmatrix}, \begin{pmatrix} y_0 \\ y_1  \end{pmatrix} \right)
=x_0^TA_0^{-1}\overline{y}_0-x_1^TA_1^{-1}\overline{y}_1.
  ~$$
\item The pairing~$\lambda$ is sesquilinear, Hermitian and takes values in~$\Lambda$.
\item The following two maps, which we abridge by~$\widehat{\lambda}_0,\widehat{\lambda}_1$, are injective:
$$ (H_0,\lambda_0) \stackrel{\bsm \widehat{\lambda}_0\\0 \esm}{\hookrightarrow} (H,\lambda) \stackrel{\bsm 0 \\\widehat{\lambda}_1\esm}{\hookleftarrow}  (H_1,-\lambda_1).$$
Furthermore, these maps satisfy~$\widehat{\lambda}_0(H_0)^\perp=\widehat{\lambda}_1(H_1)$.
\end{enumerate}
\end{proposition}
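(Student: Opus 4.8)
The plan is to derive all four items from the two alternative formulas for $\lambda$ together with the single substantive input, that $h$ is an isometry of boundary linking forms. I would start with item~(1), which is essentially a reformulation and which also establishes the well-definedness asserted in Construction~\ref{construction:Gluing}. Given $(x_0,x_1)\in H$ and a choice of $s_i\in\Lambda$, $z_i\in H_i$ with $s_ix_i=\widehat{\lambda}_i(z_i)$, pass to $Q$-coefficients as in Remark~\ref{rem:TensorQ}, where $\widehat{\lambda}_{i,Q}$ is invertible by nondegeneracy; then $\widehat{\lambda}_{i,Q}(z_i)=s_ix_i$ gives $\widehat{\lambda}_{i,Q}^{-1}(x_i)=\tfrac{1}{s_i}z_i\in H_{i,Q}$, and substituting $\tfrac{1}{s_i}y_i(z_i)=y_i\!\left(\tfrac{1}{s_i}z_i\right)$ yields $\lambda((x_0,x_1),(y_0,y_1))=y_0(\widehat{\lambda}_{0,Q}^{-1}(x_0))-y_1(\widehat{\lambda}_{1,Q}^{-1}(x_1))$. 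Since the right-hand side makes no reference to $s_i,z_i$, this proves independence of the choices. Item~(2) is then just the coordinate version: after fixing bases and dual bases, $\widehat{\lambda}_i$ is represented by $\overline{A_i}=A_i^T$ (Remark~\ref{rem:AlternativeDefinitionBoundaryLinking}), so $\widehat{\lambda}_{i,Q}^{-1}$ is represented by $(A_i^{-1})^T$, and the same bookkeeping as in Remark~\ref{rem:AlternativeDefinitionBoundaryLinking} turns $y_i(\widehat{\lambda}_{i,Q}^{-1}(x_i))$ into $x_i^TA_i^{-1}\overline{y}_i$.

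For item~(3), sesquilinearity and the Hermitian property are formal consequences of item~(1): writing $\phi=\widehat{\lambda}_{i,Q}(a)$ and $\psi=\widehat{\lambda}_{i,Q}(b)$, the $i$-th summand $\psi(\widehat{\lambda}_{i,Q}^{-1}(\phi))$ equals $\lambda_{i,Q}(a,b)$, which is sesquilinear and Hermitian, and a difference of two such pairings is again sesquilinear and Hermitian (this is also transparent from the matrix formula in item~(2) together with $A_i=\overline{A_i}^T$). The real content of item~(3) is that $\lambda$ takes values in $\Lambda\subseteq Q$, and this is precisely where the hypotheses enter. For $(x_0,x_1),(y_0,y_1)\in H$ we have $h\pi_0(x_0)=\pi_1(x_1)$ and $h\pi_0(y_0)=\pi_1(y_1)$ in $\coker(\widehat{\lambda}_1)$, i.e.\ $h[x_0]=[x_1]$ and $h[y_0]=[y_1]$; since $h$ is an isometry of boundary linking forms, $\partial\lambda_1([x_1],[y_1])=\partial\lambda_0([x_0],[y_0])$ in $Q/\Lambda$. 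By the description of $\partial\lambda_i$ in Remark~\ref{rem:AlternativeDefinitionBoundaryLinking}, this says $y_1(\widehat{\lambda}_{1,Q}^{-1}(x_1))\equiv y_0(\widehat{\lambda}_{0,Q}^{-1}(x_0))\pmod{\Lambda}$, so their difference, which is $\lambda((x_0,x_1),(y_0,y_1))$ by item~(1), lies in $\Lambda$.

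For item~(4), I would first check that $z_0\mapsto(\widehat{\lambda}_0(z_0),0)$ lands in $H$: since $\pi_0(\widehat{\lambda}_0(z_0))=0$ we get $h\pi_0(\widehat{\lambda}_0(z_0))=0=\pi_1(0)$, and symmetrically $z_1\mapsto(0,\widehat{\lambda}_1(z_1))$ lands in $H$. Injectivity of both maps is nondegeneracy of $\lambda_0,\lambda_1$. Evaluating the formula of item~(1) with the valid choice $s_i=1$ gives $\lambda((\widehat{\lambda}_0(z_0),0),(\widehat{\lambda}_0(w_0),0))=\widehat{\lambda}_0(w_0)(z_0)=\lambda_0(z_0,w_0)$ and $\lambda((0,\widehat{\lambda}_1(z_1)),(0,\widehat{\lambda}_1(w_1)))=-\widehat{\lambda}_1(w_1)(z_1)=-\lambda_1(z_1,w_1)$, which identifies the two maps as isometric embeddings of $(H_0,\lambda_0)$ and $(H_1,-\lambda_1)$. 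Finally, a direct computation from item~(1) shows $\lambda\!\left((x_0,x_1),(\widehat{\lambda}_0(z_0),0)\right)=\lambda_{0,Q}(\widehat{\lambda}_{0,Q}^{-1}(x_0),z_0)=\overline{x_0(z_0)}$ for $(x_0,x_1)\in H$ and $z_0\in H_0$; hence $(x_0,x_1)\in\widehat{\lambda}_0(H_0)^\perp$ if and only if $x_0$ vanishes on all of $H_0$, i.e.\ $x_0=0$. But $(0,x_1)\in H$ forces $\pi_1(x_1)=0$, i.e.\ $x_1\in\im(\widehat{\lambda}_1)$, so $\widehat{\lambda}_0(H_0)^\perp=\{(0,x_1)\in H\}=\widehat{\lambda}_1(H_1)$.

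I expect the only step with genuine content to be the $\Lambda$-valuedness in item~(3), where the isometry hypothesis on $h$ is used in an essential way; everything else is careful bookkeeping with the bar-involution, transposes, and the conventions for $H^*$ and $\widehat{\lambda}_{i,Q}$, following the patterns already set up in Remarks~\ref{rem:TensorQ} and~\ref{rem:AlternativeDefinitionBoundaryLinking} and Lemma~\ref{lem:BoundaryIsometry}.
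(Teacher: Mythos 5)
Your proposal is correct and follows essentially the same route as the paper's proof: item (1) by inverting the adjoint over $Q$, item (2) by the bookkeeping of Remark~\ref{rem:AlternativeDefinitionBoundaryLinking}, the $\Lambda$-valuedness in item (3) by reducing modulo $\Lambda$ to the identity $\partial\lambda_0([x_0],[y_0])=\partial\lambda_1(h[x_0],h[y_0])$, and item (4) by nondegeneracy together with the defining condition $h[x_0]=[x_1]$ of $H$. The only cosmetic difference is that in the perpendicularity argument the paper passes through $s x_0=\widehat{\lambda}_0(z_0)$ and concludes $z_0=0$, whereas you conclude $x_0=0$ directly from $x_0(z_0)=0$ for all $z_0$; both are fine.
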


\begin{proof}
To prove the first assertion, notice that since~$\lambda_{i,Q}$ is a nonsingular pairing for~$i=0,1$, we can write~$\widehat{\lambda}_{i,Q}^{-1}(x_i)=z_i/s_i$ for some~$z_i \in H$ and some~$s_i \in \Lambda$.
Consequently,~$\widehat{\lambda}_i(z_i)=s_ix_i$ and we have~$\frac{1}{s_i}y_i(z_i)=y_i\left(\widehat{\lambda}_{i,Q}^{-1}(x_i)\right)$, as desired.
This proves the first assertion.
The second assertion now follows as in Remark~\ref{rem:AlternativeDefinitionBoundaryLinking}.

We prove the third assertion.
The fact that~$\lambda$ is sesquilinear and Hermitian follows from the second assertion and the fact that~$A_0^{-1}$ and~$A_1^{-1}$ are Hermitian.
Next, using the definition of the boundary linking forms~$\partial \lambda_0$ and~$\partial \lambda_1$ from Definition~\ref{def:BoundaryLinkingForm}, we have
$$ \lambda \left( \begin{pmatrix} x_0 \\ x_1 \end{pmatrix}, \begin{pmatrix} y_0 \\ y_1  \end{pmatrix} \right)= \partial \lambda_0 ([x_0],[y_0])-\partial \lambda_1 ([x_1],[y_1]) \ \ \ \ \operatorname{mod} \Lambda.~$$
%Now,
By definition of~$H$, we have~$h[x_0]=x_1$ and $h[y_0]=y_1$. Then since~$h$ is an isometry, this latter expression vanishes:
\begin{align*}
 \partial \lambda_0 ([x_0],[y_0])-\partial \lambda_1 ([x_1],[y_1])
&\equiv \partial \lambda_0 ([x_0],[y_0])-\partial \lambda_1 (h[x_0],h[y_0])  \\
&\equiv \partial \lambda_0 ([x_0],[y_0])-\partial \lambda_0 ([x_0],[y_0]) \equiv 0 \in Q/\Lambda.
\end{align*}
It follows that~$\lambda$ takes values in~$\Lambda$, concluding the proof of the third assertion.
We now prove the fourth assertion.
First, we check that~$\widehat{\lambda}_0$ is an isometric embedding; the proof for~$\widehat{\lambda}_1$ is identical.
Since the pairings are nondegenerate, the maps are injective and given~$z_0,z_0' \in H_0$, we have
$$\lambda \left( \begin{pmatrix} \widehat{\lambda}_0(z_0) \\ 0 \end{pmatrix}, \begin{pmatrix} \widehat{\lambda}_0(z_0') \\ 0  \end{pmatrix}\right)
=\widehat{\lambda}(z_0')(z_0)=\lambda_0(z_0,z_0') .~$$
It remains to check that~$\widehat{\lambda}_0(H_0)^\perp=\widehat{\lambda}_1(H_1)$.
The inclusion~$\widehat{\lambda}_0(H_0)^\perp \supseteq \widehat{\lambda}_1(H_1)$ is clear and so we prove the inclusion~$\widehat{\lambda}_0(H_0)^\perp \subseteq\widehat{\lambda}_1(H_1)$.
%$$\lambda \left( \begin{pmatrix} \widehat{\lambda}_0(z_0) \\ 0 \end{pmatrix}, \begin{pmatrix} 0 \\ \widehat{\lambda}_0(z_0')  \end{pmatrix}\right)
%=0 .~$$
Assume that~$(x_0,x_1) \in H$ (with~$sx_0=\widehat{\lambda}_0(z_0)$) satisfies for all~$a_0 \in H_0$
$$0
=\lambda \left( \begin{pmatrix} x_0 \\ x_1  \end{pmatrix},\begin{pmatrix} \widehat{\lambda}_0(a_0) \\ 0 \end{pmatrix} \right)
=\frac{1}{s}\widehat{\lambda}_0(a_0)(z_0)
=\frac{1}{s}\lambda_0(z_0,a_0) \in \Lambda.$$
As~$\lambda_0$ is nondegenerate, this implies that~$z_0=0$ and therefore~$x_0=0$.
But since~$(x_0,x_1) \in H$, we also have~$[x_1]=h[x_0]=0$ and so~$x_1=\widehat{\lambda}_1(z_1)$ for some~$z_1\in H_1$.
 We therefore conclude that~$\widehat{\lambda}_0(H_0)^\perp=\widehat{\lambda}_1(H_1)$, establishing the proposition.
\end{proof}

Note that Proposition~\ref{prop:PropertiesAlgebraicGluing} does not contain a statement about the~$\Lambda$-module~$H$ being free.
Since we defined Hermitian forms over free~$\Lambda$-modules, a \emph{Hermitian module}~$(H,\lambda)$ will refer to a pair consisting of a finitely generated~$\Lambda$-module~$H$ and a sesquilinear Hermitian pairing~$\lambda \colon H \times H \to \Lambda$; we drop the requirement that~$H$ be free.

\begin{definition}
\label{def:Gluing}
Let~$(H_0,\lambda_0)$ and~$(H_1,\lambda_1)$ be two nondegenerate Hermitian forms over~$\Lambda$, and
let~$h \colon (\coker(\widehat{\lambda}_0),\partial \lambda_0) \to (\coker(\widehat{\lambda}_1),\partial \lambda_1)$ be an isometry of their boundary linking forms.
The \emph{union} of~$(H_0,\lambda_0)$ and~$(H_1,\lambda_1)$ along~$h$ is the Hermitian module~$(H_0 \cup_h H_1,\lambda_0\cup_h -\lambda_1)$ described in Construction~\ref{construction:Gluing} and Proposition~\ref{prop:PropertiesAlgebraicGluing}:
\begin{align*}
&H_0 \cup_h H_1:=\ker \big(h \pi_0-\pi_1 \colon H_0^* \oplus H_1^* \to \coker(\widehat{\lambda}_1) \big)  \\
 &\lambda_0\cup_h -\lambda_1 \left( \begin{pmatrix} x_0 \\ x_1 \end{pmatrix}, \begin{pmatrix} y_0 \\ y_1  \end{pmatrix} \right)
 =\frac{1}{s_0}y_0(z_0)-\frac{1}{s_1}y_1(z_1) \in \Lambda,
 \end{align*}
 %\lambda \left( \begin{pmatrix} x_0 \\ x_1 \end{pmatrix}, \begin{pmatrix} y_0 \\ y_1  \end{pmatrix} \right)&=y_0(\widehat{\lambda}_{0,Q}^{-1}(x_0))-y_1(\widehat{\lambda}_{1,Q}^{-1}(x_1))
%=\lambda_{0,Q}^{-1}(x_0,y_0)-\lambda_{1,Q}^{-1}(x_1,y_1).
%%\lambda \left( \begin{pmatrix} x_0 \\ x_1 \end{pmatrix}, \begin{pmatrix} y_0 \\ y_1  \end{pmatrix} \right)&=\frac{1}{s_0}x_0(z_0)-\frac{1}{s_1}x_1(z_1) \in Q.
where since~$\coker(\widehat{\lambda}_i)$ is torsion, there exists~$s_i \in \Lambda$ and~$z_i \in H_i$ such that~$s_ix_i=\widehat{\lambda}_i(z_i)$.
\end{definition}

\begin{remark}
\label{rem:NonSing}
We make a brief remark about the nonsingularity of~$(H_0 \cup_h H_1,\lambda_0\cup_h -\lambda_1)$.
Proposition~\ref{prop:PropertiesAlgebraicGluing} contains no statement about~$(H,\lambda):=(H_0 \cup_h H_1,\lambda_0\cup_h -\lambda_1)$ being nonsingular.
While this holds over the integers~\cite[Lemma 3.6]{CrowleyThesis} and in the topological setting of Proposition~\ref{prop:UnionIsUnion}, we will not prove it in the algebraic generality of this section.
Note that if~$\widehat{\lambda} \colon H \xrightarrow{\cong} \overline{\Hom_\Lambda(H,\Lambda)}=:H^*$ is nonsingular, then~$H$ must be free, since for any finitely generated~$\Lambda$-module~$H$, the dual~$H^*$ is free~\cite[Lemma 2.1]{BorodzikFriedlOnTheAlgebraic}.
\end{remark}

Given a~$\Lambda$-module~$H$, we set~$H^*:=\overline{\Hom_\Lambda(H,\Lambda)}$.
If~$H$ is free, then there is a~$\Lambda$-linear evaluation isomorphism~$\ev \colon H \to H^{**}$ given by~$\ev_x(\varphi)=\overline{\varphi(x)}$ where~$x \in H$ and~$\varphi \in H^*$.
%\ev_x is linear on H^*
%\ev_x(\lambda \cdot \varphi)=\overline{(\lambda \cdot \varphi)(x)}=\overline{\overline{\lambda}\varphi(x)}=\lambda\overline{\varphi(x)}.
%\lambda \ev_x(\varphi)=\lambda\overline{\varphi(x)}.
%%%
%ev is linear
%\ev_{\lambda x}(\varphi)=\overline{\varphi(\lambda x)}=\overline{\lambda} \overline{\varphi(x)}.
%(\lambda \cdot \ev_x)(\varphi)=\overline{\lambda}ev_x(\varphi)=\overline{\lambda} \overline{\varphi(x)}.
The next remark uses~$\ev$ to describe the adjoint of~$\lambda_0 \cup_h-\lambda_1$.

\begin{remark}
\label{rem:AdjointOfUnion}
Let~$(H_0,\lambda_0)$ and~$(H_1,\lambda_1)$ be two nondegenerate Hermitian forms over~$\Lambda$, and let~$h \in \Iso (\partial \lambda_0,\partial \lambda_1)$ be an isometry.
The adjoint of~$\lambda:=\lambda_0\cup_h -\lambda_1$ is given by
\begin{align*}
\widehat{\lambda} \colon H_0 \cup_h H_1 &\to (H_0 \cup_h H_1)^* \\
\begin{pmatrix}
y_0 \\ y_1
\end{pmatrix} &\mapsto
 \begin{pmatrix}
\ev &  \ev
\end{pmatrix}
 \begin{pmatrix}
\widehat{\lambda}_{0,Q}^{-1} & 0 \\
 0 &-\widehat{\lambda}_{1,Q}^{-1}
\end{pmatrix}
\begin{pmatrix}
y_0 \\ y_1
\end{pmatrix}.
\end{align*}
This follows by combining the definition of~$\ev$ with the second and third items of Proposition~\ref{prop:PropertiesAlgebraicGluing}.
Indeed for~$(x_0,y_0) \in H_0 \cup_h H_1$, we have
\begin{align*}
 \lambda \left( \begin{pmatrix} x_0 \\ x_1 \end{pmatrix}, \begin{pmatrix} y_0 \\ y_1  \end{pmatrix} \right)
&=\overline{ \lambda \left( \begin{pmatrix} y_0 \\ y_1 \end{pmatrix}, \begin{pmatrix} x_0 \\ x_1  \end{pmatrix} \right) }
=\overline{x_0(\widehat{\lambda}_{0,Q}^{-1}(y_0))}-\overline{x_1(\widehat{\lambda}_{1,Q}^{-1}(y_1))}  \\
&=
\begin{pmatrix} \ev_{\widehat{\lambda}_{0,Q}^{-1}(y_0)} & \ev_{-\widehat{\lambda}_{1,Q}^{-1}(y_1)}  \end{pmatrix}
\begin{pmatrix}
x_0 \\ x_1
\end{pmatrix}
=\widehat{\lambda}
\begin{pmatrix}
y_0 \\ y_1
\end{pmatrix}
\begin{pmatrix}
x_0 \\ x_1
\end{pmatrix}.
\end{align*}
\end{remark}

We conclude this section by describing how the union interacts with boundary isometries.
The proof follows immediately from the definitions and is left to the reader.

\begin{proposition}
\label{prop:IsometriesOfUnion}
Let~$(H_0,\lambda_0)$ and~$(H_1,\lambda_1)$ be Hermitian forms,  and let~$h \in \Iso(\partial \lambda_0,\partial \lambda_1)$ be an isometry.
If~$F \in \Iso(\lambda_0,\lambda_1)$ is an isometry, then
$F^{-*} \oplus \id$ induces an isometry
$$ (H_0,\lambda_0) \cup_h (H_1,-\lambda_1) \to (H_1,\lambda_1) \cup_{h \circ  \partial F^{-1}} (H_1,-\lambda_1).$$
%Furthermore, the map~$\id \oplus -\id$ induces an isometry
%$$ (H_0,\lambda_0) \cup_h (H_1,-\lambda_1) \to (H_0,\lambda_0) \cup_{-h} (H_1,-\lambda_1).$$
\end{proposition}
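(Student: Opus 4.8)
The plan is to verify directly that the $\Lambda$-linear map $\Psi := F^{-*} \oplus \id$ restricts to an isomorphism between the two kernels defining the unions, and that it is an isometry. Write $\lambda_0' := \lambda_1$ and recall that by definition $H_0 \cup_h H_1 = \ker(h\pi_0 - \pi_1 \colon H_0^* \oplus H_1^* \to \coker(\wh\lambda_1))$, while $H_1 \cup_{h\circ\partial F^{-1}} H_1 = \ker\big((h\circ\partial F^{-1})\pi_1' - \pi_1 \colon H_1^* \oplus H_1^* \to \coker(\wh\lambda_1)\big)$, where $\pi_1' \colon H_1^* \to \coker(\wh\lambda_1)$ is the canonical projection on the first summand. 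The first thing to check is that $\Psi$ is a well-defined isomorphism between these kernels. By Lemma~\ref{lem:BoundaryIsometry}, $\partial F = F^{-*} \colon \coker(\wh\lambda_0) \to \coker(\wh\lambda_1)$; since $F^{-*} \colon H_0^* \to H_1^*$ descends to $\partial F$ on cokernels, we have $\pi_1' \circ F^{-*} = \partial F \circ \pi_0$ as maps $H_0^* \to \coker(\wh\lambda_1)$. Hence for $(x_0, x_1) \in H_0^* \oplus H_1^*$, the element $\Psi(x_0, x_1) = (F^{-*}(x_0), x_1)$ lies in the target kernel iff $(h\circ\partial F^{-1})(\pi_1' F^{-*}(x_0)) = \pi_1(x_1)$, i.e.\ iff $(h\circ\partial F^{-1})(\partial F \pi_0(x_0)) = \pi_1(x_1)$, i.e.\ iff $h\pi_0(x_0) = \pi_1(x_1)$ — which is exactly the condition that $(x_0, x_1) \in H_0 \cup_h H_1$. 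So $\Psi$ maps one kernel bijectively onto the other, with inverse $F^* \oplus \id$.

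Next I would check the isometry condition. Using the closed formula from Proposition~\ref{prop:PropertiesAlgebraicGluing}(1), for $(x_0, x_1), (y_0, y_1) \in H_0 \cup_h H_1$ we have
\[
(\lambda_0 \cup_h -\lambda_1)\left(\begin{pmatrix} x_0 \\ x_1 \end{pmatrix}, \begin{pmatrix} y_0 \\ y_1 \end{pmatrix}\right) = y_0\big(\wh\lambda_{0,Q}^{-1}(x_0)\big) - y_1\big(\wh\lambda_{1,Q}^{-1}(x_1)\big),
\]
while the target form evaluated on $\Psi(x_0,x_1), \Psi(y_0,y_1)$ gives
\[
(\lambda_1 \cup_{h\circ\partial F^{-1}} -\lambda_1)\left(\begin{pmatrix} F^{-*}x_0 \\ x_1 \end{pmatrix}, \begin{pmatrix} F^{-*}y_0 \\ y_1 \end{pmatrix}\right) = (F^{-*}y_0)\big(\wh\lambda_{1,Q}^{-1}(F^{-*}x_0)\big) - y_1\big(\wh\lambda_{1,Q}^{-1}(x_1)\big).
\]
The second terms agree verbatim, so it remains to show $(F^{-*}y_0)(\wh\lambda_{1,Q}^{-1}F^{-*}x_0) = y_0(\wh\lambda_{0,Q}^{-1}x_0)$. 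Since $F$ is an isometry, $\wh\lambda_0 = F^*\wh\lambda_1 F$, hence (tensoring with $Q$ and inverting) $\wh\lambda_{0,Q}^{-1} = F^{-1}\wh\lambda_{1,Q}^{-1}F^{-*}$. Therefore $\wh\lambda_{1,Q}^{-1}F^{-*}x_0 = F \wh\lambda_{0,Q}^{-1} x_0$, and so $(F^{-*}y_0)(\wh\lambda_{1,Q}^{-1}F^{-*}x_0) = (F^{-*}y_0)(F\wh\lambda_{0,Q}^{-1}x_0) = (F^*F^{-*}y_0)(\wh\lambda_{0,Q}^{-1}x_0) = y_0(\wh\lambda_{0,Q}^{-1}x_0)$, using that for $\varphi \in H_1^*$ and $z \in H_1$ one has $\varphi(Fz) = (F^*\varphi)(z)$ and $F^*F^{-*} = \id$. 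This completes the isometry verification.

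The only genuinely delicate point — and I expect it to be the main (though still mild) obstacle — is bookkeeping the adjoint/dual conventions so that the identities $\pi_1' \circ F^{-*} = \partial F \circ \pi_0$ and $\wh\lambda_{0,Q}^{-1} = F^{-1}\wh\lambda_{1,Q}^{-1}F^{-*}$ come out with the correct sesquilinear twists, and making sure the element-wise pairing formula is applied with matching argument order (the formula in Proposition~\ref{prop:PropertiesAlgebraicGluing}(1) is not symmetric in how $x$ and $y$ enter). Everything else is a direct unwinding of Construction~\ref{construction:Gluing} and Definition~\ref{def:Gluing}. Since the statement explicitly says the proof "follows immediately from the definitions and is left to the reader," I would present this as a short remark-style verification rather than a formal proof, or simply cite the computation above in condensed form.
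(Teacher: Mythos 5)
Your verification is correct, and it is precisely the argument the paper has in mind: the paper explicitly states that the proof ``follows immediately from the definitions and is left to the reader,'' and your direct check that $F^{-*}\oplus\id$ carries $\ker(h\pi_0-\pi_1)$ onto $\ker((h\circ\partial F^{-1})\pi_1'-\pi_1)$ (via $\pi_1'\circ F^{-*}=\partial F\circ\pi_0$) and preserves the pairing (via $\widehat\lambda_{0,Q}^{-1}=F^{-1}\widehat\lambda_{1,Q}^{-1}F^{-*}$) is exactly that unwinding. No gaps; the conventions $\widehat\lambda_0=F^*\widehat\lambda_1F$ and $\varphi(Fz)=(F^*\varphi)(z)$ are applied consistently with the paper's definitions.
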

%\begin{proof}
%Recall that~$\partial F=F^{-*}$ and consequently~$\partial F^{-1}=F^{*}$
%We check the map is well defined from~$\ker(f \pi_0-\pi_1)$ to~$\ker(f \circ F^{*} \pi_0-\pi_1)$.
%If we assume that~$f[x]=[y]$, then we have~$f \circ F^{*} (F^{-*}[x])=f([x])=y$.
%Pick~$s_ix_i=\lambda_i(z_i)$ for~$i=0,1$.
%We have~$s_0 F^{-*}(x_0)=F^{-*}\lambda_0(z_0)=\lambda_1 (F(z_0))$.
%We now check that we have an isometry.
%\begin{align*}
% \lambda_1 \cup -\lambda_1
%\left( \begin{pmatrix}
%F^{-*}(x_0) \\ x_1
%\end{pmatrix},
%\begin{pmatrix}
%F^{-*}(y_0) \\ y_1
%\end{pmatrix}
%\right)
%&=\frac{1}{s_0} F^{-*}(y_0)(F(z_0))-\frac{1}{s_1}y_1(z_1) \\
%&=\frac{1}{s_0} y_0(z_0)-\frac{1}{s_1}y_1(z_1) \\
%&= \lambda_0 \cup -\lambda_1
%\left( \begin{pmatrix}
%x_0 \\ x_1
%\end{pmatrix},
%\begin{pmatrix}
%y_0 \\ y_1
%\end{pmatrix}
%\right).
%\end{align*}
%%%%
%We prove the second assertion.
%We must show that~$\lambda_0 \cup_h \lambda_1(\bsm x_0 \\ x_1 \esm, \bsm y_0 \\ y_1 \esm)=\lambda_0 \cup_{-h} \lambda_1(\bsm x_0 \\ -x_1 \esm, \bsm y_0 \\ -y_1 \esm)$.
%Choose~$s_i,z_i$ so that~$s_ix_i=\widehat{\lambda}_i(z_i)$.
%In particular we have~$s_1(-x_1)=\widehat{\lambda}_1(-z_1)$.
%Now we have
%\begin{align*}
%\lambda_0 \cup_{-h} \lambda_1(\bsm x_0 \\ -x_1 \esm, \bsm y_0 \\ -y_1 \esm)
%&=\frac{1}{s_0}(y_0)(z_0)-\frac{1}{s_1}(-y_1)(-z_1) \\
%&=\frac{1}{s_0}(y_0)(z_0)-\frac{1}{s_1}(y_1)(z_1) \\
%&=\lambda_0 \cup_h \lambda_1(\bsm x_0 \\ x_1 \esm, \bsm y_0 \\ y_1 \esm)
%\end{align*}
%\end{proof}

\subsection{Lagrangian complements}
\label{sub:LagrangianComplement}
This subsection provides a criterion for a union of Hermitian forms to be metabolic. It motivates the notion of a compatible pair that will be introduced in the next subsection.
The idea for this criterion stems from work of Kreck~\cite[Proposition 8]{KreckSurgeryAndDuality} and Crowley-Sixt~\cite[Theorem 5.11]{CrowleySixt} and their work on the monoid $\ell_{2q+1}(\Z[\pi])$.
\medbreak

A \emph{Lagrangian} for a nonsingular Hermitian form~$(H,\lambda)$ is a direct summand~$L \subseteq H$ such that~$L^\perp=L$.
A nonsingular Hermitian form that admits a Lagrangian is called \emph{metabolic}.
A \emph{Lagrangian complement} for a half rank direct summand~$V \subseteq (H,\lambda)$ is a Lagrangian of~$(H,\lambda)$ such that~$L \oplus V=H$.

In the next proposition, for an isomorphism~$F \colon H_0 \to H_1$ we consider the graph \[\Gamma_{F^{-*}} = \{(x,F^{-*}(x)) \mid x \in H_0^*\} \subseteq H_0^* \oplus H_1^*.\]
Since, by definition of $\partial F$, we have~$\partial F(x) - F^{-*}(x) = F^{-*}(x) - F^{-*}(x) =0$, we deduce the inclusions~$\Gamma_{F^{-*}} \subseteq \ker(\partial F \circ \pi_0-\pi_1) = H_0 \cup_{\partial F} H_1 \subseteq H_0^* \oplus H_1^*$.
%{Observe furthermore that if~$F \colon (H_0,\lambda_0) \to (H_1,\lambda_1)$ is an isometry, then so is~$-F$.}

\begin{proposition}
\label{prop:IsElementary}
Let~$F \colon (H_0,\lambda_0) \to (H_1,\lambda_1)$ be a~$\Lambda$-isometry such that~$\lambda_0 \cup_{\partial F} -\lambda_1$ is nonsingular.
The graph~$\Gamma_{F^{-*}} \subseteq H_0 \cup_{\partial F} H_1$ is a Lagrangian complement for~$\widehat{\lambda}_0(H_0)$:
$$H_0 \cup_{\partial F} H_1=\widehat{\lambda}_0(H_0) \oplus \Gamma_{F^{-*}}.$$
\end{proposition}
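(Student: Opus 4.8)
The plan is to check the two requirements separately: first that $\Gamma_{F^{-*}}$ is a direct summand of $H_0 \cup_{\partial F} H_1$ with $\widehat{\lambda}_0(H_0) \oplus \Gamma_{F^{-*}} = H_0 \cup_{\partial F} H_1$, and then that $\Gamma_{F^{-*}}^\perp = \Gamma_{F^{-*}}$. For the algebraic splitting, transversality is immediate: if $(\widehat{\lambda}_0(z_0),0) = (x, F^{-*}(x))$ lies in both summands, then $F^{-*}(x) = 0$, so $x = 0$ as $F^{-*}$ is an isomorphism, hence $\widehat{\lambda}_0(z_0) = 0$ and $z_0 = 0$ by nondegeneracy of $\lambda_0$. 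For spanning, I would use that the isometry identity $\widehat{\lambda}_0 = F^*\widehat{\lambda}_1 F$ gives $F^*(\widehat{\lambda}_1(H_1)) = \widehat{\lambda}_0(H_0)$, so that $F^*$ descends to a map $\coker(\widehat{\lambda}_1) \to \coker(\widehat{\lambda}_0)$ inverse to $\partial F$. Given $(x_0,x_1) \in H_0 \cup_{\partial F} H_1$, which means $[F^{-*}(x_0)] = [x_1]$ in $\coker(\widehat{\lambda}_1)$, applying this inverse yields $[x_0] = [F^*(x_1)]$ in $\coker(\widehat{\lambda}_0)$, so $x_0 - F^*(x_1) = \widehat{\lambda}_0(z_0)$ for some $z_0 \in H_0$; then $(x_0,x_1) = (\widehat{\lambda}_0(z_0),0) + (F^*(x_1), x_1)$ with the second summand lying in $\Gamma_{F^{-*}}$. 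Hence $H_0 \cup_{\partial F} H_1 = \widehat{\lambda}_0(H_0) \oplus \Gamma_{F^{-*}}$ as $\Lambda$-modules, and in particular $\Gamma_{F^{-*}}$ is a direct summand complementary to $\widehat{\lambda}_0(H_0)$.

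Next I would verify that $\Gamma_{F^{-*}}$ is isotropic. By the $Q$-valued formula of Proposition~\ref{prop:PropertiesAlgebraicGluing}(1), for $x,y \in H_0^*$,
\[(\lambda_0 \cup_{\partial F} -\lambda_1)\big((x,F^{-*}(x)),(y,F^{-*}(y))\big) = y\big(\widehat{\lambda}_{0,Q}^{-1}(x)\big) - F^{-*}(y)\big(\widehat{\lambda}_{1,Q}^{-1}(F^{-*}(x))\big).\]
Tensoring $\widehat{\lambda}_0 = F^*\widehat{\lambda}_1 F$ with $Q$ and inverting gives $\widehat{\lambda}_{1,Q}^{-1} \circ F_Q^{-*} = F_Q \circ \widehat{\lambda}_{0,Q}^{-1}$, while $F^{-*}(y) \circ F_Q = y$ by definition of $F^{-*}$; substituting, the second term becomes $y(\widehat{\lambda}_{0,Q}^{-1}(x))$, equal to the first, so the pairing vanishes and $\Gamma_{F^{-*}} \subseteq \Gamma_{F^{-*}}^\perp$.

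For the opposite inclusion, I would combine the splitting from the first step with the elementary computation, again from Proposition~\ref{prop:PropertiesAlgebraicGluing}(1), that $(\lambda_0 \cup_{\partial F}-\lambda_1)\big((\widehat{\lambda}_0(z_0),0),(y,F^{-*}(y))\big) = y(z_0)$ for $z_0 \in H_0$, $y \in H_0^*$. If $v \in \Gamma_{F^{-*}}^\perp$, write $v = (\widehat{\lambda}_0(z_0),0) + b$ with $b \in \Gamma_{F^{-*}}$; since $b$ already pairs trivially with $\Gamma_{F^{-*}}$, we obtain $y(z_0) = 0$ for all $y \in H_0^*$, hence $z_0 = 0$ by reflexivity of the free module $H_0$, so $v = b \in \Gamma_{F^{-*}}$. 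Thus $\Gamma_{F^{-*}}^\perp = \Gamma_{F^{-*}}$, and together with the first step this makes $\Gamma_{F^{-*}}$ a Lagrangian complement for $\widehat{\lambda}_0(H_0)$; the nonsingularity hypothesis on $\lambda_0 \cup_{\partial F} -\lambda_1$ enters only to guarantee that we are dealing with a genuine nonsingular Hermitian form (so in particular $H_0 \cup_{\partial F} H_1$ is free, cf.\ Remark~\ref{rem:NonSing}) for which the terminology applies. The one step requiring genuine care is the cokernel bookkeeping in the spanning argument: checking that it is $F^*$, not $F^{-*}$, that descends to the inverse of $\partial F$, and that these two descended maps are mutually inverse.
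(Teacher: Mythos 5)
Your proof is correct and follows essentially the same route as the paper's: the same decomposition $(x_0,x_1)=(x_0-F^*(x_1),0)+(F^*(x_1),x_1)$ for the spanning step, and the same adjoint computation (in its $Q$-valued form from Proposition~\ref{prop:PropertiesAlgebraicGluing}) for isotropy of the graph. The only organisational difference is that you establish the direct-sum decomposition first and then exploit it to shorten the verification of $\Gamma_{F^{-*}}^\perp \subseteq \Gamma_{F^{-*}}$, whereas the paper proves that inclusion directly via the $s_i, z_i$ bookkeeping; both ultimately reduce to nondegeneracy of the evaluation pairing on the free module $H_0$.
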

\begin{proof}
Set~$\lambda:=\lambda_0 \cup_{\partial F}-\lambda_1$.
We show that~$\Gamma_{F^{-*}}$ is a Lagrangian.
To see that~$\Gamma_{F^{-*}} \subseteq \Gamma_{F^{-*}}^\perp$, we must show that for all~$x_0,y_0 \in H_0^*$,
\begin{align*}
\lambda \left(
 \begin{pmatrix} x_0 \\ F^{-*}(x_0) \end{pmatrix} ,
\begin{pmatrix} y_0 \\ F^{-*}(y_0) \end{pmatrix}
\right)=0.
\end{align*}
Pick~$s_0 \in \Lambda$ and~$z_0 \in H_0$ such that~$s_0x_0=\widehat{\lambda}_0(z_0)$.
We apply~$F^{-*}$ to both sides of this equation.
Since~$F$ is an isometry, we have~$F^*\widehat{\lambda}_1F=\widehat{\lambda}_0$, and we therefore obtain~$s_0F^{-*}(x_0)=\widehat{\lambda}_1(F(z_0))$.
Using the definition of~$\lambda$, we now obtain the desired conclusion:
$$ \lambda \left(
 \begin{pmatrix} x_0 \\ F^{-*}(x_0) \end{pmatrix} ,
\begin{pmatrix} y_0 \\ F^{-*}(y_0) \end{pmatrix}
\right)
=\frac{1}{s_0}y_0(z_0)-\frac{1}{s_0}F^{-*}(y_0)(F(z_0))
=0.~$$
Next, we show that~$\Gamma_{F^{-*}}^\perp \subseteq \Gamma_{F^{-*}}$.
We therefore assume that~$(x_0,x_1) \in H_0 \cup_{\partial F} H_1$ satisfies, for all~$y_0 \in H_0^*$, the equation
$$\lambda \left(
\begin{pmatrix} x_0 \\ x_1 \end{pmatrix} ,
 \begin{pmatrix} y_0 \\ F^{-*}(y_0) \end{pmatrix}
\right)=0~$$
and we must show that~$x_1=F^{-*}(x_0)$.
Pick~$s_0,s_1 \in \Lambda, z_0 \in H_0$ and~$z_1 \in H_1$ so that~$s_0x_0=\widehat{\lambda}_0(z_0)$ and~$s_1 x_1=\widehat{\lambda}_1(z_1)$.
We have~$s_0F^{-*}(x_0)=\widehat{\lambda}_1(F(z_0)).$
Use consecutively the definition of~$\lambda$ and the definition of pullback to obtain
%\begin{align*}
%\label{eq:NonSingular}
%0=\lambda \left(
% \begin{pmatrix} x_0 \\ F^{-*}(x_0) \end{pmatrix}
%\begin{pmatrix} y_0 \\ y_1 \end{pmatrix}
%\right)
%=y_0\left(\frac{z_0}{s_0}\right)-y_1\left(\frac{F(z_0)}{s_0}\right)
%=\left(y_0-F^*(y_1)\right)\left(\frac{z_0}{s_0}\right)
%=x_0\left(\frac{z_0}{s_0}-F^{-1}\left(\frac{z_1}{s_1}\right)\right).
%\end{align*}
\begin{align*}
\label{eq:NonSingular}
0=\lambda \left(
\begin{pmatrix} x_0 \\ x_1 \end{pmatrix} ,
 \begin{pmatrix} y_0 \\ F^{-*}(y_0) \end{pmatrix}
\right)
=y_0\left(\frac{z_0}{s_0}\right)-F^{-*}(y_0)\left(\frac{z_1}{s_1}\right)
=y_0\left(\frac{z_0}{s_0}-F^{-1}\left(\frac{z_1}{s_1}\right)\right).
\end{align*}
%\begin{align*}
%\label{eq:NonSingular}
%0=\lambda \left(
% \begin{pmatrix} x_0 \\ F^{-*}(x_0) \end{pmatrix}
%\begin{pmatrix} y_0 \\ y_1 \end{pmatrix}
%\right)
%=y_0\left(\frac{z_0}{s_0}\right)-F^{-*}(x_0)\left(\frac{z_1}{s_1}\right)
%=x_0\left(\frac{z_0}{s_0}-F^{-1}\left(\frac{z_1}{s_1}\right)\right).
%\end{align*}
Since this equation holds for all~$y_0$, we deduce that \[\frac{z_0}{s_0}=F^{-1}\left(\frac{z_1}{s_1}\right).\]
Using that~$F$ is an isometry, as well as the definitions of~$s_0,s_1,z_0$ and~$z_1$, we obtain the desired equation:
$$ x_0
=\widehat{\lambda}_0\left(\frac{z_0}{s_0}\right)
=\widehat{\lambda}_0\left(F^{-1}\left(\frac{z_1}{s_1}\right)\right)
=F^*\left(\widehat{\lambda}_1\left(\frac{z_1}{s_1}\right)\right)=F^*(x_1).$$
To conclude the proof that~$\Gamma_{F^{-*}}$ is a Lagrangian, it remains to show that it is a direct summand of~$H:=\ker (\partial F \circ  \pi_0 -\pi_1)$.
In fact, we will show that
$$\widehat{\lambda}_0(H_0) \oplus \Gamma_{F^{-*}}=H.$$
First we establish the inclusion~$\widehat{\lambda}_0(H_0) + \Gamma_{F^{-*}} \subseteq H$.
To do so, we must prove that the sum of arbitrary elements~$(\widehat{\lambda}_0(z_0),0) \in \widehat{\lambda}_0(H_0)$ and~$(y,F^{-*}(y)) \in \Gamma_{F^{-*}}$ belongs to~$H$.
In other words, we must show that~$\partial F[\widehat{\lambda}_0(z_0)+y]=[F^{-*}(y)]$.
This uses the fact that~$F^{-*}$ induces~$\partial F$ (by Definition~\ref{def:BoundaryOfAnIsometry}) and the fact that, since~$F$ is an isometry,~$F^{-*} \circ  \widehat{\lambda}_0(z_0)=\widehat{\lambda}_1(F(z_0))$ belongs to~$\operatorname{im}(\widehat{\lambda}_1)$ and so the class~$[F^{-*} \circ  \widehat{\lambda}_0(z_0)]$ must vanish in the quotient~$\coker(\widehat{\lambda}_1)$:
$$\partial F[\widehat{\lambda}_0(z_0)+y]=[F^{-*} \widehat{\lambda}_0(z_0)]+[F^{-*}(y)]=[F^{-*}(y)].$$
Next, we prove the reverse inclusion, namely that~$H \subseteq \widehat{\lambda}_0(H_0) + \Gamma_{F^{-*}}$.
To show this, we must write~$(x,y) \in H$ as a sum of an element in~$\widehat{\lambda}_0(H_0)$ with an element in~$\Gamma_{F^{-*}}$.
Consider the decomposition
$$ (x,y)=(x-F^{*}(y),0)+(F^{*}(y),y).$$
The second term certainly lies in~$\Gamma_{F^{-*}}$, so we need to argue that the first term lies in~$\widehat{\lambda}_0(H_0)$.
Since~$(x,y) \in H$, we know that~$\partial F [x]=[y]$ in~$\coker(\widehat{\lambda}_1)$.
It follows that~$F^{-*}(x)-y \in H_1^*$ belongs to~$\operatorname{im}(\widehat{\lambda}_1)$.
Since~$F$ is an isometry, this is equivalent to saying that~$x-F^{*}(y) \in \operatorname{im}(\widehat{\lambda}_0)$, as desired.
%F^{-*}(x)-y=\widehat{\lambda}_1(z)
%x-F^*(y)=F^*\widehat{\lambda}_1(z)=\widehat{\lambda}_0(F^{-1}(z)).

Having proved that~$\widehat{\lambda}_0(H_0) + \Gamma_{F^{-*}}=H$, it remains to check that~$\widehat{\lambda}_0(H_0) \cap \Gamma_{F^{-*}}=0$ (so that the sum is direct): if~$(\widehat{\lambda}_0(x_0),0)=(x,F^{-*}(x))$ belongs to this intersection, then~$F^{-*}(x)=0$ and therefore~$0=x=\widehat{\lambda}_0(x_0)$.
This concludes the proof of Proposition~\ref{prop:IsElementary}.
\end{proof}

%As an immediate consequence of Proposition~\ref{prop:IsElementary}, we obtain the following result.
%
%\begin{corollary}
%\label{cor:DoublesAreHyperbolic}
%Let~$F \colon (H_0,\lambda_0) \to (H_1,\lambda_1)$ be a~$\Lambda$-isometry such that~$\lambda_0 \cup_{\partial F} -\lambda_1$ is nonsingular.
%If~$\lambda_0 \cup_{\partial F} -\lambda_1$ is even, then it is hyperbolic.
%\end{corollary}
%\begin{proof}
%Proposition~\ref{prop:IsElementary} shows that~$\lambda_0 \cup_{\partial F} -\lambda_1$ is metabolic.
%Even metabolic Hermitian forms over the ring~$\Lambda$ are hyperbolic~\cite[Corollary 3.7.3]{KnusQuadratic}.
%%; here it is worth mentioning that Knus works over general associative rings with~$1$ and that his notation for the hyperbolic Hermitian form is~$H^\varepsilon$; see~\cite[p.14]{KnusQuadratic}.
%\end{proof}

\section{The intersection form of the union of two~$4$-manifolds}
\label{sec:UnionIsUnion}

We start our study of 4-manifolds with infinite cyclic fundamental group and of their~$\Lambda$--intersection forms.
First, recall the following definition from the introduction.

\begin{definition}
\label{def:RibbonBoundary}
A~$4$-manifold~$M$ has \emph{ribbon boundary} if~$\partial M$ is nonempty and path-connected, and the map~$\pi_1(\partial M) \to \pi_1(M)$ induced by the inclusion is surjective.
\end{definition}

Here is a summary of Section~\ref{sec:UnionIsUnion}.
In Subsection~\ref{sub:BoundaryIntersectionForm}, we show that if~$M$ is a~$4$-manifold with infinite cyclic fundamental group whose boundary is ribbon and has torsion Alexander module, then the boundary linking form~$\partial \lambda_M$ of~$\lambda_M$ is isometric to minus the Blanchfield form of~$\partial M$.
In Subsection~\ref{sub:HomeoIso}, we describe when homeomorphisms of 3-manifolds induce isometries of the Blanchfield pairing.
In Subsection~\ref{sub:UnionIsUnion} we show that the~$\Lambda$--intersection form of a union of two manifolds can be expressed using the algebraic union from Subsection~\ref{sub:Gluing}.
In Subsection~\ref{sub:Compatible}, we introduce the notion of a compatible pair and prove Theorem~\ref{thm:UnionIsS1S3ConnectSumSimplyConnectedIntro} from the introduction.

\subsection{The boundary of the intersection form}
\label{sub:BoundaryIntersectionForm}
If a~$4$-manifold~$M$ with~$\pi_1(M)=\Z$ has ribbon boundary and~$\Lambda$-torsion Alexander module, then~$\partial M$ is endowed with a  sesquilinear Hermitian nonsingular linking form over~$\Lambda$, namely the Blanchfield form
$$\Bl_{\partial M} \colon H_1(\partial M;\Lambda) \times H_1(\partial M;\Lambda) \to Q/\Lambda.~$$
This subsection establishes that the boundary linking form of the~$\Lambda$--intersection form of~$M$ is isometric to minus the Blanchfield form of~$\partial M$.
Since results of this form are known (see e.g.~\cite[Theorem 2.6]{BorodzikFriedl1}), we only outline the argument so as to fix some notations for later~use.
\medbreak
We collect some homological facts about~$4$-manifolds with infinite cyclic fundamental group.

\begin{lemma}
\label{lem:Homology}
Let~$M$ be a~$4$-manifold with~$\pi_1(M)=\Z$.
When~$\partial M$ is nonempty, we assume it is ribbon and has torsion Alexander module~$H_1(\partial M;\Lambda)$.
The following assertions hold:
\begin{enumerate}
\item $H_0(M;\Lambda)\cong \Z$ and, if~$\partial M \neq \emptyset$, then~$H_2(\partial M;\Lambda)\cong \Z$;
\item the~$\Lambda$-modules~$H_1(M;\Lambda), H_1(M,\partial M;\Lambda)$ and~$H_3(M;\Lambda)$ all vanish;
\item the~$\Lambda$-modules~$H_2(M;\Lambda)$ and~$H_2(M,\partial M;\Lambda)$ are free;
\item when~$\partial M=\emptyset$, the~$\Lambda$--intersection form~$\lambda_M$ is nonsingular, whereas for~$\partial M \neq \emptyset$,~$\lambda_M$ is nondegenerate, and any matrix representing it presents the Alexander module of~$\partial M$.
\end{enumerate}
\end{lemma}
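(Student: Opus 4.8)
The plan is to compute everything from the chain complex $C_*(\wt M)$ of the infinite cyclic cover, using the standard facts that $\Lambda = \Z[t^{\pm 1}]$ is a Noetherian domain of global dimension $2$, that $C_*(\wt M)$ is a finite complex of free $\Lambda$-modules, and that $H_*(M;\Lambda) = H_*(\wt M)$. First I would handle $H_0$ and $H_1$: since $\wt M$ is connected, $H_0(M;\Lambda)=\Z$ with the trivial $\Lambda$-action (i.e. $t$ acts as $1$), and $H_0(\wt M) = \Lambda/(t-1)$. For $H_1(M;\Lambda)$, the Hurewicz map and $\pi_1(\wt M) = \ker(\pi_1(M)\to\Z)$ being trivial (as $\pi_1(M)=\Z$ maps isomorphically to $\Z$) give $H_1(M;\Lambda)=0$ directly. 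For $H_2(\partial M;\Lambda)=\Z$: $\partial M$ is a closed oriented $3$-manifold, and since it is ribbon with torsion Alexander module, $H_1(\partial M;\Lambda)$ is $\Lambda$-torsion, so $H_1(\partial \wt M)\otimes\Q(t)=0$; then Poincar\'e duality on $\wt{\partial M}$ (or rather the half-lives-half-dies computation) plus the fact that $\wt{\partial M}$ is connected with $\pi_1$-surjection gives $H_0(\partial M;\Lambda)=\Z$ and, by Poincar\'e duality $H_2(\partial M;\Lambda)\cong H^1(\partial M;\Lambda)$, which by universal coefficients is $\Hom_\Lambda(H_1;\Lambda)\oplus \operatorname{Ext}^1_\Lambda(H_0;\Lambda)$; the $\Hom$ term vanishes because $H_1$ is torsion, and $\operatorname{Ext}^1_\Lambda(\Lambda/(t-1),\Lambda)\cong \Lambda/(t-1)=\Z$, giving $H_2(\partial M;\Lambda)=\Z$.

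Next I would establish vanishing of $H_1(M,\partial M;\Lambda)$ and $H_3(M;\Lambda)$ and freeness of $H_2$. By Poincar\'e--Lefschetz duality $H_k(M,\partial M;\Lambda)\cong H^{4-k}(M;\Lambda)$ and $H_k(M;\Lambda)\cong H^{4-k}(M,\partial M;\Lambda)$. So $H_3(M;\Lambda)\cong H^1(M,\partial M;\Lambda)$ and $H_1(M,\partial M;\Lambda)\cong H^3(M;\Lambda)$. By universal coefficients $H^1(M;\Lambda)\cong \Hom_\Lambda(H_1(M;\Lambda),\Lambda)\oplus \operatorname{Ext}^1_\Lambda(H_0(M;\Lambda),\Lambda)$; the first summand is $0$ since $H_1(M;\Lambda)=0$, and $\operatorname{Ext}^1_\Lambda(\Z,\Lambda)\cong \Z$, so $H^1(M;\Lambda)=\Z$ — not quite vanishing, so I need to be more careful and run the long exact sequence of the pair $(M,\partial M)$ together with the identifications above rather than naively quoting one duality. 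The honest route is: feed $H_0,H_1$ of $M$ and of $\partial M$ (now known) into the long exact sequence $\cdots\to H_k(\partial M;\Lambda)\to H_k(M;\Lambda)\to H_k(M,\partial M;\Lambda)\to H_{k-1}(\partial M;\Lambda)\to\cdots$, combine with the dualities $H_k(M,\partial M;\Lambda)\cong H^{4-k}(M;\Lambda)$ and the UCT spectral sequence $\operatorname{Ext}^p_\Lambda(H_q(M;\Lambda),\Lambda)\Rightarrow H^{p+q}(M;\Lambda)$, and solve for the unknown groups degree by degree; the $\operatorname{Ext}^2$ term over the $2$-dimensional ring $\Lambda$ is what detects torsion, and it vanishes for $H_0=\Z$ only if — here one uses that $\operatorname{Ext}^2_\Lambda(\Z,\Lambda)=0$ since $\Z=\Lambda/(t-1)$ has a length-$1$ free resolution $0\to\Lambda\xrightarrow{t-1}\Lambda\to\Z\to 0$. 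Freeness of $H_2(M;\Lambda)$ then follows because $H^1(M;\Lambda)$ and $H^3(M;\Lambda)$ being computed, and $H_2(M;\Lambda)$ fitting into the UCT/duality picture with no $\operatorname{Ext}^2$ obstruction, forces it to be a submodule of a free module over $\Lambda$ with $H^2$ having no torsion from below; more cleanly, $H_2(M,\partial M;\Lambda)\cong H^2(M;\Lambda)$ which is $\Hom_\Lambda(H_2(M;\Lambda),\Lambda)$ modulo $\operatorname{Ext}$ terms that vanish, hence $H_2(M,\partial M;\Lambda)$ is the dual of a f.g. module and is therefore free by \cite[Lemma~2.1]{BorodzikFriedlOnTheAlgebraic} (cited in Remark~\ref{rem:NonSing}), and a symmetric argument (or the same applied after noting $H_2(M;\Lambda)$ injects into its double dual once torsion is ruled out) gives freeness of $H_2(M;\Lambda)$.

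Finally, for part (4): when $\partial M=\emptyset$, Poincar\'e duality gives $H_2(M;\Lambda)\cong H^2(M;\Lambda)\cong \Hom_\Lambda(H_2(M;\Lambda),\Lambda)$ (the $\operatorname{Ext}$ contributions vanishing by the computations above), and unwinding this isomorphism identifies it with the adjoint $\wh\lambda_M$, so $\lambda_M$ is nonsingular. When $\partial M\neq\emptyset$, the same duality plus the long exact sequence of $(M,\partial M)$ gives a short exact sequence
\[0\to H_2(M;\Lambda)\xrightarrow{\wh\lambda_M} H_2(M;\Lambda)^* \to H_1(\partial M;\Lambda)\to 0,\]
where the identification of $\coker(\wh\lambda_M)$ with $H_1(\partial M;\Lambda)$ comes exactly from the composite $H_2(M;\Lambda)^*\cong H^2(M;\Lambda)\cong H_2(M,\partial M;\Lambda)\xrightarrow{\delta} H_1(\partial M;\Lambda)$ described in the introduction; injectivity of $\wh\lambda_M$ (nondegeneracy) follows since tensoring with $Q=\Q(t)$ kills the torsion module $H_1(\partial M;\Lambda)$ and makes $\wh\lambda_M\otimes Q$ an isomorphism. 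Any matrix $A$ for $\lambda_M$ in a basis of the free module $H_2(M;\Lambda)$ then has $\coker(A^T)\cong \coker(\wh\lambda_M)\cong H_1(\partial M;\Lambda)$, i.e. $A$ presents the Alexander module. The main obstacle in all of this is bookkeeping the $\operatorname{Ext}^1$ and $\operatorname{Ext}^2$ terms over the two-dimensional ring $\Lambda$ correctly — in particular keeping track of the fact that $H^1(M;\Lambda)$ and $H^3(M;\Lambda)$ are \emph{not} zero (they contain a $\Z=\operatorname{Ext}^1_\Lambda(\Z,\Lambda)$) even though the corresponding homology groups vanish — and making sure the duality identifications are threaded through the long exact sequence consistently; everything else is standard.
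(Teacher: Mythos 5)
Your proposal is correct and follows essentially the same route as the paper: identify $H_*(M;\Lambda)$ with the homology of the universal cover to get $H_0$ and $H_1$, use Poincar\'e--Lefschetz duality together with the universal coefficient spectral sequence over the two-dimensional ring $\Lambda$ (with the key input $\operatorname{Ext}^2_\Lambda(\Z,\Lambda)=0$ from the length-one resolution of $\Lambda/(t-1)$, and the fact that duals of finitely generated $\Lambda$-modules are free) for parts (1)--(3), and the long exact sequence of $(M,\partial M)$ tensored with $Q$ for part (4). The only wobble is the initial detour through $H^1(M;\Lambda)\cong\Z$, which you correctly abandon in favour of computing $H_1(M,\partial M;\Lambda)=0$ from the long exact sequence of the pair and then $H_3(M;\Lambda)\cong H^1(M,\partial M;\Lambda)=0$, exactly as the paper does.
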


\begin{proof}
Throughout this proof we use that since the manifold has fundamental group~$\Z$, homology with~$\Lambda$-coefficients can be computed as the homology of the universal cover.
Since universal covers are~$1$-connected, we immediately deduce that~$H_0(M;\Lambda)\cong \Z$ and~$H_1(M;\Lambda)=0$.
If the boundary is nonempty, the long exact sequence of the pair shows that $H_1(M,\partial M;\Lambda)=0$; here we used that~$\partial M$ is connected, as assumed in Definition~\ref{def:RibbonBoundary}.
For later use, we also note that~$H_0(M,\partial M;\Lambda)=0$, also owing to the fact that~$\partial M$ is connected.
As~$H_1(\partial M;\Lambda)$ is torsion, we have~$\Hom_\Lambda(H_1(\partial M;\Lambda),\Lambda)=0$, and thus~$H_2(\partial M;\Lambda) \cong \operatorname{Ext}_\Lambda^1(H_0(\partial M;\Lambda),\Lambda) \cong \Z$ by duality and the universal coefficient spectral sequence~\cite[Theorem 2.3]{LevineKnotModules}.
Since $H_i(M,\partial M;\Lambda)=0$ for~$i=0,1$, duality and the universal coefficient spectral sequence (UCSS for short) also show that~$H_3(M;\Lambda)=H^1(M,\partial M;\Lambda)=0$.
Thus we have proved the first two assertions; we now prove the third.
Poincar\'e duality and the UCSS imply that
\begin{align*}
&H_2(M;\Lambda) \cong H^2(M,\partial M;\Lambda) \cong \overline{\Hom_\Lambda(H_2(M,\partial M;\Lambda),\Lambda)}, \\
& H_2(M,\partial M;\Lambda) \cong H^2(M;\Lambda) \cong \overline{\Hom_\Lambda(H_2(M;\Lambda),\Lambda)}.
\end{align*}
Since the dual of a finitely generated~$\Lambda$-module is free~\cite[Lemma 2.1]{BorodzikFriedlOnTheAlgebraic}, we deduce that these second homology modules are free over~$\Lambda$.
This proves the third assertion and also establishes the fourth, namely that in the closed case, the~$\Lambda$--intersection form is nonsingular.
Finally, when the boundary is nonempty, the map~$\Z \cong H_2(\partial M;\Lambda) \to H_2(M;\Lambda)$ is the zero map (since~$H_2(M;\Lambda)$ is free) and therefore the intersection form is nondegenerate; it presents~$H_1(\partial M;\Lambda)$ because we established that~$H_1(M;\Lambda)=0$.
%This concludes the proof of the lemma.
\end{proof}

In the case with nonempty connected boundary, we fix bases for the free~$\Lambda$-modules~$H_2(M;\Lambda)$ and~$H_2(M,\partial M;\Lambda)$.

\begin{remark}
\label{rem:Bases}
Let~$M$ be a~$4$-manifold with~$\pi_1(M)=\Z$ whose boundary is ribbon and has torsion Alexander module.
Fix a basis~$\mathbf{b}$ for~$H:=H_2(M;\Lambda)$, endow the dual~$H^*:=\overline{\Hom_\Lambda(H,\Lambda)}$ with the dual basis~$\mathbf{b}^*$, and equip~$H_2(M,\partial M;\Lambda)$ with the basis~$\PD \circ \ev^{-1} (\mathbf{b}^*)$ coming from the isomorphisms~$\ev \colon H^2(M;\Lambda) \to H^*$ and ~$\PD \colon H^2(M;\Lambda) \to  H_2(M,\partial M;\Lambda)$.
A short computation shows that if the matrix~$A$ is defined as~$A_{ij}:=\lambda_M(b_i,b_j)$, then~$A^T=\overline{A}$ is a matrix for~$\widehat{\lambda}_M \colon H\to H^*$; recall Remark~\ref{rem:Torsion}.
The same conclusion holds for the map~$H \to H_2(M,\partial M;\Lambda)$ induced by the inclusion~\cite[Section~5.2]{ConwayFriedlToffoli}.
As mentioned in the fourth item of Lemma~\ref{lem:Homology},
%Looking at the exact sequence of the pair, we see that
the connecting homomorphism~$\delta$ in the long exact sequence of the pair~$(M,\partial M)$, together with Poincar\'e duality and the evaluation map, determines a map
\begin{align*} D_M \colon \coker(\widehat{\lambda}_M)=\coker(A^T)  &\xrightarrow{\cong} H_1(\partial M;\Lambda) \\
[x]  &\mapsto \delta \circ \PD \circ \ev^{-1}(x),
\end{align*}
where $x\in H_2(M;\Lambda)^*$.
The map $D_M$ is well-defined and an isomorphism. That it is well-defined follows from a diagram chase in the next diagram, in which all homology has $\Lambda$ coefficients.  The left two vertical maps are isomorphisms, and therefore by the five lemma so is $D_M$.  To help the reader to parse the basis choices above, we also indicate the matrices representing the maps on the left of the diagram:
\begin{equation}\label{eqn:delta-induces-map}
\xymatrix{
0\ar[r]&H_2(M) \ar[r]^-{\widehat{\lambda}_M}_-{A^T} \ar[d]^{\Id}_{I} & H_2(M)^* \ar[r] \ar[d]^{\PD \circ \ev^{-1}}_{I} & \coker(\widehat{\lambda}_M) \ar[r] \ar[d]^{D_M} & 0 \\
0\ar[r]& H_2(M) \ar[r]^-{j}_-{A^T}  & H_2(M,\partial M) \ar[r]^-{\delta}  & H_1(\partial M) \ar[r] & 0.  }
\end{equation}
\end{remark}

Next, we briefly recall the definition of the Blanchfield pairing, referring to~\cite{ConwayFriedlToffoli, FriedlPowell} as references in which the conventions are identical to ours.

\begin{definition}
\label{rem:Blanchfield}
Let~$N$ be a closed~$3$-manifold with an epimorphism~$\varphi \colon \pi_1(N) \twoheadrightarrow \Z$ such that the resulting Alexander module is~$\Lambda$-torsion.
The adjoint of the Blanchfield pairing is defined by the composition
\begin{align*}
H_1(N;\Lambda)
 \xrightarrow{\PD^{-1},\cong} H^2(N;\Lambda)
  \xrightarrow{BS^{-1},\cong} H^1(N;Q/\Lambda)
    \xrightarrow{\ev,\cong} \overline{\Hom_\Lambda(H_1(N;\Lambda),Q/ \Lambda)}
\end{align*}
of the inverse of Poincar\'e duality, the inverse of a Bockstein homomorphism, and the evaluation map.
The Blanchfield pairing is a nonsingular linking form, so in particular it is sesquilinear and Hermitian.
\end{definition}

Now we can prove the main result of this subsection.

\begin{proposition}
\label{prop:BoundaryLinkingForm3Manifold}
If~$M$ is a~$4$-manifold with~$\pi_1(M)=\Z$ whose boundary is ribbon and has torsion Alexander module. The map $D_M$ defined in Remark~\ref{rem:Bases} induces an isometry
$$D_M \colon \partial (H_2(M;\Lambda),\lambda_M) = (\coker(\widehat{\lambda}_M),\partial \lambda_M) \xrightarrow{\cong} (H_1(\partial M;\Lambda),-\Bl_{\partial M}).~$$
\end{proposition}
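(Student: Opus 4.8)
The plan is to reduce both the boundary linking form $\partial\lambda_M$ and the Blanchfield form $\Bl_{\partial M}$ to explicit matrix formulas over a common presentation, and then to check that they differ only by a global sign. First I would invoke Lemma~\ref{lem:Homology} together with Remark~\ref{rem:Bases}: fix a basis $\mathbf{b}$ of the free $\Lambda$-module $H := H_2(M;\Lambda)$, let $A$ be the Hermitian matrix with $A_{ij} = \lambda_M(b_i,b_j)$, so that $\overline{A} = A^T$ represents $\widehat{\lambda}_M$, and the connecting homomorphism $\delta$ of the pair $(M,\partial M)$ induces an isomorphism of $\Lambda$-modules $\coker(\widehat{\lambda}_M) = \coker(A^T) \xrightarrow{\cong} H_1(\partial M;\Lambda)$; in particular $A$ presents the Alexander module of $\partial M$. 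It then suffices to show that this isomorphism carries $\partial\lambda_M$ to $-\Bl_{\partial M}$.

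For the boundary linking form, Remark~\ref{rem:AlternativeDefinitionBoundaryLinking} already supplies the formula $\partial\lambda_M([x],[y]) = x^{T}A^{-1}\overline{y} \in Q/\Lambda$. For the Blanchfield form I would unwind Definition~\ref{rem:Blanchfield} and transport the computation from $\partial M$ into $M$ using the long exact sequences of the pair $(M,\partial M)$ with $\Lambda$-, $Q$-, and $Q/\Lambda$-coefficients. Since the Alexander module is $\Lambda$-torsion, $H_*(\partial M;Q)$ vanishes in the relevant degrees, so $\widehat{\lambda}_{M,Q}$ is an isomorphism and a class in $H_1(\partial M;\Lambda)$ lifts across $\delta$ to a relative class whose Poincar\'e--Lefschetz dual in $H^2(M;\Lambda) \cong H^*$ can be inverted by $A$ over $Q$; assembling this with the Bockstein and evaluation isomorphisms from Definition~\ref{rem:Blanchfield} identifies $\Bl_{\partial M}$ with the form $([x],[y]) \mapsto -x^{T}A^{-1}\overline{y}$ (equivalently $-\overline{A}^{-1}$, since $A$ is Hermitian), which is the standard description of a Blanchfield pairing presented by a Hermitian matrix, cf.\ \cite[Theorem~2.6]{BorodzikFriedl1} and the conventions of \cite{FriedlPowell,ConwayFriedlToffoli}. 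Comparing the two formulas gives $\partial\lambda_M = -\Bl_{\partial M}$, and since all the identifications involved are $\Lambda$-linear isomorphisms, this is the claimed isometry $\partial(H_2(M;\Lambda),\lambda_M) \cong (H_1(\partial M;\Lambda),-\Bl_{\partial M})$.

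The only real content is the sign: one must check that Poincar\'e--Lefschetz duality on $M$, Poincar\'e duality on $\partial M$, the connecting map $\delta$, the Bockstein for $0 \to \Lambda \to Q \to Q/\Lambda \to 0$, and the evaluation isomorphisms fit into a single commutative diagram with consistent signs, the $-1$ coming from the interplay between the boundary-orientation convention in Definition~\ref{def:BoundaryLinkingForm} and the duality convention in Definition~\ref{rem:Blanchfield}. I expect this to be the main obstacle; rather than recompute it in full I would isolate this sign computation, verify it on a single model, and otherwise cite \cite[Theorem~2.6]{BorodzikFriedl1}. Since the statement is standard, the purpose of including the argument at all is mainly to pin down, for later use, the notation: the matrix $A$, the map $\delta$, and the identification $\coker(\widehat{\lambda}_M) \cong H_1(\partial M;\Lambda)$.
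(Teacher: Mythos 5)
Your proposal is correct and follows essentially the same route as the paper: fix bases as in Remark~\ref{rem:Bases}, represent $\lambda_M$ by a Hermitian matrix $A$, use Remark~\ref{rem:AlternativeDefinitionBoundaryLinking} to write $\partial\lambda_M$ as $x^TA^{-1}\overline{y}$, and identify $\Bl_{\partial M}$ with $-x^TA^{-1}\overline{y}$ via the connecting homomorphism $\delta$, deferring the duality/Bockstein diagram chase to the literature (the paper cites \cite[Section~5]{ConwayFriedlToffoli} for exactly the commutative diagram you describe, and itself flags \cite[Theorem~2.6]{BorodzikFriedl1} as the source of the known result). Your identification of the sign as the only real content, and of the purpose of the proof as fixing notation for $A$ and $\delta$, matches the paper's stated intent.
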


\begin{proof}
Choose bases for~$H_2(M;\Lambda)$ and~$H_2(M,\partial M;\Lambda)$ as in Remark~\ref{rem:Bases}, and use the notation of that remark.
%and set~$A_{ij}:=\lambda_M(b_i,b_j)$. and use~$\delta$ to denote the connecting homomorphism from the long exact sequence of the pair~$(M,\partial M)$.
The same argument as in~\cite[Section~5]{ConwayFriedlToffoli} shows that the following diagram (with exact columns) commutes:
$$
\xymatrix@R0.5cm@C2cm{
0\ar[d]&0\ar[d] \\
H_2(M;\Lambda) \times H_2(M;\Lambda) \ar[r]^-{(x,y)\mapsto -x^T A \overline{y}}\ar[d]^{A^T \times A^T}&\Lambda \ar[d]  \\
H_2(M,\partial M;\Lambda) \times H_2(M,\partial M;\Lambda) \ar[r]^-{(x,y)\mapsto -x^T A^{-1} \overline{y}}\ar[d]^{\delta \times \delta } &Q \ar[d]  \\
H_1(\partial M;\Lambda) \times H_1(\partial M;\Lambda) \ar[d]\ar[r]^-{\Bl_{\partial M}}&Q/\Lambda\ar[d] \\
0&0.
}
$$
In particular, the isomorphism $D_M \colon \coker(\widehat{\lambda}_M) = \coker(A^T)  \xrightarrow{\cong} H_1(\partial M;\Lambda)$ induced by $\delta$ and discussed in Remark~\ref{rem:Bases} is an isometry between the linking forms $(\coker(\widehat{\lambda}_M),\partial \lambda_M)$ and~$(H_1(\partial M;\Lambda),-\Bl_{\partial M})$.
Here we are using the characterisation of~$\partial \lambda_M$ from Remark~\ref{rem:AlternativeDefinitionBoundaryLinking}.
\end{proof}

The next remark records that the presence of a minus sign in Proposition~\ref{prop:BoundaryLinkingForm3Manifold} is immaterial once one passes to isometry groups, and defines the bijection $D^\#$ which identifies the two isometry groups.

\begin{remark}
\label{rem:MinusAndIdentifIso}
Given linking forms~$(H_0,\beta_0)$ and~$(H_1,\beta_1)$, observe that canonically we have $\Iso(\beta_0,\beta_1)=\Iso(-\beta_0,-\beta_1)$; the presence of the minus sign has no effect.
%$\beta_1(g(x),g(y))=\beta_0(x,y)$ if and only if~$-\beta_1(g(x),g(y))=-\beta_0(x,y)$
In particular if $M_0$ and $M_1$ are~$4$-manifolds with $\pi_1(M_i)=\Z$ for $i=0,1$, whose boundaries are ribbon and have torsion Alexander modules, then we can use the maps $D_i := D_{M_i}$ from Proposition~\ref{prop:BoundaryLinkingForm3Manifold} to obtain an identification
\begin{align*}
 D^\# \colon \Iso (\partial \lambda_{M_0},\partial \lambda_{M_1}) &\xrightarrow{\cong} \Iso (\Bl_{\partial M_0},\Bl_{\partial M_1})\\
\wt{f} & \mapsto D_{1} \circ \wt{f} \circ D_{0}^{-1}.
\end{align*}
\end{remark}

\subsection{Homeomorphisms and isometries}
\label{sub:HomeoIso}

In this short subsection, we provide a condition for a homeomorphism between 3-manifolds to induce an isometry of Blanchfield forms.

\begin{proposition}
\label{prop:HomeoInduceIsometry}
Let~$Y_0^3,Y_1^3$ be~$3$-manifolds equipped with epimorphisms~$\varphi_i \colon \pi_1(Y_i) \twoheadrightarrow \Z$ and assume that the resulting Alexander modules~$H_1(Y_i;\Lambda)$ are~$\Lambda$-torsion for~$i=0,1.$
If an orientation-preserving homotopy equivalence~$f \colon Y_0^3 \to Y_1^3$ satisfies~$\varphi_1 \circ f_* =\varphi_0$ on~$\pi_1(Y_0)$, then it induces an isometry between the Blanchfield forms:
$$ f_* \colon (H_1(Y_0;\Lambda), \Bl_{Y_0})\to (H_1(Y_1;\Lambda), \Bl_{Y_1}).$$
\end{proposition}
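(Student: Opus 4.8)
The plan is to check that a suitable lift of $f$ to the infinite cyclic covers is an isometry, using naturality of all the maps in the definition of the Blanchfield pairing (Definition~\ref{rem:Blanchfield}). First I would observe that, since $\varphi_1 \circ f_* = \varphi_0$, the map $f$ lifts to a $\Lambda$-equivariant homotopy equivalence $\widetilde{f} \colon \widetilde{Y}_0 \to \widetilde{Y}_1$ between the infinite cyclic covers associated to $\ker \varphi_i$, unique up to the deck action; fix one such lift. This induces $\widetilde{f}_* \colon H_1(Y_0;\Lambda) \to H_1(Y_1;\Lambda)$, a $\Lambda$-module isomorphism because $f$ is a homotopy equivalence. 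It remains to show $\Bl_{Y_1}(\widetilde{f}_*x, \widetilde{f}_*y) = \Bl_{Y_0}(x,y)$ for all $x,y \in H_1(Y_0;\Lambda)$.

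For this I would write the adjoint of $\Bl_{Y_i}$ as the composite from Definition~\ref{rem:Blanchfield}, namely $\PD^{-1}$ followed by $BS^{-1}$ followed by the evaluation map, and argue that $\widetilde{f}$ intertwines each of the three stages. The Bockstein $BS$ associated to the coefficient sequence $0 \to \Lambda \to Q \to Q/\Lambda \to 0$ and the evaluation map are natural for any continuous map, so the only subtlety is Poincar\'e duality: naturality of $\PD$ requires $f$ to be orientation-preserving, which is exactly the hypothesis, and one must be careful that the chosen lift $\widetilde{f}$ is compatible with the induced orientations on the covers (it is, since the covering projections are local homeomorphisms and $f$ is orientation-preserving). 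Concretely, capping with the fundamental class commutes with $\widetilde{f}_*$ up to the sign that is $+1$ precisely when $f$ preserves orientation. Assembling these three commuting squares gives that $\widehat{\Bl_{Y_1}} \circ \widetilde{f}_* = (\widetilde{f}_*)^{-*} \circ \widehat{\Bl_{Y_0}}$, i.e.\ $\widetilde{f}_*$ carries the adjoint of $\Bl_{Y_0}$ to that of $\Bl_{Y_1}$, which is exactly the statement that $\widetilde{f}_*$ is an isometry of linking forms.

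I expect the main obstacle to be purely bookkeeping rather than conceptual: namely, keeping track of the various identifications (universal coefficients, the duality isomorphism, the equivariant structure) so that the naturality squares genuinely commute on the nose and the sesquilinear structure is respected under the involution $t \mapsto t^{-1}$. One should check that $\widetilde{f}_*$ is $\Lambda$-linear (not merely $\overline{\Lambda}$-linear), which follows from $\varphi_1 \circ f_* = \varphi_0$ rather than its conjugate; reversing the generator of $\Z$ would introduce the involution and is ruled out by this hypothesis. No deep input is needed beyond the functoriality already implicit in Definition~\ref{rem:Blanchfield}; the orientation-preserving hypothesis is used exactly once, to fix the sign in Poincar\'e duality.
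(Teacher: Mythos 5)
Your proposal is correct and follows essentially the same route as the paper: the paper's proof likewise notes that $\varphi_1\circ f_*=\varphi_0$ gives $\Lambda$-isomorphisms on twisted (co)homology, and then invokes the degree-one property of $f$ together with naturality of the universal coefficient spectral sequence and the Bockstein to see that $\widetilde{f}_*$ intertwines the composites defining the two Blanchfield adjoints. Your expanded discussion of the three naturality squares and of where the orientation hypothesis enters is just a more detailed write-up of that same argument.
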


\begin{proof}
The homotopy equivalence~$f$ induces~$\Lambda$-isomorphisms~$f_* \colon H_*(Y_0;\Lambda) \to~H_*(Y_1;\Lambda)$ (and similarly on cohomology) because we assumed that~$\varphi_1 \circ f_* =\varphi_0$.
The fact that~$f$ has degree one, together with the naturality of the UCSS and the Bockstein homomorphism ensure that~$f_*$ intertwines the Blanchfield pairings, concluding the proof of the proposition.
\end{proof}

For pairs~$(Y_0,\varphi_0)$ and~$(Y_1,\varphi_1)$ as in the statement of Proposition~\ref{prop:HomeoInduceIsometry}, we consider those orientation-preserving homeomorphisms that intertwine the~$\varphi_i$:
%the following set of orientation-preserving homeomorphisms:
$$ \Homeo_\varphi^+(Y_0,Y_1)=\lbrace f \in \Homeo^+(Y_0,Y_1) \mid \varphi_1 \circ f_* =\varphi_0 \rbrace.$$
In the case where~$(Y_0,\varphi_0)=(Y_1,\varphi_1)$, we simply write~$\Homeo_\varphi^+(Y_0)$ for the resulting group.
%This is a group because if we have homeos f_1,f_2, then \varphi_0 (f_1f_2)_*={f_2}_*\varphi_0=\varphi_0.

\subsection{Algebraic unions and topological unions}
\label{sub:UnionIsUnion}

In this subsection, we show that under favourable conditions, the~$\Lambda$--intersection form of a union of two~$4$-manifolds can be expressed using the algebraic union of Hermitian forms from Subsection~\ref{sub:Gluing}.
For that however, given two manifolds~$M_0,M_1$ with~$\pi_1(M_i) = \Z$, we need to verify that~$\pi_1(M_0 \cup_f -M_1) = \Z$.
Here and from now on, when $\pi_i(M_i)=\Z$, we take $\varphi_i \colon \pi_1(\partial M_i) \to \pi_1(M_i)=\Z$ to be the map induced by the inclusion.
Thus, when we write $\Homeo^+_{\varphi}(\partial M_0,\partial M_1)$, it is with respect to these inclusion induced maps.
%As noted in Lemma~\ref{lem:UnionHaspi1Z} below, this holds if~$M_0,M_1$ are 4-manifolds with ribbon boundaries and if~$f \in \Homeo_\varphi^+(\partial M_0,\partial M_1)$.
%In what follows,~$\iota_i \colon \pi_1(\partial M_i) \to \pi_1(M_i)$ denotes the inclusion induced map, and we write~$\varphi_i \colon \pi_1(\partial M_i) \to \Z$ for the map obtained after identifying~$\pi_1(M_i)$ with~$\Z$ for~$i=0,1$.
\medbreak

\begin{lemma}
\label{lem:UnionHaspi1Z}
If~$f \in \Homeo^+_{\varphi}(\partial M_0,\partial M_1)$ is a homeomorphism where~$M_0,M_1$ are 4-manifolds with ribbon boundaries and~$\pi_1(M_i)= \Z$, then~$\pi_1(M_0 \cup_f -M_1) = \Z$.
\end{lemma}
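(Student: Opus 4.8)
The plan is to compute $\pi_1(M_0 \cup_f -M_1)$ directly via the Seifert--van Kampen theorem and then collapse the resulting amalgamated product using the ribbon hypothesis.

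First I would set up the van Kampen decomposition. Write $W := M_0 \cup_f -M_1$ and thicken the gluing region, so that $W$ is covered by two open sets $U_0 \supseteq M_0$ and $U_1 \supseteq -M_1$ that deformation retract onto $M_0$ and $-M_1$ respectively, with $U_0 \cap U_1$ deformation retracting onto $\partial M_0 \cong \partial M_1$; this intersection is connected since ribbon boundaries are connected by Definition~\ref{def:RibbonBoundary}. Choosing a basepoint on $\partial M_0$ and recalling $\pi_1(-M_1)=\pi_1(M_1)$, Seifert--van Kampen yields
\[\pi_1(W) \cong \pi_1(M_0) *_{\pi_1(\partial M_0)} \pi_1(M_1),\]
where the two structure maps $\pi_1(\partial M_0) \to \pi_1(M_i)$ are the inclusion-induced map $\alpha_0$ for $i=0$ and the composite $\alpha_1 \circ f_*$ for $i=1$, with $\alpha_i \colon \pi_1(\partial M_i) \to \pi_1(M_i)$ the inclusion-induced maps.

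Next I would use the ribbon hypothesis on $\partial M_0$: since $\alpha_0$ is surjective, in the pushout every element of $\pi_1(M_0)$ is identified with one coming from $\pi_1(M_1)$, so $\pi_1(W)$ is generated by the image of $\pi_1(M_1)\cong \Z$ and is therefore cyclic. To rule out the finite cyclic case I would build an epimorphism $\pi_1(W)\twoheadrightarrow \Z$: choose isomorphisms $g_i \colon \pi_1(M_i) \xrightarrow{\cong}\Z$ with $\varphi_i = g_i \circ \alpha_i$. The hypothesis $f \in \Homeo^+_\varphi(\partial M_0,\partial M_1)$, i.e.\ $\varphi_1 \circ f_* = \varphi_0$, says precisely that $g_0 \circ \alpha_0 = g_1 \circ \alpha_1 \circ f_*$ on $\pi_1(\partial M_0)$, so the universal property of the pushout assembles $g_0$ and $g_1$ into a homomorphism $\Phi \colon \pi_1(W) \to \Z$. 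As $\Phi$ restricts to $g_0$ on the image of $\pi_1(M_0)$, it is surjective. A cyclic group admitting an epimorphism onto $\Z$ is $\Z$, so $\pi_1(W)\cong \Z$.

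There is no substantial obstacle here; the only point requiring care is the correct identification of the two structure maps in the amalgamated product, so that the condition $f \in \Homeo^+_\varphi$ enters exactly as the compatibility needed to construct $\Phi$, together with the (already guaranteed) connectedness of $\partial M_0$ so that van Kampen applies in this clean form.
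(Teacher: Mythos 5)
Your proof is correct and takes essentially the same route as the paper's: a Seifert--van Kampen computation in which the ribbon hypothesis forces $\pi_1(M_0\cup_f -M_1)$ to be cyclic and the intertwining condition $\varphi_1\circ f_*=\varphi_0$ pins it down as $\Z$. The only difference is cosmetic: the paper chases the relations explicitly to identify the two generators $t_0$ and $t_1$, whereas you invoke the universal property of the pushout to assemble an epimorphism onto $\Z$ and then note that a cyclic group surjecting onto $\Z$ is $\Z$; both finishes are fine.
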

\begin{proof}
 For the sake of applying van Kampen's theorem, for~$i=0,1$ we write~$\pi_1(M_i)=\Z \langle t_i \rangle$, so that~$\pi_1(M_0 \cup_f -M_1)$ is generated by~$t_0,t_1$.
Consider the following commutative diagram:
\[
\xymatrix@C+0.2cm{
\pi_1(\partial M_0) \ar[rrrr]^-{f_*,\cong}\ar@{->>}@/^1pc/[rrd]_-{\varphi_0} \ar@{->>}[d]^{\iota_0} &&&& \pi_1(\partial M_1) \ar@{->>}[d]^{\iota_1} \ar@{->>}@/_1pc/[lld]^-{\varphi_1}  \\
\pi_1(M_0) \ar@{=}[r]& \Z\langle t_0 \rangle \ar[r]_{\psi_0,\cong}& \Z\langle t \rangle& \ar[l]^{\psi_1,\cong} \Z\langle t_1 \rangle& \ar@{=}[l] \pi_1(M_1).
}
\]
Here, for $j=0,1$, $\iota_j \colon \pi_1(\partial M_j) \to \pi_1(M_j)$ is the inclusion induced map and $\psi_j \colon t_j \mapsto t$.
%We write~$\iota_i \colon \pi_1(\partial M_i) \twoheadrightarrow \pi_1(M_i)$ for the inclusion-induced map for~$i=0,1$.
%Set~$M:=M_0 \cup -M_1$
In the group~$\pi_1(M_0 \cup_f -M_1)$, the relations identify~$\iota_1f_*(g)$ with~$\iota_0(g)$ for every~$g \in \pi_1(\partial M_0)$.
%Can always add generators artificially. E.g. the longitude might not generate pi_1 of a knot exterior, but you can add it to a generating set modulo the appropriate relation.
%Pick identifications~$\psi_i \colon \pi_1(M_i) \xrightarrow{\cong} \Z\langle t \rangle, t_i \mapsto t$ so that~$\varphi_i=\psi_i \circ \iota_i$ for~$i=0,1$.
Since~$\iota_0$ is surjective, there is a~$\mu \in \pi_1(\partial M_0)$ with~$\iota_0(\mu)=t_0$.
We deduce that in~$\pi_1(M_0 \cup_f -M_1)$, the relations identify~$t_0=\varphi_0(\mu)$ with~$\iota_1(f_*(\mu))=t_1^n$ for some~$n$.
Under the identification given by the~$\psi_i$ and using that~$\varphi_0(\mu)=\varphi_1(f_*(\mu))$, we deduce that~$n=1$.
Thus $\pi_1(M_0 \cup_f -M_1)$ is cyclic.
For every~$g \in\pi_1(\partial M_0)$, we have~$\iota_0(g)=t_0^m$ for some~$m$, and the same reasoning as above shows that~$\iota_0(g)=t_0^m$ is identified with~$\iota_1(f_*(g))=t_1^m$ in~$\pi_1(M_0 \cup_f -M_1)$.
It follows that~$\pi_1(M_0 \cup_f -M_1)$ is isomorphic to~$\langle t_0,t_1 \mid t_0=t_1 \rangle\cong \Z$.
\end{proof}

Next, we prove the main result of this section, after setting up some notation.
Continuing with the setting of Lemma~\ref{lem:UnionHaspi1Z}, we write $M:=M_0 \cup_f -M_1$.
Note that $\widehat{\lambda}_M \colon H_2(M;\Lambda) \to H_2(M;\Lambda)^*$ is an isomorphism because $\partial M=\emptyset$; recall the fourth item of Lemma~\ref{lem:Homology}. This endows $H_2(M;\Lambda)^*$ with a Hermitian form, which we shall denote $\lambda_M^{-1}$. % in~\eqref{eq:UnionIdentif}.
%This explains the notation $\lambda_M^{-1}$ from~\eqref{eq:UnionIdentif}.

%: we express the~$\Lambda$--intersection form of the union of two 4-manifolds with infinite cyclic fundamental groups and ribbon boundaries using the algebraic union from Subsection~\ref{sub:Gluing}.

For $i=0,1$, we set~$\widehat{\lambda}_i:=\widehat{\lambda}_{M_i}$, and as above we write $D_i := D_{M_i}$.
Combining Propositions~\ref{prop:BoundaryLinkingForm3Manifold} and~\ref{prop:HomeoInduceIsometry}, we have an isometry
\[\wt{f}_* \colon \coker(\widehat{\lambda}_0) \xrightarrow{D_0} H_1(\partial M_0;\Lambda) \xrightarrow{f_*} H_1(\partial M_1;\Lambda) \xrightarrow{D_1^{-1}} \coker(\widehat{\lambda}_1),\]
which satisfies that $D^\#(\wt{f}_*) = f_*$.

\begin{proposition}
\label{prop:UnionIsUnion}
Let~$M_0$ and~$M_1$ be two~$4$-manifolds with $\pi_1(M_i) =\Z$ for $i=0.1$, whose boundaries are ribbon and have torsion Alexander modules.
% Inclusion induces maps $\varphi_i \colon \pi_1(\partial M_i) \to \pi_1(M_i)=\Z$.{MP: I added this here, since we should define the $\varphi_i$.}
 Let~$f \in \Homeo_{\varphi}^+(\partial M_0,\partial M_1)$ be a homeomorphism, set~$M:=M_0\cup_f-M_1$, and let $\iota_j \colon M_i \to M$ be the inclusion.
The map $\bsm  \iota_0^* \\ \iota_1^* \esm \colon H_2(M;\Lambda)^* \to H_2(M_0;\Lambda)^* \oplus H_2(M_1;\Lambda)^*$ induces an isometry
\begin{equation}
\label{eq:UnionIdentif}
(H_2(M;\Lambda)^*,\lambda_M^{-1}) \xrightarrow{\cong} (H_2(M_0;\Lambda) \cup_{\widetilde{f}_*} H_2(M_1;\Lambda),\lambda_{M_0} \cup_{\widetilde{f}_*} -\lambda_{M_1}).
\end{equation}
Furthermore, the map $\widehat{\lambda}_M$ induces an isometry $(H_2(M;\Lambda),\lambda_M) \to (H_2(M;\Lambda)^*,\lambda_M^{-1})$ which, using the identification from~\eqref{eq:UnionIdentif},
%Furthermore, for~$j=0,1$ the inclusions~$\iota_j \colon M_j \hookrightarrow M$ induce isometric inclusions of the~$\Lambda$--intersection forms and, under the identification from~\eqref{eq:UnionIdentif},
makes the following diagram commute:
%There is an isometry
%$$ (H_2(M;\Lambda),\lambda_M) \cong (H_2(M_0;\Lambda) \cup_{\widetilde{f}_*} H_2(M_1;\Lambda),\lambda_{M_0} \cup_{\widetilde{f}_*} -\lambda_{M_1}).~$$
%Furthermore, for~$j=0,1$ the inclusions~$\iota_j \colon M_j \hookrightarrow M$ induce isometric inclusions of the~$\Lambda$--intersection forms and there is an identification of~$H_2(M;\Lambda)^*$ with~$ (H_2(M_0;\Lambda) \cup_{\widetilde{f}_*} H_2(M_1;\Lambda)$ with respect to which the following diagram commutes:
\begin{equation}
\label{eq:MapsIntoUnion}
\xymatrix@R0.6cm{
(H_2(M_0;\Lambda),\lambda_{M_0})\ar@{^{(}->}[r]^{\iota_0}\ar@{_{(}->}[rd]^-{\widehat{\lambda}_{M_0}}&(H_2(M;\Lambda),\lambda_{M}) \ar[d]^{\widehat{\lambda}_M}_\cong&(H_2(M_1;\Lambda),-\lambda_{M_1}) \ar@{_{(}->}[l]_{\iota_1}\ar@{^{(}->}[ld]_-{\widehat{\lambda}_{M_1}} \\
&(H_2(M_0;\Lambda) \cup_{\widetilde{f}_*} H_2(M_1;\Lambda),\lambda_{M_0} \cup_{\widetilde{f}_*} -\lambda_{M_1}).&
}
\end{equation}
In particular,~$H_2(M_0;\Lambda) \cup_{\widetilde{f}_*} H_2(M_1;\Lambda)$ is a free~$\Lambda$-module and~$\lambda_{M_0} \cup_{\widetilde{f}_*} -\lambda_{M_1}$ is nonsingular.
\end{proposition}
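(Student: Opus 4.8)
The plan is to compute $H_2(M;\Lambda)$ (and its dual) via a Mayer--Vietoris argument, identify the resulting Mayer--Vietoris data with the algebraic union of Subsection~\ref{sub:Gluing}, and then check that the various intersection-theoretic maps match up. First I would set up the Mayer--Vietoris sequence with $\Lambda$-coefficients for the decomposition $M = M_0 \cup_{\partial M_i} (-M_1)$, using Lemma~\ref{lem:UnionHaspi1Z} to know that $\pi_1(M) \cong \Z$ so that $\Lambda$-homology really is the homology of the infinite cyclic cover. Feeding in the vanishing and freeness results of Lemma~\ref{lem:Homology} for $M_0$ and $M_1$ (in particular $H_1(M_i;\Lambda) = 0$, $H_3(M_i;\Lambda) = 0$, $H_2$ free), together with $H_1(\partial M_i;\Lambda)$ torsion and $H_2(\partial M_i;\Lambda) = \Z$, collapses the sequence: on the one hand it shows $H_1(M;\Lambda) = 0$ and computes $H_3(M;\Lambda)$, and on the other it expresses $H_2(M;\Lambda)$ as built from $H_2(M_i;\Lambda)$ and $H_2(\partial M_i;\Lambda)$. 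It is cleaner, though, to work with the relative/dual picture: using Poincar\'e--Lefschetz duality $H_2(M;\Lambda) \cong H^2(M;\Lambda)$ and the long exact sequences of the pairs $(M_i,\partial M_i)$, one identifies $H_2(M;\Lambda)^*$ with the pullback (fibre product) of $H_2(M_0,\partial M_0;\Lambda) \to H_1(\partial M_0;\Lambda) \leftarrow H_2(M_1,\partial M_1;\Lambda)$ along the connecting maps $\delta_i$, which under Remark~\ref{rem:Bases} are exactly $H_i^* \to \coker(\widehat\lambda_{M_i}) \cong H_1(\partial M_i;\Lambda)$. Matching the gluing map $\widetilde f_*$ via Proposition~\ref{prop:HomeoInduceIsometry} and Proposition~\ref{prop:BoundaryLinkingForm3Manifold} (with Remark~\ref{rem:MinusAndIdentifIso} absorbing the sign), this pullback is by definition $H_2(M_0;\Lambda) \cup_{\widetilde f_*} H_2(M_1;\Lambda)$, and $\bsm \iota_0^* \\ \iota_1^* \esm$ is the comparison map; it is an isomorphism because both sides sit in the same exact sequence.

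Next I would verify that this isomorphism is an isometry $\lambda_M^{-1} \cong \lambda_{M_0} \cup_{\widetilde f_*} -\lambda_{M_1}$. The pairing $\lambda_M^{-1}$ on $H_2(M;\Lambda)^*$ is, after tensoring with $Q$, the inverse of the (now nonsingular over $Q$) intersection form; restricted to the image of $\iota_i^*$ it is computed by the $Q$-coefficient cup/intersection product, which is additive over the Mayer--Vietoris decomposition with a sign change on the $M_1$ side (orientation reversal). Comparing with the first item of Proposition~\ref{prop:PropertiesAlgebraicGluing}, which gives $\lambda_{M_0}\cup_h-\lambda_{M_1}$ as $y_0(\widehat\lambda_{0,Q}^{-1}(x_0)) - y_1(\widehat\lambda_{1,Q}^{-1}(x_1))$, the two formulas agree; this is essentially the same local-to-global computation as in \cite[Section~5]{ConwayFriedlToffoli} and in the proof of Proposition~\ref{prop:BoundaryLinkingForm3Manifold}. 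Once the isometry~\eqref{eq:UnionIdentif} is established, the map $\widehat\lambda_M \colon (H_2(M;\Lambda),\lambda_M) \to (H_2(M;\Lambda)^*,\lambda_M^{-1})$ is an isometry by the general fact that $F$ is an isometry iff $\widehat{\lambda}_1 = F^{-*}{}^*\widehat\lambda_1 F^{-*}$ applied with $F = \widehat\lambda_M$ — formally, $\widehat{(\lambda_M^{-1})} = \widehat\lambda_M^{-*} = (\widehat\lambda_M^{*})^{-1}$, and Hermitianness of $\lambda_M$ gives $\widehat\lambda_M^* = \widehat\lambda_M$ up to the evaluation identification, so the square $\widehat\lambda_M^* \cdot \widehat{(\lambda_M^{-1})} \cdot \widehat\lambda_M = \widehat\lambda_M$ holds. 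Chasing definitions, $\iota_i^* \circ \widehat\lambda_M = \widehat\lambda_{M_i} \circ \iota_i$ (this is just naturality of the intersection pairing under the inclusions $M_i \hookrightarrow M$ combined with Poincar\'e--Lefschetz duality), which is precisely the commutativity of diagram~\eqref{eq:MapsIntoUnion}. Finally, the "in particular" clause is immediate: $H_2(M;\Lambda)$ is free by Lemma~\ref{lem:Homology}(3), hence so is the isometric $H_2(M_0;\Lambda)\cup_{\widetilde f_*}H_2(M_1;\Lambda)$, and $\lambda_M$ is nonsingular by Lemma~\ref{lem:Homology}(4) since $M$ is closed, hence so is $\lambda_{M_0}\cup_{\widetilde f_*}-\lambda_{M_1}$ by transport of structure along~\eqref{eq:UnionIdentif}.

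I expect the main obstacle to be the bookkeeping of dualities and signs: one has to carefully thread Poincar\'e duality for $M$, Poincar\'e--Lefschetz duality for the pairs $(M_i,\partial M_i)$, the universal coefficient identifications $H^2 \cong (H_2)^*$, and the orientation-reversal on $M_1$, and show that after all these the Mayer--Vietoris gluing datum $\delta_0, \delta_1, \widetilde f_*$ is literally the datum $\pi_0, \pi_1, h$ defining the algebraic union in Construction~\ref{construction:Gluing}. This is where using the explicit bases of Remark~\ref{rem:Bases} and the matrix description in Proposition~\ref{prop:PropertiesAlgebraicGluing}(2) pays off, since it reduces the coherence check to comparing $x_0^T A_0^{-1}\overline{y_0} - x_1^T A_1^{-1}\overline{y_1}$ with the $Q$-coefficient inverse intersection form on $M$, which a short diagram chase (already carried out for the boundary form in Proposition~\ref{prop:BoundaryLinkingForm3Manifold}) handles. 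The remaining verifications — exactness collapsing the Mayer--Vietoris sequence, freeness, nonsingularity — are then routine consequences of Lemmas~\ref{lem:Homology} and~\ref{lem:UnionHaspi1Z}.
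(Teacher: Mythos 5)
Your proposal is correct and follows essentially the same route as the paper: a $\Lambda$-coefficient Mayer--Vietoris argument combined with Poincar\'e--Lefschetz duality and the basis identifications of Remark~\ref{rem:Bases} to exhibit $H_2(M;\Lambda)^*$ as the fibre product $\ker(\widetilde f_*\pi_0-\pi_1)$, followed by the inversion $\widehat{\lambda}_M^{-1} = (\ev\ \ev)\circ(\widehat{\lambda}_{0,Q}^{-1}\oplus -\widehat{\lambda}_{1,Q}^{-1})$ matched against Remark~\ref{rem:AdjointOfUnion}/Proposition~\ref{prop:PropertiesAlgebraicGluing}. The duality-and-sign bookkeeping you flag as the main obstacle is exactly what the paper's diagram~\eqref{eq:BigDiagramUnion} organises.
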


\begin{proof}
%Using Proposition~\ref{prop:BoundaryLinkingForm3Manifold}, we think of the Blanchfield form on~$\partial M_i$ as being defined on the torsion module~$\coker(\widehat{\lambda}_i)$.
%We also recall from Proposition~\ref{prop:HomeoInduceIsometry} that the homeomorphism~$f \colon \partial M_0 \to \partial M_1$ induces an isometry~$\widetilde{f}_*$ between these linking forms.}
Lemma~\ref{lem:UnionHaspi1Z} implies that~$\pi_1(M)=\Z$.
Consider the following commutative diagram in which the first and third rows are exact and~$\Lambda$ coefficients are understood:
%{AC: A remark. The minus in the left hand side of the pentagon was already there before this round of edits. It is not explicitly there in Bredon, but I think it is the correct sign given that we use the orientation~$M=M_0 \cup -M_1$; it is in fact thanks to this sign that I got to fix the sign problem in the main theorem. It ``must" also be there, given the definition of~$\lambda_0 \cup_f -\lambda_1$.}
\begin{equation}
\label{eq:BigDiagramUnion}
\xymatrix@C1cm{
0 \ar[r]& H_2(M_0) \oplus H_2(M_1) \ar[r]^-{(\iota_0 \ \iota_1)}\ar[d]^{j_0 \oplus j_1}& H_2(M) \ar[r]\ar[dd]^{\PD^{-1}_M}& H_1(\partial M_1) \ar[r]&0 \\
 & H_2(M_0,\partial M_0) \oplus H_2(M_1,\partial M_1) \ar[d]^{\PD^{-1}_{M_0} \oplus -\PD^{-1}_{M_1}}&  & & \\
H^2(\partial M_1) & H^2(M_0) \oplus H^2(M_1) \ar[l]_-{f^{-*} \incl_0^*-\incl_1^*}\ar[d]_\cong^{\ev \oplus \ev}& H^2(M) \ar[l]_-{\bsm \iota_0^* \\ \iota_1^* \esm }\ar[d]_\cong^{\ev}&\ar[l] 0 & \\
%\operatorname{Ext}(H_1(N),\Z)\ar[u]^\cong
& H_2(M_0)^* \oplus H_2(M_1)^* & H_2(M)^*. \ar[l]_-{\bsm \iota_0^* \\ \iota_1^* \esm }&  &
}
\end{equation}
We justify the three zeros that appear.
In the first row, the rightmost zero comes from the fact that~$H_1(M_i;\Lambda)=0$ for~$i=0,1$ by the second item of Lemma~\ref{lem:Homology}.
For the leftmost zero of the first row, the first and third items of Lemma~\ref{lem:Homology} respectively imply that~$H_2(\partial M_i;\Lambda)=\Z$ is~$\Lambda$-torsion and that~$H_2(M_i;\Lambda)$ is free.
 It follows that the inclusion induced homomorphism~$H_2(\partial M_i;\Lambda) \to H_2(M_i;\Lambda)$ is zero.
%Since this implies that~$H_2(\partial M_i;\Lambda) \$
%%$, as well as~$H^2(N)=\operatorname{Ext}(H_1(N),\Z)$.
The same argument explains the appearance of the zero in the third row:~$H^2(M;\Lambda)$ is free and~$H^1(\partial M_1;\Lambda)$ is~$\Lambda$-torsion.
The commutativity of the middle square follows by applying~\cite[Lemma 8.2]{Bredon} with~$K=M_0$ and~$L=M_1$, as well as applying excision to the pairs~$(M,M \setminus M_i)$.

Our next aim is to simplify the diagram in~\eqref{eq:BigDiagramUnion}.
We write $\pi_i \colon H_2(M_i;\Lambda)^* \to \coker(\widehat{\lambda}_i)$ for the canonical projection.

\begin{claim}
\label{claim:ForNotationUnion}
%For $i=0,1$, one can identify $H_1(\partial M_i;\Lambda)$ with $ \coker(\widehat{\lambda}_i)$, so that the homeomorphism~$f \colon \partial  M_0 \cong \partial M_1$ induces an isomorphism~
The isometry  $\widetilde{f}_*\colon\coker(\widehat{\lambda}_0) \xrightarrow{\cong} \coker(\widehat{\lambda}_1)$
 %from Lemma~\ref{lemma:f-induces-isometry-on-coker}
 fits in the diagram
\begin{equation}
\xymatrix@C1.5cm{
0 \ar[r]&H_2(M_0;\Lambda) \oplus H_2(M_1;\Lambda)  \ar[r]^-{\bsm \iota_0 & \iota_1 \esm}\ar[d]^{\widehat{\lambda}_0 \oplus -\widehat{\lambda}_1}& H_2(M;\Lambda)\ar[d]_\cong^{\widehat{\lambda}_M} \ar[r]& H_1(\partial M_1;\Lambda) \\
\coker(\widehat{\lambda}_1) &H_2(M_0;\Lambda)^* \oplus H_2(M_1;\Lambda)^* \ar[l]_-{\widetilde{f}_*\pi_0-\pi_1}& H_2(M;\Lambda)^*\ar[l]_-{\bsm
 \iota_0^* \\ \iota_1^*
\esm} &\ar[l]0,
}
\label{eq:DiagramForUnionClaim}
\end{equation}
where the rows are exact and the square is commutative.
\end{claim}
\begin{proof}
The two vertical compositions in \eqref{eq:BigDiagramUnion} are by definition $\widehat{\lambda}_0 \oplus -\widehat{\lambda}_1$ and $\widehat{\lambda}_M$ respectively. Thus the exactness of the top row and the commutativity of the square follow from~\eqref{eq:BigDiagramUnion}. It therefore remains to establish the exactness of the bottom row in~\eqref{eq:DiagramForUnionClaim}.
%{The point is that these are the bases of Remark~\ref{rem:Bases} and then I additionally based~$H^2(M_i;\Lambda)$ so that~$\ev$ is represented by the identity matrix; because of the identification~$H^2(M_i;\Lambda)=H_2(M_i;\Lambda)^*$ I want to make below.}
A second look at~\eqref{eq:BigDiagramUnion} shows a similar exact sequence with cohomology instead of duals and $H^2(\partial M_1;\Lambda)$ instead of $\coker(\widehat{\lambda}_1)$.
We shall produce the following commuting square that can be appended to the bottom left of \eqref{eq:BigDiagramUnion}. The claim will then follow immediately.
\begin{equation}
  \xymatrix @C+1.2cm {H^2(\partial M_1;\Lambda) \ar[d]^{G_1}_{\cong}  & H^2(M_0;\Lambda) \oplus H^2(M_1;\Lambda) \ar[l]_-{f^{-*} \incl_0^*-\incl_1^*}\ar[d]_\cong^{\ev \oplus \ev} \\
\coker(\widehat{\lambda}_1) &H_2(M_0;\Lambda)^* \oplus H_2(M_1;\Lambda)^*. \ar[l]_-{\widetilde{f}_*\pi_0-\pi_1}
}
\label{eq:new-bottom-left-square}
\end{equation}
%maps
We define isomorphisms $G_i$, for $i=0,1$, as the composition:
\[G_i := D_i^{-1} \circ \PD \colon H^2(\partial M_i;\Lambda) \xrightarrow{\cong} H_1(\partial M_i;\Lambda) \xrightarrow{\cong} \coker(\widehat{\lambda}_i).\]
%To define maps $G_i \colon H^2(\partial M_i;\Lambda) \to \coker(\widehat{\lambda}_i)$, for $i=0,1$ we take the diagram~\eqref{eqn:delta-induces-map} that we used to define the maps $D_i$, invert the vertical maps, and then factor them as follows.
The map $G_i$ agrees with the rightmost vertical downwards composition (inverting $D_i$) in the next diagram.  Again $\Lambda$ coefficients are understood, and the rows are exact:
\begin{equation}
\xymatrix{
0\ar[r]&H^2(M_i,\partial M_i) \ar[r]^-{j_i^*} \ar[d]^{\cong}_{PD} & H^2(M_i) \ar[r]^-{\incl_i^*} \ar[d]_{\PD}^{\cong} & H^2(\partial M_i) \ar[r] \ar[d]_{\PD}^{\cong}  & 0 \\
0\ar[r]&H_2(M_i) \ar[r]^-{j_i}   & H_2(M_i,\partial M_i) \ar[r]^{\delta_i}  & H_1(\partial M_i)  \ar[r] & 0\\
0\ar[r]&H_2(M_i) \ar[r]^-{\widehat{\lambda}_i} \ar[u]_{\Id}^= & H_2(M_i)^* \ar[r]^-{\pi_i} \ar[u]_{\PD \circ \ev^{-1}}^{\cong} & \coker(\widehat{\lambda}_i) \ar[r] \ar[u]_{D_i}^{\cong} & 0.
}
\label{eq:diagram-for-sans-basis-1}
\end{equation}
The diagram commutes by the definition of $\widehat{\lambda}_i$ as $\ev \circ PD^{-1} \circ j_i$, and using standard naturality of Poincar\'{e} duality; for the bottom right square, one uses the definition of $D_i$ from Remark~\ref{rem:Bases}.
%The map $G_i$ is constructed via a diagram chase in this diagram, and the five lemma shows that $G_i$ is an isomorphism.
%Note that
%\[D_i^{-1} = G_i \circ \PD^{-1} \colon H_1(\partial M_i;\Lambda) \xrightarrow{\cong} H^2(\partial M_i;\Lambda) \xrightarrow{\cong} \coker %(\widehat{\lambda}_i)\]
%for $i=0,1$.
Via $G_i$ and $\ev$, the concatenation of the two squares on the right identifies the maps $\pi_i$ and $\incl^*_i$. That is, for $i=0,1$:
\begin{equation}\label{eqn:G_i-ev-identify-maps}
  G_i \circ \incl^*_i = \pi_i \circ \ev \colon H^2(M_i;\Lambda) \to \coker(\widehat{\lambda}_i).
\end{equation}
This shows that the bottom left square in the following diagram commutes. This diagram again has $\Lambda$ coefficients and vertical maps isomorphisms:
%, and is exact at each entry in the second column.
\begin{equation}
\xymatrix{
& H_1(\partial M_0;\Lambda) \ar[r]^-{f_*}_-{\cong} \ar[d]^{\PD^{-1}}_{\cong} & H_1(\partial M_1;\Lambda) \ar[d]^{\PD^{-1}}_{\cong} \\
%H^2(M_0,\partial M_0) \ar[r] \ar[d]^{\cong}_{PD} &
H^2(M_0) \ar[r]^-{\incl_0^*} \ar[d]_{\ev}^{\cong} & H^2(\partial M_0) \ar[r]^-{f^{-*}}_-{\cong} \ar[d]_{G_0}^{\cong}  & H^2(\partial M_1) \ar[d]^{\cong}_{G_1} \\
%H_2(M_0) \ar[r]^-{\widehat{\lambda}_0} %\ar[d]^{\cong}_{\wt{f}_*} &
H_2(M_0)^* \ar[r]^-{\pi_0} %\ar[d]^{\cong}_{\wt{f}^{-*}}
 & \coker(\widehat{\lambda}_0) \ar[r]^-{\wt{f}_*}_{\cong} %\ar[d]_-{\wt{f}_*}^{\cong}
& \coker(\widehat{\lambda}_1).
%\\
%H_2(M_1) \ar[r]^{\widehat{\lambda}_1} & H_2(M_1)^* \ar[r]^-{\pi_1} & \coker(\widehat{\lambda}_1). &
}
\label{eq:diagram-for-sans-basis-2}
\end{equation}
Commutativity of the top right square follows from naturality of Poincar\'{e} duality and the fact that all four maps are isomorphisms.
Using this and that $D_i = \PD \circ G_i^{-1}$ for $i=0,1$, starting with the definition of $\wt{f}_* \colon \coker(\widehat{\lambda}_0) \to \coker(\widehat{\lambda}_1)$  we obtain:
\[\wt{f}_* = D_1^{-1} \circ f_* \circ D_0 = G_1 \circ \PD^{-1} \circ f_* \circ \PD \circ G_0^{-1} = G_1 \circ f^{-*} \circ G_0^{-1}.\]
In other words, the bottom right square of \eqref{eq:diagram-for-sans-basis-2} commutes.

%The map $\wt{f}_* \colon \coker(\widehat{\lambda}_0) \to \coker(\widehat{\lambda}_1)$ was defined
%
%
%in the proof of Lemma~\ref{lemma:f-induces-isometry-on-coker} (there we compared with $H_1(\partial M_i;\Lambda)$ instead of $H^2(\partial M_i;\Lambda)$). It was also shown in that lemma that $\wt{f}_*$ is an isometry between $\partial \lambda_0$ and $\partial \lambda_1$. The right-handle large rectangle commutes by combining $D_i^{-1} = G_i \circ \PD^{-1}$ with~\eqref{eq:diagram-identifying-two-isometries}.   It follows that the bottom right square also commutes.
%$D_i^{-1} = G_i \circ \PD^{-1}$, plus commutativity of the following square.
%\[  \xymatrix{H_1(\partial M_0;\Lambda) \ar[r]^-{\wt{f}_*}_-{\cong} \ar[d]^{\PD^{-1}}_{\cong} & H_1(\partial M_1;\Lambda) \ar[d]^{\PD^{-1}}_{\cong} \\
%  H^2(\partial M_0;\Lambda) \ar[r]^{\wt{f}^{-*}}_-{\cong} & H^2(\partial M_1;\Lambda). }
%\]
% comparing the two versions of \eqref{eq:diagram-for-sans-basis-1} for $i=0,1$, via $\wt{f}$.
Therefore the bottom of diagram \eqref{eq:diagram-for-sans-basis-2}
%identifies $f^*\circ \incl_0^*$ with $\wt{f}_* \circ \pi_0$, via $G_1$ and $\ev$.
yields $G_1\circ f^{-*}\circ \incl_0^*=\widetilde{f}_*\circ\pi_0\circ\ev$.
 Subtracting \eqref{eqn:G_i-ev-identify-maps} with $i=1$, we obtain the desired commutative square~\eqref{eq:new-bottom-left-square}.
The diagram in~\eqref{eq:DiagramForUnionClaim} then follows from the combination of~\eqref{eq:BigDiagramUnion} and~\eqref{eq:new-bottom-left-square}, completing the proof of the claim.
\end{proof}

Now we write $T:=H_1(\partial M_1;\Lambda)$, as well as $H_i:=H_2(M_i;\Lambda)$ for $i=0,1$.
We also set~$V:=H_2(M;\Lambda)^*$ and identify~$H_2(M;\Lambda)$ with the double dual~$H_2(M;\Lambda)^{**}=V^{*}$ via the evaluation isomorphism~$\ev$.
With these notations, the diagram from Claim~\ref{claim:ForNotationUnion} leads to the following commutative diagram in which both rows are exact:
\begin{equation}
\label{eq:DiagramForUnion}
\xymatrix@C1.5cm{
0 \ar[r]&H_0 \oplus H_1  \ar[r]^-{\bsm \ev & \ev \esm}\ar[d]^{\widehat{\lambda}_0 \oplus -\widehat{\lambda}_1}& V^{*}\ar[d]_\cong^{\widehat{\lambda}_M} \ar[r]& T \\
\coker(\widehat{\lambda}_1) &H_0^* \oplus H_1^* \ar[l]_-{\widetilde{f}_*\pi_0-\pi_1}& V\ar[l] &\ar[l] 0.
}
\end{equation}
Identify~$V=H_2(M;\Lambda)^*$ with~$\ker(\widetilde{f}_* \pi_0-\pi_1)$ and view it as a subspace of~$H_0^*\oplus H_1^*$.
The diagram in~\eqref{eq:DiagramForUnion} therefore yields the equation
\begin{equation}
\label{eq:NearlyUnionIsUnion}
\begin{pmatrix}
\widehat{\lambda}_0 & 0 \\
0 &-\widehat{\lambda}_1
\end{pmatrix}= \widehat{\lambda}_M \circ  \begin{pmatrix}
 \ev & \ev
\end{pmatrix}.
\end{equation}
Since Lemma~\ref{lem:Homology} implies that~$H_2(M;\Lambda)$ is a free~$\Lambda$-module and~$H_1(M;\Lambda)=0$, we may identify~$H^2(M;Q)$ with~$H^2(M;\Lambda) \otimes_\Lambda Q$, and similarly for~$H^2(M_i,\partial M_i;Q) \cong H^2(M_i,\partial M_i;\Lambda) \otimes_\Lambda Q$ and~$H^2(M_i;Q) \cong H^2(M_i;\Lambda) \otimes_\Lambda Q$ for~$i=0,1$.
As a consequence, we may identify the tensored up intersection pairings~$\lambda_M \otimes_\Lambda \id_Q$ and~$\lambda_i \otimes_\Lambda \id_Q$ with the nonsingular~$Q$-valued intersections pairings on~$H_2(M;Q)$ and~$H_2(M_i;Q)$.
Since~$\widehat{\lambda}_M$ is invertible by Lemma~\ref{lem:Homology}, we deduce from~\eqref{eq:NearlyUnionIsUnion} that
\begin{equation*}
\widehat{\lambda}_M^{-1}
=  \begin{pmatrix}
 \ev & \ev
\end{pmatrix} \circ \begin{pmatrix}
\widehat{\lambda}_{0,Q}^{-1} & 0 \\
0 &-\widehat{\lambda}_{1,Q}^{-1}
\end{pmatrix}.
\end{equation*}
As noted in Remark~\ref{rem:AdjointOfUnion}, this is precisely the adjoint of ~$\lambda_0 \cup_{\widetilde{f}_*} -\lambda_1$.

%We have thus proved that~$(H_0 \cup_{\widetilde{f}_*} H_1,\lambda_0 \cup_{\widetilde{f}_*} -\lambda_1)$ is isometric to~$(H_2(M;\Lambda)^*,\lambda_M^{-1})$.
We have thus proved that the inclusions $\iota_0$ and $\iota
_1$ induce an isometry~$(H_2(M;\Lambda)^*,\lambda_M^{-1}) \cong (H_0 \cup_{\widetilde{f}_*} H_1,\lambda_0 \cup_{\widetilde{f}_*} -\lambda_1)$.
Now, since~$\lambda_M^*=\lambda_M$, a quick verification shows that the isomorphism~$\widehat{\lambda}_M \colon H_2(M;\Lambda) \to H_2(M;\Lambda)^*$ induces an isometry between the forms~$(H_2(M;\Lambda),\lambda_M)$ and~$(H_2(M;\Lambda)^*,\lambda_M^{-1})$.
Indeed, writing~$H:=H_2(M;\Lambda)$, this follows from the following commutative diagram:
$$
\xymatrix{
H \ar[r]^{\widehat{\lambda}_M}_\cong \ar[d]^{\widehat{\lambda}_M}& H^*  \ar[d]^{\widehat{\lambda}_M^{-1}} \\
H^* & H. \ar[l]_\cong^{\widehat{\lambda}_M^*=\widehat{\lambda}_M}
}
$$
The commutativity of the diagram in~\eqref{eq:MapsIntoUnion} follows from the commutativity of~\eqref{eq:DiagramForUnion}: we have shown that~$(H_0 \cup_{\widetilde{f}_*} H_1,\lambda_0 \cup_{\widetilde{f}_*} -\lambda_1) \cong (H_2(M;\Lambda),\lambda_M)$ and, recalling that~$(\ev,\ev)=(\iota_0,\iota_1)$, the diagram in~\eqref{eq:DiagramForUnion} gives~$\widehat{\lambda}_{M}\circ \iota_0(x_0,0)=\widehat{\lambda}_{0}(x_0,0)$ leading to the commutativity of the left hand triangle of~\eqref{eq:MapsIntoUnion}; the reasoning for the right hand triangle is identical.
\end{proof}

\subsection{Compatible pairs}
\label{sub:Compatible}

Throughout this section,~$M_0$ and $M_1$ are~$4$-manifolds with $\pi_1(M_i)=\Z$ for $i=0,1$, whose boundaries are ribbon and have torsion Alexander modules.
This section introduces a notion of compatibility for an isometry~$F \colon H_2(M_0;\Lambda) \to H_2(M_1;\Lambda)$ and a homeomorphism~$f \colon \partial M_0 \to \partial M_1$.
We then prove Theorem~\ref{thm:UnionIsS1S3ConnectSumSimplyConnectedIntro} from the introduction.
\medbreak

%Let~$M_0,M_1$ be~$4$-manifolds with infinite cyclic fundamental group whose boundaries are ribbon and have torsion Alexander modules.
 Since the boundaries are ribbon, and we have identifications $\pi_1(M_i)=\Z$, the inclusions~$\partial M_i \hookrightarrow M_i$ induce surjections~$\varphi_i \colon \pi_1(M_i) \twoheadrightarrow \Z$ for~$i=0,1$.
Recall from Subsection~\ref{sub:HomeoIso} that we set
$$ \Homeo_{\varphi}^+(\partial M_0,\partial M_1)=\lbrace f \in \Homeo^+(\partial M_0,\partial M_1) \ | \ \varphi_1 \circ f_*=\varphi_0 \rbrace.$$
Proposition~\ref{prop:HomeoInduceIsometry} ensures that a homeomorphism~$f \in \Homeo_{\varphi}^+(\partial M_0,\partial M_1)$ induces an isometry~$f_*\in \Iso (\Bl_{\partial M_0},\Bl_{\partial M_1})$ of the Blanchfield forms.
On the other hand, recall from Lemma~\ref{lem:BoundaryIsometry} that an isometry~$F \in \Iso(\lambda_{M_0},\lambda_{M_1})$ of the~$\Lambda$--intersection forms of~$M_0$ and~$M_1$ induces an isometry~$\partial F=F^{-*}$ of the boundary linking forms.
Using Proposition~\ref{prop:BoundaryLinkingForm3Manifold}, we identify these boundary linking forms with minus the corresponding Blanchfield forms, so that using Remark~\ref{rem:MinusAndIdentifIso} we may consider~$D^\#(\partial F) \in \Iso (\Bl_{\partial M_0},\Bl_{\partial M_1})$.

\begin{definition}
\label{def:Compatible}
%Let~$M_0,M_1$ be~$4$-manifolds with infinite cyclic fundamental group whose boundaries are ribbon and have torsion Alexander modules.
An orientation-preserving homeomorphism~$f \in \Homeo_{\varphi}^+(\partial M_0,\partial M_1)$ is \emph{compatible} with an isometry~$F \in \Iso(\lambda_{M_0},\lambda_{M_1})$ if, using the identification $D^\#$ from Remark~\ref{rem:MinusAndIdentifIso}, we have:
$$D^\#(\partial F)=f_* \colon (H_1(\partial M_0;\Lambda), \Bl_{\partial M_0})\to (H_1(\partial M_1;\Lambda), \Bl_{\partial M_1}).$$
In this case, the pair~$(f,F)$ is called a \emph{compatible pair}.
\end{definition}

The next proposition shows that the existence of a compatible pair is a necessary condition for a homeomorphism~$f \in \Homeo^+_{\varphi}(\partial M_0,\partial M_1)$ to extend to a homeomorphism~$M_0 \to~M_1$.

\begin{proposition}
\label{prop:necessary}
%Let~$M_0,M_1$ be~$4$-manifolds with infinite cyclic fundamental group whose boundaries are ribbon and have torsion Alexander modules, and
Let~$f \in \Homeo_{\varphi}^+(\partial M_0,\partial M_1)$ be a homeomorphism, and let~$F \in \Iso(\lambda_{M_0},\lambda_{M_1})$ be an isometry.
If~$f$ extends to an orientation-preserving homeomorphism~$\Phi \colon M_0 \xrightarrow{\cong} M_1$ that induces~$F$, then~$(f,F)$ is a compatible pair.
%If~$f$ extends to an orientation-preserving homeomorphism
%$\Phi \colon M_0 \xrightarrow{\cong} M_1$, then~$M_0$ and~$M_1$ admit a compatible pair~$(f,F)$, {where~$F \colon H_2(M_0;\Lambda) \cong H_2(M_1;\Lambda)$ is induced by~$\Phi$.} {AC: It used to be written ``with~$\Phi_* = F \colon H_2(M_0;\Lambda) \to H_2(M_1;\Lambda)$" but in the proof we wrote~$\widetilde{\Phi}_*$ I didn't want to have to explain the notation before the proof, so I wrote it with words.}
%with~$\Phi_* = F \colon H_2(M_0;\Lambda) \to H_2(M_1;\Lambda)$.
\end{proposition}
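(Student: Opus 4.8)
The plan is to show that the two isometries $\partial F$ and $\widetilde{f}_*$ of Blanchfield forms coincide as $\Lambda$-module homomorphisms $H_1(\partial M_0;\Lambda)\to H_1(\partial M_1;\Lambda)$; by Definition~\ref{def:Compatible} this equality is exactly the assertion that $(f,F)$ is a compatible pair. Recall that, by Definition~\ref{def:BoundaryOfAnIsometry} and Lemma~\ref{lem:BoundaryIsometry}, $\partial F$ is the map induced by $F^{-*}\colon H_2(M_0;\Lambda)^*\to H_2(M_1;\Lambda)^*$ on cokernels, transported to the Blanchfield forms via the identification $\coker(\widehat{\lambda}_{M_i})\cong H_1(\partial M_i;\Lambda)$ of Remark~\ref{rem:Bases} and Proposition~\ref{prop:BoundaryLinkingForm3Manifold} (the overall minus sign there being irrelevant by Remark~\ref{rem:MinusAndIdentifIso}). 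So the proof amounts to a diagram chase through the three maps making up that identification, namely the universal coefficient isomorphism $H_2(M_i;\Lambda)^*\cong H^2(M_i;\Lambda)$, Poincar\'e--Lefschetz duality $H^2(M_i;\Lambda)\cong H_2(M_i,\partial M_i;\Lambda)$, and the connecting homomorphism $\delta$ of the pair $(M_i,\partial M_i)$, checking at each stage that the maps induced by $\Phi$ fit into a commuting square.

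First I would observe that, since $\Phi$ extends $f$ and $\partial M_0$ is $\pi_1$-surjective in $M_0$, the isomorphism $\Phi_*$ intertwines the epimorphisms $\varphi_i\colon\pi_1(M_i)\twoheadrightarrow\Z$; this is in any case built into the hypothesis that $\Phi$ induces $F$ on $\Lambda$-homology, and it guarantees that $\Phi$ lifts to an equivariant homeomorphism of the infinite cyclic covers, so that the naturality statements below make sense with $\Lambda$-coefficients. Because $F=\Phi_*$ on $H_2(-;\Lambda)$ and $H_1(M_i;\Lambda)=0$ by Lemma~\ref{lem:Homology}, the universal coefficient isomorphism has no $\operatorname{Ext}$-correction and is natural, so $F^*$ corresponds to $\Phi^*\colon H^2(M_1;\Lambda)\to H^2(M_0;\Lambda)$ and hence $F^{-*}$ corresponds to $(\Phi^{-1})^*$. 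Next, naturality of the cap product together with the fact that $\Phi$ is orientation-preserving, so that $\Phi_*[M_0,\partial M_0]=[M_1,\partial M_1]$, gives $\Phi_*\bigl(\Phi^*(\xi)\cap[M_0]\bigr)=\xi\cap[M_1]$ for $\xi\in H^2(M_1;\Lambda)$; equivalently $\Phi_*$ on $H_2(-,\partial-;\Lambda)$ corresponds, under Poincar\'e--Lefschetz duality, to $(\Phi^{-1})^*$ on $H^2$. Finally, the long exact sequence of a pair is natural with respect to the map of pairs $\Phi\colon(M_0,\partial M_0)\to(M_1,\partial M_1)$, so $\delta\circ\Phi_*=(\Phi|_{\partial M_0})_*\circ\delta=\widetilde{f}_*\circ\delta$, using $\Phi|_{\partial M_0}=f$ and the fact that $\widetilde{f}_*$ is by definition the map induced by $f$ on $H_1(\partial M_i;\Lambda)$ (Proposition~\ref{prop:HomeoInduceIsometry}).

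Stacking these three commuting squares shows that, under the identifications $\coker(\widehat{\lambda}_{M_i})\cong H_1(\partial M_i;\Lambda)$, the map on cokernels induced by $F^{-*}$ is exactly $\widetilde{f}_*$; hence $\partial F=\widetilde{f}_*$ and $(f,F)$ is compatible. Both maps are already known to be isometries of the Blanchfield forms, $\partial F$ via Lemma~\ref{lem:BoundaryIsometry} and Proposition~\ref{prop:BoundaryLinkingForm3Manifold} and $\widetilde{f}_*$ via Proposition~\ref{prop:HomeoInduceIsometry}, so nothing further is needed there. The only delicate point is bookkeeping: keeping straight the variance of each isomorphism in the chain and pinning down where the orientation-preserving hypothesis is used. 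It enters in exactly one place, the cap-product naturality step, through the equality $\Phi_*[M_0,\partial M_0]=[M_1,\partial M_1]$; getting that Poincar\'e--Lefschetz square to commute on the nose with the correct direction of arrows is the main thing I expect to need care, and the rest is formal.
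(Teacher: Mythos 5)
Your proposal is correct and follows essentially the same route as the paper: both arguments run the naturality of the long exact sequence of the pair $(M_i,\partial M_i)$ under $\Phi$ and identify the induced map on $H_2(M_i,\partial M_i;\Lambda)\cong H_2(M_i;\Lambda)^*$ with $F^{-*}$ via universal coefficients and Poincar\'e--Lefschetz duality, so that the map induced on $\coker(\widehat{\lambda}_{M_i})\cong H_1(\partial M_i;\Lambda)$ is simultaneously $\partial F$ and $\widetilde{f}_*$. You simply spell out the cap-product and variance bookkeeping that the paper compresses into the phrase ``choosing bases as in Remark~\ref{rem:Bases}.''
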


\begin{proof}
%\begin{lemma}\label{lemma:f-induces-isometry-on-coker}
% Let~$M_0$ and~$M_1$ be two~$4$-manifolds with infinite cyclic fundamental group whose boundaries are ribbon and have torsion Alexander modules.
%Let~$(F,f) \colon (M_0,\partial M_0) \to (M_1,\partial M_1)$ be a homeomorphism of pairs.
% \in \Homeo_{\varphi}^+(\partial M_0,\partial M_1)$ be a homeomorphism.
Write $\lambda_i := \lambda_{M_i}$ and $D_{i} := D_{M_i}$ for $i=0,1$.
The homeomorphism~$\Phi$ induces an isomorphism~$\Phi_* \colon \pi_1(M_0) \to \pi_1(M_1)$. By the assumption on $f$, the isomorphisms~$\pi_1(M_i) = \Z$ are such that~$\Phi_*$ intertwines them. This follows because in the next diagram both triangles, the top square, and the large outside square commute, and it then follows that the bottom square commutes too.
\[\xymatrix @R-0.3cm{\pi_1(\partial M_0) \ar[r]^-{f_*} \ar[d] \ar@/_2pc/[dd]_{\varphi_0} & \pi_1(\partial M_1) \ar[d] \ar@/^2pc/[dd]^{\varphi_1}  \\  \pi_1(M_0) \ar[r]^-{\Phi_*} \ar@{=}[d] & \pi_1(M_1) \ar@{=}[d]  \\ \Z \ar[r]^{=} & \Z.}\]

By assumption $\Phi$ induces the maps $F \colon H_2(M_0;\Lambda) \xrightarrow{\cong} H_2(M_1;\Lambda)$ and $F^{-*} \colon H_2(M_0;\Lambda)^* \xrightarrow{\cong} H_2(M_1;\Lambda)^*$.
By Lemma~\ref{lem:BoundaryIsometry} the map $F^{-*}$ induces an isometry
\[\partial F \colon (\coker(\widehat{\lambda}_0),\partial \lambda_0) \xrightarrow{\cong} (\coker(\widehat{\lambda}_1),\partial \lambda_1).\]
We assert that $\partial F$ fits into a commuting diagram of isometries:
\begin{equation}\label{eq:diagram-identifying-two-isometries}
  \xymatrix{\coker(\widehat{\lambda}_0) \ar[r]^-{\partial F}_-{\cong} \ar[d]^{D_0}_-{\cong} & \coker(\widehat{\lambda}_1) \ar[d]^{D_1}_-{\cong} \\
  H_1(\partial M_0;\Lambda) \ar[r]^{f_*}_-{\cong} & H_1(\partial M_1;\Lambda). }
\end{equation}
The maps $D_i$ are isometries by Proposition~\ref{prop:BoundaryLinkingForm3Manifold}, and the map $f_*$ in the bottom row is an isometry by Proposition~\ref{prop:HomeoInduceIsometry}.
By definition of $D^\#$, commutativity of the diagram~\eqref{eq:diagram-identifying-two-isometries} is equivalent to $D^{\#}(\partial F) = f_*$, i.e. to the fact that $(f,F)$ is a compatible pair.
To show that~\eqref{eq:diagram-identifying-two-isometries} commutes, we use the next diagram, in which all homology groups have $\Lambda$ coefficients and the rightmost horizontal maps are all surjections. The map $\partial F \colon \coker(\widehat{\lambda}_0) \to \coker(\widehat{\lambda}_1)$ is by definition the map induced by the upper part of the diagram.
\[\begin{tikzcd}[column sep={6em,between origins},row sep=3em]
&
H_2(M_0)
  \arrow[rr,"\widehat{\lambda}_0"]
  \arrow[dl,"F_*"',"\cong"]
  \arrow[dd,"\Id" near start,"="' near start]
&&
H_2(M_0)^*
  \arrow[dd,"\PD \circ \ev^{-1}" near start,"\cong"' near start]
  \arrow[dl,"F^{-*}"',"\cong"]
  \arrow[rr,twoheadrightarrow]
&&
\coker(\widehat{\lambda}_0)
%\arrow[r]
\arrow[dl,"\partial F"',"\cong"]
\arrow[dd,"D_0","\cong"']
%&
%0
\\
H_2(M_1)
  \arrow[rr,"\widehat{\lambda}_1" near end,crossing over]
  \arrow[dd,"\Id" near start,"="' near start]
&&
H_2(M_1)^*
\arrow[rr,crossing over,twoheadrightarrow]
&&
\coker(\widehat{\lambda}_1)
%\arrow[rr,crossing over]
&
%0
\\
&
H_2(M_0)
  \arrow[rr,"j_0" near start]
  \arrow[dl,"F_*"',"\cong"]
&&
H_2(M_0,\partial M_0)
  \arrow[dl,"F_*"',"\cong"]
  \arrow[rr,twoheadrightarrow]
&&
H_1(\partial M_0).
%\arrow[r]
\arrow[dl,"f_*"',"\cong"]
%&
%0
\\
H_2(M_1)
  \arrow[rr,"j_1"]
&&
H_2(M_1,\partial M_1)
\arrow[rr,twoheadrightarrow]
\arrow[from=uu,crossing over,"\PD \circ \ev^{-1}" near start,"\cong"' near start]
&&
H_1(\partial M_1)
%\arrow[r]
\arrow[from=uu,crossing over, "D_1" near start,"\cong"' near start]
%& 0
&
\end{tikzcd}\]
The squares in the left-most cube commute either trivially, by the definition of $\widehat{\lambda}_i$, or by naturality of Poincar\'{e} duality.  The potentially contentious point is the latter justification: naturality in fact says that $F_* \circ PD_{M_0} \circ F^* = PD_{M_1}$. But since $F^*$ is an isomorphism, with inverse $F^{-*}$, it follows that the middle vertical square commutes as shown. A straightforward diagram chase now shows as desired that the rightmost vertical square, which is equivalent to~\eqref{eq:diagram-identifying-two-isometries}, also commutes. This completes the proof that $(f,F)$ is a compatible pair.
\end{proof}

%\begin{proof}
%The homeomorphism~$\Phi$ induces an isomorphism~$\Phi_* \colon \pi_1(M_0) \to \pi_1(M_1)$.
%Arrange the isomorphisms~$\pi_1(M_i) \cong \Z$ for~$i=0,1$ so that~$\Phi_*$ intertwines them.
%It follows that the homeomorphism~$\Phi$ induces maps on twisted homology.
%In particular, we obtain the isometry~$\widetilde{\Phi}_* \colon H_2(M_0;\Lambda) \xrightarrow{\cong}  H_2(M_1;\Lambda)$ that we assumed to coincide with~$F$.
%We argue that~$(f,F)$ is a compatible pair.
%Naturality of homology shows that the following diagram commutes:
%$$
%\xymatrix@R0.5cm@C0.5cm{
%H_2(M_0;\Lambda) \ar[r]\ar[d]^{F}&H_2(M_0,\partial M_0;\Lambda) \ar[r]\ar[d]&H_1(\partial M_0;\Lambda) \ar[r]\ar[d]^{\widetilde{f}_*}&0 \\
%H_2(M_1;\Lambda) \ar[r]&H_2(M_1,\partial M_1;\Lambda) \ar[r]&H_1(\partial M_1;\Lambda) \ar[r]&0.
%}
%$$
%Using the isomorphisms~$H_2(M_i,\partial M_i;\Lambda) \cong H^2(M_i;\Lambda) \cong H_2(M_i;\Lambda)^*$ and choosing bases as in Remark~\ref{rem:Bases}, we see that the map induced by~$\Phi$ on~$H_2(M_0,\partial M_0;\Lambda)$ is given by~$F^{-*}$.
%The commutativity of the diagram shows that~$(f,F)$ is a compatible pair, as desired.
%%as this will inform the behavior on the cokernel.
%%Endow~$H_2(M,\Lambda)$ with a basis~$\mathbf{b}$.
%%The basis for~$H_2(M,\partial M)$ is~$\PD^{-1} \ev^{-1}(b_i^*)$.
%%We have~$F_*(\PD^{-1} \ev^{-1}(b_i^*))=\PD^{-1} \ev^{-1} F^{-*}(b_i^*)$ so the map on the relative homology has the same matrix as~$F^{-*}$.
%\end{proof}

The next result shows that the existence of a compatible pair~$(f,F)$ imposes strong restrictions on the topology of~$M:= M_0 \cup_f -M_1$.
This is Theorem~\ref{thm:UnionIsS1S3ConnectSumSimplyConnectedIntro} from the introduction.

\begin{theorem}
\label{thm:UnionIsS1S3ConnectSumSimplyConnected}
%Let~$M_0$ and~$M_1$ be two~$4$-manifolds with infinite cyclic fundamental group whose boundaries are ribbon and have torsion Alexander modules.
Let~$(f,F)$ be a compatible pair. If~$M_0$ and~$M_1$ are not spin, assume that the Kirby-Siebenmann invariants satisfy~$\ks(M_0)=\ks(M_1) \in~\Z/2$.
Then there is an orientation-preserving homeomorphism
$$M:= M_0 \cup_f -M_1 \cong S^1 \times S^3 \#_{i=1}^{a} S^2 \times S^2 \#_{j=1}^b S^2 \wt{\times} S^2$$
%\end{enumerate}
for some~$a$,~$b$ with~$a+b=b_2(M_0)$. If~$M_0$ and~$M_1$ are spin then~$b=0$.

Moreover it can be assumed that the~$\Lambda$-isometry induced by this homeomorphism takes the Lagrangian~$\widehat{\lambda}_M^{-1}(\Gamma_{F^{-*}}) \subseteq H_2(M;\Lambda)$ to the Lagrangian generated by~$\{ [\{ \pt \} \times S^2 ]  \}_{i=1}^{b_2(M_0)}$.
\end{theorem}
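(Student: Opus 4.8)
The plan is to compute $(H_2(M;\Lambda),\lambda_M)$ together with the Lagrangian $L := \wh\lambda_M^{-1}(\Gamma_{F^{-*}})$, recognise the pair as that of $N := S^1\times S^3\#_{i=1}^a S^2\times S^2\#_{j=1}^b S^2\wt{\times}S^2$ with $L$ the span of the fibre classes, check that Kirby--Siebenmann invariants agree, and apply the Freedman--Quinn classification. Write $H_i := H_2(M_i;\Lambda)$ and $\lambda_i := \lambda_{M_i}$. Since $(f,F)$ is compatible, $\wt f_* = \partial F$ (Definition~\ref{def:Compatible}), so Proposition~\ref{prop:UnionIsUnion} shows that $\wh\lambda_M$, followed by the identification~\eqref{eq:UnionIdentif}, is an isometry
\[(H_2(M;\Lambda),\lambda_M)\ \xrightarrow{\ \cong\ }\ (H_0\cup_{\partial F}H_1,\ \lambda_0\cup_{\partial F}-\lambda_1),\]
whose target is nonsingular. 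By Proposition~\ref{prop:IsElementary} applied to $F$, the graph $\Gamma_{F^{-*}}$ is a Lagrangian complement of $\wh\lambda_0(H_0)$ inside this union. Transporting this decomposition back along the isometry above and using the diagram~\eqref{eq:MapsIntoUnion} (under which $\wh\lambda_M^{-1}(\wh\lambda_0(H_0))$ is $\iota_0(H_2(M_0;\Lambda))$ and $\iota_0$ is an isometric embedding of $\lambda_{M_0}$), we obtain a splitting
\[H_2(M;\Lambda)\ =\ L\oplus L',\qquad L := \wh\lambda_M^{-1}(\Gamma_{F^{-*}}),\quad L' := \iota_0(H_2(M_0;\Lambda)),\]
in which $L$ is a Lagrangian of $\lambda_M$ and $(L',\lambda_M|_{L'})\cong(H_2(M_0;\Lambda),\lambda_{M_0})$.

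\textbf{Step 2: normal form for the pair $(\lambda_M, L)$.}
Both summands are free of rank $r := \rk_\Lambda H_2(M_0;\Lambda) = b_2(M_0)$; since $L$ is a Lagrangian that is a direct summand and $\lambda_M$ is nonsingular, the pairing $L\times L'\to\Lambda$ induced by $\lambda_M$ is unimodular. Choosing a basis $f_1,\dots,f_r$ of $L$ and the dual basis $e_1,\dots,e_r$ of $L'$ (so $\lambda_M(f_i,e_j)=\delta_{ij}$), the form $\lambda_M$ is represented by $\bsm 0 & I_r\\ I_r & C\esm$ where $C$ is Hermitian over $\Lambda$ --- in fact $C$ is a Gram matrix for $\lambda_{M_0}$. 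Replacing $e_i$ by $e_i+\sum_j d_{ij}f_j$ keeps the block shape and changes $C$ to $C+D+\ol{D}^T$; the off-diagonal entries of $D+\ol{D}^T$ realise all of $\Lambda$, and the diagonal entries realise exactly the elements of the shape $p+\ol{p}$. Because each $C_{ii}$ is invariant under $t\mapsto t^{-1}$, we may reduce $C$ to a diagonal $\{0,1\}$--matrix, and after permuting the basis to $\operatorname{diag}(1^{\,b},0^{\,a})$ with $a+b=r$. If $M_0$ and $M_1$ are spin, then $\lambda_{M_0}$ is even (a $4$--manifold with fundamental group $\Z$ and ribbon boundary with torsion Alexander module is spin exactly when its $\Lambda$--intersection form is even), so each $C_{ii}$ has the shape $p+\ol{p}$ and we may take $b=0$. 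Reordering the basis to interleave the two generators of each rank--$2$ block yields an isometry
\[(H_2(M;\Lambda),\lambda_M)\ \cong\ H^+(\Lambda)^{\oplus a}\ \oplus\ \bigl(\Lambda^{2},\, \bsm 0 & 1\\ 1 & 1\esm\bigr)^{\oplus b}\]
carrying $L$ onto the submodule spanned by the first generator of each rank--$2$ summand. The right-hand side is precisely $(H_2(N;\Lambda),\lambda_N)$: the $S^1\times S^3$ factor contributes no $\Lambda$--homology, each $S^2\times S^2$ contributes $H^+(\Lambda)$ on the basis $\bigl([\{\pt\}\times S^2],[S^2\times\{\pt\}]\bigr)$, and each $S^2\wt{\times}S^2$ contributes $\bsm 0&1\\1&1\esm$ on the basis given by the fibre class $[\{\pt\}\times S^2]$ and a section of self-intersection $1$. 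Under this identification $L$ corresponds to the Lagrangian generated by $\{[\{\pt\}\times S^2]\}_{i=1}^{a+b}$.

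\textbf{Step 3: Kirby--Siebenmann invariant and conclusion.}
We claim $\ks(M)=0$. If $M_0$ and $M_1$ are not spin, then by additivity of the Kirby--Siebenmann invariant under gluing along a boundary component (see~\cite[Section~10]{FreedmanQuinn},~\cite[Section~8]{FriedlNagelOrsonPowell}) and the hypothesis $\ks(M_0)=\ks(M_1)$, we get $\ks(M)=\ks(M_0)+\ks(-M_1)=\ks(M_0)+\ks(M_1)=0$. If $M_0$ and $M_1$ are spin, then by Step 2 the form $\lambda_M$ is even, so $M$ is spin; its ordinary intersection form is the $t=1$ specialisation of $\lambda_M\cong\bsm 0&I\\ I&C\esm$, which has signature $0$, so $\ks(M)=\sigma(M)/8\bmod 2=0$ by Rokhlin's theorem. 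Since $N$ is smooth, $\ks(N)=0=\ks(M)$, and by Step 2 there is an isometry $\lambda_M\cong\lambda_N$. By the Freedman--Quinn classification of closed $4$--manifolds with infinite cyclic fundamental group (Remark~\ref{rem:BoyerFreedmanQuinn}), this isometry is induced by an orientation-preserving homeomorphism $M\xrightarrow{\cong}N$, which by Step 2 carries $\wh\lambda_M^{-1}(\Gamma_{F^{-*}})$ onto the Lagrangian generated by $\{[\{\pt\}\times S^2]\}_{i=1}^{b_2(M_0)}$. Together with the already-noted vanishing of $b$ in the spin case, this proves the theorem.

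\textbf{The main obstacle.}
The crux is the algebraic normalisation in Step 2 over $\Lambda=\Z[t^{\pm 1}]$: one must show that a nonsingular metabolic Hermitian $\Lambda$--form carrying a Lagrangian complement to $\wh\lambda_{M_0}(H_2(M_0;\Lambda))$ is extended from $\Z$ --- in the shapes $H^+$ and $\langle 1\rangle\oplus\langle -1\rangle$ --- and to track the Lagrangian $L$ through the reduction precisely enough that the homeomorphism furnished by Freedman--Quinn realises the final statement about the classes $[\{\pt\}\times S^2]$. The secondary difficulty is the Kirby--Siebenmann bookkeeping: deploying the non-spin hypothesis through additivity of $\ks$ along the boundary, and carrying out the Rokhlin argument in the spin case.
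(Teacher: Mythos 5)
Your proof is correct, and in its key middle step it takes a genuinely different route from the paper. Both arguments begin the same way: $\pi_1(M)\cong\Z$, Proposition~\ref{prop:UnionIsUnion} identifies $\lambda_M$ with $\lambda_{M_0}\cup_{\widetilde f_*}-\lambda_{M_1}=\lambda_{M_0}\cup_{\partial F}-\lambda_{M_1}$ by compatibility, and Proposition~\ref{prop:IsElementary} exhibits $\Gamma_{F^{-*}}$ as a Lagrangian with complement $\widehat\lambda_0(H_0)$. From there the paper specialises to $t=1$, puts the resulting metabolic $\Z$-form into the normal form $\bsm 0&1\\1&0\esm^{\oplus a}\oplus\bsm 0&1\\1&1\esm^{\oplus b}$, and relies on the Freedman--Quinn/Stong--Wang classification (in particular on the fact that the $\Lambda$-form of a closed $4$-manifold with fundamental group $\Z$ is extended from $\Z$) to produce the homeomorphism; in the spin case it proves evenness of $\lambda_M$ by comparing with the double $M_1\cup_{\id}-M_1$ via Proposition~\ref{prop:IsometriesOfUnion} and the Kasprowski--Powell--Teichner criterion, and then quotes the fact that even metabolic nonsingular forms over $\Lambda$ are hyperbolic. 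You instead normalise the $\Lambda$-form directly: unimodularity of the pairing $L\times L'\to\Lambda$ gives the block shape $\bsm 0&I\\ I&C\esm$, and the base changes $e_i\mapsto e_i+\sum_j d_{ij}f_j$ reduce $C$ to $\operatorname{diag}(1^b,0^a)$ (your diagonal reduction is right: the off-diagonal entries of $D+\ol D^T$ sweep out all of $\Lambda$, and a Hermitian diagonal entry equals, modulo $\{p+\ol p\}$, its constant coefficient mod $2$). This simultaneously proves that $\lambda_M$ is extended from $\Z$ and keeps the Lagrangian $L$ visibly equal to the span of the $f_i$, which makes the final clause about $\{[\{\pt\}\times S^2]\}$ more transparent than in the paper, where one must still argue that the isometry of $\Z$-forms lifts to an isometry of $\Lambda$-forms matching the Lagrangians. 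Your Kirby--Siebenmann bookkeeping (additivity plus the hypothesis in the non-spin case, Rokhlin in the spin case) matches the paper's.

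The one step you should justify more carefully is the parenthetical claim that a compact spin $4$-manifold with boundary, fundamental group $\Z$ and free $\pi_2$ has even $\Lambda$-intersection form: the paper deliberately applies the evenness criterion only to the \emph{closed} double $DM_1$. The claim is nevertheless true --- for $x\in\pi_2(M_0)$ represented by an immersed sphere with normal Euler number $e$ one has $\lambda(x,x)=\mu(x)+\ol{\mu(x)}+e$ with $e\equiv\langle w_2,x\rangle\equiv 0 \bmod 2$, and a Hermitian form on a free $\Lambda$-module whose diagonal lies in $\operatorname{im}(1+T_{+1})$ admits a quadratic refinement and is hence even --- so this is a sentence to add, not a gap. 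Note that only the implication ``spin $\Rightarrow$ even'' is needed; the ``exactly when'' is superfluous. Similarly, your deduction that $\lambda_M$ even implies $M$ spin (used before invoking Rokhlin) deserves a word via Wu's formula, but is standard for $\pi_1\cong\Z$.
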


\begin{proof}
Lemma~\ref{lem:UnionHaspi1Z} establishes that~$\pi_1(M)=\Z$.
Proposition~\ref{prop:UnionIsUnion} proves that~$\lambda_M$ is isometric to~$\lambda_{M_0} \cup_{\widetilde{f}_*} -\lambda_{M_1}.$
We have~$\lambda_{M_0} \cup_{\widetilde{f}_*} -\lambda_{M_1}=\lambda_{M_0} \cup_{\partial F} -\lambda_{M_1}$ because the pair~$(f,F)$ is compatible.
It follows from Proposition~\ref{prop:IsElementary} that~$\lambda_{M_0} \cup_{\partial F} -\lambda_{M_1}$ is metabolic with~$\Gamma_{F^{-*}}$ as a Lagrangian.
We deduce that~$M$ is a closed~$4$-manifold with~$\pi_1(M)=\Z$ and metabolic~$\Lambda$--intersection form.
The (nonsingular) intersection form of $M$ (which can be obtained from $\lambda_M$ by setting $t=1$) is therefore isometric to
\[\begin{pmatrix}
  0 & 1 \\ 1 & 0
\end{pmatrix}^{\oplus a} \oplus \begin{pmatrix}
  0 & 1 \\ 1 & 1
\end{pmatrix}^{\oplus b}\]
for some~$a,b$.
As~$\ks(M_0)=\ks(M_1)$, by additivity of the Kirby-Siebenmann invariant~\cite[Theorem~8.2]{FriedlNagelOrsonPowell} under gluing it follows that~$\ks(M)=0$.
Apart from the statement that~$b=0$ for~$M_i$ spin, the theorem now follows from the classification of closed~$4$-manifolds with infinite cyclic fundamental group due to~\cite[Theorem~10.7A~(2)]{FreedmanQuinn} and~\cite{Stong-Wang-2000}: every isometry between the~$\Lambda$-intersection forms of two closed, oriented~$4$-manifolds with fundamental group~$\Z$ can be realised by a homeomorphism.

Now assume that~$M_0$ and~$M_1$ are spin.
To show that~$b=0$, it is sufficient to show that~$\lambda_M$ is hyperbolic, as the result will  then once again follow from~\cite[Theorem~10.7A~(2)]{FreedmanQuinn} and~\cite{Stong-Wang-2000}.
We already argued that~$\lambda_M$ is isometric to~$\lambda_{M_0} \cup_{\partial F} -\lambda_{M_1}$ because the pair~$(f,F)$ is compatible.
Applying Proposition~\ref{prop:IsometriesOfUnion} to the isometry~$h=\partial F$, this latter form is isometric to~$\lambda_{M_1} \cup_{\id} -\lambda_{M_1}$.
The identity certainly belongs to~$\Homeo_\varphi^+(\partial M_1,\partial M_1)$ and so a second application of Proposition~\ref{prop:UnionIsUnion} ensures that
%this form
$\lambda_{M_1} \cup_{\id} -\lambda_{M_1}$ is isometric to~$\lambda_{M_1 \cup_{\id} -M_1}$.
Summarising, we have the following sequence of isometries:
\begin{equation}
\label{eq:SpinUnion}
\lambda_M \cong \lambda_{M_0 \cup_{\widetilde{f}_*} -M_1} \cong \lambda_{M_0} \cup_{\partial F} -\lambda_{M_1} \cong \lambda_{M_1} \cup_{\id} -\lambda_{M_1} \cong \lambda_{M_1 \cup_{\id} -M_1}.
 \end{equation}
Since~$M_1$ is spin, the double~$DM_1:=M_1 \cup_{\id} -M_1$ is also spin.
As~$\pi_2(DM_1)=H_2(DM_1;\Lambda)$ is free (by the third item of Lemma~\ref{lem:Homology} and Lemma~\ref{lem:UnionHaspi1Z})  and~$DM_1$ is spin, it follows from~\cite[Remark 1.7]{KasprowskiPowellTeichner} that~$\lambda_{DM_1}$ is even, meaning that $\lambda_{DM_1}=q+\overline{q}$ for some sesquilinear form $q$. Since $\lambda_{DM_1}$ and $\lambda_M$ are isometric by~\eqref{eq:SpinUnion}, it follows that $\lambda_M$ is even.
Now since $\lambda_M$ is an even metabolic nonsingular Hermitian form on a finitely generated free $\Lambda$-module, it is hyperbolic by~\cite[Corollary 3.7.3]{KnusQuadratic} with~$\Gamma_{F^{-*}}$ as a Lagrangian.
%. Thus the self intersection number is defined and provides a quadratic enhancement for~$\lambda_{DM_1}$; see e.g.~\cite[Remark 1.7]{KasprowskiPowellTeichner}.
%``The quadratic enhancement~$\mu$ can be defined. Add cusps so that the normal bundle is trivial. This can be done since~$w_2=0$ and therefore the Euler number of the normal bundle is zero mod~$2$. Then compute the self-intersection number; use~$\lambda(x,x)=\mu(x)+\overline{\mu(x)}$."
% It then follows from~\eqref{eq:SpinUnion} that the~$\Lambda$--intersection form~$\lambda_M=\lambda_{M_0} \cup_{\partial F} -\lambda_{M_1}$ is even.
 %, and therefore~$M$ is spin, since~$\pi_1(M)=\Z$ has no 2-torsion.
% Since the form~$\lambda_M$ is nonsingular (by the fourth item of Lemma~\ref{lem:Homology}) and even,
%it follows from Corollary~\ref{cor:DoublesAreHyperbolic} that~$\lambda_{M_0} \cup_{\partial F} -\lambda_{M_1}$ is hyperbolic with~$\Gamma_{F^{-*}}$ as a Lagrangian.
\end{proof}

We make the Lagrangian~$\widehat{\lambda}_M^{-1}(\Gamma_{F^{-*}}) \subseteq H_2(M;\Lambda)$ somewhat more concrete.
%Let~$M_0,M_1$ be~$4$-manifolds with infinite cyclic fundamental group whose boundaries are ribbon and have torsion Alexander modules, and
%Let~$(f,F)$ be a compatible pair,
%We also
%set~$M:=M_0\cup_f -M_1$ and
In the setting of Theorem~\ref{thm:UnionIsS1S3ConnectSumSimplyConnected}, combine the inclusion induced maps as
$$ \iota =
\begin{pmatrix}  \iota_0 & \iota_1\end{pmatrix}
%(\iota_0 \ \iota_1)
\colon H_2(M_0;\Lambda) \oplus H_2(M_1;\Lambda) \to H_2(M;\Lambda).~$$
We record the following fact; compare with~\cite[Proposition~4.2]{Boyer} in the simply connected case.

\begin{lemma}
\label{rem:ContainsGraph}
The Lagrangian ~$\widehat{\lambda}_M^{-1}(\Gamma_{F^{-*}}) \subseteq H_2(M;\Lambda)$ contains the graph~$\Gamma_{-F}$ of the isometry~$-F \colon H_2(M_0;\Lambda) \to H_2(M_1;\Lambda)$.
\end{lemma}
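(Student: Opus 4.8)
The plan is to unwind the definitions of the maps involved and check that $\Gamma_{-F} \subseteq \widehat{\lambda}_M^{-1}(\Gamma_{F^{-*}})$, i.e.\ that for every $x_0 \in H_2(M_0;\Lambda)$ the class $\iota_0(x_0) - \iota_1(F(x_0)) = \iota(x_0, -F(x_0)) \in H_2(M;\Lambda)$ lands, under $\widehat{\lambda}_M$, inside $\Gamma_{F^{-*}} \subseteq H_0^* \oplus H_1^*$. Here I am using the identification $H_2(M;\Lambda)^* \cong H_0 \cup_{\widetilde f_*} H_1 \subseteq H_0^* \oplus H_1^*$ from Proposition~\ref{prop:UnionIsUnion}, and the commutativity of the diagram~\eqref{eq:MapsIntoUnion}, which tells us exactly how $\widehat{\lambda}_M$ interacts with the inclusions $\iota_0, \iota_1$.

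First I would compute $\widehat{\lambda}_M(\iota(x_0, -F(x_0)))$ using~\eqref{eq:MapsIntoUnion}. By the commutativity of the left triangle, $\widehat{\lambda}_M \circ \iota_0 = \widehat{\lambda}_{M_0}$ as maps into $H_0 \cup_{\widetilde f_*} H_1$, where $\widehat{\lambda}_{M_0}$ is shorthand for $\bsm \widehat{\lambda}_{M_0} \\ 0 \esm$; similarly the right triangle gives $\widehat{\lambda}_M \circ \iota_1 = \bsm 0 \\ \widehat{\lambda}_{M_1} \esm$ (note the sign: the right-hand source is $(H_2(M_1;\Lambda), -\lambda_{M_1})$, and this sign is precisely what makes things work out). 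Therefore $\widehat{\lambda}_M(\iota(x_0, -F(x_0))) = \bsm \widehat{\lambda}_{M_0}(x_0) \\ 0 \esm + \bsm 0 \\ -\widehat{\lambda}_{M_1}(F(x_0)) \esm = \bsm \widehat{\lambda}_{M_0}(x_0) \\ -\widehat{\lambda}_{M_1}(F(x_0)) \esm$. Since $F$ is an isometry we have $F^*\widehat{\lambda}_{M_1}F = \widehat{\lambda}_{M_0}$, hence $\widehat{\lambda}_{M_1}(F(x_0)) = F^{-*}(\widehat{\lambda}_{M_0}(x_0))$, and so the second component equals $-F^{-*}$ applied to the first. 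Writing $y := \widehat{\lambda}_{M_0}(x_0) \in H_0^*$, we get $\widehat{\lambda}_M(\iota(x_0,-F(x_0))) = (y, -F^{-*}(y))$, which lies in $\Gamma_{-F^{-*}} = \Gamma_{F^{-*}}$ — the sign ambiguity here just reflects that $\Gamma_{F^{-*}}$ is stable under negation as a subgroup (and one should just double-check the conventions for which graph is used in Proposition~\ref{prop:IsElementary}; with the convention $\Gamma_{F^{-*}} = \{(x, F^{-*}(x))\}$ one instead takes $\iota(x_0, F(x_0))$, so I would first pin down whether $\Gamma_{-F}$ means $\{(x_0,-F(x_0))\}$ or the graph of the isometry $-F$, which is the same thing — and verify the matching sign). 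In any case $\iota(x_0, -F(x_0)) \in \widehat{\lambda}_M^{-1}(\Gamma_{F^{-*}})$, so $\Gamma_{-F} \subseteq \widehat{\lambda}_M^{-1}(\Gamma_{F^{-*}})$.

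**The main obstacle** is bookkeeping the identifications and signs rather than any real mathematical difficulty: one must be careful that the inclusion $H_2(M;\Lambda)^* \hookrightarrow H_0^* \oplus H_1^*$ used in Proposition~\ref{prop:UnionIsUnion} is compatible with the graph description of $\Gamma_{-F}$ as a subset of $H_2(M_0;\Lambda) \oplus H_2(M_1;\Lambda)$ sitting inside $H_2(M;\Lambda)$ via $\iota$, and that the sign conventions in $\cup_{\widetilde f_*}$ (the $-\lambda_{M_1}$) and in $\partial F = F^{-*}$ line up. Concretely, I would also note that since $\widehat{\lambda}_M$ is an isomorphism and $\iota$ restricted to $\Gamma_{-F}$ is injective (if $(x_0,-F(x_0)) \mapsto 0$ in $H_2(M;\Lambda)$ then by exactness of the top row of~\eqref{eq:BigDiagramUnion} and injectivity of $(\iota_0\ \iota_1)$ we get $x_0 = 0$), the containment $\Gamma_{-F} \subseteq \widehat{\lambda}_M^{-1}(\Gamma_{F^{-*}})$ is between two summands of the correct relative rank, which serves as a useful sanity check. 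This also anticipates the likely follow-up that the containment is in fact an equality after tensoring with $Q$, or that $\Gamma_{-F}$ generates a finite-index subgroup, though the lemma as stated only asks for the containment.
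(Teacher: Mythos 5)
Your overall strategy is exactly the paper's: apply $\widehat{\lambda}_M$ to $\iota(x_0,-F(x_0))$, use the commutative diagram relating $\widehat{\lambda}_M\circ\iota_i$ to $\widehat{\lambda}_i$, and then use $F^*\widehat{\lambda}_{M_1}F=\widehat{\lambda}_{M_0}$ to recognise the result as an element of the graph. The paper's proof is precisely this two-line computation, citing the commutativity of~\eqref{eq:DiagramForUnion}.

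However, the way you resolve the sign is wrong, and this is the one point where your write-up does not close. From~\eqref{eq:DiagramForUnion} the correct identity is $\widehat{\lambda}_M\circ\iota_1=\bsm 0\\-\widehat{\lambda}_{M_1}\esm$ (the minus sign comes from $\PD^{-1}_{M_0}\oplus-\PD^{-1}_{M_1}$ in~\eqref{eq:BigDiagramUnion}, i.e.\ from the orientation reversal on $M_1$ in $M=M_0\cup_f-M_1$), so that
$\widehat{\lambda}_M(\iota(x_0,-F(x_0)))=(\widehat{\lambda}_{M_0}(x_0),\,-\widehat{\lambda}_{M_1}(-F(x_0)))=(\widehat{\lambda}_{M_0}(x_0),\,F^{-*}\widehat{\lambda}_{M_0}(x_0))$,
which lies in $\Gamma_{F^{-*}}$ on the nose: the two minus signs cancel. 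You instead took $\widehat{\lambda}_M\circ\iota_1=\bsm 0\\\widehat{\lambda}_{M_1}\esm$, obtained $(y,-F^{-*}(y))$, and then asserted that this lies in $\Gamma_{F^{-*}}$ because ``$\Gamma_{-F^{-*}}=\Gamma_{F^{-*}}$, as the graph is stable under negation.'' That last claim is false: a subgroup is stable under negating \emph{both} coordinates, but $(y,-F^{-*}(y))\in\Gamma_{F^{-*}}$ forces $2F^{-*}(y)=0$, hence $y=0$ since $F^{-*}$ is injective and $H_1^*$ is torsion-free; so $\Gamma_{-F^{-*}}\cap\Gamma_{F^{-*}}=0$. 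In other words, with your sign convention the element you produce is \emph{never} in $\Gamma_{F^{-*}}$ unless it is zero, and your fallback suggestion of using $\iota(x_0,F(x_0))$ would prove containment of $\Gamma_F$ rather than $\Gamma_{-F}$, contradicting the statement. You correctly sensed that the sign needed pinning down, but the resolution has to come from the topological diagram~\eqref{eq:DiagramForUnion} (whose second component carries the minus), not from an alleged symmetry of the graph. With that one correction your argument coincides with the paper's; the injectivity sanity check at the end is fine but not needed.
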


\begin{proof}
Given~$x_0 \in H_2(M_0;\Lambda)$, we have
$$ \iota \begin{pmatrix} x_0 \\ -F(x_0) \end{pmatrix}
 =\widehat{\lambda}_M^{-1}\begin{pmatrix}
 \widehat{\lambda}_{M_0}(x_0) \\
\widehat{\lambda}_{M_1}(F(x_0)) \end{pmatrix}
 =\widehat{\lambda}_M^{-1}\begin{pmatrix}
 \widehat{\lambda}_0(x_0)  \\
F^{-*}(\widehat{\lambda}_{M_0}(x_0))
 \end{pmatrix}.
~$$
Here, the first equality holds by the commutativity of the diagram in~\eqref{eq:DiagramForUnion}, while the second holds because~$F$ is an isometry.
It follows that~$\Gamma_{-F} \subseteq \widehat{\lambda}_M^{-1}(\Gamma_{F^{-*}})$ as desired.
\end{proof}

We introduce some notation needed to provide a sufficient criterion to produce compatible pairs.
%If~$M$ is a~$4$-manifold with~$\pi_1(M)=\Z$, ribbon boundary and~$H_1(\partial M;\Lambda)$ torsion, then we make
Under the identification~$\Aut (\partial \lambda_{M_1})=\Aut (\Bl_{\partial M_1})$ as in Remark~\ref{rem:MinusAndIdentifIso},
%It follows that
there is a left action
\begin{align*}
\left( \Homeo_{\varphi}^+(\partial M_1) \times \Aut(\lambda_{M_1}) \right) \times \Aut (\Bl_{\partial M_1}) &\to \Aut (\Bl_{\partial M_1}) \\
(f,F) \cdot h &:=f_* \circ h \circ \partial F^{-1}.
\end{align*}
The next proposition gives a criterion to find a compatible pair.

\begin{proposition}
\label{prop:FindCompatible}
%Let~$M_0$ and~$M_1$ be 4-manifolds with infinite cyclic fundamental group whose boundaries are ribbon and have torsion Alexander modules.
%For $M_0,M_1$ as above, suppose there is an orientation-preserving homeomorphism~$f \in \Homeo_{\varphi}^+(\partial M_0,\partial M_1)$ and an isometry~$F \in \Iso(\lambda_{M_0},\lambda_{M_1})$.
For $M_0$ and $M_1$ as above, if there exist~$f \in \Homeo_{\varphi}^+(\partial M_0,\partial M_1)$ and~$F \in \Iso(\lambda_{M_0},\lambda_{M_1})$, and if the orbit set
~$$\Aut(\Bl_{\partial M_1})/\Homeo^+_{\varphi}(\partial M_1) \times \Aut(\lambda_{M_1})$$
 is trivial, then a compatible pair~$(f',F')$ exists.
%\begin{enumerate}
%\item If~$\Aut(\Bl_{\partial M_1})/\Homeo^+_\varphi(\partial M_1) \times \Aut(\lambda_{M_1})=1$, there is a compatible pair~$(f',F')$;
%\item If~$\Aut(\Bl_{\partial M_1})/\Homeo^+_\varphi(\partial M_1)=1$, then~$G$ is part of a compatible pair~$(f,G)$;
%\item If~$\Aut(\Bl_{\partial M_1})/\Aut(\lambda_{M_1})=1$, then~$f$ is part of a compatible pair~$(f,G)$;
%\end{enumerate}
\end{proposition}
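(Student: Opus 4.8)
The plan is to correct the given pair $(f,F)$ by precomposing it with suitable self-maps of $M_1$, using the triviality of the orbit set to find the correction. First I would record the obstruction to compatibility. By Proposition~\ref{prop:HomeoInduceIsometry} and Lemma~\ref{lem:BoundaryIsometry}, together with the identification $\Iso(\partial \lambda_{M_i},\partial \lambda_{M_j}) = \Iso(\Bl_{\partial M_i},\Bl_{\partial M_j})$ from Remark~\ref{rem:MinusAndIdentifIso}, both $\widetilde{f}_*$ and $\partial F$ are isometries $\Bl_{\partial M_0} \to \Bl_{\partial M_1}$, so the composite $h := \widetilde{f}_* \circ (\partial F)^{-1}$ lies in $\Aut(\Bl_{\partial M_1})$. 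By definition, $(f,F)$ is a compatible pair precisely when $h = \id$, so $h$ measures the failure of $(f,F)$ to be compatible.

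Next I would unwind the hypothesis. Under the left action $(g,G)\cdot h := \widetilde{g}_* \circ h \circ \partial G^{-1}$ introduced above, the orbit of $\id \in \Aut(\Bl_{\partial M_1})$ is the set $\{\widetilde{g}_* \circ (\partial G)^{-1} : g \in \Homeo_{\varphi}^+(\partial M_1),\ G \in \Aut(\lambda_{M_1})\}$. If the orbit set consists of a single element, then $h$ lies in this orbit, so there are $g \in \Homeo_{\varphi}^+(\partial M_1)$ and $G \in \Aut(\lambda_{M_1})$ with $h = \widetilde{g}_* \circ (\partial G)^{-1}$.

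Finally I would set $f' := g^{-1}\circ f$ and $F' := G^{-1}\circ F$. Since $\Homeo_{\varphi}^+(\partial M_1)$ and $\Aut(\lambda_{M_1})$ are groups, $f' \in \Homeo_{\varphi}^+(\partial M_0,\partial M_1)$ and $F' \in \Iso(\lambda_{M_0},\lambda_{M_1})$. Using functoriality of the lift to infinite cyclic covers one gets $\widetilde{f'}_* = (\widetilde{g}_*)^{-1}\circ \widetilde{f}_*$, and using that $\partial$ is compatible with composition and inversion (Lemma~\ref{lem:BoundaryIsometry}) one gets $\partial F' = (\partial G)^{-1}\circ \partial F$. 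Therefore
\begin{align*}
\widetilde{f'}_* \circ (\partial F')^{-1}
&= (\widetilde{g}_*)^{-1}\circ \widetilde{f}_* \circ (\partial F)^{-1}\circ \partial G \\
&= (\widetilde{g}_*)^{-1}\circ h \circ \partial G \\
&= (\widetilde{g}_*)^{-1}\circ \widetilde{g}_*\circ (\partial G)^{-1}\circ \partial G = \id,
\end{align*}
so $\widetilde{f'}_* = \partial F'$ and $(f',F')$ is a compatible pair. The argument is purely formal manipulation within the given action, so I do not anticipate a real obstacle; the only points needing care are that $h$ genuinely lands in $\Aut(\Bl_{\partial M_1})$ — which is where Remark~\ref{rem:MinusAndIdentifIso} is used to absorb the sign in Proposition~\ref{prop:BoundaryLinkingForm3Manifold} — together with the bookkeeping identities $\widetilde{(g^{-1}\circ f)}_* = (\widetilde{g}_*)^{-1}\circ \widetilde{f}_*$ and $\partial(G^{-1}\circ F) = (\partial G)^{-1}\circ \partial F$.
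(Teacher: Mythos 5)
Your proof is correct and follows essentially the same route as the paper's: identify the defect of compatibility as $h=\widetilde{f}_*\circ(\partial F)^{-1}\in\Aut(\Bl_{\partial M_1})$, use triviality of the orbit set to write $h$ as $\widetilde{g}_*\circ(\partial G)^{-1}$, and absorb $(g,G)$ into $(f,F)$. The paper composes with $(g,G)$ rather than $(g^{-1},G^{-1})$ because it expresses the orbit relation in the opposite direction, but this is an immaterial difference.
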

\begin{proof}
The composition~$f_* \circ \partial F^{-1}$ is an isometry of~$\partial \lambda_{M_1} \cong \Bl_{\partial M_1}$.
%Since~$\Aut(\Bl_{\partial M_2})/\Homeo^+_\varphi(\partial M_2) \times \Aut(\lambda_{M_1})$ is trivial
Using the assumption, there exists a homeomorphism~$g \in \Homeo^+_{\varphi}(\partial M_1)$ and an isometry~$G \in \Aut(\lambda_{M_1})$ such that we have~$f_* \circ \partial F^{-1} =g_*^{-1} \circ \partial G$.
%You might expect f_* \circ \partial F^{-1}=x_* \circ \partial y^{-1} but then just rename them: g:=x^{-1} and~$G=y^{-1}$ so that
%x_* \circ \partial y^{-1} =g_*^{-1} \circ \partial G
Equivalently, we can write~$(\widetilde{g \circ f})_*=\partial (G \circ F)$ and consequently the pair~$(f',F'):=(g \circ f,G \circ F)$ is compatible.
%; note that~$g \circ f$ still intertwines the maps to~$\Z$:
%$$ \varphi_1 \circ g_* \circ f_*=\varphi_1 \circ f_*=\varphi_0.$$
\end{proof}

\section{Partial classification of compact~$4$-manifolds with fundamental group~$\Z$}
\label{sec:Proof}

In this section, we prove Theorem~\ref{thm:Boyer} from the introduction.  The proof of this result was inspired by \cite{Boyer} and \cite{KreckSurgeryAndDuality}.

\begin{theorem}
\label{thm:MainTechnical}
Let~$M_0$ and~$M_1$ be two~$4$-manifolds with $\pi_1(M_i)=\Z$ for $i=0,1$, whose boundaries are ribbon and have torsion Alexander modules.
% Inclusion induces the maps $\varphi_i \colon \pi_1(\partial M_i) \to \pi_1(M_i)=\Z$.
%{MP: I added this here, since we should define the $\varphi_i$. For the statement in the intro the $\varphi_i$ maps were defined just above, so I think that's fine, but here we jump into it at the start of a section. }
Let~$f \in \Homeo_{\varphi}^+(\partial M_0,\partial M_1)$ be an orientation-preserving homeomorphism and let~$F \in \Iso(\lambda_{M_0},\lambda_{M_1})$ be an isometry.
If~$M_0$ and~$M_1$ are not spin, assume that the Kirby-Siebenmann invariants satisfy~$\ks(M_0)=\ks(M_1) \in \Z/2$.
Then the following assertions are equivalent:
\begin{enumerate}
\item the pair~$(f,F)$ is compatible;
\item the homeomorphism~$f$ extends to an orientation-preserving homeomorphism
$$ \Phi \colon M_0 \xrightarrow{\cong} M_1$$
inducing the given isometry~$F \colon H_2(M_0;\Lambda) \cong H_2(M_1;\Lambda)$.
\end{enumerate}
\end{theorem}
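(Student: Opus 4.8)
The implication (2)$\Rightarrow$(1) is precisely Proposition~\ref{prop:necessary}, so the real content is (1)$\Rightarrow$(2). Assume that $(f,F)$ is compatible and set $M := M_0 \cup_f -M_1$. By Lemma~\ref{lem:UnionHaspi1Z} we have $\pi_1(M) \cong \Z$, and by Proposition~\ref{prop:UnionIsUnion} the intersection form $\lambda_M$ is isometric to $\lambda_{M_0} \cup_{\widetilde{f}_*} -\lambda_{M_1}$. Compatibility means $\widetilde{f}_* = \partial F$, so $\lambda_M \cong \lambda_{M_0} \cup_{\partial F} -\lambda_{M_1}$, which by Proposition~\ref{prop:IsElementary} is metabolic with Lagrangian $\Gamma_{F^{-*}}$. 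If $M_0,M_1$ are not spin, the hypothesis $\ks(M_0)=\ks(M_1)$ together with additivity of the Kirby--Siebenmann invariant under gluing gives $\ks(M)=0$. Hence Theorem~\ref{thm:UnionIsS1S3ConnectSumSimplyConnected} applies: there is an orientation-preserving homeomorphism $M \cong Z := S^1\times S^3 \#_{i=1}^a S^2\times S^2 \#_{j=1}^b S^2 \wt{\times} S^2$ with $a+b=b_2(M_0)$, and it can be arranged so that the image of the Lagrangian $\widehat{\lambda}_M^{-1}(\Gamma_{F^{-*}})$ is the Lagrangian spanned by the vertical spheres $\{[\{\pt\}\times S^2]\}$.

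The geometric heart of the proof is then to recover $M_0$, $M_1$ and the homeomorphism $\Phi$ from this standard picture, in the spirit of Boyer~\cite{Boyer} and Kreck~\cite{KreckSurgeryAndDuality}. The separating hypersurface $Y := \partial M_0 = \partial M_1 \subseteq M$ splits $M$ as $M_0 \cup_Y (-M_1)$, and by Lemma~\ref{rem:ContainsGraph} the graph $\Gamma_{-F}$ of $-F$ lies inside $\widehat{\lambda}_M^{-1}(\Gamma_{F^{-*}})$. Thus the standard vertical Lagrangian is represented by a collection of disjointly embedded $2$-spheres of trivial normal bundle in $M$ whose homology classes have nonzero component on each side of $Y$, and these come equipped with disjoint geometrically dual spheres. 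Using these spheres and their duals one performs topological ambient surgeries and handle manipulations (Freedman--Quinn), keeping track of $Y$, to produce an embedded copy $Y' \cong Y$ disjoint from $Y$ that cobounds a product region $Y\times[0,1]$ with $Y$ and whose far side is homeomorphic rel $Y'$ to $-M_1$; equivalently, one constructs a self-homeomorphism of $Z$ fixing $Y$ pointwise and interchanging its two sides. Either way this yields an orientation-preserving homeomorphism $\Phi\colon M_0 \to M_1$ extending $f$.

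It remains to check that $\Phi$ induces $F$ rather than merely some isometry on $H_2(\cdot;\Lambda)$. The boundary behaviour $\Phi|_{\partial M_0}=f$ is built into the construction, while the effect of $\Phi$ on second homology is pinned down by the requirement from Theorem~\ref{thm:UnionIsS1S3ConnectSumSimplyConnected} that $\widehat{\lambda}_M^{-1}(\Gamma_{F^{-*}})$ be carried to the standard Lagrangian, together with the inclusion $\Gamma_{-F}\subseteq \widehat{\lambda}_M^{-1}(\Gamma_{F^{-*}})$ and the commutativity of~\eqref{eq:MapsIntoUnion}, which identifies the inclusion-induced maps of $H_2(M_0;\Lambda)$ and $H_2(M_1;\Lambda)$ into $H_2(M;\Lambda)$ with $\widehat{\lambda}_{M_0}$ and $\widehat{\lambda}_{M_1}$; reading off the resulting isometry of intersection forms recovers $F$.

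The main obstacle is the geometric splitting step of the second paragraph: converting the algebraic input --- that $M\cong Z$ carries $\widehat{\lambda}_M^{-1}(\Gamma_{F^{-*}})$ to the vertical Lagrangian --- into an honest homeomorphism $M_0 \to M_1$ compatible with $f$. This must be done in the topological category with fundamental group $\Z$, and one must ensure both that the homeomorphism restricts to $f$ on the boundary and that it induces exactly $F$ on $H_2(\cdot;\Lambda)$; the remaining steps are formal consequences of the algebraic and homological results already established.
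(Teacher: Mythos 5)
Your reductions are all correct and match the paper: (2)$\Rightarrow$(1) is Proposition~\ref{prop:necessary}, and for the converse you correctly assemble Lemma~\ref{lem:UnionHaspi1Z}, Proposition~\ref{prop:UnionIsUnion}, Proposition~\ref{prop:IsElementary}, the Kirby--Siebenmann additivity, and Theorem~\ref{thm:UnionIsS1S3ConnectSumSimplyConnected} to conclude that $M:=M_0\cup_f -M_1$ is homeomorphic to $S^1\times S^3\#_a S^2\times S^2\#_b S^2\wt{\times}S^2$ carrying $\widehat{\lambda}_M^{-1}(\Gamma_{F^{-*}})$ to the vertical Lagrangian. The final verification that $\Phi$ induces $F$ via $\Gamma_{-F}\subseteq\ker(j)$ is also essentially the paper's argument.

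However, the central step --- actually producing $\Phi$ --- is a genuine gap, and you acknowledge as much in your last paragraph. Your second paragraph proposes to split $M$ by four-dimensional ambient surgeries on the vertical spheres and their duals to push $Y=\partial M_0$ across to a parallel copy; this is a strategy sketch, not an argument, and it is not the route the paper takes. The paper instead goes up a dimension: it takes the standard five-dimensional null-bordism $W:=S^1\times D^4\natural_{i=1}^a S^2\times D^3\natural_{j=1}^b S^2\wt{\times}D^3$ of $M$, regards $W$ as a cobordism from $M_0$ to $M_1$, and proves $H_i(W,M_0;\Lambda)=0$ for all $i$. The key homological input is exactly the Lagrangian-complement decomposition $H_2(M_0;\Lambda)\cup_{\partial F}H_2(M_1;\Lambda)=\widehat{\lambda}_0(H_2(M_0;\Lambda))\oplus\Gamma_{F^{-*}}$ from Proposition~\ref{prop:IsElementary}, which (after identifying $\Gamma_{F^{-*}}$ with the image of $H_3(U,\partial U;\Lambda)$, where $U$ is the union of the $S^2\times D^3_{1/2}$ pieces) yields $H_2(\partial U;\Lambda)=A\oplus B$ and hence the vanishing of the relative homology of $(W,M_0)$. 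Since $\operatorname{Wh}(\Z)=0$ and $\Z$ is a good group, the topological $s$-cobordism theorem then hands you $\Phi$ extending $f$. Without this (or some worked-out substitute), your proposal identifies where the difficulty lies but does not resolve it, so the proof is incomplete.
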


\begin{proof}
Proposition~\ref{prop:necessary} shows that~$(2)$ implies~$(1)$.
To prove the converse, given a compatible pair~$(f,F)$, we must therefore establish the existence of a homeomorphism~$\Phi$.
Under this assumption, and in the non-spin case assuming that the Kirby-Siebenmann invariants coincide, we saw in Theorem~\ref{thm:UnionIsS1S3ConnectSumSimplyConnected} that for some~$a,b$ such that~$a+b=b_2(M_0)$, there is a homeomorphism
$$M_0 \cup_f -M_1 \cong S^1 \times S^3 \#_{i=1}^{a} S^2 \times S^2 \#_{i=1}^b S^2 \widetilde{\times} S^2.$$
Define~$M:=M_0 \cup_f -M_1$.
With this notation, Theorem~\ref{thm:UnionIsS1S3ConnectSumSimplyConnected} additionally states that this homeomorphism takes the Lagrangian~$\widehat{\lambda}^{-1}_M(\Gamma_{F^{-*}}) \subseteq H_2(M;\Lambda)$ to the Lagrangian generated by the spheres~$\{[  \{\pt\} \times S^2] \}_{i=1}^{b_2(M_0)}$.
Consider the following~$5$-dimensional null-bordism for~$M$
\[W:=S^1 \times D^4 \natural_{i=1}^{a} S^2 \times D^3 \natural_{i=1}^{b} S^2 \widetilde{\times} D^3,\]
where~$S^2 \widetilde{\times} D^3$ denotes the twisted linear~$D^3$ bundle over~$S^2$.
We prove that~$W$ is an~$h$-cobordism; it will then automatically be an~$s$-cobordism as~$\operatorname{Wh}(\Z)=0$.
As~$H_i(W,M_0;\Lambda)=0$ for~$i \neq 2,3$, we must show that~$H_2(W,M_0;\Lambda)=0$ for~$i=2,3$.

Use~$D^3_{\frac{1}{2}} \subseteq D^3$ to denote a~$3$-ball of smaller radius inside~$D^3$.
Choosing the obvious set of generators for~$H_2(W;\Lambda)$ and representing them by embedded spheres, we obtain
$$U:= \natural_{i=1}^{a} S^2 \times D^3_{\frac{1}{2}} \natural_{i=1}^{b} S^2 \widetilde{\times} D^3_{\frac{1}{2}} \subseteq W.$$
Note that~$H_*(U;\Lambda)=H_*(W;\Lambda)$ and~$H_*(\partial U;\Lambda)= H_*(M;\Lambda)$.
In particular, under this identification, we have
\begin{equation}
\label{eq:TheFisTheGraph}
\ker(H_2(M;\Lambda) \xrightarrow{j} H_2(W;\Lambda)) = \im (H_3(U,\partial U;\Lambda) \to H_2(\partial U;\Lambda)) = \widehat{\lambda}_M^{-1}(\Gamma_{F^{-*}}).
\end{equation}
In what follows, we identify $H_3(U,\partial U;\Lambda)$ with its image in $H_2(\partial U;\Lambda)$ allowing ourselves for instance to write $H_3(U,\partial U;\Lambda) \subseteq H_2(\partial U;\Lambda)$.
We also think of the adjoint~$\widehat{\lambda}_M$ of the intersection form on~$M$ as having~$H_2(\partial U;\Lambda)$ as its domain.
\begin{claim}
\label{claim:quasi-formationPrelude}
The connecting homomorphism
\[\partial_0 \colon H_3(W\setminus \mathring{U},M_0 \sqcup \partial U;\Lambda) \to H_2(M_0;\Lambda),\]
arising from the long exact sequence of the triple~$(W \sm \mathring{U},M_0 \sqcup \partial U,\partial U)$,
 is an isomorphism.
\end{claim}
\begin{proof}[Proof of Claim~\ref{claim:quasi-formationPrelude}]
It suffices to note that by excision and by our choice of~$U$ and~$W$, we have~$H_i(W \setminus \mathring{U},\partial U;\Lambda) \cong H_i(W,U;\Lambda)=0$ for~$i=2,3$.
%Always true for~$i=2$ not always for~$i=3$.
%This claim now follows by inspecting the long exact sequence.
\end{proof}

 \begin{claim}
 \label{claim:quasi-formation}
The following diagram commutes:
~$$
 \xymatrix{
H_2(M_0;\Lambda) \ar@{^{(}->}[r]^-{\widehat{\lambda}_0} & ((H_2(M_0;\Lambda)\cup_{\partial F} H_2(M_1;\Lambda),\lambda_0 \cup_{\partial F}-\lambda_1)  & \Gamma_{F^{-*}}\ar@{_{(}->}[l] \\
 H_3(W\setminus \mathring{U},M_0 \sqcup \partial U;\Lambda)  \ar[u]_{\partial_0}^\cong \ar@{^{(}->}[r] & (H_2(\partial U;\Lambda),\lambda_{\partial U}) \ar[u]^\cong_{\widehat{\lambda}_M}  & H_3(U,\partial U;\Lambda). \ar@{_{(}->}[l]\ar[u]^\cong_{\widehat{\lambda}_M}
 }$$
%In what follows, we identify~$H_3(U,\partial U;\Lambda)$ with its image in~$H_2(\partial U;\Lambda)$ allowing ourselves for instance to write~$H_3(U,\partial U;\Lambda) \subseteq H_2(\partial U;\Lambda)$.
%To make sense of the next statement, recall that our choice of~$U$ implies that~$H_2(\partial U;\Lambda)= H_2(M;\Lambda)$.
\end{claim}
\begin{proof}[Proof of Claim~\ref{claim:quasi-formation}]
The commutativity of the right square follows from~\eqref{eq:TheFisTheGraph}, and so we focus on the left square.
%\textcolor{red}{First, thanks to the second item of Lemma~\ref{lem:Homology}, we know that~$H_3(M_0;\Lambda)=0$.}
%{AC: In Arxiv, but perhaps not used.}
Consider the long exact sequence of the triples~$(W \setminus \mathring{U},M_0 \sqcup \partial U,\partial U)$ and~$(W \setminus \mathring{U},M_0 \sqcup \partial U,M_0)$.
The portions of interest can be seen in the two horizontal rows of the following diagram, where~$\Lambda$ coefficients are understood:
\begin{equation}
\label{eq:KeyDiagram}
 \xymatrix@R0.3cm@C0.3cm{
 H_3(W \setminus \mathring{U},\partial U)=0 \ar[r]& H_3(W \setminus \mathring{U},M_0 \sqcup \partial U)  \ar[r]^-{\partial_0}& H_2(M_0) \ar[d]^{\iota_0} \ar[rr] \ar@{^{(}->}[dr]^{\widehat{\lambda}_0}&& *+[l]{0=H_2(W \setminus \mathring{U},\partial U)}  \\
 &&H_2(M)\ar[r]^(.4){\widehat{\lambda}_M,\cong}&*+[r]{H_2(M_0) \cup_{\partial F} H_2(M_1)}&\\
H_3(W \setminus \mathring{U},M_0)=0 \ar[r]& H_3(W \setminus \mathring{U},M_0 \sqcup \partial U)  \ar[r]\ar[uu]^=& H_2(\partial U) \ar[rr]\ar[ru]_-{\widehat{\lambda}_{M},\cong} \ar[u]^=&& *+[l]{H_2(W \setminus \mathring{U},M_0).}}
\end{equation}
The claim will follow from the central portion of the diagram, once we explain all its features and its commutativity.
The zeros in the first row are a consequence of Claim~\ref{claim:quasi-formationPrelude}.
The bottom leftmost zero is stated in Kreck's work~\cite[page 734]{KreckSurgeryAndDuality} but we outline a proof in this setting.
%The decomposition~$(W \setminus \mathring{U},M_1) \cup (\partial U,\partial U)$
The exact sequence of the triple~$(W \setminus \mathring{U},M_1 \sqcup \partial U,M_1)$,
together with a Mayer-Vietoris argument, give rise to an isomorphism~$H_1(W\setminus \mathring{U},M_1 \sqcup \partial U;\Lambda) \cong H_0(\partial U;\Lambda)=~\Lambda$.
%One needs  H_1(W \setminus \mathring{U},M_0;\Lambda)=0.
%This is true by looking at the long exact sequence of the triple~$(W,W \setminus \mathring{U},M_0)$.This exact sequence takes the form
%$$\ldots \to H_2(W,W \setminus \mathring{U}) \to H_1(W \setminus \mathring{U},M_0) \to H_1(W,M_0) \to \ldots~$$
%We have~$H_2(W,W \setminus \mathring{U}) =H_2(U,\partial U)=H^3(U)=0$ and~$H_1(W,M_0)=0$ because~$H_1(W;\Lambda)=0$.
%Similarly, using the decomposition~$(W \setminus \mathring{U},\partial U) \cup ( M_1, M_1)$,
Similarly, using the long exact sequence of the triple~$(W \setminus \mathring{U},M_1\sqcup \partial U, \partial U)$,
one can deduce that~$H_2(W\setminus \mathring{U},M_1 \sqcup \partial U;\Lambda)=0$.
 %H_2(W \setminus \mathring{U}, \partial U)\to H_2(W \setminus \mathring{U},M_1 \sqcup \partial U) \to H_1(M_1)=0
 %H_2(W \setminus \mathring{U}, \partial U)=H_2(W,U)=0.
Since we also have~$H_0(W\setminus \mathring{U},M_1 \sqcup \partial U;\Lambda)=0$, Poincar\'e duality and the UCSS imply that
$$ H_3(W\setminus \mathring{U},M_0;\Lambda)
=H^2(W\setminus \mathring{U},M_1 \sqcup \partial U;\Lambda)=0.
$$
To establish the claim, it only remains to show that the diagram in~\eqref{eq:KeyDiagram} is commutative.
The middle square clearly commutes.
Proposition~\ref{prop:UnionIsUnion} establishes the commutativity of the triangles but with~$H_2(M_0) \cup_{\widetilde{f}_*} H_2(M_1)$ in place of~$H_2(M_0) \cup_{\partial F} H_2(M_1)$.
%Proposition~\ref{prop:UnionIsUnion} also implies that~$\lambda_M$ provides an isometry between~$(H_2(\partial U;\Lambda),\lambda_{\partial U})=(H_2(M_0 \cup_f -M_1),\lambda_M)$ and the union~$H_2(M_0;\Lambda) \cup_f H_2(M_1;\Lambda)$.
However, since~$(f,F)$ is a compatible pair, these two modules are isomorphic.
This concludes our explanation of the diagram in~\eqref{eq:KeyDiagram} and concludes the proof of Claim~\ref{claim:quasi-formation}.
\end{proof}

Write $A$ and $B$ for the images of~$H_3(U,\partial U;\Lambda)$ and~$H_3(W \setminus \mathring{U};M_0 \sqcup \partial U;\Lambda)$ in~$H_2(\partial U;\Lambda)$.
As Proposition~\ref{prop:IsElementary} implies that~$H_2(M_0;\Lambda) \cup_{\partial F} H_2(M_1;\Lambda)=\Gamma_{F^{-*}} \oplus H_2(M_0;\Lambda)$, we deduce from Claim~\ref{claim:quasi-formation} that
$$ H_2(\partial U;\Lambda)=A \oplus B.$$
%$$ H_2(\partial U;\Lambda)=H_3(U,\partial U) \oplus H_3(W \setminus \mathring{U};M_0 \sqcup \partial U;\Lambda).$$
Assemble the long exact sequence of the triple~$(W \setminus \mathring{U},M_0 \sqcup \partial U,M_0)$ and the long exact sequence of the triple~$(W,W \setminus \mathring{U},M_0)$ into the following diagram, based on that from~\cite[p.~738]{KreckSurgeryAndDuality}, in which~$\Lambda$-coefficients are understood:
$$
\xymatrix@R0.5cm@C0.5cm{
&&&0\ar[d] \\
&&&B \cong H_3(W \setminus \mathring{U},M_0 \sqcup \partial U) \ar[d] \\
&&A\cong H_3(U,\partial U) \ar[r]\ar[d]^\cong& H_2(\partial U) \ar[d]\\
0\ar[r]& H_3(W,M_0) \ar[r]& H_3(W,W \setminus \mathring{U}) \ar[r]& H_2(W\setminus \mathring{U},M_0)\ar[r]\ar[d]& H_2(W,M_0) \ar[r]& 0. \\
&&&0&&
}
$$
Here, the left vertical isomorphism comes from excision.
Since~$H_2(\partial U;\Lambda)=A \oplus B$, the right-down composition~$A \to H_2(W\setminus \mathring{U},M_0;\Lambda)$ is an isomorphism. It follows that the central map in the long row is an isomorphism, and therefore~$H_i(W,M_0;\Lambda)=0$ for~$i=2,3$. Thus~$W$ is a relative~$h$-cobordism, and therefore an~$s$-cobordism, as desired.

Since~$\Z$ is a good group, by the topological~$s$-cobordism theorem~\cite[Theorem 7.1A]{FreedmanQuinn},~$M_0$ and~$M_1$ are homeomorphic, via a homeomorphism~$\Phi$ that extends~$f$.
It remains to show that~$\Phi$ induces the isometry~$F$ on~$H_2(-;\Lambda)$.
The inclusions of~$M_0,M_1$ into~$M=M_0 \cup_f -M_1 \subseteq W$ give rise to the following homomorphism:
$$ H_2(M_0) \oplus H_2(M_1) \xrightarrow{\iota=\bsm \iota_0 & \iota_1 \esm} H_2(M) \xrightarrow{j} H_2(W).$$
Set~$j_i=j \circ \iota_i$; these are isomorphisms because~$W$ is an~$s$-cobordism.
By definition~$j_1^{-1}j_0$ is the isometry induced by the homeomorphism~$\Phi$.
We noted that~$H_2(U;\Lambda)=H_2(W;\Lambda)$ as well as~$H_2(\partial U;\Lambda)=H_2(M;\Lambda)$.
We also noted in~\eqref{eq:TheFisTheGraph} that~$\ker(j)=\widehat{\lambda}_M^{-1}(\Gamma_{F^{-*}})$.
Using Lemma~\ref{rem:ContainsGraph}, we deduce that~$\iota(\Gamma_{-F}) \subseteq \ker(j)$.
This inclusion implies that for all~$x_0 \in H_2(M_0;\Lambda)$,
$$ 0=j\iota
\begin{pmatrix}
x_0 \\ -F(x_0)
\end{pmatrix}
=j\iota_0(x_0) -j\iota_1(F(x_0))
=j_0(x_0)-j_1(F(x_0)).
$$
Since the~$j_i$ are isomorphisms, we obtain the desired conclusion:~$ j_1^{-1}j_0(x_0)=F(x_0).$
This concludes the proof of Theorem~\ref{thm:MainTechnical}.
\end{proof}

\section{Knotted surfaces in simply-connected~$4$-manifolds.}
\label{sec:Knotted}

The goal of this section is to use Theorem~\ref{thm:MainTechnical} to prove Theorems~\ref{thm:WithBoundaryIntro} and~\ref{thm:Unknotting4ManifoldIntro} from the introduction.
Recall from our conventions that~$X$ refers to a closed, simply-connected, oriented~$4$-manifold, while~$N=X\setminus \mathring{D}^4$, and~$\Sigma$ always denotes a $\Z$-surface of genus $g$ either embedded in~$X$ or properly embedded in~$N$ (recall that a \emph{$\Z$-surface} refers to an oriented surface whose knot group is~$\Z$.)

This section is organised as follows.
In Subsection~\ref{sub:FactsZSurfaces}, we collect some initial facts concerning $\Z$-surfaces.
Subsection~\ref{sub:IsometriesBlanchfield} then shows that any isometry of~$\Bl_{\Sigma_{g,1} \times S^1}$ can be realised by a homeomorphism of~$\Sigma_{g,1} \times S^1$ (allowing us to construct a compatible pair between $\Z$-surfaces exteriors~$N_{\Sigma_0}$ and~$N_{\Sigma_1}$).
In Subsections~\ref{sub:SurfacesManifoldWithBoundary} and~\ref{subsection:surfaces-in-closed-4-manifolds}, we prove Theorems~\ref{thm:WithBoundaryIntro} and~\ref{thm:Unknotting4ManifoldIntro} from the introduction respectively.

\subsection{Facts about $\Z$-surfaces}
\label{sub:FactsZSurfaces}

\begin{lemma}
\label{lem:Nullhomologous}
If $\Sigma \subseteq N$ is a $\Z$-surface, then it is null-homologous.
\end{lemma}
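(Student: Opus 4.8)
The plan is to prove that the inclusion induces a \emph{surjection} $H_2(N_\Sigma;\Z)\twoheadrightarrow H_2(N;\Z)$ — so that every second homology class of $N$ has a representative in the surface exterior — and then to invoke the nonsingularity of the intersection pairing of the simply-connected $4$-manifold $N$ to force $\PD_N[\Sigma]=0$. Throughout, $N=X\setminus\mathring D^4$ with $X$ closed and simply-connected, and $\Sigma\subseteq N$ is properly embedded with $\pi_1(N_\Sigma)=\Z$.

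First I would fix a locally flat topological tubular neighbourhood $\nu\Sigma\subseteq N$, an oriented $D^2$-bundle over $\Sigma\cong\Sigma_{g,1}$ with frontier $\mathrm{fr}(\nu\Sigma)\to\Sigma$ the associated circle bundle, so that $N=N_\Sigma\cup_{\mathrm{fr}(\nu\Sigma)}\nu\Sigma$. By excision together with the Thom isomorphism of the disc--sphere bundle pair (equivalently: $\Sigma_{g,1}$ is homotopy equivalent to a wedge of circles, so $\nu\Sigma\cong\Sigma_{g,1}\times D^2$ and one applies Künneth), one gets $H_k(N,N_\Sigma;\Z)\cong H_k(\nu\Sigma,\mathrm{fr}(\nu\Sigma);\Z)\cong H_{k-2}(\Sigma_{g,1};\Z)$; in particular $H_2(N,N_\Sigma;\Z)\cong H_0(\Sigma_{g,1};\Z)\cong\Z$, generated by the class of a normal disc (whose boundary is the meridian). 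Now look at the long exact sequence of the pair $(N,N_\Sigma)$:
\[ H_2(N_\Sigma;\Z)\xrightarrow{\ \iota_*\ } H_2(N;\Z)\xrightarrow{\ q\ } H_2(N,N_\Sigma;\Z)\xrightarrow{\ \partial\ } H_1(N_\Sigma;\Z)\xrightarrow{\ \iota_*\ } H_1(N;\Z). \]
Since $X$ is simply-connected, $H_1(N;\Z)=H_1(X;\Z)=0$, so $\partial$ is onto; and $H_1(N_\Sigma;\Z)\cong\Z$ because $\pi_1(N_\Sigma)=\Z$. A surjective homomorphism $\Z\cong H_2(N,N_\Sigma;\Z)\twoheadrightarrow H_1(N_\Sigma;\Z)\cong\Z$ is an isomorphism, hence $\partial$ is injective, hence $q=0$ by exactness, hence $\iota_*\colon H_2(N_\Sigma;\Z)\to H_2(N;\Z)$ is surjective. (As a byproduct, the meridian generates $H_1(N_\Sigma;\Z)$.)

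To finish, recall that $\PD_N[\Sigma]\in H^2(N;\Z)$ is the image of the Thom class under $H^2(N,N_\Sigma;\Z)\to H^2(N;\Z)$, and therefore lies in $\ker\big(\iota^*\colon H^2(N;\Z)\to H^2(N_\Sigma;\Z)\big)$. Hence for any $\beta\in H_2(N;\Z)$, writing $\beta=\iota_*c$ with $c\in H_2(N_\Sigma;\Z)$ as allowed by the previous paragraph, $[\Sigma]\cdot\beta=\langle\PD_N[\Sigma],\iota_*c\rangle=\langle\iota^*\PD_N[\Sigma],c\rangle=0$. Thus $\PD_N[\Sigma]$ pairs trivially with all of $H_2(N;\Z)$; since $H_1(N;\Z)=0$, universal coefficients make $H^2(N;\Z)\to\Hom(H_2(N;\Z),\Z)$ an isomorphism, so $\PD_N[\Sigma]=0$, and hence $[\Sigma]=0$ in $H_2(N,\partial N;\Z)$ (equivalently in $H_2(N;\Z)$, which maps isomorphically to $H_2(N,\partial N;\Z)$ since $\partial N=S^3$). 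That is, $\Sigma$ is null-homologous. There is no substantial obstacle: the only step that genuinely uses the hypothesis is $q=0$, which rests on the elementary fact that a surjective endomorphism of $\Z$ is an isomorphism, and one should merely note that tubular neighbourhoods, Thom isomorphisms, and Poincaré--Lefschetz duality are all available for locally flat submanifolds in the topological category.
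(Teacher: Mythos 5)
Your proof is correct. The paper's own argument is the geometric counterpart of yours: it pairs $[\Sigma,\partial\Sigma]$ against an arbitrary class $x\in H_2(N)$ represented by a closed surface $S$ transverse to $\Sigma$, observes that $S\cap N_\Sigma$ is a null-homology of $(\Sigma\cdot S)\,\mu_\Sigma$ in $N_\Sigma$, and concludes $\Sigma\cdot S=0$ because $H_1(N_\Sigma)\cong\Z\mu_\Sigma$ is torsion-free; nonsingularity of $Q_N$ on the simply-connected $N$ then finishes. You instead run the long exact sequence of the pair $(N,N_\Sigma)$ with excision and the Thom isomorphism, show that $\partial\colon H_2(N,N_\Sigma)\cong\Z\to H_1(N_\Sigma)\cong\Z$ is an isomorphism (forced by $H_1(N)=0$), deduce that $H_2(N_\Sigma)\to H_2(N)$ is onto, and kill $\PD_N[\Sigma]$ by restricting it to the exterior. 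The two proofs use exactly the same inputs --- the meridian generates the infinite cyclic, torsion-free group $H_1(N_\Sigma)$, and the intersection pairing of $N$ is nonsingular --- and indeed your map $q\colon H_2(N)\to H_2(N,N_\Sigma)\cong\Z$ \emph{is} the intersection number with $\Sigma$, so ``$q=0$'' is literally the paper's conclusion; you could shorten your write-up by stopping there rather than passing through surjectivity of $\iota_*$ and the Thom class. What your version buys is that it avoids choosing transverse geometric representatives (which in the topological, locally flat category requires an appeal to \cite[Theorem~9.5]{FreedmanQuinn}-type transversality); what the paper's version buys is brevity and the geometric picture of $S\cap N_\Sigma$ as an explicit null-homology of the meridians.
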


\begin{proof}
We must show that~$[\Sigma,\partial \Sigma] =0 \in H_2(N,\partial N)$.
The intersection form~$Q_N$ pairs~$H_2(N,\partial N)$ with~$H_2(N)$ nonsingularly because~$N$ is simply-connected.
Thus, it is equivalent to show that~$Q_N([\Sigma,\partial \Sigma],x)=0$ for every class~$x \in H_2(N)$.
Represent such an~$x \in H_2(N)$ by a closed surface~$S \subseteq N$ that intersects~$\Sigma$ transversely in points~$p_1,\ldots,p_n$, so that~$Q_N([\Sigma,\partial \Sigma],x)=\sum_{i=1}^n \varepsilon(p_i)$, where~$\varepsilon(p_i)=\pm 1$.
%FNOP Theorem 10.9
Now the intersection~$S \cap N_\Sigma$ is a properly embedded surface in~$N_\Sigma$ with oriented boundary (homologous to)~$\sum_{i=1}^n \varepsilon(p_i)\mu_\Sigma$.
This implies that~$\sum_{i=1}^n \varepsilon(p_i)\mu_\Sigma =0\in H_1(N_\Sigma)$.
But now, since the homology group~$H_1(N_\Sigma)=\pi_1(N_\Sigma)=\Z\mu_\Sigma$ is torsion-free, we therefore deduce that~$Q_N([\Sigma,\partial \Sigma],x)=\sum_{i=1}^n \varepsilon(p_i)=0$, establishing that~$\Sigma$ is null-homologous.
\end{proof}

It follows easily that a closed $\Z$-surface $\Sigma \subseteq X$ is also null-homologous in $H_2(X)$.
For the next lemma, recall that orientable 2-plane bundles over compact, orientable surfaces are classified up to isomorphism by their (relative) Euler number.

\begin{lemma}\label{lem:trivial-normal-bundle}
  Every $\Z$-surface has trivial normal bundle. In the case of nonempty boundary this holds moreover relative to the Seifert framing on the boundary.
\end{lemma}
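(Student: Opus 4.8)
The plan is to identify the (relative) Euler number of $\nu\Sigma$ with a signed count of intersection points of $\Sigma$ with a pushoff, and then show this count vanishes because $H_1$ of the surface exterior is a torsion-free group generated by the meridian $\mu_\Sigma$. Recall that an orientable $2$-plane bundle over a compact orientable surface is determined by its Euler number; when $\partial\Sigma\neq\emptyset$ we measure this relative to the Seifert framing of $K=\partial\Sigma\subseteq S^3$, i.e.\ the unique framing for which the corresponding pushoff $K'$ of $K$ is null-homologous in $E_K$.

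First I would choose a generic section $s$ of $\nu\Sigma$ which, in the bounded case, restricts on $\partial\Sigma$ to the Seifert framing; such a section has $e:=e(\nu\Sigma)$ zeros, counted with sign. Writing $\Sigma'=s(\Sigma)$ for the associated pushoff inside $\nu\Sigma$ of the ambient $4$-manifold $P$ (with $P=X$ or $P=N$), the surface $\Sigma'$ is transverse to the zero section $\Sigma$ and meets it in exactly the $e$ zeros of $s$. Intersecting $\Sigma'$ with the exterior $P_\Sigma$ removes a small disc around each such point, and the boundary of the resulting $2$-chain $\Sigma'\cap P_\Sigma$ consists of $e$ signed meridians of $\Sigma$ --- whose total class is $e\cdot\mu_\Sigma\in H_1(P_\Sigma)$ --- together with, in the bounded case, the Seifert pushoff $K'$, which lies on $\partial E_K\subseteq \partial P_\Sigma$ and is null-homologous in $E_K$, hence in $P_\Sigma$. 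Consequently $e\cdot\mu_\Sigma=0$ in $H_1(P_\Sigma)$.

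To conclude, I would invoke the hypothesis that $\pi_1(P_\Sigma)\cong\Z$: then $H_1(P_\Sigma)\cong\Z$ is torsion-free, and it is generated by $\mu_\Sigma$, since the meridian normally generates $\pi_1(P_\Sigma)$ (filling it back in recovers $\pi_1(X)=1$, resp.\ $\pi_1(N)=1$) and a normal generator of an abelian group is a generator; in the bounded case one also uses that $\mu_K\in H_1(E_K)$ maps to $\mu_\Sigma$ under $E_K\hookrightarrow N_\Sigma$. Hence $e=0$, so $\nu\Sigma$ is trivial, and in the bounded case the trivialization may be chosen to restrict to the Seifert framing. For a closed $\Sigma\subseteq X$ this also follows more directly, as $e=[\Sigma]\cdot[\Sigma]=0$ by Lemma~\ref{lem:Nullhomologous} and the observation that closed $\Z$-surfaces are null-homologous.

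There is no serious obstacle here; the only thing requiring care is the boundary bookkeeping --- verifying that the chosen section genuinely extends the Seifert framing, that the boundary component $K'$ of $\Sigma'\cap P_\Sigma$ is null-homologous in the exterior and not merely in $E_K$, and that signs are tracked so that the $e$ small boundary circles really contribute $e\cdot\mu_\Sigma$ rather than some other multiple of $\mu_\Sigma$.
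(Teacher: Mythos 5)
Your proof is correct and follows essentially the same route as the paper: both identify the (relative) Euler number with a signed intersection count of $\Sigma$ with a push-off, observe that the intersection of the push-off with the exterior is a $2$-chain whose boundary is $e\cdot\mu_\Sigma$ plus a null-homologous piece, and conclude $e=0$ because $H_1(P_\Sigma)\cong\Z$ is torsion-free and generated by $\mu_\Sigma$. The only cosmetic difference is that the paper first caps $\Sigma$ off with a Seifert surface and reuses the proof of Lemma~\ref{lem:Nullhomologous} verbatim, whereas you work directly with the relative push-off; this changes nothing of substance.
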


\begin{proof}
  In the case that the boundary is nonempty, the surface is oriented and therefore the normal bundle is trivial.
    However we need to show that this holds relative to the Seifert framing, and we need to show that the normal bundle is likewise trivial in the closed case.
    Continuing with the nonempty boundary case, cap off the $\Z$-surface $\Sigma$ with a Seifert surface $F$ in $S^3$.  Form a push-off by pushing $F$ off itself in $S^3$ and extending this to a normal push-off in $N$ along a generic section that agrees with the push-off of $F$ along $\partial F = \partial \Sigma$.

    Now the argument is the same in the closed case and in the nonempty boundary case (in the closed case take a push-off of $\Sigma$ using a generic section of the normal bundle). Now in both cases, by the proof of  Lemma~\ref{lem:Nullhomologous}, the push-off intersects $\Sigma$ algebraically trivially.  Therefore the normal bundle of $\Sigma$ has vanishing Euler number, relative to the Seifert framing in the case of nonempty boundary.  Thus the normal bundle of $\Sigma$ is trivial, again relative to the Seifert framing in the case of nonempty boundary.
\end{proof}

The boundary of the exterior of~$N_\Sigma$ is homeomorphic to~$M_{K,g}:=E_K \cup_\partial (\Sigma_{g,1} \times S^1)$, where~$\Sigma_{g,1}$ is the orientable genus~$g$ surface with one boundary component and~$E_K:=S^3 \setminus \nu (K)$.
We give more details on this identification as we will then make it implicitly throughout the remainder of the paper.

\begin{remark}
\label{rem:IdentifBoundary}
If~$\Sigma \subseteq X$ is a closed oriented surface with~$\pi_1(X_\Sigma)=\Z$,  then the homotopy class of the meridian of~$\Sigma \subseteq X$ is the unique nontrivial primitive class in~$\pi_1(\partial X_\Sigma)$ that bounds a disc in~$\Sigma \times D^2$ and maps to $1 \in \Z$.  Fix a framing of the normal bundle of~$\Sigma$, i.e.\ an identification~$\nu \Sigma \cong \Sigma \times \R^2$ compatible with the orientation, with the property that for each simple closed curve~$\gamma_k \subseteq \Sigma$, we have that ~$\gamma_k \times \{e_1\} \subseteq X \sm \Sigma$ is null-homologous in~$H_1(X \sm \nu \Sigma) \cong \Z$.
Use a choice of an identification~$\Sigma \cong \Sigma_g$ and this condition to fix an identification of the boundary of the exterior with~$\Sigma_g \times S^1$.
Any two choices now differ by an element of the mapping class group of~$\Sigma_g$.
Similarly, if~$\Sigma \subseteq N$ is a properly embedded, oriented surface, then we can identify~$\partial N_\Sigma$ with~$E_K \cup_\partial (\Sigma_{g,1} \times S^1).$
\end{remark}

\begin{lemma}
\label{lem:RibbonBoundary}
The exterior of a $\Z$-surface $\Sigma$ has ribbon boundary.
\end{lemma}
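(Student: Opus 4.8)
The plan is to unwind the definition of ribbon boundary and verify its two clauses for the exterior $N_\Sigma$ (respectively $X_\Sigma$ in the closed case) of a $\Z$-surface $\Sigma$: that $\partial N_\Sigma$ is nonempty and connected, and that the inclusion-induced map $\pi_1(\partial N_\Sigma)\to\pi_1(N_\Sigma)$ is surjective. Since $\pi_1(N_\Sigma)\cong\Z$ is part of the definition of a $\Z$-surface, surjectivity reduces to exhibiting a single loop in $\partial N_\Sigma$ whose image generates $\pi_1(N_\Sigma)$.

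First I would record connectedness of the boundary. By Lemma~\ref{lem:trivial-normal-bundle} the normal bundle of $\Sigma$ is trivial, so the boundary of a tubular neighbourhood $\nu\Sigma$ is a product. In the closed case $\partial X_\Sigma\cong\Sigma_g\times S^1$, which is connected since $\Sigma$ is. In the properly embedded case, $\nu\Sigma\cong\Sigma_{g,1}\times D^2$ meets $\partial N=S^3$ in a tubular neighbourhood $\nu K$ of $K$, so the frontier of $\nu\Sigma$ in $N$ is $\Sigma_{g,1}\times S^1$; assembling, $\partial N_\Sigma\cong M_{K,g}=E_K\cup_\partial(\Sigma_{g,1}\times S^1)$ as in Remark~\ref{rem:IdentifBoundary}, a union of two connected manifolds along a torus, hence connected (and certainly nonempty).

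The substantive point is surjectivity on $\pi_1$, and the natural candidate generator is the meridian $\mu_\Sigma$, i.e.\ the circle fibre $\{\pt\}\times S^1$ of the normal circle bundle, which lies in $\partial N_\Sigma$ by construction. I would argue that $\mu_\Sigma$ normally generates $\pi_1(N_\Sigma)$ by applying van Kampen's theorem to $N=N_\Sigma\cup_\partial\nu\Sigma$ (respectively $X=X_\Sigma\cup\nu\Sigma$): the map $\pi_1(\Sigma\times S^1)\to\pi_1(\nu\Sigma)=\pi_1(\Sigma)$ is the quotient by the normal closure of the fibre class (the fibre $\{\pt\}\times S^1$ bounds the meridian disc $\{\pt\}\times D^2\subseteq\nu\Sigma$), so the pushout identifies $\pi_1(N)$ with $\pi_1(N_\Sigma)/\langle\langle\mu_\Sigma\rangle\rangle$. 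As $\pi_1(N)=1$, this gives $\pi_1(N_\Sigma)=\langle\langle\mu_\Sigma\rangle\rangle$, and since $\pi_1(N_\Sigma)\cong\Z$ is abelian, $\mu_\Sigma$ genuinely generates it. (Alternatively one can simply invoke that $H_1(N_\Sigma)=\Z\langle\mu_\Sigma\rangle$, a fact already used in the proof of Lemma~\ref{lem:Nullhomologous}.) Hence $\pi_1(\partial N_\Sigma)\to\pi_1(N_\Sigma)$ hits a generator, so it is surjective, and $N_\Sigma$ has ribbon boundary.

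I do not expect a real obstacle here; the only care needed is (i) the boundary bookkeeping in the properly embedded case, where one must track how $\partial\nu\Sigma$ splits into a piece in $\partial N$ and the frontier $\Sigma_{g,1}\times S^1$, and (ii) reading off the van Kampen pushout correctly as an amalgamation over a surjection whose kernel is normally generated by the meridian.
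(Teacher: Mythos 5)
Your proof is correct and follows essentially the same route as the paper: a van Kampen argument for the decomposition $N=N_\Sigma\cup\nu\Sigma$, using simple connectivity of $N$ to conclude that the meridian (which lies in $\partial N_\Sigma$) generates $\pi_1(N_\Sigma)\cong\Z$. Your identification of the pushout with $\pi_1(N_\Sigma)/\langle\langle\mu_\Sigma\rangle\rangle$ is a slightly more explicit version of the paper's one-line deduction, and the connectedness bookkeeping is a harmless addition.
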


\begin{proof}
Since $N$ is simply-connected, we have
\[\{1\} = \pi_1(N_{\Sigma}) *_{\pi_1(\Sigma \times S^1)} \pi_1(\Sigma \times D^2),\]
which implies that $\pi_1(\Sigma \times S^1) \to \pi_1(N_{\Sigma})$ is onto.
In fact, by the parametrisation of $\Sigma \times S^1$ described in Remark~\ref{rem:IdentifBoundary}, we have that~$\pi_1(\Sigma)$ maps trivially to $\pi_1(N_{\Sigma})$, so the fundamental group of $N_{\Sigma}$ is generated by a meridian of $\Sigma$.
The closed case follows, since the exterior of a closed surface in $X$ can be thought of as the exterior of a surface in $N = X \sm \mathring{D}^4$.
\end{proof}

%In the closed case, this meridian clearly belongs to $\im(\pi_1(\partial X_\Sigma) \to \pi_1(X_\Sigma))$
%thanks to the first sentence of Remark~\ref{rem:IdentifBoundary}.
%The conclusion is similar in the case with boundary once one notices that the meridian to $\Sigma=\Sigma_{g,1}$ glues to the meridian of the knot $K=\partial \Sigma$.

\begin{lemma}
\label{lem:AlexanderModuleSigmaS1}
The boundary of the exterior of a $\Z$-surface $\Sigma$ has $\Lambda$-torsion Alexander module.
More precisely, the following hold.
\begin{enumerate}
\item If $\Sigma \subseteq X$ is closed, then
\[ H_1(\partial X_\Sigma;\Lambda)=H_1(\Sigma_g \times S^1;\Lambda)=\left( \Lambda/(t-1)\right)^{\oplus 2g}.\]
\item If $\Sigma \subseteq N$ has boundary a knot $K$, then the inclusion induces an isomorphism
\begin{align*}
H_1(E_K;\Lambda) \oplus H_1(\Sigma_{g,1} \times S^1;\Lambda)\cong H_1(\partial N_\Sigma;\Lambda).
\end{align*}
In particular, if $K$ has Alexander polynomial $1$, then $H_1(\partial N_\Sigma;\Lambda)=\left( \Lambda/(t-1)\right)^{\oplus 2g}$.
\end{enumerate}
\end{lemma}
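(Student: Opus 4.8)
The plan is to compute both Alexander modules directly, using the standard decomposition of the boundary of a $\Z$-surface exterior into pieces whose infinite cyclic covers are easy to identify, followed by a Mayer--Vietoris argument.

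First I would dispatch the closed case. By Lemma~\ref{lem:trivial-normal-bundle} the normal bundle of $\Sigma\subseteq X$ is trivial, so $\partial X_\Sigma=\partial\nu\Sigma\cong\Sigma_g\times S^1$, and by the parametrisation of Remark~\ref{rem:IdentifBoundary} (see also the proof of Lemma~\ref{lem:RibbonBoundary}) the map $\varphi\colon\pi_1(\Sigma_g\times S^1)=\pi_1(\Sigma_g)\times\Z\to\pi_1(X_\Sigma)=\Z$ kills $\pi_1(\Sigma_g)$ and restricts to an isomorphism on the $S^1$-factor, generated by a meridian of $\Sigma$. Hence the associated infinite cyclic cover is $\Sigma_g\times\R\simeq\Sigma_g$, on which the deck transformation acts by translating the $\R$-coordinate, and so homotopically as the identity. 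Therefore $H_1(\Sigma_g\times S^1;\Lambda)=H_1(\Sigma_g;\Z)$ with trivial $t$-action, which is $(\Lambda/(t-1))^{\oplus 2g}$; this is $\Lambda$-torsion and proves~(1).

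For the case with boundary, recall from Remark~\ref{rem:IdentifBoundary} that $\partial N_\Sigma\cong M_{K,g}=E_K\cup_{T^2}(\Sigma_{g,1}\times S^1)$, where the gluing torus $T^2=\partial E_K=\partial\Sigma_{g,1}\times S^1$ identifies the normal circle $\{\pt\}\times S^1$ with the meridian $\mu_K$ and, by Lemma~\ref{lem:trivial-normal-bundle}, identifies $\partial\Sigma_{g,1}\times\{\pt\}$ with the Seifert-framed longitude $\lambda_K$. Restricting $\varphi$: on $\pi_1(E_K)$ it is the abelianisation $\pi_1(E_K)\to H_1(E_K)=\Z$ (it sends $\mu_K$ to a generator); on $\pi_1(\Sigma_{g,1}\times S^1)=\pi_1(\Sigma_{g,1})\times\Z$ it kills $\pi_1(\Sigma_{g,1})$ and is an isomorphism on the $S^1$-factor; on $\pi_1(T^2)$ it sends $\mu_K\mapsto 1$ and $\lambda_K\mapsto 0$. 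In particular $E_K$, $B:=\Sigma_{g,1}\times S^1$, $T^2$ and $\partial N_\Sigma$ all have connected infinite cyclic cover, hence $H_0(-;\Lambda)=\Lambda/(t-1)$; and just as in the closed case the covers of $B$ and $T^2$ are $\Sigma_{g,1}\times\R\simeq\Sigma_{g,1}$ and $S^1\times\R\simeq S^1$ with trivial $t$-action, so $H_1(B;\Lambda)=H_1(\Sigma_{g,1};\Z)=(\Lambda/(t-1))^{\oplus 2g}$ and $H_1(T^2;\Lambda)=\Lambda/(t-1)$, the latter generated by $\lambda_K$, while $H_1(E_K;\Lambda)$ is by definition the classical Alexander module of $K$, a $\Lambda$-torsion module. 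Now I would feed this into the Mayer--Vietoris sequence with $\Lambda$-coefficients of $\partial N_\Sigma=E_K\cup_{T^2}B$ (equivalently, the ordinary Mayer--Vietoris sequence of the induced decomposition of the infinite cyclic cover):
\[H_1(T^2;\Lambda)\xrightarrow{\partial}H_1(E_K;\Lambda)\oplus H_1(B;\Lambda)\xrightarrow{\psi}H_1(\partial N_\Sigma;\Lambda)\xrightarrow{\delta}H_0(T^2;\Lambda).\]
The map following $\delta$ is the injective diagonal $\Lambda/(t-1)\to(\Lambda/(t-1))^{\oplus 2}$, so $\delta=0$ and $\psi$ is onto. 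It then remains to show $\partial=0$, i.e.\ that the generator $\lambda_K$ of $H_1(T^2;\Lambda)$ dies in each summand: in $H_1(B;\Lambda)=H_1(\Sigma_{g,1};\Z)$ it maps to $[\partial\Sigma_{g,1}]=0$ since the boundary of a surface is null-homologous in it, and in $H_1(E_K;\Lambda)$ it maps to $0$ because a Seifert surface for $K$ lifts to the infinite cyclic cover $\widetilde{E_K}$ — indeed $\widetilde{E_K}$ is standardly built by splicing copies of $E_K$ cut along a Seifert surface — so $\lambda_K=\partial(\text{Seifert surface})$ bounds there. Hence $\psi$ is an isomorphism $H_1(E_K;\Lambda)\oplus(\Lambda/(t-1))^{\oplus 2g}\xrightarrow{\cong}H_1(\partial N_\Sigma;\Lambda)$ induced by the inclusions, and it is $\Lambda$-torsion. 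Finally, if $\Delta_K=1$ then $H_1(E_K;\Lambda)=0$, since the Alexander module has a square presentation matrix (e.g.\ $tV-V^\top$ for a Seifert matrix $V$) of determinant the Alexander polynomial, which is then a unit of $\Lambda$, so the matrix is invertible and the cokernel vanishes; this gives $H_1(\partial N_\Sigma;\Lambda)=(\Lambda/(t-1))^{\oplus 2g}$.

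Everything above is standard once the geometry is pinned down, and the one step demanding care — the main obstacle — is the vanishing of the gluing map $\partial$: the identification of $\partial N_\Sigma$ and of the restrictions of $\varphi$ to its pieces (via Remark~\ref{rem:IdentifBoundary} and Lemma~\ref{lem:trivial-normal-bundle}) must be set up precisely enough that the class shown to die in $H_1(E_K;\Lambda)$ and in $H_1(\Sigma_{g,1}\times S^1;\Lambda)$ is genuinely the Seifert longitude $\lambda_K$. The remaining homological bookkeeping is routine.
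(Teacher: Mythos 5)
Your proof is correct and follows essentially the same route as the paper: the closed case via the identification of the infinite cyclic cover with $\Sigma_g \times \R$, and the bounded case via the Mayer--Vietoris sequence for $E_K \cup_{T^2} (\Sigma_{g,1}\times S^1)$, with the connecting map killed because the longitude bounds in $\widetilde{E_K}$ and $[\partial\Sigma_{g,1}]=0$ in $H_1(\Sigma_{g,1})$. Your additional explicit checks (surjectivity via the $H_0$-terms, and the vanishing of $H_1(E_K;\Lambda)$ when $\Delta_K=1$) are fine and only make explicit what the paper leaves implicit.
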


\begin{proof}
Using infinite cyclic covers, the assertion in the closed case is immediate:
\begin{equation*}
%\label{eq:AlexanderModuleS4}
H_1(\partial X_\Sigma;\Lambda)=H_1(\Sigma_g \times S^1;\Lambda)=H_1(\widetilde{\Sigma_g \times S^1};\Z)=H_1(\Sigma_g \times \R)=\Z^{2g}=\left( \Lambda/(t-1)\right)^{\oplus 2g}.
\end{equation*}
We prove the second assertion.
Consider the Mayer-Vietoris sequence for the decomposition~$M_{K,g}=E_K \cup_\partial (\Sigma_{g,1} \times S^1)$ with~$\Lambda$ coefficients:
$$ \cdots \to H_1(S^1 \times S^1;\Lambda) \to H_1(E_K;\Lambda)\oplus H_1(\Sigma_{g,1} \times S^1;\Lambda) \to H_1(M_{K,g};\Lambda)\to 0.$$
We have~$H_1(S^1 \times S^1;\Lambda)=H_1(S^1 \times \R)= \Z$ and~$H_1(\Sigma_{g,1} \times S^1;\Lambda)=H_1(\Sigma_{g,1} \times \R)=~\Z^{2g}.$
The map~$H_1(S^1 \times S^1;\Lambda) \to H_1(E_K;\Lambda)$ is the zero map: it sends the generator to the lift of the longitude of~$E_K$; this is null-homologous in the infinite cyclic cover~$E_K^\infty$ of~$E_K$.
Next, the homomorphism~$H_1(S^1 \times S^1;\Lambda) \to H_1(\Sigma_{g,1} \times S^1;\Lambda)$ is also the zero map: it coincides with the map~$H_1(S^1) \to H_1(\Sigma_{g,1})$ sending the generator to~$[\partial \Sigma_{g,1}]$ which vanishes in~$H_1(\Sigma_{g,1})$.
This establishes the second assertion.
\end{proof}

\subsection{Isometries of the Blanchfield pairing}
\label{sub:IsometriesBlanchfield}

Given an orientable genus~$g$ surface~$\Sigma_{g,1}$ with one boundary component, we determine the isometries of the Blanchfield pairing on~$\Sigma_{g,1} \times S^1$ and show that they can be realised by orientation-preserving homeomorphisms.
We then describe the Blanchfield pairing of \[M_{K,g}:=E_K \cup (\Sigma_{g,1} \times S^1),\] where~$E_K:=S^3 \setminus \nu (K)$ is the knot exterior.
\medbreak

To describe the Blanchfield pairing~$\Bl_{\Sigma_{g,1} \times S^1}$, we fix some notation on the symplectic group:
\begin{align*}J_g&:=
\begin{pmatrix}
0&1 \\
-1&0
\end{pmatrix}^{\oplus g}, \\
\Sp_{2g}(\Z)&:=\lbrace A \in M_{2g \times 2g}(\Z) \mid A^TJ_gA=J_g \rbrace.
\end{align*}
Given a genus~$g$ surface~$\Sigma_{g,1}$ with one boundary component, we call a basis of~$H_1(\Sigma_{g,1})$ \emph{symplectic} if the intersection form on~$\Sigma_{g,1}$ with respect to this basis is represented by~$J_g$.
We also consider the map~$\varphi \colon \pi_1(\Sigma_{g,1} \times S^1) \twoheadrightarrow \Z$ induced by the projection on the second coordinate and recall that
$$ \Homeo^+_\varphi(\Sigma_{g,1} \times S^1)=\lbrace f \in \Homeo^+(\Sigma_{g,1} \times S^1) \ | \  f_* \circ \varphi=\varphi \rbrace.$$
The next proposition identifies the isometries of~$\Bl_{\Sigma_{g,1} \times S^1}$ with~$\Sp_{2g}(\Z)$ and uses this fact to show that the orbit set~$\Aut(\Bl_{\Sigma_{g,1} \times S^1})/\Homeo_\varphi^+(\Sigma_{g,1} \times S^1)$ is trivial.

\begin{proposition}
\label{prop:IsometriesBlanchfield}
A choice of a symplectic basis for~$H_1(\Sigma_{g,1})$ gives rise to an identification
$$ \Aut(\Bl_{\Sigma_{g,1} \times S^1})=\Sp_{2g}(\Z).$$
Every element of~$\Aut(\Bl_{\Sigma_{g,1} \times S^1})$ can be realised by a homeomorphism in~$\Homeo_\varphi^+(\Sigma_{g,1} \times S^1)$:
$$ \Aut(\Bl_{\Sigma_{g,1} \times S^1})/\Homeo_\varphi^+(\Sigma_{g,1} \times S^1)=\lbrace \id \rbrace.$$
Any such homeomorphism can be assumed to be of the form~$j \times \id_{S^1}$, for some~$j \in \Homeo^+(\Sigma_{g,1})$ that fixes the boundary of~$\Sigma_{g,1}$ pointwise.
\end{proposition}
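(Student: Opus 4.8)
First I would compute the Blanchfield pairing explicitly. By Lemma~\ref{lem:AlexanderModuleSigmaS1}, the Alexander module is $H_1(\Sigma_{g,1}\times S^1;\Lambda)=H_1(\Sigma_{g,1};\Z)\otimes_\Z\Lambda/(t-1)$, which as a $\Lambda$-module is $(\Lambda/(t-1))^{\oplus 2g}$; it receives all the torsion from the $S^1$-direction becoming trivial in the cover, while the $\Sigma_{g,1}$-direction contributes the coefficients. The plan is to identify $\Bl_{\Sigma_{g,1}\times S^1}$ with the form on $(\Lambda/(t-1))^{\oplus 2g}$ given by the matrix $\tfrac{1}{t-1}J_g$ (or $\tfrac{1}{1-t}J_g$ up to the sign conventions of Definition~\ref{rem:Blanchfield}), with respect to a symplectic basis. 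This can be done either by a direct computation via the Bockstein and Poincaré duality on $\Sigma_{g,1}\times S^1$ — using the product structure and the Künneth theorem — or, more cleanly, by realising $\Sigma_{g,1}\times S^1$ as the boundary of $\Sigma_{g,1}\times D^2$, whose intersection form is $J_g$ (the intersection form of $\Sigma_{g,1}$), and invoking Proposition~\ref{prop:BoundaryLinkingForm3Manifold}: the Blanchfield form is then (minus) the boundary linking form of $(\Lambda^{2g},J_g)$, and by Remark~\ref{rem:AlternativeDefinitionBoundaryLinking} that boundary form is $[x],[y]\mapsto x^T J_g^{-1}\overline{y}=-x^TJ_g\overline{y}$ reduced mod $\Lambda$, living on $\coker(J_g\colon\Lambda^{2g}\to\Lambda^{2g})=0$ — so that approach needs care, since $J_g$ is invertible over $\Lambda$; instead one should cap with a manifold whose form is degenerate, e.g. note $\Sigma_{g,1}\times S^1=\partial(\Sigma_{g,1}\times D^2)$ but account for $\partial(\Sigma_{g,1}\times D^2)$ not being closed. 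I would therefore fall back on the direct Bockstein computation, which is standard.

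Next, given the identification $\Bl_{\Sigma_{g,1}\times S^1}\cong (\Lambda/(t-1))^{\oplus 2g}$ with pairing matrix $\tfrac{1}{t-1}J_g$, I would determine its isometry group. A $\Lambda$-linear automorphism of $(\Lambda/(t-1))^{\oplus 2g}$ is the same as a $\Z$-linear automorphism of $\Z^{2g}$, i.e.\ an element $A\in GL_{2g}(\Z)$ (since $\Lambda/(t-1)\cong\Z$ as a ring and multiplication by $t$ is the identity on the module). The isometry condition $\Bl(Ax,Ay)=\Bl(x,y)$ unwinds, after clearing the common denominator $(t-1)$, to $A^T J_g A=J_g$, that is $A\in\Sp_{2g}(\Z)$. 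This gives the first displayed identification.

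Finally I would realise every $A\in\Sp_{2g}(\Z)$ by a homeomorphism in $\Homeo^+_\varphi(\Sigma_{g,1}\times S^1)$. Here I use the classical surjectivity of the symplectic representation of the mapping class group of a surface with one boundary component: by the work of Dehn (via products of Dehn twists about a generating set of curves), every $A\in\Sp_{2g}(\Z)$ is induced by some orientation-preserving $j\in\Homeo^+(\Sigma_{g,1})$ that restricts to the identity on $\partial\Sigma_{g,1}$. Then $j\times\id_{S^1}\in\Homeo^+(\Sigma_{g,1}\times S^1)$ visibly preserves the projection $\varphi$ to $S^1$, hence lies in $\Homeo^+_\varphi(\Sigma_{g,1}\times S^1)$, and by Proposition~\ref{prop:HomeoInduceIsometry} the induced isometry $\widetilde{(j\times\id)}_*$ on $\Bl_{\Sigma_{g,1}\times S^1}$ is exactly the one attached to $A$ (the lift to the infinite cyclic cover is $\tilde j\times\id_\R$, whose effect on $H_1=\Z^{2g}$ is $A$). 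This shows $\Aut(\Bl_{\Sigma_{g,1}\times S^1})/\Homeo^+_\varphi(\Sigma_{g,1}\times S^1)=\{\id\}$ and gives the claimed form $j\times\id_{S^1}$ of the homeomorphism. The main obstacle is the first step: pinning down the exact form and the relevant sign/denominator conventions of $\Bl_{\Sigma_{g,1}\times S^1}$ so that the isometry condition cleanly becomes $A^TJ_gA=J_g$ rather than a twisted variant; once the form is identified as $\tfrac{1}{t-1}J_g$ on $\Z^{2g}$, the rest is bookkeeping plus the classical mapping class group input.
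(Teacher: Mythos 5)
Your proposal is correct and follows essentially the same route as the paper: identify the Blanchfield form of $\Sigma_{g,1}\times S^1$ with $(t^{-1}-1)^{-1}J_g$ on $(\Lambda/(t-1))^{\oplus 2g}$ (the paper gets this by citing Friedl--Powell's formula for fibred $3$-manifolds rather than a direct Bockstein computation), deduce $\Aut(\Bl)=\Sp_{2g}(\Z)$, and realise symplectic matrices by $j\times\id_{S^1}$ using surjectivity of the symplectic representation of the mapping class group. The one place to be slightly more careful than ``clearing the denominator'': the isometry equation holds only modulo $\Lambda$, so it gives $A^TJ_gA-J_g\in(t-1)\Lambda$, and one then uses that this matrix has integer entries (and $(t-1)\Lambda\cap\Z=0$) to conclude it vanishes, exactly as the paper spells out.
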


\begin{proof}
Let~$h$ be an isometry of the Blanchfield pairing~$\Bl_{\Sigma_{g,1} \times S^1}.$
Since we saw in Lemma~\ref{lem:AlexanderModuleSigmaS1} that~$H_1(\Sigma_{g,1} \times S^1;\Lambda)=H_1(\Sigma_{g,1})$, the isometry~$h$ can be thought of as a~$\Z$-linear map.
We assert that it preserves the intersection form on the surface~$\Sigma_{g,1}$.
Pick a symplectic basis for~$H_1(\Sigma_{g,1})$ so that its intersection form is represented by~$J_g$.
%Let~$A_0,A_1$ be matrices for the intersection forms of~$\Sigma_0,\Sigma_1$.
By \cite[Corollary~1.2]{FriedlPowell}, the Blanchfield form on the fibred manifold~$\Sigma_{g,1} \times S^1$ is given by
$$ (v,w) \mapsto v^T (t^{-1}-1)^{-1}J_g w \in \Lambda_S/\Lambda.$$
Here,~$\Lambda_S=\Z[t^{\pm 1},(t-1)^{-1}]$ denotes the ring obtained from~$\Lambda=\Z[t^{\pm 1}]$ by inverting~$(t-1)$.
Since~$h$ can be thought of as a~$\Z$-linear map, we represent it by a matrix~$H$ with coefficients in~$\Z$.
As~$h$ is an isometry, we have the following equalities of matrices with coefficients in~$\Lambda_S/\Lambda$:
$$ (t^{-1}-1)^{-1} H^T J_gH=J_g (t^{-1}-1)^{-1}.$$
Since $H$ and~$J_g$ takes values in~$\Z$, the coefficients of the matrix~$H^TJ_gH-J_g$ take values in~$(t-1)\Lambda$.
On the other hand, $H^TJ_gH-J_g$ has coefficients in~$\Z$, so $H^TJ_gH-J_g =0$, and we deduce that~$H^TJ_gH=J_g$.
This shows that~$H$ is a symplectomorphism, proving the first assertion.
%Because the argument also proves the converse.

We prove the second assertion.
We may realise any~$h \in \Sp_{2g}(\Z)$ as the map on $H_1(\Sigma_{g,1})$ induced by an orientation-preserving homeomorphism~$j \colon \Sigma_{g,1} \to \Sigma_{g,1}$ that fixes the boundary pointwise~\cite[Section~2.1 and the discussion following Theorem 6.4]{FarbMargalit}.
%Mod(S) is defined as orientation-preserving in Farb-Margalit.
Cross~$j$ with the identity on~$S^1$ to obtain an orientation-preserving homeomorphism~$j \times \id \colon \Sigma_{g,1} \times S^1 \to \Sigma_{g,1} \times S^1$ that belongs to~$\Homeo_\varphi^+(\Sigma_{g,1} \times~S^1)$.
\end{proof}

The next proposition describes the Blanchfield pairing of~$M_{K,g}=E_K \cup (\Sigma_{g,1} \times S^1)$ and its automorphism group; here recall that~$E_K=S^3 \setminus \nu (K)$ is the knot exterior.

\begin{proposition}
\label{prop:AutomorphismInTheBoundaryCase}
Given a knot~$K \subseteq S^3$,
%, the Alexander module~$H_1(M_{K,g};\Lambda)$ is torsion,
 there is an isometry~$\Bl_{M_{K,g}}\cong \Bl_{E_K} \oplus \Bl_{\Sigma_{g,1} \times S^1}$ and
$$\Aut(\Bl_{M_{K,g}}) \cong \Aut(\Bl_{E_K}) \oplus \Aut(\Bl_{\Sigma_{g,1} \times S^1}).$$
\end{proposition}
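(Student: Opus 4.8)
The plan is to first produce the $\Lambda$-module splitting of $H_1(M_{K,g};\Lambda)$, then upgrade it to an orthogonal splitting of the Blanchfield pairing, and finally read off the statement about automorphism groups. Recall from Lemma~\ref{lem:AlexanderModuleSigmaS1}~(2) that the inclusions induce an isomorphism of $\Lambda$-modules
\[ H_1(E_K;\Lambda) \oplus H_1(\Sigma_{g,1} \times S^1;\Lambda) \xrightarrow{\cong} H_1(M_{K,g};\Lambda), \]
coming from the Mayer--Vietoris sequence of $M_{K,g}=E_K\cup_{T^2}(\Sigma_{g,1}\times S^1)$, in which the connecting maps out of $H_*(T^2;\Lambda)$ vanish. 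Write $A$ and $B$ for the images of the two summands. I will show that this is an \emph{orthogonal} decomposition for $\Bl_{M_{K,g}}$ and that $\Bl_{M_{K,g}}|_A\cong\Bl_{E_K}$ and $\Bl_{M_{K,g}}|_B\cong\Bl_{\Sigma_{g,1}\times S^1}$; the isometry $\Bl_{M_{K,g}}\cong\Bl_{E_K}\oplus\Bl_{\Sigma_{g,1}\times S^1}$ then follows.

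For this I would use the chain-level description of the Blanchfield pairing compatible with Definition~\ref{rem:Blanchfield} (see \cite{ConwayFriedlToffoli,FriedlPowell}): for torsion classes $[x],[y]$ in the Alexander module of a $3$-manifold $Y$, choose $s\in\Lambda$ and a $2$-chain $z$ in the infinite cyclic cover $\wt Y$ with $\partial z=s\wt x$, and set $\Bl_Y([x],[y])=\tfrac1s(z\cdot\wt y)\in Q/\Lambda$, where $\cdot$ is the algebraic intersection number of a $2$-chain with a $1$-cycle in $\wt Y$. Given $[x]\in H_1(E_K;\Lambda)$ and $[y]\in H_1(\Sigma_{g,1}\times S^1;\Lambda)$, represent $[x]$ by a cycle pushed into the interior of $\wt{E_K}\subseteq\wt{M_{K,g}}$, pick a bounding $2$-chain $z$ for $s\wt x$ inside $\wt{E_K}$ (possible since $H_1(E_K;\Lambda)$ is $\Lambda$-torsion), and represent $[y]$ by a cycle pushed into the interior of $\wt{\Sigma_{g,1}\times S^1}$. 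These interiors are disjoint in $\wt{M_{K,g}}$, so $\Bl_{M_{K,g}}([x],[y])=\tfrac1s(z\cdot\wt y)=0$ and $A\perp B$. Running the same argument with both classes in $A$ (respectively in $B$), the bounding chain may be taken in $\wt{E_K}$ (respectively $\wt{\Sigma_{g,1}\times S^1}$) and the intersection number computed in $\wt{M_{K,g}}$ agrees with the one computed in that submanifold, which by the same recipe computes $\Bl_{E_K}$ (respectively $\Bl_{\Sigma_{g,1}\times S^1}$); here one invokes that these two forms, on $3$-manifolds with torus boundary, are given by this same formula, as in \cite{FriedlPowell,ConwayFriedlToffoli}.

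It remains to deduce the automorphism group statement. The point is that the two summands are distinguished by the $\Lambda$-module structure alone. On $H_1(\Sigma_{g,1}\times S^1;\Lambda)=(\Lambda/(t-1))^{\oplus 2g}$ multiplication by $t-1$ is zero, whereas on the knot Alexander module $H_1(E_K;\Lambda)$ multiplication by $t-1$ is invertible: $\Delta_K$ annihilates $H_1(E_K;\Lambda)$ (being the determinant of a square presentation matrix, whose adjugate provides the annihilation), and $\Delta_K(1)=\pm1$ shows that $t-1$ divides $\Delta_K\mp1$ in $\Lambda$, so $t-1$ is a unit modulo $\Delta_K$. Hence, writing $H:=H_1(M_{K,g};\Lambda)$, we have $A=(t-1)H$ and $B=\{x\in H\mid (t-1)x=0\}$, so both $A$ and $B$ are preserved by every $\Lambda$-module automorphism of $H$, a fortiori by every $h\in\Aut(\Bl_{M_{K,g}})$. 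Since the decomposition is orthogonal, such an $h$ restricts to isometries $h|_A\in\Aut(\Bl_{E_K})$ and $h|_B\in\Aut(\Bl_{\Sigma_{g,1}\times S^1})$; conversely $(h_K,h_\Sigma)\mapsto h_K\oplus h_\Sigma$ lands in $\Aut(\Bl_{M_{K,g}})$, again because the sum is orthogonal. These two assignments are mutually inverse group homomorphisms, giving $\Aut(\Bl_{M_{K,g}})\cong\Aut(\Bl_{E_K})\oplus\Aut(\Bl_{\Sigma_{g,1}\times S^1})$.

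The step I expect to be the main obstacle is the second paragraph: turning the Mayer--Vietoris $\Lambda$-module splitting into a genuinely orthogonal splitting of linking forms. This needs a careful chain-level model of the Blanchfield pairing, a transversality/pushing-off argument inside the infinite cyclic cover, and attention to the conventions defining $\Bl_{E_K}$ and $\Bl_{\Sigma_{g,1}\times S^1}$ for $3$-manifolds with torus boundary (orientations, the $t$ versus $t^{-1}$ convention, and the identification $H_1(Y;\Lambda)\cong H_1(Y,\partial Y;\Lambda)$). Once that is in place, the remaining arguments are essentially formal.
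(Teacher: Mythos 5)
Your proof is correct and follows the same overall strategy as the paper: start from the Mayer--Vietoris splitting of Lemma~\ref{lem:AlexanderModuleSigmaS1}, upgrade it to an orthogonal splitting of Blanchfield forms, and then show every automorphism respects the two summands. The one place you diverge is the orthogonality step: the paper simply quotes \cite[Theorem~1.1]{FriedlLeidyNagelPowell} for the isometry $\Bl_{M_{K,g}}\cong\Bl_{E_K}\oplus\Bl_{\Sigma_{g,1}\times S^1}$, whereas you sketch the underlying geometric argument (bounding $2$-chains in the infinite cyclic cover supported in the two pieces). Your sketch is the right proof of that cited theorem, and you correctly flag the convention-checking it requires; invoking the reference is the shorter route. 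For the automorphism statement your argument is a mild variant of the paper's: you characterise the summands intrinsically as $(t-1)H$ and $\ker(t-1)$, using that $t-1$ acts invertibly on $H_1(E_K;\Lambda)$ (via $\Delta_K(1)=\pm1$) and as zero on $(\Lambda/(t-1))^{\oplus 2g}$, while the paper writes an automorphism in block form and kills the off-diagonal blocks because the orders $\Delta_K$ and $(t-1)^{2g}$ are coprime. These are the same idea; your version has the small advantage of making it evident that \emph{every} $\Lambda$-module automorphism, not just isometries, preserves the decomposition.
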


\begin{proof}
%We first show that~$\Bl_{M_{K,g}} \cong \Bl_{E_K} \oplus \Bl_{\Sigma_{g,1} \times S^1}$.
%Consider the Mayer-Vietoris sequence for the decomposition~$M_{K,g}=E_K \cup_\partial (\Sigma_{g,1} \times S^1)$ with~$\Lambda$ coefficients:
%$$ \cdots \to H_1(S^1 \times S^1;\Lambda) \to H_1(E_K;\Lambda)\oplus H_1(\Sigma_{g,1} \times S^1;\Lambda) \to H_1(M_{K,g};\Lambda)\to 0.$$
%We have~$H_1(S^1 \times S^1;\Lambda)=H_1(S^1 \times \R)= \Z$ and~$H_1(\Sigma_{g,1} \times S^1;\Lambda)=H_1(\Sigma_{g,1} \times \R)=~\Z^{2g}.$
%The map~$H_1(S^1 \times S^1;\Lambda) \to H_1(E_K;\Lambda)$ is the zero map: it sends the generator to the lift of the longitude of~$E_K$; this is null-homologous in the infinite cyclic cover~$E_K^\infty$ of~$E_K$.
%Next, the homomorphism~$H_1(S^1 \times S^1;\Lambda) \to H_1(\Sigma_{g,1} \times S^1;\Lambda)$ is also the zero map: it coincides with the map~$H_1(S^1) \to H_1(\Sigma_{g,1})$ sending the generator to~$[\partial \Sigma_{g,1}]$ which vanishes in~$H_1(\Sigma_{g,1})$.
%This establishes that
We proved in Lemma~\ref{lem:AlexanderModuleSigmaS1} that the inclusion induces a $\Lambda$-isomorphism $H_1(E_K;\Lambda)\oplus H_1(\Sigma_{g,1} \times S^1;\Lambda) \cong H_1(M_{K,g};\Lambda).$
%This module is torsion and the isometry of Blanchfield forms
The isometry $\Bl_{M_{K,g}} \cong \Bl_{E_K} \oplus \Bl_{\Sigma_{g,1} \times S^1}$ now follows from~\cite[Theorem~1.1]{FriedlLeidyNagelPowell}.

We must now show that~$\Aut(\Bl_{E_K} \oplus \Bl_{\Sigma_{g,1} \times S^1})=\Aut(\Bl_{E_K}) \oplus \Aut(\Bl_{\Sigma_{g,1} \times S^1})$.
The order of~$H_1(M_K;\Lambda)$ is the Alexander polynomial~$\Delta_K$ (which is not divisible by~$(t-1)$) and the order of~$H_1(\Sigma_{g,1} \times S^1;\Lambda)$ is~$(t-1)^{2g}$.
An automorphism~$h \in  \Aut(\Bl_{E_K}) \oplus \Aut(\Bl_{\Sigma_{g,1} \times S^1})$ can be written as~$h=\bsm h_{11}&h_{12} \\ h_{21}&h_{22} \esm$.
Since the orders of~$H_1(M_K;\Lambda)$ and~$H_1(\Sigma_{g,1} \times S^1;\Lambda)$ are coprime, we deduce that~$h_{12}=h_{21}=0$.
%This is non-trivial. See BoundarySurfaceExteriorD4.pdf
%This concludes the proof of the proposition.
%The other inclusion is clear.
\end{proof}

\subsection{Surfaces in manifolds with boundary}
\label{sub:SurfacesManifoldWithBoundary}

The aim of this subsection is to prove Theorem~\ref{thm:WithBoundaryIntro} from the introduction.
Given a knot~$K \subseteq S^3$, thanks to Proposition~\ref{prop:AutomorphismInTheBoundaryCase}, we can write automorphisms of~$\Bl_{M_{K,g}}$ as~$h=h_K \oplus h_\Sigma$ with~$h_K \in \Aut(\Bl_{E_K})$ and~$h_\Sigma \in \Aut(\Bl_{\Sigma_{g,1} \times S^1})$.

Let us recall the construction of an isotopy, as introduced in the introduction.
Let $f_K \colon E_K \to E_K$ be an orientation-preserving homeomorphism that is the identity on~$\partial E_K$. Extend $f_K$ via the identity on $\nu K$ to an orientation-preserving self-homeomorphism $\wt{f}_K$ of $S^3$.  The mapping class group of $S^3$ is trivial, so there is an isotopy  $\Theta(f_K)_t \colon S^3 \to S^3$ between the extension and the identity, such that $\Theta(f_K)_0 = \Id$ and $\Theta(f_K)_1 = \wt{f}_K$.
We can now prove Theorem~\ref{thm:WithBoundaryIntro}.

\begin{theorem}
\label{thm:WithBoundary}
Let $\Sigma_0,\Sigma_1 \subseteq N$ be two $\Z$-surfaces of genus $g$ with boundary $K$.
Suppose there is an isometry~$F \colon \lambda_{N_{\Sigma_0}} \cong \lambda_{N_{\Sigma_1}}$, and write~$\partial F=h_K \oplus h_\Sigma$.
\begin{itemize}
\item  If~$h_K$ is induced by an orientation-preserving homeomorphism~$f_K \colon E_K \to E_K$ that is the identity on~$\partial E_K$, then~$f_K$ extends to an orientation-preserving homeomorphism of pairs
$$(N,\Sigma_0) \xrightarrow{\cong} (N,\Sigma_1)$$
inducing the given isometry~$F \colon H_2(N_{\Sigma_0};\Lambda) \cong H_2(N_{\Sigma_1};\Lambda)$.
\item
If in addition~$N=D^4$, then for any choice of isotopy $\Theta(f_K)$, the surfaces~$\Sigma_0$ and~$\Sigma_1$ are topologically ambiently isotopic via an ambient isotopy of $D^4$ extending $\Theta(f_K)$.
\end{itemize}
\end{theorem}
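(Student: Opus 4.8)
The plan is to deduce this from the partial classification of $4$-manifolds with fundamental group $\Z$, namely Theorem~\ref{thm:MainTechnical}, applied to the surface exteriors $M_i := N_{\Sigma_i}$. First I would verify the hypotheses: by Lemma~\ref{lem:RibbonBoundary} the exteriors have ribbon boundary, and by Lemma~\ref{lem:AlexanderModuleSigmaS1} the boundary Alexander module $H_1(\partial N_{\Sigma_i};\Lambda) \cong H_1(E_K;\Lambda) \oplus H_1(\Sigma_{g,1}\times S^1;\Lambda)$ is $\Lambda$-torsion (the knot-exterior summand is torsion since $\Delta_K \neq 0$, the other summand has order $(t-1)^{2g}$). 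The Kirby-Siebenmann hypothesis is automatic: both exteriors sit inside the simply-connected $N$ and are smoothable, so $\ks(N_{\Sigma_i})=0$, and in the spin case there is nothing to check. So the only real work is to promote the isometry $F$ to a \emph{compatible pair} $(f,F)$ in the sense of Definition~\ref{def:Compatible}, i.e.\ to build an orientation-preserving homeomorphism $f \colon \partial N_{\Sigma_0} \to \partial N_{\Sigma_1}$, intertwining the maps to $\Z$, with $\widetilde{f}_* = \partial F$ on Blanchfield forms.

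To construct $f$, I would use the identification $\partial N_{\Sigma_i} \cong M_{K,g} = E_K \cup_\partial (\Sigma_{g,1}\times S^1)$ from Remark~\ref{rem:IdentifBoundary} and the splitting $\partial F = h_K \oplus h_\Sigma$ coming from Proposition~\ref{prop:AutomorphismInTheBoundaryCase}. The $E_K$-part is handled by hypothesis: $h_K$ is realised by the given $f_K \colon E_K \to E_K$ which is the identity on $\partial E_K$. The $\Sigma_{g,1}\times S^1$-part is handled by Proposition~\ref{prop:IsometriesBlanchfield}: $h_\Sigma \in \Aut(\Bl_{\Sigma_{g,1}\times S^1}) = \Sp_{2g}(\Z)$ is realised by a homeomorphism of the form $j\times \id_{S^1}$ with $j \in \Homeo^+(\Sigma_{g,1})$ fixing $\partial \Sigma_{g,1}$ pointwise. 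Since both pieces restrict to the identity on the gluing torus $S^1\times S^1$, they glue to a single $f \colon M_{K,g}\to M_{K,g}$, which lies in $\Homeo^+_\varphi$ because each piece intertwines the map to $\Z$ (the $\varphi$ on the knot side is the abelianisation of $\pi_1 E_K$, and $f_K$, being identity near the boundary and inducing $h_K$, respects it, while $j\times\id$ respects projection to the $S^1$ factor). By construction $\widetilde{f}_* = h_K\oplus h_\Sigma = \partial F$, so $(f,F)$ is a compatible pair. Now Theorem~\ref{thm:MainTechnical} yields an orientation-preserving homeomorphism $\Phi \colon N_{\Sigma_0} \xrightarrow{\cong} N_{\Sigma_1}$ extending $f$ and inducing $F$ on $H_2(-;\Lambda)$. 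Since $\Phi|_{E_K} = f_K$ and $\Phi$ sends meridians of $\Sigma_0$ to meridians of $\Sigma_1$ (as it respects the $\Lambda$-structure, i.e.\ the $\Z$-cover), it extends over the tubular neighbourhoods $\Sigma_i\times D^2$ by the identity on the $D^2$-factor (using the Seifert framing normalisation from Lemma~\ref{lem:trivial-normal-bundle} and Remark~\ref{rem:IdentifBoundary}), giving a homeomorphism of pairs $(N,\Sigma_0)\to(N,\Sigma_1)$ that restricts to $f_K$ on $E_K$, hence extends $f_K$ over $S^3=\partial N$ by the identity on $\nu K$. This proves the first bullet.

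For the second bullet, with $N=D^4$: we have a homeomorphism of pairs $\Psi \colon (D^4,\Sigma_0)\to (D^4,\Sigma_1)$ whose restriction to $\partial D^4 = S^3$ is the extension $\widetilde{f}_K$ of $f_K$ by the identity on $\nu K$. Using the given isotopy $\Theta(f_K)_t \colon S^3\to S^3$ from $\Id$ to $\widetilde{f}_K$, I would first modify $\Psi$ near the boundary: cone $\Theta(f_K)$ inward over a collar $S^3\times[0,1]\subseteq D^4$ to replace $\Psi$ by a homeomorphism $\Psi'$ of pairs with $\Psi'|_{\partial D^4} = \Id$, at the cost of composing the target copy of $\Sigma_1$ with an ambient isotopy supported in the collar — this is where $\Theta(f_K)$ enters. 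Then $\Psi'$ is a self-homeomorphism of $D^4$ restricting to the identity on $\partial D^4$, so by Alexander's trick (coning) it is topologically isotopic rel boundary to $\id_{D^4}$; tracking $\Sigma_0$ along this isotopy, and then concatenating with the collar isotopy above, produces an ambient isotopy of $D^4$ carrying $\Sigma_0$ to $\Sigma_1$ and extending $\Theta(f_K)$ on $\partial D^4$. The main obstacle I anticipate is purely bookkeeping rather than conceptual: checking carefully that the homeomorphism $\Phi$ from Theorem~\ref{thm:MainTechnical}, which a priori only knows about the \emph{exteriors}, genuinely extends across the surface neighbourhoods as a homeomorphism of pairs compatible with the chosen normalisations of the normal framing, and that the collar-coning of $\Theta(f_K)$ can be arranged to be an ambient isotopy of the pair. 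Everything else is an application of the already-established machinery.
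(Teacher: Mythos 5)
Your proposal follows essentially the same route as the paper's proof: construct the compatible pair $(f,F)$ with $f = f_K \cup (j\times\id_{S^1})$ via Propositions~\ref{prop:AutomorphismInTheBoundaryCase} and~\ref{prop:IsometriesBlanchfield}, apply Theorem~\ref{thm:MainTechnical} to the exteriors, glue in $j\times\id_{D^2}$ over the tubular neighbourhood to obtain the homeomorphism of pairs, and, for $N=D^4$, combine the coned isotopy $C(\Theta(f_K)_t)$ with the Alexander-trick isotopy of $C(\wt{f}_K)^{-1}\circ\Phi'$. The only incorrect step is your justification of the Kirby--Siebenmann hypothesis: the exteriors $N_{\Sigma_i}$ need not be smoothable, since $X$ (hence $N=X\setminus\mathring{D}^4$) can have $\ks(N)=1$; the correct argument is that $\ks(N_{\Sigma_i}) = \ks(N_{\Sigma_i}) + \ks(\Sigma_i\times D^2) = \ks(N)$ by additivity of $\ks$ under gluing, so the two exteriors have \emph{equal} (not necessarily vanishing) Kirby--Siebenmann invariants, which is all that Theorem~\ref{thm:MainTechnical} requires.
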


%Let~$X$ be a closed, simply-connected~$4$-manifold, let~$N=X \setminus \mathring{D}^4$ be a punctured~$X$, and let~$K \subseteq S^3=\partial N$ be a knot.
%Let~$\Sigma_0,\Sigma_1 \subseteq N$ be two locally flat embedded orientable genus~$g$ surfaces with boundary~$K$ and~$\pi_1(N_{\Sigma_i})=\Z$ for~$i=0,1$.
%\begin{itemize}
%\item

%\begin{itemize}
%\item Suppose there is an isometry~$F \in \Iso(\lambda_{N_{\Sigma_0}},\lambda_{N_{\Sigma_1}})$ and write~$\partial F=h_K \oplus h_\Sigma$.
%If~$h_K$ is induced by an orientation-preserving homeomorphism~$f_K \colon E_K \to E_K$ that is the identity on~$\partial E_K$, then~$f_K$ extends to an orientation-preserving homeomorphism of pairs
%$$ (N,\Sigma_0) \xrightarrow{\cong} (N,\Sigma_1)$$
%inducing the given isometry~$F \colon H_2(N_{\Sigma_0};\Lambda) \cong H_2(N_{\Sigma_1};\Lambda)$.
%If~$h_K=\id$, then this orientation-preserving homeomorphism of pairs can be assumed to fix the boundary of~$N$ pointwise.
%\item If in addition~$N=D^4$ and~$h_K=\id$, then~$\Sigma_0$ and~$\Sigma_1$ are topologically ambiently isotopic rel.\ boundary.
%
%
%Let~$X$ be a closed, simply-connected~$4$-manifold, let~$N=X \setminus \mathring{D}^4$ be a punctured~$X$, and let~$K \subseteq S^3=\partial N$ be a knot.
%Let~$\Sigma_0,\Sigma_1 \subseteq N$ be two locally flat, embedded, compact, oriented genus~$g$ surfaces with boundary~$K$ and~$\pi_1(N_{\Sigma_i})=\Z$ for~$i=0,1$.

\begin{proof}
By assumption, the boundary of the isometry~$F$ is~$\partial F=h_K \oplus h_\Sigma$ for some~$h_K \in \Aut (\Bl_{E_K})$ and some~$h_\Sigma \in \Aut (\Bl_{\Sigma_{g,1} \times S^1})$.
By Proposition~\ref{prop:IsometriesBlanchfield}, we can realise~$h_\Sigma$ by an orientation-preserving homeomorphism of the form~$j \times \id_{S^1}$ with~$j \colon \Sigma_{g,1} \to \Sigma_{g,1}$ an orientation-preserving homeomorphism that fixes the boundary pointwise.
Moreover,~$h_K$ is realised by an orientation-preserving homeomorphism~$f_K \colon E_K \to E_K$ that fixes the boundary $\partial E_K$.
Gluing these two homeomorphisms together, we obtain a homeomorphism
$$f:=f_K \cup (j \times \id_{S^1})  \colon M_{K,g} \to M_{K,g}.$$
Since~$f$ is the identity on~$\partial E_K \subseteq M_{K,g}$, we deduce that~$f \in \Homeo_\varphi^+(M_{K,g})$.
It follows that~$(f,F)$ is a compatible pair.
%The inclusion map \pi_1(M_{K,g}) \to \Z  maps the meridian to t. So if the meridians is preserved by the homeo, then we are ok.
The~$N_{\Sigma_i}$ are~$4$-manifolds whose boundaries are ribbon (recall Lemma~\ref{lem:RibbonBoundary}) and have torsion Alexander modules for~$i=0,1$ (by Lemma~\ref{lem:AlexanderModuleSigmaS1}).
Since~$\Sigma_{i} \times D^2$ is a smooth manifold,~$\ks(\Sigma_{i} \times D^2)=0$. Thus for~$i=0,1$, by additivity of~$\ks$ under gluing we have $\ks(N_{\Sigma_i}) = \ks(N_{\Sigma_i}) + \ks(\Sigma_i \times D^2) =  \ks(N)$.
In particular~$\ks(N_{\Sigma_0}) = \ks(N_{\Sigma_1})$.
We may therefore apply Theorem~\ref{thm:MainTechnical} to extend~$f$ to an orientation-preserving homeomorphism
$\Phi \colon N_{\Sigma_0} \to N_{\Sigma_1}$ that induces~$F$.
As a consequence, we obtain the required homeomorphism of pairs:
$$\Phi':=\Phi \cup (j \times \id_{D^2}) \colon (N,\Sigma_0) \to (N,\Sigma_1).$$
%by gluing back~$\Sigma_0 \times S^1 \to \Sigma_1 \times S^1$.
%If~$h_K=\id$, then we can take~$f_K=\id$.
%As~$j$ fixes~$\partial \Sigma_{g,1}$ pointwise, we deduce that~$\Phi'|_{\partial N}=\id$.
This establishes the first assertion.

%{MP: New detailed proof added here, for the extension to ambient isotopy in $D^4$ using coning. It's done a little differently to how we said before, to make sure it restricts to \emph{exactly} the chosen isotopy on the boundary.}
For the second assertion, let~$N=D^4$ and let $\Theta(f_K)_t \colon S^3 \to S^3$ be an isotopy of self-homeomorphisms of $S^3$ with $\Theta(f_K)_0 =\Id_{S^3}$  and $\Theta(f_K)_1$ equal to the extension $\wt{f}_K$ of $f_K$ by the identity to all of $S^3$.
Cone $\Theta(f_K)$ to obtain an isotopy $C(\Theta(f_K)_t) \colon D^4 \to D^4$ with $C(\Theta(f_K)_0) = \Id_{D^4}$ and $C(\Theta(f_K)_1)$ equal to the cone $C(\wt{f}_K)$ of $\wt{f}_K$.

Next, note that $C(\wt{f}_K)$ and $\Phi' \colon D^4 \to D^4$ are two homeomorphisms of $D^4$ that restrict to the same homeomorphism of $S^3$ on the boundary.
Therefore $C(\wt{f}_K)^{-1} \circ \Phi' \colon D^4 \to D^4$ is a homeomorphism restricting to $\Id_{S^3}$ on the boundary. By the Alexander trick, $C(\wt{f}_K)^{-1} \circ \Phi'$ is isotopic rel.\ boundary to the identity, via an isotopy $G_t \colon D^4 \to D^4$ with $G_0 = \Id_{D^4}$ and $G_1 = C(\wt{f}_K)^{-1} \circ \Phi'$. Note that $G_t|_{S^3} = \Id_{S^3}$ for all $t \in [0,1]$.
Then let
\[H_t := C(\Theta(f_K)_t) \circ G_t \colon D^4 \to D^4.\]
This is an isotopy with
\begin{align*}
  H_0 &= \Id_{D^4} \circ \Id_{D^4} = \Id_{D^4} \\
  H_1 &= C(\wt{f}_K) \circ C(\wt{f}_K)^{-1} \circ \Phi' = \Phi'.
\end{align*}
In addition, for every $t \in [0,1]$ we have that $H_t|_{S^3} = \Theta(f_K)_t \circ G_t|_{S^3} = \Theta(f_K)_t$.
Since $\Id_{D^4}(\Sigma_0) = \Sigma_0$ and $\Phi'(\Sigma_0) = \Sigma_1$, the isotopy $H_t$ is a topological ambient isotopy between~$\Sigma_0$ and~$\Sigma_1$, and it extends $\Theta(f_K)$, as required.
\end{proof}

We also note the following corollary, in which we give a slightly relaxed criterion, but without precise control on the isotopy of $K$. It is sometimes easier to construct homeomorphisms of $E_K$ that do not fix the boundary $\partial E_K$, but rather only fix the basepoint, so this could be a useful variation.

\begin{corollary}\label{corollary:realising-boundary-isos}
Let $K \subseteq S^3$ be a knot and fix a basepoint in $\partial E_K$.   Let $f \colon S^3 \to S^3$ be an orientation-preserving, basepoint-preserving homeomorphism with $f(K) = K$ as oriented knots, $f(E_K) = E_K$. Then $f$ induces an isomorphism $h \colon H_1(E_K;\Lambda) \to H_1(E_K;\Lambda)$. Suppose that $\Sigma_0$ and $\Sigma_1$ are two $\Z$-surfaces in $N = X \sm \mathring{D}^4$ with boundary $K \subseteq S^3 = \partial N$, and let $F \colon \lambda_{N_{\Sigma_0}} \cong \lambda_{N_{\Sigma_1}}$ be an isometry with $\partial F = h$. Then $\Sigma_0$ and $\Sigma_1$ are related by a homeomorphism of pairs, and if $N=D^4$ then they are ambiently isotopic.
\end{corollary}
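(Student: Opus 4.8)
The plan is to reduce to Theorem~\ref{thm:WithBoundary} by modifying $f|_{E_K}$ so that it becomes the identity on $\partial E_K$ without changing the induced automorphism of the Alexander module. The first step is to analyse $f|_{\partial E_K}$: since $f$ is orientation-preserving on $S^3$ and $f(K)=K$ as \emph{oriented} knots, $f$ preserves linking numbers with $K$ and carries Seifert surfaces to Seifert surfaces, so it fixes the homology classes of the meridian $\mu$ and of the Seifert longitude $\lambda$. Hence $f|_{\partial E_K}$ induces the identity on $H_1(\partial E_K;\Z)$. As the orientation-preserving mapping class group of the torus is $SL_2(\Z)$ and injects into $\operatorname{Aut}(H_1(\partial E_K;\Z))$, the homeomorphism $f|_{\partial E_K}$ is isotopic to $\id_{\partial E_K}$.

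Second, I would spread such an isotopy through a collar $\partial E_K \times [0,1]\subseteq E_K$ to obtain a homeomorphism $\phi\colon E_K\to E_K$ supported in the collar with $\phi|_{\partial E_K}=(f|_{\partial E_K})^{-1}$ and $\phi$ isotopic to $\id_{E_K}$. Set $f':=\phi\circ f$. Then $f'$ is an orientation-preserving self-homeomorphism of $E_K$ that restricts to the identity on $\partial E_K$; since $\phi\simeq\id_{E_K}$ we have $f'_*=f_*=h$ on $H_1(E_K;\Lambda)$, and in particular $f'$ intertwines the map $\varphi\colon\pi_1(E_K)\twoheadrightarrow\Z$ (this uses that $f$ itself induces $+1$ on $H_1(E_K;\Z)$, which again follows from $f_*\mu=\mu$), so by Proposition~\ref{prop:HomeoInduceIsometry} it realises $h$ as an isometry of $\Bl_{E_K}$.

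Third, I would invoke Theorem~\ref{thm:WithBoundary}. Writing $\partial F=h_K\oplus h_\Sigma$ via Proposition~\ref{prop:AutomorphismInTheBoundaryCase}, the hypothesis $\partial F=h$ means $h_K=h$, which by the previous step is induced by the homeomorphism $f'\colon E_K\to E_K$ that is the identity on $\partial E_K$. The first bullet of Theorem~\ref{thm:WithBoundary} then yields an orientation-preserving homeomorphism of pairs $(N,\Sigma_0)\xrightarrow{\cong}(N,\Sigma_1)$ inducing $F$; when $N=D^4$, the second bullet upgrades this to a topological ambient isotopy (extending the isotopy $\Theta(f')$ of $S^3$, which is exactly the reason one loses precise control of the isotopy in terms of the original $f$).

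I expect the only real work to be the bookkeeping in the second step: verifying that the collar modification $\phi$ can be chosen admissible (intertwining $\varphi$, so that it lifts to the infinite cyclic cover) and genuinely leaves the induced map on $\Lambda$-homology unchanged, together with confirming that $f|_{\partial E_K}$ is actually isotopic, not merely homotopic, to the identity. These points are routine but should be spelled out.
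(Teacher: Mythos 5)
Your proof is correct, but it takes a genuinely different route from the paper's. You normalise the homeomorphism: using that the orientation-preserving mapping class group of $T^2$ injects into $\Aut(H_1(T^2))$, you isotope $f|_{\partial E_K}$ to the identity and absorb that isotopy into a collar of $\partial E_K$ inside $E_K$, producing a boundary-fixing $f'$ realising $h$, and then apply Theorem~\ref{thm:WithBoundary} directly with $f_K=f'$. The paper instead normalises the isometry: it isotopes $f$ as a self-homeomorphism of $S^3$ to the identity (triviality of the mapping class group of $S^3$), tapers that isotopy in a collar $S^3\times[0,1]\subseteq N$ to get an ambient isotopy $\Theta_t$ of $N$, replaces $\Sigma_0$ by $\Sigma_0':=\Theta_1(\Sigma_0)$, and notes that $G:=F\circ(\Theta_1)_*^{-1}$ satisfies $\partial G=\id$ on $H_1(E_K;\Lambda)$, so Theorem~\ref{thm:WithBoundaryIntro} applies with $f_K=\id$. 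The paper's version sidesteps the one genuine subtlety in yours, which you rightly flag but do not resolve: the induced map on $H_1(E_K;\Lambda)$ depends on a choice of lift of the basepoint, and if the isotopy $\iota_u$ from $\id_{\partial E_K}$ to $(f|_{\partial E_K})^{-1}$ has basepoint track a loop $\gamma$ with $\varphi(\gamma)=k\neq 0$, then $f'$ induces $t^{k}h$ rather than $h$. This is repairable: the identity component $\Homeo_0(T^2)$ is homotopy equivalent to $T^2$ via evaluation at the basepoint, so the isotopy can be corrected by a loop in $\Homeo_0(T^2)$ to make the basepoint track null-homotopic, whence $f'_*=f_*=h$ exactly. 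With that point spelled out, your argument is complete and delivers the same conclusion (including the ambient isotopy for $N=D^4$, which is not rel.\ boundary, consistent with the statement).
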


\begin{proof}
Since $f$ is orientation-preserving as a map from $S^3$ to $S^3$, and as a map $K \to f(K)$, it restricts to an orientation-preserving map on $\partial E_K$ that preserves the homotopy class of the zero-framed longitude and therefore preserves the orientation of the meridian.  Thus $f$ commutes with the map $\pi_1(E_K) \to \Z$ inducing the $\Lambda$-coefficients, and therefore induces a map $h \colon H_1(E_K;\Lambda) \to H_1(E_K;\Lambda)$ as claimed; recall Proposition~\ref{prop:HomeoInduceIsometry}.
  Let $\theta_t \colon S^3 \to S^3$ be an isotopy from the identity of $S^3$ to $f$. Let $S^3 \times [0,1] \hookrightarrow N$ be a collar on the boundary of $N$ with $S^3 \times \{0\}$ mapping to $\partial N$. Let $\Theta_t \colon N \to N$ be the isotopy that is the identity outside $S^3 \times [0,1]$ and defined by $\Theta_t(x,s) = \theta_{t(1-s)}(x)$. This performs $\theta_t$ on the boundary, and tapers it in the collar.  Let $\Sigma_0' := \Theta_1(\Sigma_0)$, and note that $\Sigma_0'$ is ambiently isotopic (not necessarily rel.\ boundary) to $\Sigma_0$.
  In addition, $\partial (\Theta_1)_* = \partial F = h$ as isomorphisms of $H_1(E_K;\Lambda)$.  Therefore $G:= F \circ (\Theta_1)_*^{-1} \colon \lambda_{N_{\Sigma_0'}} \cong \lambda_{N_{\Sigma_1}}$ is an isometry with $\partial G = \id$.  By Theorem~\ref{thm:WithBoundaryIntro},  $\Sigma_0'$ and $\Sigma_1$ are related by a homeomorphism of pairs restricting to the identity on the boundary, and if $N=D^4$ then they are ambiently isotopic rel.\ boundary.  The corollary now follows immediately.
\end{proof}

\subsection{Surfaces in closed manifolds}\label{subsection:surfaces-in-closed-4-manifolds}

Now we recover the statements in the closed setting, proving Theorem~\ref{thm:Unknotting4ManifoldIntro} from the introduction.
As discussed in the introduction, for a closed, simply-connected~$4$-manifold~$X$ there is a classification of self-homeomorphisms of~$X$ in terms of their action on~$H_2(X)$.  The hypothesis of Theorem~\ref{thm:Unknotting4ManifoldIntro} is in terms of an isometry between the~$\La$-intersection forms of the surface exteriors~$X_{\Sigma_0}$ and~$X_{\Sigma_1}$.  In order to carefully state Theorem~\ref{thm:Unknotting4ManifoldIntro}, first we need to understand the relationship between such an isometry and isomorphisms of the second homology~$H_2(X)$ of the ambient~$4$-manifold.

\begin{lemma}
\label{lem:InducedIsometryOfH2(X)}
Let~$\Sigma_0,\Sigma_1 \subseteq X$ be closed $\Z$-surfaces of the same genus.
\begin{enumerate}
\item An isometry~$F \colon \lambda_{X_{\Sigma_0}} \cong \lambda_{X_{\Sigma_1}}$ induces an isomorphism~$F_\Z \colon H_2(X) \xrightarrow{\cong} H_2(X)$.
\item\label{item:induced-isom-item-2} If the isometry~$F$ is induced by a homeomorphism of pairs~$\Phi' \colon (X,\Sigma_0) \to (X,\Sigma_1)$, then~$F_\Z=\Phi_*'$; in particular,~$F_\Z$ is an isometry of the standard intersection form~$Q_{X}$.
\end{enumerate}
\end{lemma}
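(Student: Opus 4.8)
The plan is to realise $(H_2(X),Q_X)$ intrinsically out of the pair $(H_2(X_\Sigma;\Lambda),\lambda_{X_\Sigma})$ by reducing modulo $t-1$ and then quotienting by a radical, and to check that this realisation is natural.

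First I would record three facts, valid for any closed $\Z$-surface $\Sigma\subseteq X$. \textbf{(a)} Since $\pi_1(X_\Sigma)=\Z$ and $H_1(X_\Sigma;\Lambda)=0$ (Lemma~\ref{lem:Homology}), the Wang sequence of the infinite cyclic cover collapses to an isomorphism $H_2(X_\Sigma;\Lambda)/(t-1)\xrightarrow{\cong}H_2(X_\Sigma;\Z)$ induced by the covering projection; as $H_2(X_\Sigma;\Lambda)$ is $\Lambda$-free (again Lemma~\ref{lem:Homology}), the target is free abelian, and evaluating $\lambda_{X_\Sigma}$ at $t=1$ recovers the ordinary $\Z$-valued intersection form $Q_{X_\Sigma}$ of $X_\Sigma$ by the translate-counting description of equivariant intersection numbers. \textbf{(b)} As $\Sigma$ is null-homologous with trivial normal bundle (Lemmas~\ref{lem:Nullhomologous} and~\ref{lem:trivial-normal-bundle}), we have $X=X_\Sigma\cup_{\Sigma_g\times S^1}(\Sigma_g\times D^2)$, and a Mayer--Vietoris computation --- using Remark~\ref{rem:IdentifBoundary} and Lemma~\ref{lem:RibbonBoundary} to see that the curves $\gamma_k\subseteq\Sigma$ die in $H_1(X_\Sigma;\Z)$ while the meridian generates it, and using null-homology of $\Sigma$ to see that a push-off of $\Sigma$ already lies in the image of $H_2(\partial X_\Sigma;\Z)$ --- shows that $H_2(X_\Sigma;\Z)\to H_2(X;\Z)$ is onto with kernel $\operatorname{im}\bigl(H_2(\partial X_\Sigma;\Z)\to H_2(X_\Sigma;\Z)\bigr)$. \textbf{(c)} Poincaré--Lefschetz duality together with the universal coefficient theorem (the relevant $\mathrm{Ext}$-term vanishing because $H_1(X_\Sigma;\Z)=\Z$ is free) identifies that image with the radical of $Q_{X_\Sigma}$. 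Combining (b) and (c), the inclusion induces an isometry $H_2(X_\Sigma;\Z)/\mathrm{rad}(Q_{X_\Sigma})\xrightarrow{\cong}(H_2(X),Q_X)$.

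Given these, part (1) is formal: an isometry $F\colon\lambda_{X_{\Sigma_0}}\cong\lambda_{X_{\Sigma_1}}$ is in particular $\Lambda$-linear, so it descends modulo $t-1$ to an isomorphism $\overline F\colon H_2(X_{\Sigma_0};\Z)\to H_2(X_{\Sigma_1};\Z)$ which, by (a), is an isometry of the $\Z$-valued intersection forms and therefore carries $\mathrm{rad}(Q_{X_{\Sigma_0}})$ isomorphically onto $\mathrm{rad}(Q_{X_{\Sigma_1}})$; passing to quotients and using the two isometric identifications from the previous paragraph yields $F_\Z\in\Aut(Q_X)$, which in particular settles the ``in particular'' clause. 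For part (2), write $\Phi:=\Phi'|_{X_{\Sigma_0}}\colon X_{\Sigma_0}\to X_{\Sigma_1}$, which induces $F$ on $H_2(-;\Lambda)$ after the usual matching of fundamental groups. Naturality of the Wang sequence gives $\overline F=\Phi_*$ on $H_2(-;\Z)$, and naturality of $H_2$ applied to the square formed by the inclusions $X_{\Sigma_i}\hookrightarrow X$ together with $\Phi$ and $\Phi'$ shows that $\Phi_*$ and $\Phi'_*$ agree under the quotient maps of step (b); since those quotient maps are exactly what defines $F_\Z$, we obtain $F_\Z=\Phi'_*$, which is an isometry of $Q_X$ because $\Phi'$ is an orientation-preserving self-homeomorphism of the closed $4$-manifold $X$.

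The main obstacle is step (b): running the Mayer--Vietoris argument with care, in particular checking the vanishing of the connecting homomorphism into $H_1(\Sigma_g\times S^1)$ and the claim that a push-off of $\Sigma$ lies in the image of $H_2(\partial X_\Sigma;\Z)$ (this is precisely where null-homology of the $\Z$-surface enters), and then matching the resulting kernel with $\mathrm{rad}(Q_{X_\Sigma})$ through duality. The remaining steps are bookkeeping with natural maps.
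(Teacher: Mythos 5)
Your proposal is correct, and its skeleton matches the paper's: reduce the $\Lambda$-form modulo $t-1$ to recover $(H_2(X_{\Sigma_i};\Z),Q_{X_{\Sigma_i}})$ (the paper does this with the universal coefficient spectral sequence rather than the Wang sequence, but these are interchangeable here), then use the Mayer--Vietoris sequence for $X=X_{\Sigma_i}\cup(\Sigma_g\times D^2)$ to present $H_2(X)$ as a quotient of $H_2(X_{\Sigma_i};\Z)$ and define $F_\Z$ on the quotient; part (2) is naturality in both accounts. The genuine added value in your version is step (c): identifying the Mayer--Vietoris kernel $\operatorname{im}\bigl(H_2(\partial X_{\Sigma_i})\to H_2(X_{\Sigma_i})\bigr)$ with the radical of $Q_{X_{\Sigma_i}}$ via Poincar\'e--Lefschetz duality. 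The paper instead defines $F_\Z$ by a diagram chase and leaves implicit the key point that $F\otimes_\Lambda\id_\Z$ carries $\ker p_0$ into $\ker p_1$; your radical argument supplies exactly this (isometries preserve radicals), and as a bonus it shows already in part (1) that $F_\Z$ is an isometry of $Q_X$, a fact the paper only extracts in part (2) from the existence of the homeomorphism $\Phi'$. So your route is marginally stronger and more self-contained, at the cost of the extra duality computation.
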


\begin{proof}
We claim that~$H_2(X_{\Sigma_i};\Lambda) \otimes_{\Lambda} \Z \cong H_2(X_{\Sigma_i})$.
To prove this, we use the universal coefficient spectral sequence with second page~$E_{p,q}^2=\operatorname{Tor}_p^\Lambda(H_q(X_{\Sigma_i};\Lambda),\Z)$ and which converges to~$H_*(X_{\Sigma_i})$.
The claim now follows promptly from the fact that~$H_1(X_{\Sigma_i};\Lambda)=0$ and
$$\operatorname{Tor}_2^\Lambda(H_0(X_{\Sigma_i};\Lambda),\Z)=\operatorname{Tor}_2^{\Z[\Z]}(\Z,\Z)=H_2(\Z;\Z)=H_2(S^1)=0.$$
The claim implies that the intersection form on $H_2(X_{\Sigma_i})$ is obtained from the intersection form on $H_2(X_{\Sigma_i};\Lambda)$ by tensoring down. Therefore~$F$ induces an isometry~$F \otimes_\Lambda \id_{\Z} \colon H_2(X_{\Sigma_0}) \to H_2(X_{\Sigma_1})$.
A Mayer-Vietoris argument then yields the following commutative diagram, where the dotted map labelled~$F_\Z$
%and~$s_0$ are yet to be defined:
%One uses the fact that pi1=Z implies null-homologous surfaces to get that the boundary map~$H_2(X) \to H_1(\Sigma \times S^1)$ is zero
is defined as the unique homomorphism that makes the right hand square commutes (both existence and uniqueness follow from a short diagram chase):
\begin{equation}
\label{eq:InducedIsomOfX}
\xymatrix@R0.5cm{
0 \ar[r] & \Z^{2g} \ar[r]\ar[d]^{(F \otimes_\Lambda \id_{\Z})|} & H_2(X_{\Sigma_0}) \ar[r]_{p_0}\ar[d]^{F \otimes_\Lambda \id_{\Z}}  & H_2(X) \ar[r] \ar@{..>}[d]^{F_\Z} &0  \\
% \ar@/_1pc/@{.>}[l]_{s_0}&0  \\
0 \ar[r] & \Z^{2g} \ar[r] & H_2(X_{\Sigma_1}) \ar[r]^{p_1} & H_2(X) \ar[r]&0.
}
\end{equation}
%Since~$H_2(X)$ is free, the top short exact sequence splits.
%Choose a splitting~$s_0$ of~$p_0$, and define the required homomorphism as~$F_\Z:=p_1 \circ (F\otimes_\Lambda \id_{\Z}) \circ s_0$.
%A diagram chase shows that~$F_\Z$ does not depend on the choice of the splitting~$s_0$.
%If~$s_0,s_0'$ are two splittings, then~$s_0-s_0'(x)=i_0(y)$ and then
%p_1 \circ F\otimes \Z \circ (s_0-s_0')
%=p_1 \circ F\otimes \Z \circ i_0
%=(F \otimes \Z)| \circ i_1 \circ p_1
%=(F \otimes \Z)| \circ 0.
%This concludes the proof of the first assertion
%
The second assertion is now immediate:
%In this case,
 the homeomorphism~$\Phi'$ induces an isometry of~$(H_2(X),Q_X)$ that satisfies~$\Phi_*' \circ p_0=p_1 \circ (F \otimes_\Lambda \id_{\Z})$, so $F_\Z=\Phi'_*$ is also an isometry.
%Thus,~$F_\Z= p_1 \circ (F \otimes_\Lambda \id_{\Z}) \circ s_0=\Phi_*' \circ p_0 \circ  s_0=\Phi_*'$ is also an isometry.
%This concludes the proof of the lemma.
\end{proof}

We move on to the proof of Theorem~\ref{thm:Unknotting4ManifoldIntro}, whose statement we recall here for the benefit of the reader.

\begin{theorem}
\label{thm:Unknotting4Manifold}
Let~$\Sigma_0,\Sigma_1 \subseteq X$ be two closed $\Z$-surfaces of the same genus.
\begin{enumerate}
\item\label{item:closed-case-1-maintext}
If the intersection forms~$\lambda_{X_{\Sigma_0}}$ and~$\lambda_{X_{\Sigma_1}}$ are isometric via an isometry~$F$, then there is an orientation-preserving homeomorphism of pairs
$$\Phi \colon (X,\Sigma_0) \xrightarrow{\cong} (X,\Sigma_1)$$
inducing the given isometry~$\Phi_* = F \colon H_2(X_{\Sigma_0};\Lambda) \cong H_2(X_{\Sigma_1};\Lambda)$.
\item\label{item:closed-case-2-maintext}
The isometry~$F$ induces an isometry~$F_\Z \colon H_2(X) \to H_2(X)$ of the standard intersection form~$Q_X$ of~$X$ by Lemma~\ref{lem:InducedIsometryOfH2(X)}. The surfaces $\Sigma_0$ and~$\Sigma_1$ are topologically ambiently isotopic if and only if $F_\Z=\id$.
 \end{enumerate}
\end{theorem}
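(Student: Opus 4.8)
The plan is to deduce item~\eqref{item:closed-case-1-maintext} from Theorem~\ref{thm:WithBoundary} by a puncturing argument, and then to deduce item~\eqref{item:closed-case-2-maintext} by combining the homeomorphism of pairs just produced with the Kreck--Perron--Quinn isotopy classification of self-homeomorphisms of a closed simply-connected~$4$-manifold. First I would prove~\eqref{item:closed-case-1-maintext}. After an ambient isotopy we may assume~$\Sigma_0$ and~$\Sigma_1$ agree on a small disc~$D^2$ sitting in a standard ball~$D^4 \subseteq X$; this uses that any two locally flat discs in a~$4$-ball with boundary an unknot in~$S^3$, through a point, can be arranged to coincide near that point, and that ambient isotopies do not change the exterior up to homeomorphism. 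Remove an open tubular neighbourhood~$(\mathring D^4, \mathring D^2)$ of this common disc to obtain~$(N, \widetilde\Sigma_i)$ with~$N = X \setminus \mathring D^4$, where~$\partial\widetilde\Sigma_i = K$ is the unknot in~$S^3 = \partial N$. Crucially, the exterior of~$\Sigma_i$ in~$X$ is homeomorphic to the exterior of~$\widetilde\Sigma_i$ in~$N$, so the given isometry~$F$ is an isometry~$\lambda_{N_{\widetilde\Sigma_0}} \cong \lambda_{N_{\widetilde\Sigma_1}}$. Since the unknot has trivial Alexander module, $H_1(E_K;\Lambda) = 0$, so the~$h_K$ summand of~$\partial F$ is the identity; take~$f_K = \id_{E_K}$. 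Theorem~\ref{thm:WithBoundary} then produces an orientation-preserving homeomorphism of pairs~$(N,\widetilde\Sigma_0) \xrightarrow{\cong} (N,\widetilde\Sigma_1)$ restricting to the identity on~$\partial N = S^3$ and inducing~$F$; glue in the identity on the removed~$(D^4, D^2)$ to get~$\Phi \colon (X,\Sigma_0) \xrightarrow{\cong} (X,\Sigma_1)$ inducing~$F$ on~$H_2(-;\Lambda)$.

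For~\eqref{item:closed-case-2-maintext}, the forward direction is the main point. Suppose~$\Sigma_0$ and~$\Sigma_1$ are topologically ambiently isotopic, via an ambient isotopy whose time-one map is a homeomorphism~$\Psi \colon X \to X$ with~$\Psi(\Sigma_0) = \Sigma_1$; since an ambient isotopy starts at the identity, $\Psi$ is isotopic to~$\id_X$, hence~$\Psi_* = \id$ on~$H_2(X)$. Now~$\Psi$ gives a homeomorphism of pairs inducing some isometry~$F'$ of~$\Lambda$-intersection forms, and by Lemma~\ref{lem:InducedIsometryOfH2(X)}~\eqref{item:induced-isom-item-2} we have~$F'_\Z = \Psi_* = \id$. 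If~$F' = F$ we are done; in general~$F$ and~$F'$ differ by a self-isometry of~$\lambda_{X_{\Sigma_1}}$, but the induced map on~$H_2(X)$ depends only on the isometry up to... here one must be slightly careful. The cleaner route: it suffices to show that if a homeomorphism of pairs~$(X,\Sigma_0) \cong (X,\Sigma_1)$ exists inducing~$F$ on~$\Lambda$-homology, then~$\Sigma_0,\Sigma_1$ are ambiently isotopic \emph{iff} $F_\Z = \id$ — i.e.\ the statement is really about a fixed~$\Phi$ from part~\eqref{item:closed-case-1-maintext}. Then for the forward direction: if~$\Sigma_0,\Sigma_1$ are ambiently isotopic, the ambient isotopy's time-one homeomorphism~$\Psi$ and the homeomorphism~$\Phi$ from~\eqref{item:closed-case-1-maintext} both carry~$\Sigma_0$ to~$\Sigma_1$; hence~$\Phi^{-1}\Psi$ is a self-homeomorphism of~$(X,\Sigma_0)$, so~$\Psi_* = \Phi_*$ composed with a self-isometry of~$\lambda_{X_{\Sigma_0}}$ acting trivially on~$H_2(X)$ — one checks via~\eqref{eq:InducedIsomOfX} that this forces~$F_\Z = \Psi_*|_{H_2(X)} = \id$.

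For the reverse direction: assume~$F_\Z = \id$. Using~$\Phi$ from~\eqref{item:closed-case-1-maintext}, we want to upgrade it to an ambient isotopy. The self-homeomorphism~$\Phi \colon X \to X$ (forgetting the surface) induces~$F_\Z = \id$ on~$H_2(X)$, so by the Kreck--Perron--Quinn theorem (as cited, \cite{QuinnIsotopy}, \cite[Theorem~10.1]{FreedmanQuinn}) there is an ambient isotopy~$G_t \colon X \to X$ with~$G_0 = \id_X$ and~$G_1 = \Phi$. Then~$G_t(\Sigma_0)$ is an ambient isotopy from~$\Sigma_0$ to~$\Phi(\Sigma_0) = \Sigma_1$, as required. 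The main obstacle I anticipate is the bookkeeping in the forward direction — precisely pinning down in what sense~$F_\Z$ is ``the'' isometry associated to the isotopy class, i.e.\ verifying via the Mayer--Vietoris diagram~\eqref{eq:InducedIsomOfX} that the map induced on~$H_2(X)$ by a self-homeomorphism of~$(X,\Sigma_0)$ (fixing the exterior's~$\Lambda$-homology appropriately) is forced to be the identity, so that~$F_\Z$ is a genuine invariant of the pair of surfaces and not just of the chosen isometry~$F$. The reverse direction and part~\eqref{item:closed-case-1-maintext} are comparatively formal given Theorem~\ref{thm:WithBoundary} and the isotopy classification.
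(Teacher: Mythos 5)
Your part~\eqref{item:closed-case-1-maintext} and the ``if'' direction of part~\eqref{item:closed-case-2-maintext} follow the paper's proof essentially verbatim: puncture along a common disc of $\Sigma_0\cap\Sigma_1$, apply Theorem~\ref{thm:WithBoundary} with $f_K=\id$ (the unknot has trivial Alexander module, so $h_K=\id$), glue back the identity on $(D^4,D^2)$, and then, when $F_\Z=\id$, apply the Quinn/Freedman--Quinn isotopy classification to the resulting self-homeomorphism $\Phi$ of $X$.

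The ``only if'' direction of part~\eqref{item:closed-case-2-maintext} is where your write-up has a gap. You rightly worry about relating the given $F$ to the ambient isotopy, but your resolution is circular: you assert that the self-homeomorphism $\Phi^{-1}\Psi$ of $(X,\Sigma_0)$ induces a self-isometry of $\lambda_{X_{\Sigma_0}}$ ``acting trivially on $H_2(X)$''. By Lemma~\ref{lem:InducedIsometryOfH2(X)}~\eqref{item:induced-isom-item-2}, $\Phi$ acts on $H_2(X)$ by $F_\Z$, and $\Psi$, being isotopic to the identity, acts by $\id$; hence $\Phi^{-1}\Psi$ acts on $H_2(X)$ by exactly $F_\Z^{-1}$. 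So the triviality you assert is precisely the conclusion $F_\Z=\id$ you are trying to reach, and it is not automatic: a self-homeomorphism of a pair $(X,\Sigma_0)$ can act nontrivially on $H_2(X)$ in general (e.g.\ when $\Sigma_0$ lies in a ball and $X$ admits homologically nontrivial self-homeomorphisms supported away from that ball). The paper's own argument for this direction is the shorter, direct observation: the time-one map of the ambient isotopy restricts to a homeomorphism of exteriors which extends to a self-homeomorphism of $X$ isotopic to the identity, hence inducing the identity on $H_2(X)$ --- that is, it works with the isometry induced by the isotopy itself rather than trying to compare that isometry with $\Phi$. Replacing your $\Phi^{-1}\Psi$ digression with this observation recovers the paper's proof; as written, the digression proves nothing beyond its own hypothesis.
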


%\begin{theorem}
%\label{thm:Unknotting4Manifold}
%%Let~$X$ be a closed simply-connected~$4$-manifold.
%%Let~$\Sigma_0,\Sigma_1 \subseteq X$ be two locally flat embedded closed oriented genus~$g$ surfaces with~$\pi_1(X_{\Sigma_i})=\Z$ for~$i=0,1$.
%%\begin{enumerate}
%%\item
%Let~$\Sigma_0,\Sigma_1 \subseteq X$ be two $\Z$-surfaces.
%If~$\lambda_{X_{\Sigma_0}}$ and~$\lambda_{X_{\Sigma_1}}$ are isometric via an isometry~$F$, then there is an orientation-preserving homeomorphism of pairs
%$$(X,\Sigma_0) \xrightarrow{\cong} (X,\Sigma_1)$$
%inducing the given isometry~$F \colon H_2(X_{\Sigma_0};\Lambda) \cong H_2(X_{\Sigma_1};\Lambda)$.
%The isometry~$F$ induces an isometry~$F_\Z \colon H_2(X) \to H_2(X)$ of the standard intersection form~$Q_X$ of~$X$, and if~$F_\Z=\id$ then~$\Sigma_0$ and~$\Sigma_1$ are topologically ambient isotopic.
%\end{theorem}

\begin{proof}
%We start by proving the second assertion.
%Let~$\Sigma_0, \Sigma_1$ be two closed, orientable surfaces of genus~$g$ in~$X$.
After an ambient isotopy, assume that~$\Sigma_0$ and $\Sigma_1$ coincide on a disc~$D^2 \subseteq \Sigma_0 \cap \Sigma_1$.
Assume that the normal bundles also coincide over this~$D^2$.
Consider the preimage~$\mathring{D}^2 \times \R^2 \subseteq \nu \Sigma_i$.
This is homeomorphic to an open 4-ball~$\mathring{D}^4$.
Remove this~$(\mathring{D}^4,\mathring{D}^2)$ from~$(X,\Sigma_i)$ to obtain~$(N,\widetilde{\Sigma}_i)$, with~$\partial \widetilde{\Sigma}_i=\Sigma_i \cap \partial N$ the unknot~$K$ in~$S^3$.
Then the exterior of~$\Sigma_i$ in~$X$ equals the exterior of~$\widetilde{\Sigma}_i$ in~$N$.
%Since we now have properly embedded surfaces in a punctured~$X$ whose boundaries are an Alexander polynomial one knot, namely the unknot, we can apply the first statement of Theorem~\ref{thm:D4} to conclude that there is a rel.\ boundary homeomorphism of pairs~$\Phi' \colon (N,\widetilde{\Sigma}_0) \to (N,\widetilde{\Sigma}_1)$.
%We recover the required homeomorphism of pairs~$(X,\Sigma_0) \to (X,\Sigma_1)$ by gluing~$\Phi'$ with the identity homeomorphism~$(D^4,D^2) \to (D^4,D^2)$.

%We prove the first assertion.
%Proceed as above to obtain properly embedded surfaces~$\widetilde{\Sigma}_i \subseteq~N$ with~$\partial \widetilde{\Sigma}_i=K$.
Since~$X_{\Sigma_i}=N_{\widetilde{\Sigma}_i}$, the~$\Lambda$-intersection forms are unchanged and the isometry~$F$ also induces an isometry~$F \colon \lambda_{N_{\widetilde{\Sigma}_0}} \cong \lambda_{N_{\widetilde{\Sigma}_1}}$.
Write~$\partial F=h_K \oplus h_\Sigma$ as in Theorem~\ref{thm:WithBoundary}.
The unknot~$K \subseteq S^3$ has trivial Alexander module, so~$h_K=\id$ and so~$h_K$ is realised by the homeomorphism~$f_K = \id \colon E_K \to E_K$.
Theorem~\ref{thm:WithBoundary} provides a rel.\ boundary homeomorphism of pairs~$\Phi' \colon (N,\widetilde{\Sigma}_0) \to (N,\widetilde{\Sigma}_1)$ that induces the isometry~$F$ on the~$\Lambda$-homology of the surface exteriors.
We recover the required homeomorphism of pairs~$\Phi \colon (X,\Sigma_0) \to (X,\Sigma_1)$ by gluing~$\Phi'$ with the identity homeomorphism~$(D^4,D^2) \to (D^4,D^2)$.

Lemma~\ref{lem:InducedIsometryOfH2(X)} implies that the isometry~$F$ induces an isometry~$F_\Z$ of the standard intersection form~$Q_X$. In particular, Lemma~\ref{lem:InducedIsometryOfH2(X)}~\eqref{item:induced-isom-item-2} specifies that~$F_\Z$ is induced by the homeomorphism~$\Phi$.
If~$F_{\Z}=\id$, then~$\Phi$ is a self-homeomorphism of~$X$ inducing the identity on~$H_2(X)$, so by \cite[Theorem~1.1]{QuinnIsotopy},~\cite[Theorem~10.1]{FreedmanQuinn},~$\Phi$ is isotopic to the identity. It follows that~$\Sigma_0$ and~$\Sigma_1$ are topologically ambiently isotopic.

On the other hand if $\Sigma_0$ and~$\Sigma_1$ are topologically ambiently isotopic, then the induced homeomorphism between their exteriors extends to a homeomorphism from $X$ to itself that is isotopic to the identity, and so certainly induces the identity map on $H_2(X)$.
\end{proof}

\section{Equivariant intersection forms of surface exteriors}
\label{sec:IntersectionForms}

The goal of this section is to collect some results about the intersection forms of $\Z$-surface exteriors.
The main result of Subsection~\ref{sub:IntersectionFormHyperbolic} shows that up to direct summands with the Hermitian form
 $$ \mathcal{H}_2:=\left( \Lambda^2,
\begin{pmatrix}
0&t-1 \\
t^{-1}-1&0
\end{pmatrix} \right),
$$
the equivariant intersection forms of any two $\Z$-surface exteriors are isometric (Proposition~\ref{prop:StabiliseToUnknot}).
Subsection~\ref{sub:AlexanderPolynomial1} then focuses on the case of $\Z$-surfaces $\Sigma \subseteq N=X\setminus \mathring{D}^4$ with boundary an Alexander polynomial one knot $K \subseteq S^3$.
The main result is Corollary~\ref{cor:IntersectionFormAlexanderPolynomial1} which shows that~$\lambda_{N_\Sigma}$ becomes isometric to~$Q_X \oplus \mathcal{H}_2^{\oplus g}$ after adding sufficiently many copies of~$\mathcal{H}_2$, where~$Q_X$ is the~$\Z$-valued intersection form of~$X$ (note that $Q_X=Q_N$).
%and
%$$ \mathcal{H}_2:=\left( \Lambda^2,
%\begin{pmatrix}
%0&t-1 \\
%t^{-1}-1&0
%\end{pmatrix} \right).
%$$

\subsection{Equivariant intersection forms of $\Z$-surface exteriors}
\label{sub:IntersectionFormHyperbolic}

The goal of this section is to study the~$\Lambda$--intersection form of $\Z$-surface exteriors up to stabilisations by $\mathcal{H}_2$.
\medbreak
We start by describing the~$\Lambda$--intersection form of unknotted surfaces in~$S^4$.

\begin{lemma}
\label{lem:Unknot}
If~$\Sigma \subseteq S^4$ is an unknotted oriented closed genus~$g$ surface, then the~$\Lambda$--intersection form of~$S^4_\Sigma$ is isometric to~$\mathcal{H}_2^{\oplus g}$.
\end{lemma}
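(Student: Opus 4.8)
The plan is to build an explicit handle/geometric model of an unknotted genus $g$ surface in $S^4$ and compute directly. Recall that an unknotted genus $g$ surface $\Sigma$ bounds a standardly embedded handlebody $H_g$ in $S^4$; equivalently $\Sigma$ is obtained from the unknotted $S^2$ by attaching $g$ trivial (unknotted, unlinked) tubes, i.e.\ it is the boundary connected sum of $g$ standard genus one surfaces. First I would reduce to understanding $S^4_\Sigma$ as a union. Write $\Sigma = \partial H_g$ with $H_g = \natural_{i=1}^g (S^1 \times D^2)$ standardly embedded, so that the exterior $S^4_\Sigma = S^4 \setminus \nu\Sigma$ decomposes along $\Sigma \times S^1$ into two pieces: the ``inside'' neighbourhood of $H_g$, which is $H_g \times D^2 \simeq \natural_g (S^1 \times D^2) \times D^2$, and the ``outside'' piece, which by unknottedness is also a (standardly embedded) copy of $\natural_g (S^1 \times D^2 ) \times D^2$ — concretely, $S^4$ is the double of $H_g \times D^2$ along its boundary. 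Thus $S^4_\Sigma$ is homotopy equivalent to $\bigvee_g S^1 \cup (\text{2-handles})$ in a way that makes $\pi_1 = \Z$ (the tube meridians all being identified).

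Next I would compute $H_2(S^4_\Sigma;\Lambda)$ and the form via a Mayer--Vietoris or handle-chain-complex argument over $\Lambda = \Z[t^{\pm1}]$. The genus one building block is the key: for the standard genus one surface $T \subseteq S^4$, the exterior $S^4_T$ has infinite cyclic cover homotopy equivalent to an infinite ``ladder'' and one computes $H_2(S^4_T;\Lambda) \cong \Lambda^2$ with equivariant intersection form the matrix $\bsm 0 & t-1 \\ t^{-1}-1 & 0 \esm$; this is essentially the content of Example/computation for the spun or twist-spun picture of a standard torus, and it is exactly $\mathcal H_2$. Because the $g$ tubes are disjoint and unlinked, the exterior is (up to the relevant homotopy) a ``plumbing-free'' amalgamation and the form is the orthogonal sum of the $g$ copies, giving $\lambda_{S^4_\Sigma} \cong \mathcal H_2^{\oplus g}$. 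Alternatively, and perhaps cleaner given what is already available in the paper: since $S^4$ with the standard $S^2$ removed is $D^2 \times S^2$ with exterior $S^1 \times D^3$, one can set up the surface exterior as obtained from $S^1 \times D^3$ by attaching $g$ pairs of handles realising the tubes, and run the $\Lambda$-coefficient cellular chain complex of the universal cover.

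I expect the main obstacle to be organising the equivariant computation cleanly — i.e.\ identifying the twisted chain complex of the infinite cyclic cover of $S^4_\Sigma$ and reading off both that $H_2(S^4_\Sigma;\Lambda)$ is free of rank $2g$ and that the intersection pairing is exactly $\mathcal H_2^{\oplus g}$ (the off-diagonal entries $t-1, t^{-1}-1$ rather than, say, $1$ or $t$, being the delicate point, dictated by how a meridian and a ``pushed-in dual curve'' link in the cover). To pin down the entries precisely I would compute the two relevant geometric intersections in a single genus one model: the core of one tube's cocore disc meets a Seifert-type surface once, but its $\Z$-translates contribute the factor $(t-1)$ because the dual class is a difference of two lifts. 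Once the genus one case is verified by this direct linking computation, additivity of $\lambda$ under the (unknotted, unlinked) boundary connected sum of the tubes — which follows because the corresponding exteriors glue along $S^1 \times (\text{punctured } \Sigma) \times S^1$ with the relevant relative $\Lambda$-homology vanishing — finishes the proof. As a consistency check one notes that setting $t = 1$ recovers the ordinary intersection form of $S^4_\Sigma$, which is a connected sum of copies of $S^2 \times S^2$ minus a neighbourhood, matching $H_2^+(\La)^{\oplus g}$ at $t=1$, and that a matrix for $\mathcal H_2^{\oplus g}$ indeed presents $H_1(\Sigma_g \times S^1;\Lambda) = (\Lambda/(t-1))^{\oplus 2g}$ as required by Lemma~\ref{lem:AlexanderModuleSigmaS1}, via the fourth item of Lemma~\ref{lem:Homology}.
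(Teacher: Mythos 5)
Your second route---building the surface exterior from $S^1\times D^3$ by attaching $2g$ handles realising the tubes and reading off the form from the $\Lambda$-coefficient chain complex of the infinite cyclic cover---is essentially the paper's proof: the paper slices $(S^4,\Sigma)$ along an equatorial $(S^3,U)$ to identify $S^4_\Sigma$ with the exterior of a punctured unknotted surface in $D^4$, exhibits a handle diagram with one $1$-handle and $2g$ $2$-handles, passes to the infinite cyclic cover, and computes the pairing via equivariant linking numbers, exactly as you propose; your alternative organisation (verify the genus one case by a direct linking computation, then add the $g$ copies using triviality of the tubes) is also consistent with the paper's Lemma~\ref{lem:Handle}. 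One concrete caveat about your first paragraph: a regular neighbourhood of the $3$-dimensional handlebody $H_g$ in $S^4$ is $H_g\times[-1,1]$, not $H_g\times D^2$ (which is $5$-dimensional), and the double of $\natural_{i=1}^g(S^1\times D^3)$ is $\#_{i=1}^g(S^1\times S^3)$ rather than $S^4$, so the claim that $S^4$ is the double of a thickened $H_g$ fails for $g\geq 1$; discard that decomposition and work from the handle-diagram model instead.
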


\begin{proof}
We carry out the computation for the standardly embedded genus~$g$ surface~$\Sigma \subseteq S^4$.
Use~$U \subseteq S^3$ to denote the unknot.
Slice~$(S^4,\Sigma)$ along an equatorial~$(S^3,U)$ to obtain a decomposition~$(S^4,\Sigma)=(D^4,\mathring{\Sigma}) \cup (D^4,D)$, with~$D \subseteq D^4$ a disc bounding~$U$ and~$\mathring{\Sigma} \subseteq D^4$ a punctured unknotted surface in~$D^4$.

Note that~$S^4_\Sigma$ and~$D^4_{\mathring{\Sigma}}:=D^4 \setminus \nu \mathring{\Sigma}$ are homeomorphic, since we can assume that the removed~$\mathring{D}^4$ lies in the regular neighbourhood~$\nu \Sigma$ removed from~$S^4$ to form~$S^4_{\Sigma}$.
It follows that~$H_2(S^4_\Sigma;\Lambda)=H_2(D^4_{\mathring{\Sigma}};\Lambda)$ and that the~$\Lambda$--intersection forms agree.

%It follows that~$S^4_\Sigma$ can be obtained from~$D^4_{\mathring{\Sigma}}:=D^4 \setminus \nu \mathring{\Sigma}$ and~$D^4_D:=D^4 \setminus \nu D$ by identifying~$\mu_{\Sigma} \times [0,1]$ with~$\mu_D \times [0,1]$.
%A Mayer-Vietoris argument with~$\Lambda$ coefficient shows that~$H_2(S^4_\Sigma;\Lambda)=H_2(D^4_{\mathring{\Sigma}};\Lambda)$ and that the~$\Lambda$--intersection forms agree.

\begin{figure}[!htb]
\centering
\captionsetup[subfigure]{}%{position=below,justification=justified,singlelinecheck=false,labelfont=bf}
\begin{subfigure}[t]{.35\textwidth}
\centering
\labellist
    \small
    \pinlabel {$0$} at 105 590
        \pinlabel {$0$} at 205 590
            \pinlabel {$0$} at 395 590
                \pinlabel {$0$} at 495 590
    \endlabellist
\includegraphics[scale=0.3]{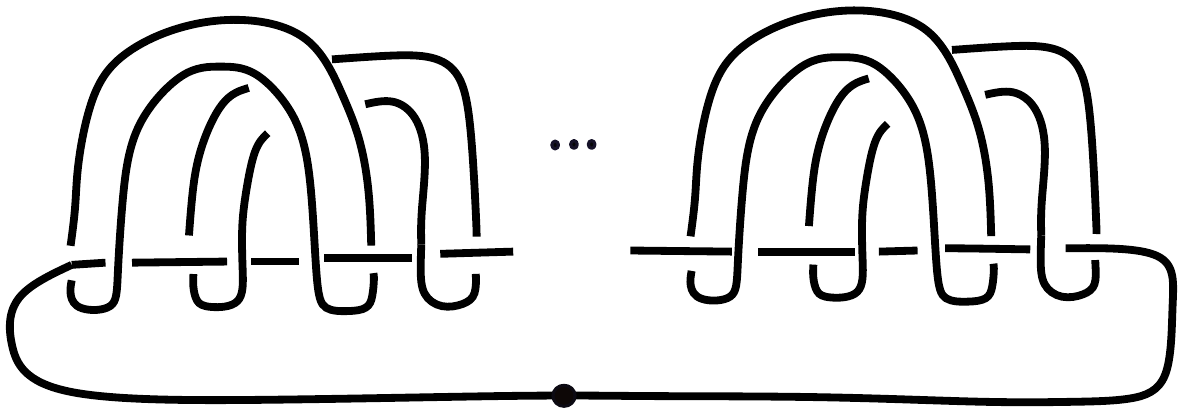}
\end{subfigure}
 \hspace{2.5cm}
\begin{subfigure}[t]{.35\textwidth}
\centering
\labellist
    \small
    \pinlabel {$0$} at 75 590
        \pinlabel {$0$} at 175 590
    \pinlabel {$0$} at 378 590
        \pinlabel {$0$} at 478 590
    \endlabellist
\includegraphics[scale=0.3]{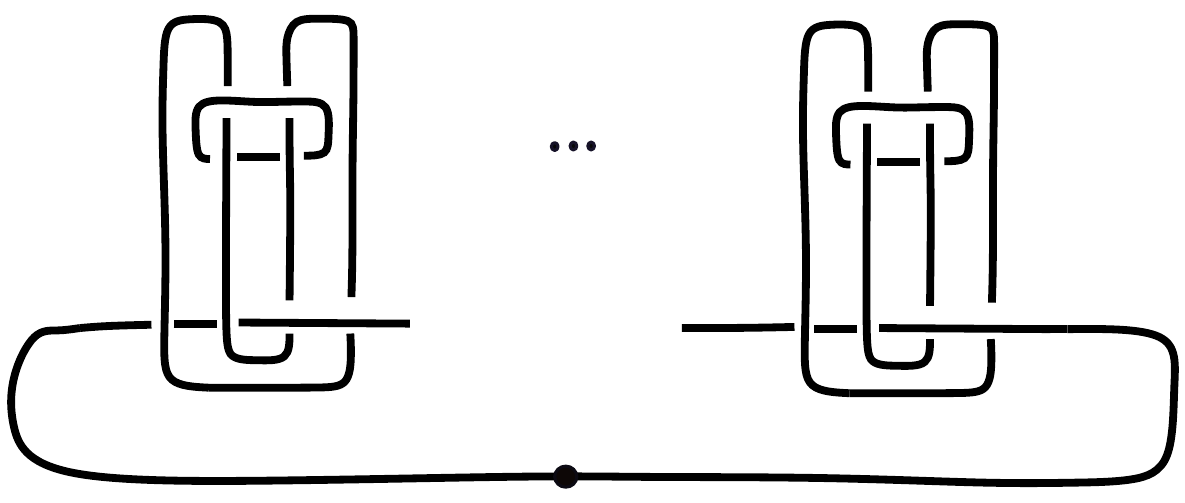}
\end{subfigure}
\caption{Two handle diagrams for the exterior of a standardly embedded genus~$g$ surface~$F \subseteq D^4$ with boundary the unknot in~$S^3$.}
\label{fig:HandleDiagram}
\end{figure}
Thus it remains to compute the~$\Lambda$--intersection form of the exterior~$D^4_F$ of a properly embedded unknotted surface~$F \subseteq D^4$.
A handle diagram with a single one handle and~$2g$ two handles for~$D^4_F$ appears in the left hand side of Figure~\ref{fig:HandleDiagram}, produced using the formalism of~\cite[Section~6.2]{GompfStipsicz}.
It can then be isotoped as in the right hand side of Figure~\ref{fig:HandleDiagram}, leading to a handle diagram for the infinite cyclic cover of~$D^4_F$ depicted in Figure~\ref{fig:UnknottedSurfaceCover}.
From this diagram, by taking the union of the cores of the 2-handles with null-homotopies of their attaching curves in the~$4$-ball, we obtain generators of~$\pi_2(D^4_F)=H_2(D^4_F;\Lambda)=\Lambda^{2g}$. The~$\Lambda$--intersection form can be computed via (equivariant) linking numbers, yielding the required result.
\begin{figure}[!htb]
\centering
\labellist
    \small
    \pinlabel {$0$} at 100 592
        \pinlabel {$0$} at 160 592
    \pinlabel {$0$} at 230 592
        \pinlabel {$0$} at 290 592
            \pinlabel {$0$} at 100 480
        \pinlabel {$0$} at 160 480
    \pinlabel {$0$} at 230 480
        \pinlabel {$0$} at 290 480
    \endlabellist
\includegraphics[scale=0.5]{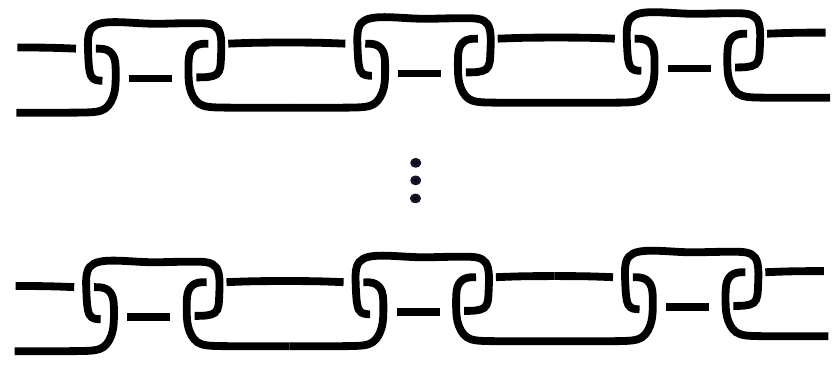}
\caption{A handle diagram for the infinite cyclic cover of~$D^4_F$, where~$F \subseteq D^4$ is an unknotted punctured surface.}
\label{fig:UnknottedSurfaceCover}
\end{figure}
\end{proof}

We recall the concept of a~$1$-handle stabilisation for a surface in a~$4$-manifold.  The following definition was motivated by~\cite{JuhaszZemke}.
Let~$\Sigma \subseteq V$ be a locally flat (connected) surface embedded in a~$4$-manifold~$V$.
Let~$B$ be an embedding of~$D^4$ into~$V$ such that~$\partial B$ intersects~$\Sigma$ transversely in a~$2$-component unlink~$L$ and~$B$ intersects~$\Sigma$ transversely in two discs~$D_0$ and~$D_1$, which can be simultaneously isotoped within~$B$ to lie in~$\partial B$.
Suppose that, for~$i=0,1$, a~$3$-dimensional~$1$-handle~$D^2 \times [0,1]$ is embedded in the interior of~$B$ such that~$D^2 \times \lbrace i \rbrace=D_i$.
The resulting \emph{$1$-handle stabilisation of~$\Sigma$} is defined as
$$\Sigma'=\left( \Sigma \cap (S^4 \setminus B) \right) \cup_L (S^1 \times [0,1]).$$
%If $S^1 \times [0,1]$ can be isotoped into~$\partial B$ relative to~$L$, and 
If there exists a path $\gamma$ in $\Sigma$ between~$(1,0)$ and $(1,1)$ in $S^1 \times [0,1]$ such that $\gamma \cup (\{1\} \times [0,1])$ is a null-homotopic loop in~$V$, via a null-homotopy $h \colon D^2 \to V$ with $h(\mathring{D}^2) \subseteq V \sm \Sigma$, then we call the stabilisation \emph{trivial}.
%%MP 28/11/21, I changed the definition here slightly. 

%\begin{definition}
%  Suppose that:
%  \begin{enumerate}
%    \item $S^1 \times [0,1]$ can be isotoped into~$\partial B$ relative to~$L$;
%    \item there exists a path $\gamma$ in $\Sigma$ between $(1,0)$ and $(1,1)$ in $S^1 \times [0,1]$ such that $\gamma \cup (\{1\} \times [0,1])$ is a null-homotopic loop in~$V$.
%  \end{enumerate}
%Then we call the stabilisation \emph{trivial}.
%\end{definition}

A detailed discussion of this construction in the locally flat setting can be found in~\cite[Proposition~9.1]{FriedlNagelOrsonPowell}.
The next result describes the effect of trivial~$1$-handle stabilisation on the~$\Lambda$--intersection form of surface exteriors.

\begin{lemma}
\label{lem:Handle}
If~$\Sigma \subseteq N$ is a $\Z$-surface and~$\Sigma'\subseteq N$ is obtained from~$\Sigma$ by a trivial~$1$-handle stabilisation, then
$$\lambda_{N_{\Sigma'}} \cong \lambda_{N_\Sigma} \oplus \mathcal{H}_2.$$
\end{lemma}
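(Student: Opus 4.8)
The plan is to localise the stabilisation in a ball, decompose the two exteriors as unions over a solid torus, and compare their $\Lambda$-homology and intersection forms via Mayer--Vietoris.

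First I would record that $\pi_1(N_{\Sigma'})=\Z$, so that $\lambda_{N_{\Sigma'}}$ and $H_*(N_{\Sigma'};\Lambda)$ are the relevant objects and $H_2(N_{\Sigma'};\Lambda)$ is free by Lemma~\ref{lem:Homology}; this follows from the decomposition below together with van Kampen's theorem, since the pieces glued onto the common part all have fundamental group $\Z$ and are glued along a solid torus whose $\pi_1$ maps isomorphically. Then, using the hypothesis that the stabilisation is trivial --- concretely, that the tube $S^1\times[0,1]$ isotopes into $\partial B$ rel $L$ and that the loop $\gamma\cup(\{1\}\times[0,1])$ is null-homotopic in $N$ --- together with \cite[Proposition~9.1]{FriedlNagelOrsonPowell}, I would enlarge $B$ (by absorbing a band in $\Sigma$ along $\gamma$) to a ball $B^+\cong D^4$ such that $\Sigma\cap B^+$ is a single unknotted properly embedded disc $\delta$, while $\Sigma'\cap B^+$ is an unknotted properly embedded copy of $\Sigma_{1,1}$, with $\partial\delta=\partial(\Sigma'\cap B^+)=U\subseteq S^3=\partial B^+$ an unknot, and $\Sigma,\Sigma'$ coinciding outside $B^+$. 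Establishing this standard local model is the part that requires care and is the main obstacle; everything afterwards is bookkeeping.

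Granting the model, set $C:=N_\Sigma\setminus\mathring B^+=N_{\Sigma'}\setminus\mathring B^+$, set $Z_0:=B^+\setminus\nu\delta$ (the exterior of a trivial disc in $D^4$), and set $Z:=B^+\setminus\nu(\Sigma'\cap B^+)$ (the exterior of the unknotted $\Sigma_{1,1}$ in $D^4$), so that $N_\Sigma=C\cup_Y Z_0$ and $N_{\Sigma'}=C\cup_Y Z$ with $Y:=C\cap Z_0=C\cap Z=S^3\setminus\nu U$ a solid torus. The inclusion induces an isomorphism $\pi_1(Y)=\Z\langle\mu_\Sigma\rangle\to\pi_1(N_\Sigma)=\Z$, so the infinite cyclic cover of $Y$ is contractible and $H_i(Y;\Lambda)=0$ for $i\ge1$; likewise $Z_0\cong S^1\times D^3$ has contractible infinite cyclic cover, so $H_i(Z_0;\Lambda)=0$ for $i\ge1$. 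Feeding these vanishings into the Mayer--Vietoris sequences with $\Lambda$-coefficients yields isomorphisms induced by inclusion,
\[
H_2(C;\Lambda)\xrightarrow{\ \cong\ }H_2(N_\Sigma;\Lambda),\qquad
H_2(C;\Lambda)\oplus H_2(Z;\Lambda)\xrightarrow{\ \cong\ }H_2(N_{\Sigma'};\Lambda).
\]

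Finally I would match up the intersection forms. The splitting $H_2(N_{\Sigma'};\Lambda)\cong H_2(C;\Lambda)\oplus H_2(Z;\Lambda)$ is orthogonal for $\lambda_{N_{\Sigma'}}$, because classes coming from $C$, resp.\ from $Z$, admit representatives pushed into the disjoint interiors $\mathring C$, resp.\ $\mathring Z$. Since the equivariant intersection number of two $2$-cycles supported in a common codimension-zero submanifold is computed locally, the restriction of $\lambda_{N_{\Sigma'}}$ to $H_2(C;\Lambda)$ coincides with the restriction of $\lambda_{N_\Sigma}$ to $H_2(C;\Lambda)$, hence is isometric to $\lambda_{N_\Sigma}$ via the first isomorphism above; and the restriction of $\lambda_{N_{\Sigma'}}$ to $H_2(Z;\Lambda)$ is the $\Lambda$-intersection form of the exterior of the unknotted $\Sigma_{1,1}\subseteq D^4$, which is $\mathcal{H}_2$ by the $g=1$ case of the computation carried out in the proof of Lemma~\ref{lem:Unknot}. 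Assembling these identifications produces the isometry $\lambda_{N_{\Sigma'}}\cong\lambda_{N_\Sigma}\oplus\mathcal{H}_2$.
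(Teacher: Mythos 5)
Your proposal is correct and follows essentially the same route as the paper: the paper phrases the local model as an interior connect sum $(N,\Sigma')=(N,\Sigma)\#(S^4,T^2)$ with an unknotted torus and glues the exteriors along a thickened meridian (your solid torus $Y$), then runs the same Mayer--Vietoris argument and quotes Lemma~\ref{lem:Unknot}, whose proof is exactly the computation of the form of the unknotted punctured surface exterior in $D^4$ that you invoke in the $g=1$ case.
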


\begin{proof}
Since the~$1$-handle stabilisation is trivial, one can write~$(N,\Sigma')=(N,\Sigma) \# (S^4,T^2)$, where~$T^2 \subseteq~S^4$ denotes a standardly embedded torus, and~$\#$ denotes the interior connect sum.
It follows that~$N_{\Sigma'}=N_{\Sigma} \cup S^4_{T^2}$, where the identification takes place along thickened meridians:~$\mu_\Sigma \times D^2$ is identified with~$\mu_{T^2} \times D^2$.
One thus deduces the isomorphism~$\pi_1(N_{\Sigma'})\cong\pi_1(N_\Sigma)=\Z$.
As the coefficient system maps these meridians to~$1 \in \Z$, a straightforward Mayer-Vietoris argument shows that~$H_2(N_{\Sigma'};\Lambda)=H_2(N_\Sigma;\Lambda)\oplus H_2(S^4_{T^2};\Lambda)$, noting that~$H_1(\mu_{\Sigma};\Lambda)=~0$.
It then follows that~$\lambda_{N_{\Sigma'}}=\lambda_{N_\Sigma}\oplus \lambda_{S^4_{T^2}}$.
The result is now a consequence of Lemma~\ref{lem:Unknot}.
\end{proof}

The next proposition shows that the $\Lambda$-intersection forms of any two $\Z$-surfaces exteriors become isometric after adding sufficiently many $\mathcal{H}_2$ summands.

\begin{proposition}
\label{prop:StabiliseToUnknot}
For $\Z$-surfaces $\Sigma_0,\Sigma_1 \subseteq N$ of the same genus with boundary $K$, there exists an integer~$n \geq 0$ and an isometry
\[\lambda_{N_{\Sigma_0}} \oplus \mathcal{H}_2^{\oplus n} \cong \lambda_{N_{\Sigma_1}}\oplus \mathcal{H}_2^{\oplus n}. \]
%Q_X \oplus \mathcal{H}_2^{\oplus n}.$$
\end{proposition}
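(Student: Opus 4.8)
The plan is to invoke the stable equivalence theorem for properly embedded surfaces in $4$-manifolds with boundary (the version of Baykur--Sunukjian adapted in the appendix, cited in the introduction as Theorem~\ref{thm:BS}), which asserts that any two $\Z$-surfaces $\Sigma_0,\Sigma_1 \subseteq N$ of the same genus with the same boundary $K$ become ambiently isotopic after some number of trivial $1$-handle stabilisations. Concretely, there is an integer $m \geq 0$ such that the surface $\Sigma_0^{(m)}$ obtained from $\Sigma_0$ by $m$ trivial stabilisations is ambiently isotopic in $N$ to the surface $\Sigma_1^{(m)}$ obtained from $\Sigma_1$ by $m$ trivial stabilisations. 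An ambient homeomorphism carrying $\Sigma_0^{(m)}$ to $\Sigma_1^{(m)}$ restricts to a homeomorphism of exteriors $N_{\Sigma_0^{(m)}} \xrightarrow{\cong} N_{\Sigma_1^{(m)}}$ which is orientation-preserving and intertwines the maps to $\Z$ (the meridians are preserved), hence induces an isometry of the $\Lambda$--intersection forms $\lambda_{N_{\Sigma_0^{(m)}}} \cong \lambda_{N_{\Sigma_1^{(m)}}}$.

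Next I would identify both sides of this isometry. By Lemma~\ref{lem:Handle}, performing a trivial $1$-handle stabilisation adds a summand $\mathcal{H}_2$ to the $\Lambda$--intersection form of the exterior; iterating $m$ times gives
\[
\lambda_{N_{\Sigma_0^{(m)}}} \cong \lambda_{N_{\Sigma_0}} \oplus \mathcal{H}_2^{\oplus m},
\qquad
\lambda_{N_{\Sigma_1^{(m)}}} \cong \lambda_{N_{\Sigma_1}} \oplus \mathcal{H}_2^{\oplus m}.
\]
Combining these two identifications with the isometry produced in the previous paragraph yields an isometry $\lambda_{N_{\Sigma_0}} \oplus \mathcal{H}_2^{\oplus m} \cong \lambda_{N_{\Sigma_1}} \oplus \mathcal{H}_2^{\oplus m}$, and setting $n := m$ completes the proof.

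The one point requiring care — and the main obstacle — is checking the hypotheses needed to apply Lemma~\ref{lem:Handle} and the stable equivalence theorem: namely that each intermediate stabilised surface $\Sigma_i^{(k)}$ is again a $\Z$-surface, so that the lemma applies at each step. This follows because a trivial $1$-handle stabilisation decomposes as an interior connected sum with a standardly embedded torus in $S^4$, so by the Mayer--Vietoris computation in the proof of Lemma~\ref{lem:Handle} the fundamental group of the exterior is unchanged, remaining $\Z$; thus the stabilisation stays within the class of $\Z$-surfaces and the hypotheses of Lemma~\ref{lem:Handle} are met inductively. One should also confirm that the Baykur--Sunukjian stabilisations can be taken to be \emph{trivial} $1$-handle stabilisations in the sense defined above and to fix the boundary knot $K$; this is precisely what the appendix provides, so no genus restriction and no constraint on $K$ is needed here.
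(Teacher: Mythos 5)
Your proposal is correct and follows essentially the same route as the paper: apply the Baykur--Sunukjian stable equivalence theorem (in the form of Theorem~\ref{thm:BS}, using that $\Z$-surfaces are null-homologous and that the stabilisations are trivial since meridians generate $\pi_1(N_{\Sigma_i})$), then convert each trivial stabilisation into an $\mathcal{H}_2$-summand via Lemma~\ref{lem:Handle}. Your extra inductive check that the stabilised surfaces remain $\Z$-surfaces is a sensible point of care that the paper leaves implicit.
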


\begin{proof}
Any two null-homologous surfaces in a~$4$-manifold can be made isotopic by enough~$1$-handles stabilisations to each surface~\cite[Theorem 5]{BaykurSunukjian}.
While Baykur and Sunukjian prove this result in the smooth category for surfaces in closed manifolds, it also applies in the topological category for properly embedded surfaces in~$4$-manifolds with boundary; see Theorem~\ref{thm:BS} for a detailed proof in the case at hand.

We apply this to~$\Sigma_0,\Sigma_1.$
%and to the target surface~$\Sigma_g^t$ from Definition~\ref{def:Target} of the same genus.
This is possible because $\Z$-surfaces are nullhomologous by Lemma~\ref{lem:Nullhomologous}.
%, for any~$\Sigma \subseteq N$ with~$H_1(N_\Sigma)=\Z$, the surface~$\Sigma$ is null-homologous.
Since~$\pi_1(\partial (\nu \Sigma_{i})) \to \pi_1(N_{\Sigma_{i}})=\Z$ is surjective, all~$1$-handles stabilisations can be taken to be trivial~\cite[Lemma~3]{BaykurSunukjian}.
We deduce that after sufficiently many (say~$n$) trivial 1-handles stabilisations,~$\Sigma_0$ becomes isotopic to~$\Sigma_1$, also stabilised~$n$ times.
%~$\Sigma$ becomes isotopic to the target surface~$\Sigma_g^t$ stabilised~$n$ times.
Using Lemma~\ref{lem:Handle}, each such stabilisation adds an~$\mathcal{H}_2$-summand to the~$\Lambda$--intersection form of the corresponding surface exterior, from which the result follows.
\end{proof}

\subsection{$\Z$-surfaces for Alexander polynomial one knots}
\label{sub:AlexanderPolynomial1}
We now restrict our attention to $\Z$-surfaces with boundary Alexander polynomial one knots.
\medbreak
%Up to topological ambient isotopy rel.\ boundary, an Alexander polynomial one knot~$K \subseteq S^3$ bounds a unique disc~$D \subseteq D^4$ with~$\pi_1(D^4 \setminus D)=\Z$~\cite{FreedmanQuinn} (the uniqueness comes from~\cite[Theorem 1.2]{ConwayPowell}).
%We call this disc \emph{the~$\Z$-slice disc of~$K$.}
%When~$S^3=\partial N$ with~$N=X \setminus \mathring{D}^4$ as in Lemma~\ref{lem:Handle}, we arrange that this disc~$D$ belong to a collar neighborhood~$S^3 \times [0,1] \subseteq N$ of~$S^3=\partial N$.
%We use this disc to build a genus~$g$ surface with boundary~$K$.
An Alexander polynomial one knot~$K \subseteq S^3$ bounds a disc~$D \subseteq D^4$ with~$\pi_1(D^4 \setminus D)=\Z$~\cite{FreedmanQuinn}.
We call this disc \emph{a~$\Z$-slice disc of~$K$.}
When~$S^3=\partial N$ with~$N=X \setminus \mathring{D}^4$, we arrange that this disc~$D$ belong to a collar neighborhood~$S^3 \times [0,1] \subseteq N$ of~$S^3=\partial N$.
We use this disc to build a genus~$g$ surface with boundary~$K$.

\begin{definition}
\label{def:Target}
%Let~$X$ be a closed, simply-connected~$4$-manifold, and set~$N:=X \setminus \mathring{D}^4$.
%The \emph{genus~$g$ target surface}~$\Sigma_g^t \subseteq N$ of an Alexander polynomial one knot~$K \subseteq S^3=\partial N$ is the locally flat, embedded, oriented surface with boundary~$K$ obtained from the~$\Z$-slice disc~$D \subseteq S^3 \times [0,1] \subseteq N$ of~$K$ by~$g$ trivial~$1$-handle stabilisations.
A \emph{genus~$g$ target surface} $\Sigma_g^t$ for an Alexander polynomial one knot $K$ is an embedded surface obtained from a $\Z$-disc $D$ of $K$ by~$g$ trivial~$1$-handle stabilisations.
\end{definition}

Although we do not need this fact in the sequel, an Alexander polynomial one knot~$K \subseteq S^3$ bounds a \emph{unique} $\Z$-disc: this follows either from~\cite[Theorem 1.2]{ConwayPowell} or from Theorem~\ref{thm:WithBoundary} above, because $H_2(D^4_D;\Lambda)=0$.
This is why we do not keep track of the $\Z$-disc~$D$ in the notation for target surfaces.

The next lemma describes the~$\Lambda$--intersection form of a target surface exterior.

\begin{lemma}
\label{lem:Unknot-2}
If $D \subseteq S^3 \times [0,1] \subseteq N$ is a $\Z$-disc, then $\lambda_{N_D}=Q_X$. If $\Sigma_g^t$ is a target surface, then $\lambda_{N_{\Sigma_g^t}} \cong Q_X \oplus \mathcal{H}_2^{\oplus g}$.
%Let~$X$ be a closed simply-connected~$4$-manifold, set~$N:=X \setminus \mathring{D}^4$, and let~$K \subseteq~S^3$ be an Alexander polynomial~$1$ knot.
%The~$\Lambda$--intersection form of the exterior of the genus~$g$ target surface~$\Sigma_g^t$ is
%$$ \lambda_{N_{\Sigma_g^t}} \cong Q_X \oplus \mathcal{H}_2^{\oplus g}.$$
\end{lemma}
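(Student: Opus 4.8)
The plan is to compute $\lambda_{N_D}$ directly and then invoke Lemma~\ref{lem:Handle} repeatedly to handle the stabilisations. For the first statement, let $D \subseteq S^3 \times [0,1] \subseteq N$ be a $\Z$-disc. The key observation is that $N_D$ is obtained from $N$ by removing a tubular neighbourhood of $D$, which lives inside the collar $S^3 \times [0,1]$. Since $\pi_1(D^4 \setminus D) = \Z$ (this is what it means for $D$ to be a $\Z$-disc), and since $N_D$ decomposes as the union of $N \setminus (S^3 \times [0,1)) \cong X \setminus \mathring{D}^4 = N$ — wait, more carefully: $N_D = (S^3\times[0,1])_D \cup_{S^3} (N \setminus \mathring{S^3 \times [0,1]})$, where the second piece is homeomorphic to $N$ again. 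The piece $(S^3 \times [0,1])_D$ is the exterior of the disc $D$ pushed into $D^4$, i.e.\ $D^4_D$, which has $\pi_1 = \Z$ and trivial $\Lambda$-homology in degree $2$ since $H_2(D^4_D;\Lambda) = 0$. So I would run a Mayer--Vietoris argument with $\Lambda$-coefficients for this decomposition, using that the gluing region $S^3$ has $H_*(S^3;\Lambda) = H_*(S^3;\Z)$ concentrated in degrees $0,3$ (as $\pi_1(S^3) = 1$ maps trivially to $\Z$), to conclude $H_2(N_D;\Lambda) \cong H_2(N;\Lambda) = H_2(N;\Z) = H_2(X;\Z)$ and that the $\Lambda$-intersection form is identified with the ordinary intersection form $Q_N = Q_X$. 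One should also check $\pi_1(N_D) = \Z$ via van Kampen applied to the same decomposition, which follows since $\pi_1(D^4_D) = \Z$ and $\pi_1(N) = 1$ glue along $\pi_1(S^3) = 1$ to give $\Z$.

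For the second statement, recall that a genus $g$ target surface $\Sigma_g^t$ is by Definition~\ref{def:Target} obtained from a $\Z$-disc $D$ by $g$ trivial $1$-handle stabilisations. First I need to confirm that $\Sigma_g^t$ is a $\Z$-surface: each trivial $1$-handle stabilisation preserves $\pi_1$ of the exterior (as recorded in the proof of Lemma~\ref{lem:Handle}, the fundamental group is unchanged), and starting from $\pi_1(N_D) = \Z$ we conclude $\pi_1(N_{\Sigma_g^t}) = \Z$, so it is indeed a $\Z$-surface and Lemma~\ref{lem:Handle} applies at each stage. Then by $g$-fold iteration of Lemma~\ref{lem:Handle},
\[
\lambda_{N_{\Sigma_g^t}} \cong \lambda_{N_D} \oplus \mathcal{H}_2^{\oplus g} \cong Q_X \oplus \mathcal{H}_2^{\oplus g},
\]
using the first part of the lemma for the last isometry. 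That completes the proof.

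The main obstacle I anticipate is making the first identification $\lambda_{N_D} = Q_X$ fully rigorous: one must be careful that the $\Z$-slice disc condition $\pi_1(D^4 \setminus D) = \Z$ genuinely forces $H_2(D^4_D;\Lambda) = 0$ (this is a standard consequence — $D^4_D$ is a homology $S^1 \times D^2$ with $\Z$ coefficients hence its infinite cyclic cover is acyclic above degree $0$, so all $\Lambda$-homology in positive degrees vanishes), and that the Mayer--Vietoris sequence splices correctly so that the intersection pairing on $H_2(N_D;\Lambda)$ really is the pullback of $Q_X$ rather than merely abstractly isomorphic. Everything else is a routine iteration. I would also remark that $Q_X = Q_N$ throughout, as noted at the start of this subsection, so there is no ambiguity in the target form.
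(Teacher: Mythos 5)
Your proof follows essentially the same route as the paper's: identify $N_D$ with $D^4_D$ glued to a punctured copy of $X$ along an $S^3$ (the paper phrases this as the interior connected sum $N_D \cong D^4_D \# X$), use $H_i(D^4_D;\Lambda)=0$ for $i>0$ together with Mayer--Vietoris to get $\lambda_{N_D}=Q_X$, and then iterate Lemma~\ref{lem:Handle} $g$ times for the target surface. The structure is correct and complete.

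One caveat: your parenthetical justification of $H_2(D^4_D;\Lambda)=0$ --- that ``$D^4_D$ is a homology $S^1\times D^2$ with $\Z$ coefficients hence its infinite cyclic cover is acyclic above degree $0$'' --- is not a valid implication; a knot exterior $E_K$ is also a homology circle, yet its infinite cyclic cover has $H_1$ equal to the Alexander module, which is generally nonzero. The correct reason uses the $\Z$-disc hypothesis in an essential way: since $\pi_1(D^4_D)\cong\Z$, the infinite cyclic cover is the universal cover, so $H_1(D^4_D;\Lambda)=0$; then $H_2(D^4_D;\Lambda)$ is free (Lemma~\ref{lem:Homology}) of rank equal to the $\Q(t)$-Euler characteristic, which is $\chi(D^4_D)=0$, and $H_3$ vanishes by duality. (Equivalently, $D^4_D$ is aspherical, which is what the paper cites.) With that repair the argument is fine.
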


\begin{proof}
Since~$D$ is properly embedded in a collar neighbourhood of~$\partial N$, $N_D$ is homeomorphic to the interior connected sum~$N_D \cong D^4_{D} \# X$.
The exterior~$D^4_D$ of a~$\Z$-slice disc is aspherical, and therefore we have~$H_i(D^4_D;\Lambda)=0$ for~$i>0$~\cite[Lemma~2.1]{ConwayPowell}.
The first assertion now follows from a straightforward Mayer-Vietoris argument.
Since a target surface is obtained from a~$\Z$-slice disc by~$g$ handle additions, the second assertion follows from the first and Lemma~\ref{lem:Handle}.
\end{proof}

%The next proposition shows that the~$\Lambda$--intersection form of a surface exterior~$N_\Sigma$, where~$\pi_1(N_\Sigma)=\Z$ and~$\partial \Sigma=K$ has Alexander polynomial~$1$ is stably the~$\Lambda$--intersection form of a target surface, if the use of the standard hyperbolic form is replaced by the form~$\mathcal{H}_2$.

Finally, we describe the~$\Lambda$--intersection form of a $\Z$-surface exterior for an Alexander polynomial one knot $K$.

\begin{corollary}
\label{cor:IntersectionFormAlexanderPolynomial1}
If $\Sigma \subseteq N=X\setminus \mathring{D}^4$ is a genus $g$ $\Z$-surface for an Alexander polynomial one knot $K$, then $\lambda_{N_\Sigma} \oplus \mathcal{H}_2^{\oplus n} \cong  Q_X \oplus \mathcal{H}_2^{\oplus (n+g)}$ for some integer $n \geq 0$.
\end{corollary}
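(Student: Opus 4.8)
The plan is to deduce the corollary by comparing $\Sigma$ with a \emph{target surface} for $K$ and then invoking the stabilisation result of Proposition~\ref{prop:StabiliseToUnknot} together with the computation in Lemma~\ref{lem:Unknot-2}.

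Since $K$ has Alexander polynomial one, it bounds a $\Z$-disc $D$ lying in a collar $S^3 \times [0,1] \subseteq N$ of $\partial N$, and performing $g$ trivial $1$-handle stabilisations on $D$ produces a genus~$g$ target surface $\Sigma_g^t \subseteq N$ for $K$ (Definition~\ref{def:Target}). First I would record that $\Sigma_g^t$ is itself a $\Z$-surface: by the proof of Lemma~\ref{lem:Unknot-2} we have $N_D \cong D^4_D \# X$ with $\pi_1(D^4_D) = \Z$, hence $\pi_1(N_D) = \Z$; and each trivial $1$-handle stabilisation leaves the knot group equal to $\Z$, by the Mayer--Vietoris and van Kampen argument already used in the proof of Lemma~\ref{lem:Handle}, where the added piece $S^4_{T^2}$ is glued to the old exterior along a thickened meridian. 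Thus $\Sigma_g^t$ is a genus~$g$ $\Z$-surface with boundary $K$, just like $\Sigma$.

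Next I would apply Proposition~\ref{prop:StabiliseToUnknot} to the pair $\Sigma_0 := \Sigma$ and $\Sigma_1 := \Sigma_g^t$, which have the same genus and the same boundary knot $K$. This yields an integer $n \geq 0$ and an isometry
\[\lambda_{N_\Sigma} \oplus \mathcal{H}_2^{\oplus n} \cong \lambda_{N_{\Sigma_g^t}} \oplus \mathcal{H}_2^{\oplus n}.\]
Lemma~\ref{lem:Unknot-2} gives $\lambda_{N_{\Sigma_g^t}} \cong Q_X \oplus \mathcal{H}_2^{\oplus g}$, so substituting and regrouping the hyperbolic summands produces
\[\lambda_{N_\Sigma} \oplus \mathcal{H}_2^{\oplus n} \cong Q_X \oplus \mathcal{H}_2^{\oplus g} \oplus \mathcal{H}_2^{\oplus n} = Q_X \oplus \mathcal{H}_2^{\oplus (n+g)},\]
which is exactly the assertion of the corollary.

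I do not expect a serious obstacle: the statement is essentially a bookkeeping consequence of Proposition~\ref{prop:StabiliseToUnknot} and Lemma~\ref{lem:Unknot-2}. The only point needing a line of justification is that a target surface is a $\Z$-surface, so that Proposition~\ref{prop:StabiliseToUnknot} genuinely applies to the pair $(\Sigma,\Sigma_g^t)$; all of the substantive input (the Baykur--Sunukjian stabilisation theorem adapted to properly embedded surfaces in $4$-manifolds with boundary, and the handle-diagram computation identifying $\lambda_{S^4_{T^2}}$ with $\mathcal{H}_2^{\oplus g}$) is already established in the cited results.
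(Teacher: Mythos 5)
Your proposal is correct and follows exactly the paper's own argument: compare $\Sigma$ with a genus~$g$ target surface $\Sigma_g^t$ via Proposition~\ref{prop:StabiliseToUnknot} and then substitute the computation $\lambda_{N_{\Sigma_g^t}} \cong Q_X \oplus \mathcal{H}_2^{\oplus g}$ from Lemma~\ref{lem:Unknot-2}. The extra remark verifying that $\Sigma_g^t$ is itself a $\Z$-surface is a sensible addition that the paper leaves implicit.
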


\begin{proof}
Since $K$ has Alexander polynomial one, it bounds a genus $g$ target surface $\Sigma_g^t$.
Proposition~\ref{prop:StabiliseToUnknot} ensures that $\lambda_{N_\Sigma} \oplus \mathcal{H}_2^{\oplus n}$ is isometric to $\lambda_{N_{\Sigma_g^t}} \oplus \mathcal{H}_2^{\oplus n}$ for some $n \geq 0$.
The result now follows from Lemma~\ref{lem:Unknot-2}, thanks to which $\lambda_{N_{\Sigma_g^t}}=Q_X \oplus \mathcal{H}_2^{\oplus g}.$
\end{proof}

\section{Knotted surfaces in $S^4$ and $D^4$}
\label{sec:S4D4}

As we outlined in Subsection~\ref{sub:DeducingIntro}, our results about surfaces in $D^4$ and $S^4$ can be deduced from Theorem~\ref{thm:WithBoundary} once we prove that the surface exteriors have isometric equivariant intersection forms.
In Subsection~\ref{sub:intersection-forms-in-D4}, we show that if $\Sigma \subseteq D^4$ is a genus $g\geq 3$ $\Z$-surface with boundary an Alexander polynomial one knot $K$, then $\lambda_{D^4_\Sigma} \cong \mathcal{H}_2^{\oplus g}$.
In Subsection~\ref{sub:SurfacesS4D4} we then combine this result with Theorem~\ref{thm:WithBoundary} to deduce our results about surfaces in $D^4$ and $S^4$, in particular proving Theorems~\ref{thm:UnknottingIntro} and \ref{thm:D4Intro}.
In Subsection~\ref{sub:PushedIn}, we apply these results to study Seifert surfaces pushed into $D^4$.

\subsection{Intersection forms in $D^4$}\label{sub:intersection-forms-in-D4}
The aim of this subsection is to show that if $\Sigma \subseteq D^4$ is a $\Z$-surface of genus $g \geq 3$ with $\Delta_{\partial \Sigma}\doteq 1$, then $\lambda_{D^4_\Sigma} \cong\mathcal{H}_2^{\oplus g}.$
\medbreak
Before we continue, we need to recall some general theory on~$\eps$-Hermitian forms, and on their~$\eps$-quadratic counterparts; further details can be found in~\cite[Section~1.1]{RanickiExact} and~\cite[Section 2]{CrowleySixt}, where the terminology ``$\eps$-symmetric" is used, as is customary in $L$-theory.
Let~$R$ be a ring with involution and let~$\eps \in R$ be a central unit with~$\eps \ol{\eps} =1$.
Let~$M$ be a finitely generated free left~$R$-module.
 Let~$\Sesq(M)$ denote the abelian group of~$R$-sesquilinear forms on~$M$, meaning that~$b(rx,sy) = r b(x,y) \ol{s}$ for~$b \in \Sesq(M)$,~$x,y \in M$, and~$r,s \in R$.
 %for~$b \in \Sesq(M)$,~$x,y \in M$, and~$r,s \in R$ we have~$b(rx,sy) = r\cdot b(x,y)\cdot \ol{s}$.
%and write~$M^*:= \Hom_R(M,R)$, made into a left~$R$-module using the involution on~$R$.  Then a sesquilinear form has adjoint an element of~$\Hom_R(M,M^*)$. There is a standard identification of~$M$ with its double dual~$M^{**}$
%\begin{align*}
%  \ev \colon M &\to M^{**}\\
%  \phi &\mapsto \big(x \mapsto (\phi \mapsto \ol{\phi(x)} \big).
%\end{align*}
Define an involution:
%\begin{align*}
 % T_{\eps} \colon \Hom_R(M,M^*) &\to \Hom_R(M,M^*)\\
 % \varphi &\mapsto \eps \varphi^* \circ \ev.
%\end{align*}
\begin{align*}
  &T_{\eps} \colon \Sesq(M) \to \Sesq(M) \\
  &(T_{\eps}b)(y,x) = \eps \ol{b(x,y)}.
\end{align*}
A quick check shows that~$(T^2_{\eps}b)(x,y) = \eps b(x,y) \ol{\eps} = \eps\ol{\eps} b(x,y) = b(x,y)$, so the conditions that~$\eps$ is central and~$\eps \ol{\eps} =1$ are crucial for~$T_{\eps}$ to be an involution.
%In terms of forms,~$(T_{\eps}\lambda)(y,x) = \eps \ol{\lambda(x,y)}$.
This enables us to define the \emph{symmetric~$Q$-group}~$Q^{\eps}(M)$ of~$M$ and the \emph{quadratic~$Q$-group}~$Q_{\eps}(M)$ of~$M$ via the exact sequence
\[0 \to Q^{\eps}(M)  \to \Sesq(M) \xrightarrow{1-T_{\eps}} \Sesq(M) \to Q_{\eps}(M) \to 0.\]
In other words,
\[Q^{\eps}(M):= \ker(1-T_{\eps}) \text{ and } Q_{\eps}(M):= \coker(1-T_{\eps}).\]
A sesquilinear form~$b \colon M \times M \to R$ is called
% \emph{$\eps$-symmetric} or
\emph{$\varepsilon$-Hermitian} if it belongs to~$Q^{\eps}(M)$, that is if~$b(y,x)=\varepsilon \overline{b(x,y)}$ for every~$x,y \in M$.
A~$(+1)$-Hermitian form is a Hermitian form in the usual sense, while a~$(-1)$-Hermitian form is a skew-Hermitian form in the usual sense.
A~$\varepsilon$-Hermitian form is called \emph{hyperbolic} if it is isometric to~$H^\varepsilon(R)^{\oplus g}=\left(R^{2g},\bsm 0&1 \\ \varepsilon & 0 \esm^{\oplus g} \right)$ for some~$g \geq~0$.
%and \emph{stably hyperbolic} if it becomes hyperbolic after taking sufficiently many direct summands with the rank~$2$ hyperbolic form~$H^+(R)$.
The reason for introducing this terminology is that we will shortly be concerned with~$(-t)$-Hermitian forms over~$\Lambda$, and their quadratic analogues.

An element~$\psi \in Q_{\eps}(M)$ is called an \emph{$\eps$-quadratic form}.  To an~$\eps$-quadratic form in~$Q_{\eps}(M)$ is associated its \emph{symmetrisation}~$\varphi := (1+T_{\eps})(\psi) \in Q^{\eps}(M)$. Given~$\varphi \in Q^{\eps}(M)$, a quadratic form~$\psi$ with~$(1+T_{\eps})(\psi) = \varphi$ is called a \emph{quadratic refinement of~$\varphi$}.
The symmetrisation is well-defined on equivalence classes in~$Q_{\eps}(M)$ because~$(1+T_{\eps})(1-T_{\eps}) = 1-T_{\eps}^2 = 0$.
Note that quadratic forms are considered up to addition of forms in the image of~$1-T_{\eps}$.
A quadratic form over~$R$ is \emph{hyperbolic} if it is isometric to~$H_\varepsilon(R)^{\oplus g}=\left(R^{2g},\bsm 0&1 \\ 0&0 \esm^{\oplus g} \right)$ for some~$g \geq 0$.
 %and \emph{stably hyperbolic} if it becomes hyperbolic after taking sufficiently many direct summands with~$H_+(R)$.
Let us emphasise that subscripts denote quadratic forms, while superscripts denote~$\eps$-Hermitian forms.

Here is the key relationship between~$(-t)$-Hermitian
%symmetric
and $(-t)$-quadratic forms that we shall exploit in the proof of Theorem~\ref{thm:HyperbolicIntersectionForm} below.
\begin{lemma}\label{lem:Q-group-map}
Let~$M$ be a finitely generated free~$\La$-module. Then the map
\[(1+T_{-t}) \colon Q_{-t}(M) \to Q^{-t}(M)\]
is injective.
\end{lemma}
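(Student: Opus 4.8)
The plan is to reduce injectivity of $(1+T_{-t})$ on $Q_{-t}(M)$ to a statement about the cokernel of $1-T_{-t}$ and to exploit the special arithmetic of $\varepsilon = -t$ over $\Lambda = \Z[t^{\pm 1}]$, which is a domain with involution. Choosing a basis for the free module $M$ identifies $\Sesq(M)$ with the set of square matrices $B$ over $\Lambda$, with $(T_{-t}B)$ corresponding to $-t\overline{B}^T$. So I would work matrix-theoretically: an $\varepsilon$-quadratic form is a matrix $\psi$ considered modulo the subgroup $\{C + t\overline{C}^T : C\}$, its symmetrisation is $\varphi = \psi - t\overline{\psi}^T$, and the claim is that if $\psi - t\overline{\psi}^T = C - t\overline{C}^T$ for some matrix $C$, then $\psi - C \in \operatorname{im}(1-T_{-t})$, i.e. $\psi$ and $C$ already represent the same class. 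Replacing $\psi$ by $\psi - C$, it suffices to prove: if a matrix $\psi$ satisfies $\psi = t\overline{\psi}^T$ (i.e. $\psi \in \ker(1-T_{-t}) = Q^{-t}(M)$, a "$(-t)$-Hermitian" matrix), then $\psi = D + t\overline{D}^T$ for some matrix $D$, i.e. $\psi \in \operatorname{im}(1+T_{-t})$. Equivalently, the composite $Q^{-t}(M) \hookrightarrow \Sesq(M) \to Q_{-t}(M) \xrightarrow{1+T_{-t}} Q^{-t}(M)$, which is multiplication by $2$ in an appropriate sense, is injective — but more precisely one wants the sharper statement that every $(-t)$-Hermitian matrix is the symmetrisation of some quadratic refinement.

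The key arithmetic input is the following: for $\varepsilon = -t$, the equation $x = -t\bar x$ in $\Lambda$ (the scalar, $1\times 1$ case) forces $x = t^{1/2}\cdot(\text{nothing})$ — there are no nonzero solutions, since comparing top and bottom degrees of a Laurent polynomial $x$ with $x = -t\bar x$ is impossible. More usefully, the diagonal entries $\psi_{ii}$ of a $(-t)$-Hermitian matrix satisfy $\psi_{ii} = -t\overline{\psi_{ii}}$, and I claim every such element is of the form $d - t\bar d$ (note $\overline{(-t)} = -t^{-1}$, and a short computation shows the image of $r \mapsto r - t\bar r$ on $\Lambda$ is exactly $\{x : x = -t\bar x\}$: given $x = -t\bar x$, write $x = \sum a_k t^k$; the relation says $a_k = -a_{1-k}$, so pairing $k$ with $1-k$ and noting there is no fixed point since $k = 1-k$ has no integer solution, one can set $d = \sum_{k \geq 1} a_k t^k$ and check $d - t\bar d = d + \overline{td} $ — let me just say: solve it term by term using that the involution pairs $t^k \leftrightarrow t^{1-k}$ fixed-point-freely). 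Then for the off-diagonal entries, the constraint $\psi_{ji} = -t\overline{\psi_{ij}}$ lets one freely choose $D_{ij} := \psi_{ij}$ for $i < j$, set $D_{ji} := 0$, and handle the diagonal by the scalar computation; one checks $D + t\overline{D}^T = \psi$. This exhibits $\psi$ as a symmetrisation, completing the argument.

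The main obstacle I anticipate is getting the scalar lemma exactly right, i.e. proving that $\{x \in \Lambda : x = -t\bar x\} = \{d - t\bar d : d \in \Lambda\}$ (or whatever the correct image description is with the sign conventions of the paper's $T_\varepsilon$). This hinges on the fact that the involution $t \mapsto t^{-1}$ twisted by multiplication by $-t$ acts on the $\Z$-basis $\{t^k\}_{k\in\Z}$ of $\Lambda$ by $t^k \mapsto -t^{1-k}$, an involution of the index set $k \mapsto 1-k$ that is \emph{fixed-point free} (because $1/2 \notin \Z$). For a $\Z[G]$-module which is free on a set with a fixed-point-free involution twisted by a sign, the relevant Tate-cohomology-type group vanishes, which is precisely the statement that $\ker(1-T) = \operatorname{im}(1+T)$ and $\ker(1+T) = \operatorname{im}(1-T)$. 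So the cleanest writeup would be: observe $\Sesq(M)$ with the $T_{-t}$-action is, as a $\Z[\Z/2]$-module, a direct sum (over entries) of copies of $\Lambda$ with the $-t$-twisted involution, each of which is a permutation module on a free $\Z/2$-set; hence all its Tate cohomology vanishes; in particular $\ker(1-T_{-t}) \subseteq \operatorname{im}(1+T_{-t})$, which is exactly the injectivity claimed. I would present it this way if space permits, and fall back on the explicit matrix computation otherwise.
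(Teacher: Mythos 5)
Your argument is correct in substance and takes a genuinely different route from the paper. The paper proves the lemma by transporting everything to the $\varepsilon=+1$ setting: it multiplies by $(t^{-1}-1)$ to obtain maps $\Xi\colon Q_{-t}(M)\to Q_{+1}(M)$ and $\Omega\colon Q^{-t}(M)\to Q^{+1}(M)$, proves injectivity of $\Xi$ by an explicit matrix computation, shows $(1+T_{+1})$ is injective via the vanishing of the hyperquadratic group $\wh{Q}^{-1}(\La)$, and concludes from a commutative square. You instead attack the relevant hyperquadratic group for $\varepsilon=-t$ head on: the kernel of $(1+T_{-t})\colon Q_{-t}(M)\to Q^{-t}(M)$ is $\ker(1+T_{-t})/\im(1-T_{-t})$ computed inside $\Sesq(M)$, and your observation that $T_{-t}$ sends the monomial $t^k$ of a diagonal entry to $\pm t^{1-k}$, with $k\mapsto 1-k$ fixed-point free on $\Z$, shows that $\Sesq(M)\cong M_n(\La)$ decomposes as a $\Z[\Z/2]$-module into cohomologically trivial pieces (each diagonal slot is free because of the fixed-point-free pairing of monomials; each pair of off-diagonal slots is induced). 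Hence both Tate groups $\ker(1\pm T_{-t})/\im(1\mp T_{-t})$ vanish, which gives the injectivity at once. This is shorter and more conceptual than the paper's detour, and it also explains why the case $\varepsilon=+1$ needs the extra remark about the constant coefficient: there the involution on exponents is $k\mapsto -k$, which fixes $k=0$.

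That said, sort out the signs before writing this up. A class $[\psi]$ dies under $(1+T_{-t})$ exactly when $\psi=t\ol{\psi}^T$, i.e.\ $\psi\in\ker(1+T_{-t})$ --- not $\ker(1-T_{-t})$ as your parenthetical asserts --- and the target is $\psi\in\im(1-T_{-t})=\{D+t\ol{D}^T\}$, not $\im(1+T_{-t})$; likewise, the injectivity of the lemma is the containment $\ker(1+T_{-t})\subseteq\im(1-T_{-t})$, not the one you name in your final sentence. Your aside that $x=-t\ol{x}$ has no nonzero solutions in $\La$ is also false ($x=t-1$ is a solution), and it contradicts the sentence that follows it: the degree comparison only forces the top and bottom exponents to sum to $1$. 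None of these slips damages the argument, since the fixed-point-free decomposition is insensitive to the sign twist and kills both Tate groups, but they must not survive into a final version.
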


\begin{proof}
To prove the lemma we will show the following facts.
\begin{enumerate}
    \item\label{item:Q-group-lemma-1} For~$\varphi \in Q^{-t}(M)$, the assignment~$\varphi \mapsto (t^{-1}-1) \varphi$ determines a map \[\Omega \colon Q^{-t}(M) \to~Q^{+1}(M).\]
    \item\label{item:Q-group-lemma-2} For~$\psi \in \Sesq(M)$, the assignment~$\psi \mapsto (t^{-1}-1) \psi$ determines an
    %well-defined
injective map
  $$\Xi \colon Q_{-t}(M) \to Q_{+1}(M).$$
   \item\label{item:Q-group-lemma-3} The map~$(1+T_{+1}) \colon Q_{+1}(M) \to Q^{+1}(M)$ is injective.
 \item\label{item:Q-group-lemma-4} The diagram
 \[\xymatrix{Q_{-t}(M) \ar[r]^-{\Xi} \ar[d]_{1+T_{-t}}  & Q_{+1}(M) \ar[d]^{1+T_{+1}} \\ Q^{-t}(M) \ar[r]^-{\Omega} & Q^{+1}(M)  }\]
 commutes.
  \end{enumerate}
Then, since the right-down composition is injective, it follows that the left vertical map is injective, as desired. So it suffices to prove these four assertions.

For \eqref{item:Q-group-lemma-1}, suppose that~$\varphi \in Q^{-t}(M) = \ker(1-T_{-t})$, that is~$\varphi = -t\ol{\varphi}^T$. Then
\[\ol{(t^{-1}-1)\varphi}^T = (t-1) \ol{\varphi}^T = (t^{-1}-1)(-t)\ol{\varphi}^T = (t^{-1}-1)\varphi.\]
Therefore~$\Omega(\varphi) = (t^{-1}-1)\varphi \in \ker (1-T_{+1}) = Q^{+1}(M)$ as desired.

To prove \eqref{item:Q-group-lemma-2}, first we show that~$\Xi\colon Q_{-t}(M) \to Q_{+1}(M)$ is well-defined. We need to see that for every sesquilinear form~$\theta \in \Sesq(M)$, the element~$(1-T_{-t})(\theta) = \theta - (-t) \ol{\theta}^T = \theta + t \ol{\theta}^T$ maps to the trivial element in~$Q_{+1}(M)$.
This is indeed the case since
\[(t^{-1}-1)( \theta + t \ol{\theta}^T) = (t^{-1}-1) \theta - (t-1) \ol{\theta}^T = (1-T_{+1})((t^{-1}-1)\theta)\]
is trivial in~$\coker(1-T_{+1}) = Q_{+1}(M)$.  Therefore~$\Xi$ is well-defined.

Now we prove that~$\Xi \colon Q_{-t}(M) \to Q_{+1}(M)$ is injective.
Let~$B$ and~$C$ be two~$(-t)$-quadratic forms in~$Q_{-t}(M)$, and suppose that~$Z:= (t^{-1}-1)(B-C) =0 \in Q_{+1}(M)$.
That is,~$Z \in \im(1-T_{+1})$, so, choosing a basis for~$M\cong \La^n$, there exists a matrix~$X$ over~$\Lambda$ such that
$$(t^{-1}-1)(B-C)=Z = X-\overline{X}^T.$$
We assert that there exists a matrix~$Y$ over~$\Lambda$ such that
\[Z = (t^{-1}-1)Y-(t-1)\overline{Y}^T.\]
Write the entries of the matrix~$Z= X-\overline{X}^T$ as~$z_{ij}$.
Since~$X-\overline{X}^T$ is skew-Hermitian, we have~$\overline{z_{ij}}=-z_{ji}$.
Also~$z_{ij}$ is divisible by~$t^{-1}-1$ for every~$i,j$. For~$i<j$, define the entries~$y_{ij}$ of~$Y$ by the formula~$y_{ij}:=z_{ij}/(t^{-1}-1)$. For~$i>j$ define~$y_{ij}:=0$.
This way~$(t^{-1}-1)y_{ij}-(t-1)\overline{y_{ji}}=z_{ij}$ for~$i \neq j$. It remains to define the diagonal entries of~$Y$.
For each~$i$, since the Laurent polynomial~$z_{ii}$ satisfies~$z_{ii}=-\overline{z}_{ii}$, we have~$z_{ii}(-1)= -z_{ii}(-1)$, and so~$z_{ii}(-1)=0$.
Therefore~$z_{ii}$ is divisible by~$t+1$.
 Since~$t^{-1}-1$ and~$(t+1)$ are coprime, we deduce that~$z_{ii}=(t+1)(t^{-1}-1)q$ for some polynomial~$q$ that satisfies~$q=\overline{q}$.
Now set~$y_{ii}:=q$. We compute:
\[(t^{-1}-1)y_{ii}-(t-1)\overline{y_{ii}}=(t^{-1}-1-(t-1))q=(t^{-1}-t)q= (t+1)(t^{-1}-1)q = z_{ii}.\]
This concludes the proof of the assertion that~$Z = (t^{-1}-1)Y-(t-1)\overline{Y}^T$ for some matrix~$Y$.
Using this assertion, as well as the definitions of~$X$ and~$Y$, we obtain:
\[Z= (t^{-1}-1)(B-C)=X-\overline{X}^T=(t^{-1}-1)Y-(t-1)\overline{Y}^T=(t^{-1}-1)(Y+t\overline{Y}^T).\]
This implies that~$B-C=Y+t\overline{Y}^T = Y-(-t)\ol{Y}^T$, which is zero in~$Q_{-t}(M)$.
Therefore the map~$\Xi\colon Q_{-t}(M) \to Q_{+1}(M)$ is injective as desired.
This completes the proof of~\eqref{item:Q-group-lemma-2}.

%We need to show that if~$(t^{-1}-1)B$ and~$(t^{-1}-1)C$ are two quadratic refinements of the same~$(+1)$-Hermitian form on~$M$, then~$B = C \in Q_{-t}(M)$.

%Since~$(t^{-1}-1)B$ and~$(t^{-1}-1)C$ are refinement of the same~$(+1)$-Hermitian form, we deduce that the symmetrisation
%$(t^{-1}-1)(B-C)+(t-1)(\overline{B}^T-\overline{C}^T)$ of \[Z:= (t^{-1}-1)(B-C)\] is the trivial~$(+1$)-Hermitian form.

Next we prove~\eqref{item:Q-group-lemma-3}, that~$1+T_{+1} \colon Q_{+1}(M) \to Q^{+1}(M)$ is injective.
%This means that~$Z$ belongs to
%$\wh{Q}^{-1}(M):=\ker(1+T_{+1})$
For an arbitrary finitely generated free module~$M$, the map~$1+T_{\eps}$  fits into the following exact sequence:
\[0 \to \wh{Q}^{-\varepsilon}(M) \to Q_{\eps}(M) \xrightarrow{1+T_{\eps}} Q^{\eps}(M).\]
The group,~$\wh{Q}^{-\varepsilon}(M) := \ker(1+T_{\eps})$ is a \emph{hyperquadratic~$Q$-group}; its purpose is to measure the difference between Hermitian and quadratic forms~\cite{RanickiExact}.
We have \begin{equation}\label{eqn:hyperquadratic}
\wh{Q}^{-\varepsilon}(M) = \ker(1+T_{\eps} \colon Q_{\eps}(M) \to Q^{\eps}(M)) = \frac{\ker(1+T_{\eps})}{\im(1-T_{\eps})} = \frac{\ker(1-T_{-\eps})}{\im(1+T_{-\eps})}.
\end{equation}
We shall use the penultimate description, but include the last equation to show why the~$-\eps$ appears in the notation for the group~$\wh{Q}^{-\varepsilon}(M)$.
Writing~$M=\La^n$, there is an isomorphism~$\wh{Q}^{-\eps}(M) =\wh{Q}^{-\varepsilon}(\La^n) \cong \wh{Q}^{-\varepsilon}(\La)^n$~\cite[Remark 3.4]{CrowleySixt}.
By \eqref{eqn:hyperquadratic} with~$\eps=+1$ and~$M=~\La$, we have~$\wh{Q}^{-1}(\La) =  \ker(1+T_{+1})/\im(1-T_{+1}).$
For~$p(t) \in \La$, if~$p(t)+p(t^{-1})=0$, then~$p(t)$ must be of the form~$p(t) = r(t)-r(t^{-1})$ for some~$r(t) \in \La$.
It follows that~$\wh{Q}^{-1}(\La)=0$.
Thus~$\ker(1+T_{+1} \colon Q_{+1}(M) \to Q^{+1}(M))=0$ and so~$1+T_{+1}$ is indeed injective.
%
%Recall we showed that~$Z$ lifts to~$\wh{Q}^{-1}(\Lambda^n)$.
%Then thanks to the above discussion,~$\wh{Q}^{-1}(\Lambda^n)=0$, and therefore ~$Z \in \im(1-T_{+1})$.
%Since~$(t^{-1}-1)(B-C)$ lies in~$\ker(1+T_{+1}$, we deduce that it belongs to the image of~$1-T_{+1} \colon Q^{-1}(\La^n) \to Q_{+1}(\La^n)$.
 %
 %
This concludes the proof of the third item of the lemma.

For \eqref{item:Q-group-lemma-4}, let $\psi \in Q_{-t}(M)$. Then
\begin{align*}
(1+T_{+1})\circ \Xi(\psi) &= (1+T_{+1})(t^{-1}-1)\psi = (t^{-1}-1) \psi + (t-1)\overline{\psi}^T \\ &= (t^{-1}-1)(\psi - t \overline{\psi}^T) = \Omega \circ (1+T_{-t}) (\psi)
\end{align*}
so the diagram commutes, which proves \eqref{item:Q-group-lemma-4} and completes the proof that $(1+T_{-t}) \colon Q_{-t}(M) \to Q^{-t}(M)$ is injective.
\end{proof}

Next we state the algebraic cancellation result that we will use. The idea behind the proof of Theorem~\ref{thm:HyperbolicIntersectionForm} below will be to engineer a situation in which we can apply cancellation.

\begin{definition}
The \emph{Witt index}~$\operatorname{ind}(H,\theta)$ of an~$\varepsilon$-quadratic form is the largest integer~$k$ such that a subform of~$(H,\theta)$ is isometric to~$H_{\varepsilon}(R)^{\oplus k}$.
\end{definition}

\begin{proposition}
\label{prop:StablyHyperbolicImpliesHyperbolic}
Let~$\varepsilon \in \Lambda$ be a central unit with~$\eps\ol{\eps}=1$, and let~$(H,\theta),(H',\theta')$ be~$\varepsilon$-quadratic forms over~$\Lambda$.
Assume that for some~$n \geq 0$ there is an isometry
\begin{equation}
\label{eq:WantBass}
(H,\theta)\oplus H_\varepsilon(\Lambda)^{\oplus n} \cong (H',\theta') \oplus H_\varepsilon(\Lambda)^{\oplus n}.
\end{equation}
If~$\operatorname{ind}(H',\theta') \geq 3$, then there is an isometry
\begin{equation}
\label{eq:AppliedBass}
(H,\theta) \cong (H',\theta').
\end{equation}
%Let~$(H,\theta),(H',\theta')$ be~$\varepsilon$-quadratic forms over~$\Lambda$, with~$H$ of rank~$2r+b$ and~$H'$ of rank~$b$.
%Assume that for some~$n_1,n_2 \geq 0$ there is an isometry
%$$(H,\theta)\oplus H_+(\Lambda)^{\oplus n_1} \cong (H',\theta') \oplus H_+(\Lambda)^{\oplus n_2}.$$
%If ~$\operatorname{ind}((H',\theta') \oplus H_+(\Lambda)^{\oplus r}) \geq 3$, then there is an isometry
%$$(H,\theta) \cong (H',\theta')\oplus H_+(\Lambda)^{\oplus r}.$$
\end{proposition}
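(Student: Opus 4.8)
The plan is to reduce this cancellation statement for $\varepsilon$-quadratic forms over $\Lambda = \Z[t^{\pm 1}]$ to a known cancellation theorem, namely the algebraic results of Bass~\cite{Bass} (see also~\cite{HambletonTeichner,Khan,Magurn-vdKallen-Vaserstein,CrowleySixt}). These classical results assert that for rings satisfying an appropriate stable range or finite-dimensionality condition, hyperbolic summands can be cancelled from quadratic forms once the Witt index of the form to be stabilised is large enough. The ring $\Lambda$ is Noetherian of Krull dimension $2$, which is precisely the regime in which these cancellation theorems apply: the relevant stable rank hypothesis holds, and an assumption like $\operatorname{ind}(H',\theta') \geq 3$ (rather than $\geq 2$) is exactly the kind of bound these theorems require. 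So the first step would be to locate the precise statement in~\cite{Bass} (or its streamlined version in~\cite{CrowleySixt} or~\cite{Magurn-vdKallen-Vaserstein}) that, given an isometry $(H,\theta) \oplus H_\varepsilon(\Lambda)^{\oplus n} \cong (H',\theta') \oplus H_\varepsilon(\Lambda)^{\oplus n}$ over a ring of this type with $\operatorname{ind}(H',\theta') \geq 3$, produces an isometry $(H,\theta) \cong (H',\theta')$, and then verify that $\Lambda$ meets the hypotheses.

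The one subtlety is the coefficient $\varepsilon$: the cited cancellation theorems are typically stated for a fixed involution and a fixed central unit $\varepsilon$ with $\varepsilon\overline{\varepsilon}=1$, and one needs to make sure that the case $\varepsilon = -t$ (the case of actual interest in the sequel, in Theorem~\ref{thm:HyperbolicIntersectionForm}) is covered. Since the statement of the proposition already allows an arbitrary such $\varepsilon$, and the cancellation theorems in the literature are genuinely stated at that level of generality (the formalism of $\varepsilon$-quadratic forms over a ring with involution is set up precisely to accommodate this), this should be a matter of citing the right version rather than doing anything new. An alternative, if one wanted to be self-contained, would be to invoke Lemma~\ref{lem:Q-group-map}: the injectivity of $(1+T_{-t}) \colon Q_{-t}(M) \to Q^{-t}(M)$ would let one transfer the problem from $(-t)$-quadratic forms to $(+1)$-quadratic forms via the maps $\Xi$ and $\Omega$ of that lemma, where the most classical form of Bass cancellation is available; but since $\Xi$ scales the form by $t^{-1}-1$, one would have to check that Witt index and the isometry relation are preserved appropriately under this rescaling, which is a bit delicate. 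I would prefer the direct citation route.

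Concretely, the steps I would carry out are: (1) recall that $\Lambda$ is a regular Noetherian ring of (Krull) dimension $2$, hence has stable rank at most $3$, so Bass's stable range condition is satisfied; (2) quote the cancellation theorem for $\varepsilon$-quadratic (equivalently, quadratic in the sense of forms-with-$\varepsilon$) forms over such a ring, e.g.\ in the form given by~\cite[Chapter IV or the relevant theorem]{Bass}, or the packaged statements in~\cite{CrowleySixt} or~\cite{Magurn-vdKallen-Vaserstein}, whose hypothesis is exactly a lower bound on the Witt index of the form that is being stabilised; (3) observe that the hypothesis $\operatorname{ind}(H',\theta')\geq 3$ of the proposition meets the numerical requirement of that theorem (which, for a ring of stable rank $3$, asks for Witt index at least $3$, or at least the stable rank); (4) apply the theorem inductively on $n$, cancelling one hyperbolic summand $H_\varepsilon(\Lambda)$ at a time, noting that the Witt index of $(H',\theta')$ is unchanged (indeed only increases under adding hyperbolics) at each stage, so the hypothesis persists through the induction; and (5) conclude the desired isometry $(H,\theta)\cong(H',\theta')$.

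The main obstacle I anticipate is purely bibliographic and book-keeping: matching the exact formulation and numerical hypothesis of the classical cancellation theorem to the present setting, in particular confirming that the version available for $\varepsilon$-quadratic forms (as opposed to $\varepsilon$-symmetric/Hermitian forms, where cancellation can fail) applies with $\varepsilon$ an arbitrary central unit satisfying $\varepsilon\overline{\varepsilon}=1$ — and hence in particular with $\varepsilon = -t$ — over the two-dimensional ring $\Lambda$. No genuinely new mathematical input should be needed; the content of the proposition is that the surface-theoretic problem has been reduced to a point where off-the-shelf algebraic cancellation suffices, and the proof should say exactly that, with the correct citations and a one-line verification of the stable-rank bound for $\Lambda$.
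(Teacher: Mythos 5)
Your proposal is correct and follows essentially the same route as the paper: the paper's proof is a direct application of Bass's cancellation theorem \cite[Corollary~IV.3.6]{Bass}, using that the relevant invariant $d_\Lambda := \dim \operatorname{maxspec}(\Lambda_0)$ of the norm subring equals $2$ (cited from \cite{Khan} and \cite{HambletonTeichner}), so that $\operatorname{ind}((H',\theta')\oplus H_\eps(\Lambda)) \geq 3+1 = d_\Lambda+2$ meets Bass's hypothesis. The only discrepancy is the precise form of the dimension hypothesis --- Bass's statement is phrased via $\dim\operatorname{maxspec}$ of the norm subring rather than stable rank, and applies in one step rather than by induction on $n$ --- which is exactly the bibliographic matching you flagged as the remaining task.
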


\begin{proof}
We apply a result due to Bass~\cite[Corollary~IV.3.6]{Bass}.
Given a ring~$R$ with involution, we write~$R_0=\lbrace \sum_i x_i\overline{x}_i \mid x_i \in R \rbrace$ for the norm subring of~$R$, as well as~$\operatorname{maxspec}(R_0)$ for the set of all maximal ideals of~$R_0$ under the Zariski topology, and~$d_R:=\dim \operatorname{maxspec}(R_0)$.
Detailed definitions of these notions are irrelevant: we need only know that~$d_\Lambda=2$; see e.g.~\cite[Proposition 2.2]{Khan} or \cite[p.~439]{HambletonTeichner}.
Let~$(H,\theta),(H',\theta')$ be quadratic forms over~$R$. If there is an isometry
$$(H,\theta) \oplus H_{\varepsilon}(R)^{\oplus n} \cong (H',\theta') \oplus H_{\varepsilon}(R)^{\oplus n}$$ and if~$\operatorname{ind}((H',\theta') \oplus H_{\varepsilon}(R)) \geq d_{R} + 2$, then there is an isometry~$(H,\theta) \cong (H',\theta')$~\cite[Corollary~IV.3.6]{Bass}.
In particular, this cancellation result holds if~$\operatorname{ind}(H',\theta') \geq d_{R}+1$.
As mentioned above, for~$R=\Lambda$, we have~$d_\Lambda= 2$.   Since
\[\operatorname{ind}((H',\theta') \oplus H_{\varepsilon}(\Lambda)) \geq 3+1=d_\Lambda+2,\]
the result of Bass says that~\eqref{eq:WantBass} implies~\eqref{eq:AppliedBass}.
%Thus, if~$(H,\theta) \oplus H_{\varepsilon}(\Lambda)^{\oplus n} \cong (H',\theta') \oplus H_{\varepsilon}(\Lambda)^{\oplus n}$ and~$\operatorname{ind}(H',\theta') \geq~3$, then~$(H,\theta)$ and~$(H',\theta')$ are isometric.
%
%
%We assumed that~$(H,\theta) \oplus H_+(\Lambda)^{\oplus n_1} \cong (H',\theta') \oplus H_+(\Lambda)^{\oplus n_2}$ over~$\Lambda$, with~$H$ of rank~$2r+b$ and~$H'$ of rank~$b$.
%It follows that~$n_2-n_1=r$ and so this isometry can be rewritten as
%$$(H,\theta) \oplus H_+(\Lambda)^{ \oplus n_1} \cong (H',\theta') \oplus H_+(\Lambda)^{\oplus r} \oplus H_+(\Lambda)^{n_1}.$$
%As~$(H',\theta') \oplus H_+(\Lambda)^{\oplus r}~$ has Witt index at least~$3$ (by assumption), we apply the cancellation result from~\cite[Corollary IV.3.6]{Bass} to deduce that~$(H,\theta) \cong (H',\theta') \oplus H_+(\Lambda)^{\oplus r}$, concluding the proof of the proposition.
\end{proof}

The next theorem is the main result of this subsection.
%Notice that in this result we specialise to~$N=D^4$.

\begin{theorem}
\label{thm:HyperbolicIntersectionForm}
If~$\Sigma \subseteq D^4$ is a $\Z$-surface of genus $g \geq 3$ whose boundary is an Alexander polynomial one knot~$K \subseteq S^3$, then
$$\lambda_{D^4_\Sigma} \cong \mathcal{H}_2^{\oplus g}.$$
\end{theorem}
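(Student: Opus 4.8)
The plan is to bootstrap the stable isometry of Corollary~\ref{cor:IntersectionFormAlexanderPolynomial1} into an honest isometry by passing to $(-t)$-Hermitian and $(-t)$-quadratic forms and invoking the cancellation result Proposition~\ref{prop:StablyHyperbolicImpliesHyperbolic}. Taking $X=S^4$, so that $N=D^4$ and $Q_X=0$, Corollary~\ref{cor:IntersectionFormAlexanderPolynomial1} gives an integer $n\geq 0$ and an isometry $\lambda_{D^4_\Sigma}\oplus\mathcal{H}_2^{\oplus n}\cong\mathcal{H}_2^{\oplus(n+g)}$; here $H_2(D^4_\Sigma;\Lambda)$ is free of rank $2g$, and comparing determinants (note $\det\mathcal{H}_2\doteq(t-1)^2$) shows that any Hermitian matrix $A$ representing $\lambda_{D^4_\Sigma}$ has $\det A\doteq(t-1)^{2g}$. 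The parameter $\varepsilon:=-t$ is a central unit with $\varepsilon\ol{\varepsilon}=1$, so the $\varepsilon$-(quadratic) form formalism from the discussion preceding Lemma~\ref{lem:Q-group-map} applies. The point is that both forms in play lie in the image of the map $\Omega(\varphi)=(t^{-1}-1)\varphi$ of Lemma~\ref{lem:Q-group-map}: a basis swap shows $\Omega(H^{-t}(\Lambda))\cong\mathcal{H}_2$.

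First I would show that $\lambda_{D^4_\Sigma}=\Omega(\mu_\Sigma)$ for a nonsingular $(-t)$-Hermitian form $\mu_\Sigma$ on $\Lambda^{2g}$. By Lemma~\ref{lem:Homology}(4), Remark~\ref{rem:Bases} and Lemma~\ref{lem:AlexanderModuleSigmaS1} (using $\Delta_K\doteq 1$), the matrix $A$ presents $H_1(\partial D^4_\Sigma;\Lambda)\cong(\Lambda/(t-1))^{\oplus 2g}$, so $(t-1)$ annihilates $\coker(A)$ and hence $(t-1)I=AV$ for some $V\in M_{2g}(\Lambda)$. Comparing determinants with $\det A\doteq(t-1)^{2g}$ forces $\det V$ to be a unit, so $V\in GL_{2g}(\Lambda)$ and $A=(t-1)V^{-1}=(t^{-1}-1)(-t\,V^{-1})$ with $-tV^{-1}\in M_{2g}(\Lambda)$. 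Since $A=\ol{A}^{\,T}$, the matrix $B_1:=-tV^{-1}$ satisfies $B_1=-t\,\ol{B_1}^{\,T}$, i.e.\ $B_1$ is $(-t)$-Hermitian; set $\mu_\Sigma:=(\Lambda^{2g},B_1)$, so that $\Omega(\mu_\Sigma)=\lambda_{D^4_\Sigma}$.

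Because $\Omega$ is multiplication by the non-zero-divisor $(t^{-1}-1)$, it reflects isometries, so the stable isometry above descends to $\mu_\Sigma\oplus H^{-t}(\Lambda)^{\oplus n}\cong H^{-t}(\Lambda)^{\oplus(n+g)}$ as $(-t)$-Hermitian forms. Pulling the canonical refinement $H_{-t}(\Lambda)^{\oplus(n+g)}$ back along this isometry yields a $(-t)$-quadratic refinement $\psi$ of $\mu_\Sigma\oplus H^{-t}(\Lambda)^{\oplus n}$ with $\psi\cong H_{-t}(\Lambda)^{\oplus(n+g)}$. Since $\mu_\Sigma$ and $H^{-t}(\Lambda)^{\oplus n}$ are mutually orthogonal nonsingular summands for the symmetrisation, a short block computation lets one adjust $\psi$ by an element of $\im(1-T_{-t})$ so that $\psi=\psi_\Sigma\oplus\psi_n$ in $Q_{-t}(\Lambda^{2g}\oplus\Lambda^{2n})$, with $\psi_\Sigma$ and $\psi_n$ $(-t)$-quadratic refinements of $\mu_\Sigma$ and of $H^{-t}(\Lambda)^{\oplus n}$. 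By the injectivity of $1+T_{-t}$ established in Lemma~\ref{lem:Q-group-map}, $\psi_n=H_{-t}(\Lambda)^{\oplus n}$ in $Q_{-t}(\Lambda^{2n})$, whence $\psi_\Sigma\oplus H_{-t}(\Lambda)^{\oplus n}\cong H_{-t}(\Lambda)^{\oplus(n+g)}$ as $(-t)$-quadratic forms.

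Now Proposition~\ref{prop:StablyHyperbolicImpliesHyperbolic} applies with $\varepsilon=-t$: since $\operatorname{ind}(H_{-t}(\Lambda)^{\oplus g})=g\geq 3$, it gives $\psi_\Sigma\cong H_{-t}(\Lambda)^{\oplus g}$. Applying $1+T_{-t}$ yields $\mu_\Sigma\cong H^{-t}(\Lambda)^{\oplus g}$, and applying $\Omega$ together with $\Omega(H^{-t}(\Lambda))\cong\mathcal{H}_2$ gives $\lambda_{D^4_\Sigma}=\Omega(\mu_\Sigma)\cong\mathcal{H}_2^{\oplus g}$. I expect the main obstacle to be the third paragraph: producing a $(-t)$-quadratic refinement of $\mu_\Sigma$ off which the hyperbolic summands split compatibly, so that the hypotheses of Bass's cancellation theorem are genuinely met. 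This is exactly what forces one into the $(-t)$-quadratic world and makes the injectivity statements of Lemma~\ref{lem:Q-group-map} indispensable; the divisibility step producing $\mu_\Sigma$ is elementary once $\det\lambda_{D^4_\Sigma}\doteq(t-1)^{2g}$ and $\Delta_K\doteq 1$ are in hand.
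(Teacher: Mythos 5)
Your proposal is correct and follows the same overall strategy as the paper: stabilise, strip off the factor $(t^{-1}-1)$ to land in the world of $(-t)$-Hermitian forms, lift to $(-t)$-quadratic refinements, use the injectivity of $1+T_{-t}$ (Lemma~\ref{lem:Q-group-map}) to identify the refinement on the stabilising summand with the standard one, apply Bass cancellation via Proposition~\ref{prop:StablyHyperbolicImpliesHyperbolic} using $g\geq 3$, and then symmetrise and multiply back by $(t^{-1}-1)$. The one place where you genuinely diverge is the step producing the $(-t)$-Hermitian form $\mu_\Sigma$ with $\lambda_{D^4_\Sigma}=(t^{-1}-1)\mu_\Sigma$. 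The paper obtains this geometrically: it introduces the relative pairing $\lambda^\partial_{D^4_\Sigma}$, constructs explicit bases $\{x_i\}$ of $H_2(D^4_\Sigma,\partial D^4_\Sigma;\Lambda)$ and $\{y_i\}$ of $H_2(D^4_\Sigma;\Lambda)$ (the latter by surgering the rim tori $g_i\times S^1$ along surfaces representing the $x_i$) so that the map $q$ is $(t-1)\id$, whence $L=(t^{-1}-1)L^\partial$. You instead argue algebraically: $A$ presents $H_1(\partial D^4_\Sigma;\Lambda)\cong(\Lambda/(t-1))^{\oplus 2g}$, so $(t-1)I=AV$ for some $V$ over $\Lambda$, and the determinant count $\det A\doteq(t-1)^{2g}$ forces $V\in GL_{2g}(\Lambda)$, giving $A=(t^{-1}-1)(-tV^{-1})$ with $-tV^{-1}$ automatically $(-t)$-Hermitian. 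This is a valid and arguably leaner substitute (and, as the paper itself notes after its equation for the stabilised congruence, the $(-t)$-Hermitian symmetry of the quotient matrix is forced in any case); what it gives up is the geometric interpretation of $\mu_\Sigma$ as the relative intersection pairing, which the paper's basis construction makes explicit but which is not needed for the cancellation argument.
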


\begin{proof}
We outline the strategy of the proof.
Since the knot~$K$ has Alexander polynomial one and $Q_{D^4}=0$, by Corollary~\ref{cor:IntersectionFormAlexanderPolynomial1}, there exists an integer~$n \geq 0$ and an isometry
%\begin{equation}
%\label{eq:StableIsometryNoCancel}
%\lambda_{D^4_\Sigma} \oplus \mathcal{H}_2^{\oplus n} \cong \lambda_{N_{\Sigma_g^t}} \oplus \mathcal{H}_2^{\oplus n}.
%\end{equation}
%By Lemma~\ref{lem:Unknot-2},~$\lambda_{N_{\Sigma_g^t}} \cong \mathcal{H}_2^{\oplus g}$, so in fact we have
\begin{equation}
\label{eq:StableIsometryNoCancel2}
\lambda_{D^4_\Sigma} \oplus \mathcal{H}_2^{\oplus n} \cong \mathcal{H}_2^{\oplus g+n}.
\end{equation}
We cannot apply the cancellation result of Proposition~\ref{prop:StablyHyperbolicImpliesHyperbolic} directly, since~$\mathcal{H}_2^{\oplus n}$ is not hyperbolic over~$\Lambda$.  However, by switching the basis elements $\mathcal{H}_2$ is isometric to a multiple of the $(-t)$-hyperbolic form:
\[\begin{pmatrix} 0 & t^{-1}-1 \\ t-1 & 0 \end{pmatrix} = (t^{-1}-1) \cdot \begin{pmatrix} 0 & 1 \\ -t & 0 \end{pmatrix} = (t^{-1}-1) \mathcal{H}^{-t}(\Lambda).\]
If we could remove the $(t^{-1}-1)$ factor, we could apply Proposition~\ref{prop:StablyHyperbolicImpliesHyperbolic}.
The idea for this is to use the relative intersection pairing
$$\lambda_{D^4_\Sigma}^\partial \colon H_2(D^4_\Sigma;\Lambda) \times H_2(D^4_\Sigma,\partial D^4_\Sigma;\Lambda) \to \Lambda$$
instead of the ``absolute'' intersection pairing~$\lambda_{D^4_\Sigma}$.
We will see, using~\eqref{eq:StableIsometryNoCancel2} and  appropriate bases, that the pairing~$\lambda^\partial_{D^4_\Sigma}$ is represented by a~$(-t)$-Hermitian matrix over~$\Lambda$.
We will then apply the cancellation result of Proposition~\ref{prop:StablyHyperbolicImpliesHyperbolic} with~$\varepsilon = -t$ to these forms, before deducing the desired conclusion on the original absolute pairings.
Note that the adjoints of the relative and absolute pairings fit into the following commutative diagram:
\begin{equation}
\label{eq:Triangle}
\xymatrix@R0.5cm{
H_2(D^4_\Sigma;\Lambda) \ar[rr]^-{\widehat{\lambda}}\ar[rd]_-q && \overline{\Hom_\Lambda(H_2(D^4_\Sigma;\Lambda),\Lambda)} \\
&H_2(D^4_\Sigma,\partial D^4_\Sigma;\Lambda) \ar[ru]_-{\widehat{\lambda}^\partial}.&
}
\end{equation}
For the first step of the proof we will choose bases for~$H_2(D^4_\Sigma;\Lambda)$ and $H_2(D^4_\Sigma,\partial D^4_\Sigma;\Lambda)$ and describe the map $q$ with respect to these bases.
First a short rank computation.

\begin{claim}
\label{claim:EulerCharacComputation}
%$H_2(D^4_\Sigma;\Lambda$) has rank~$b_1(\Sigma)=2g$.
$H_2(D^4_\Sigma;\Lambda) \cong \Lambda^{2g}$.
\end{claim}
\begin{proof}
Since $H_2(D^4_\Sigma;\Lambda)$ is free (by Lemma~\ref{lem:Homology}), we need only show that its rank is $b_1(\Sigma)=2g$.
Using~$H_1(D^4_\Sigma)=\Z$, a Mayer-Vietoris argument shows that~$b_2(D^4_\Sigma)=b_1(\Sigma)$ as well as~$b_3(D^4_\Sigma)=~0$.
%N=D^4_\Sigma \cup \nu \Sigma and then X and N have the same homology.
Since we also have~$b_4(D^4_\Sigma)=0$ and~$b_0(D^4_\Sigma)=1$, we get~$\chi(D^4_\Sigma)=b_1(\Sigma)$.
The Euler characteristic can also be computed with~$Q=\Q(t)$ coefficients.
By Lemma~\ref{lem:Homology}, we have~$b_i^{Q}(D^4_\Sigma)=0$ for~$i \neq 2$, and therefore~$\rk_\Lambda H_2(D^4_\Sigma;\La) = \chi^Q(D^4_\Sigma) = b_1(\Sigma)$ as asserted.
%Applying Lemma~\ref{lem:Homology}, we see that~$b_i^{Q}(D^4_\Sigma)=0$ for~$i \neq 2$ and so the claim follows.
\end{proof}
%The rank of~$H_2(X)$ is~$2b+b_V$.

\begin{claim}\label{claim:y_i-basis}
There is a choice of bases with respect to which the homomorphism \[q \colon H_2(D^4_\Sigma;\Lambda) \to H_2(D^4_\Sigma,\partial D^4_\Sigma;\Lambda)\] is represented by the matrix $(t-1)\id$.
%  The~$\{y_i\}_{i=1}^{2g}$ form a basis for the free module~$H_2(D^4_\Sigma;\La) \cong \Lambda^{2g}$.
\end{claim}
\begin{proof}
We
%will now
first base~$H_2(D^4_\Sigma,\partial D^4_\Sigma;\Lambda)$
%and~$H_2(D^4_\Sigma;\Lambda)$
using the fact that the Alexander polynomial of~$K$ is one.
We have an exact sequence as part of the long exact sequence of the pair~$(D^4_\Sigma,\partial D^4_\Sigma)$:
\[H_2(D^4_\Sigma;\La) \xrightarrow{q} H_2(D^4_\Sigma,\partial D^4_\Sigma;\La) \xrightarrow{\delta} H_1(\partial D^4_\Sigma;\La) \to 0.\]
Here~$H_1(D^4_\Sigma;\La) =0$ because~$\pi_1(D^4_\Sigma)\cong \Z$.
In particular, the map~$q$ presents the~$\La$-module~$H_1(\partial D^4_\Sigma;\La)$.
Use Lemma~\ref{lem:Homology}, Lemma~\ref{lem:AlexanderModuleSigmaS1}, Claim~\ref{claim:EulerCharacComputation}, and the fact that~$\Delta_K \doteq 1$ to deduce that
\[H_2(D^4_\Sigma;\La) \cong \Lambda^{2g} \cong H_2(D^4_\Sigma,\partial D^4_\Sigma;\La) \text{ and } H_1(\partial D^4_\Sigma;\La) \cong \bigoplus^{2g}_{i=1} \La/(t-1). \]
Choose a set of generators~$\{\gamma_k\} \subseteq H_1(\partial D^4_\Sigma;\La)$, represented in the form  \[g_k \times \{\pt\} \subseteq  \Sigma \times \{\pt\} \subseteq \Sigma \times S^1 \subseteq \partial D^4_\Sigma,\]
   where~$\{g_k\}$ is a symplectic basis of curves on~$\Sigma$.
Choose a basis~$\{x_i\}$ for~$H_2(D^4_\Sigma,\partial D^4_\Sigma;\La)$ such that~$\delta(x_i) = \gamma_i$ for~$1 \leq i \leq 2g$.
% and~$\delta(x_i)=0$ for~$2g+1 \leq i \leq 2g+b$.
The homology classes~$x_k$ can be represented by embedded surfaces~$S_i \subseteq D^4_\Sigma$ with~$\partial S_i = \gamma_i$ for~$1 \leq i \leq 2g$~\cite[Section~10.3]{FriedlNagelOrsonPowell}.

Next, we base $H_2(D^4_\Sigma;\La)$.
For each~$i =1,\dots,2g$, define a closed surface representing a class~$y_i \in H_2(D^4_\Sigma;\La)$, as follows.  Consider the torus~$g_i \times S^1 = \partial \big(\ol{\nu} \Sigma |_{g_i}\big)$, and surger it along~$\gamma_i$ using the surface~$S_i$.
That is, remove an annular neighbourhood $g_i \times (p,p') \subseteq g_i \times S^1$ of~$\gamma_i = g_i \times \{\pt\}$, leaving $B:= g_i \times (S^1 \sm (p,p'))$.  Then glue $-S_i$ to $g_i \times \{p\}$ and a push-off $S_i'$ of $S_i$ to $g_i \times \{p'\}$.
Call the resulting surface~$T_i$, and set~$y_i := [T_i] \in H_2(D^4_\Sigma;\La)$ for~$1 \leq i \leq 2g$.
Then, for~$i=1,\ldots,2g$, in~$H_2(D^4_\Sigma,\partial D^4_\Sigma ;\La)$ we have
\begin{equation}
\label{eq:Descriptionq}
q(y_i) = q([T_i]) = [-S_i] + [B \cup S_i'] =  -x_i + t x_i = (t-1) x_i.
\end{equation}
Here the~$t$ arises because the annulus $B$ wraps around a meridian of $\Sigma$. This process is illustrated in Figure~\ref{fig:surgering-for-Ti}.

\begin{figure}[!htb]
\centering
\labellist
    \small
    \pinlabel {$\Sigma$} at 350 80
        \pinlabel {$g_i$} at 290 70
    \pinlabel {$S_i$} at 150 300
        \pinlabel {$\Sigma$} at 130 215
        \pinlabel {$g_i$} at 180 227
    \pinlabel {$-S_i$} at 80 110
    \pinlabel {$-S_i$} at 413 300
    \pinlabel {$S_i'$} at 463 300
    \pinlabel {$S_i'$} at 487 110
        \pinlabel {$\Sigma$} at 453 217
        \pinlabel {$\gamma_i$} at 440 261
    \pinlabel {$B$} at 85 49
    \pinlabel {$g_i \times \{p\}$} at 92 72
    \pinlabel {$g_i \times \{p'\}$} at 491 72
    \pinlabel {$B$} at 484 49
    \pinlabel {$B$} at 370 33
    \pinlabel {$B$} at 392 217
    \endlabellist
\includegraphics[scale=0.68]{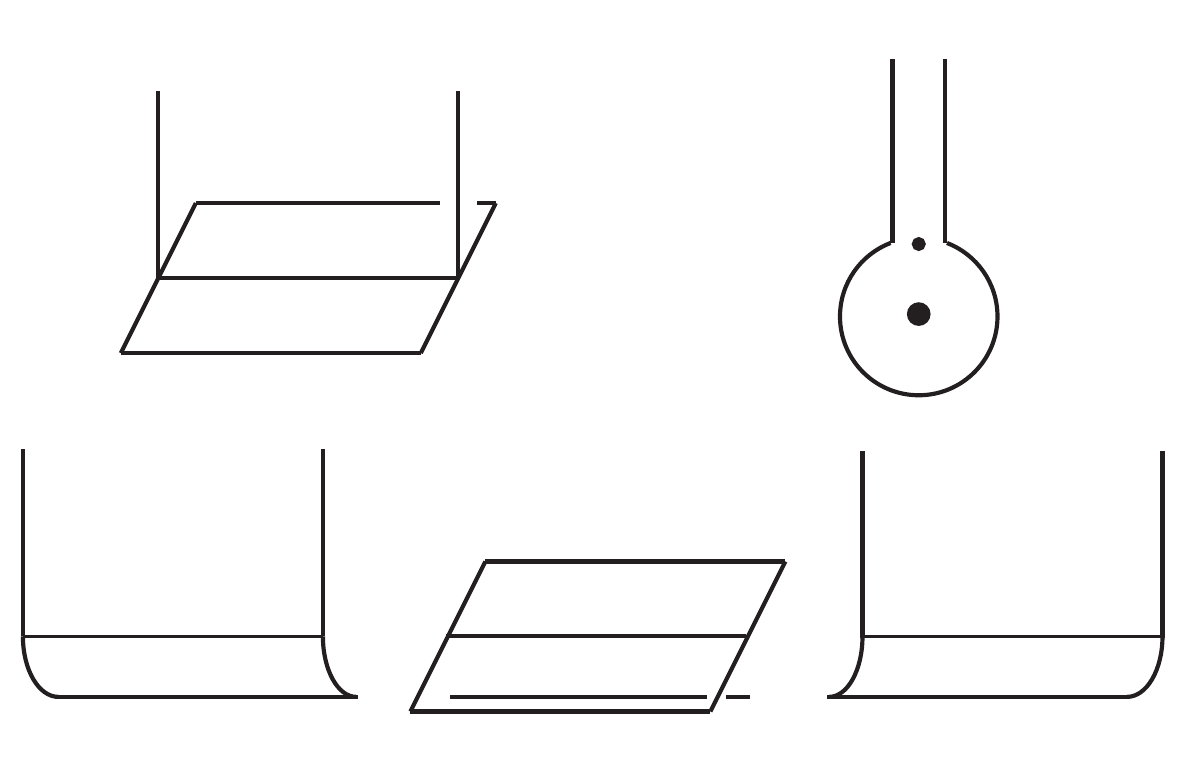}
\caption{Representing $y_i \in H_2(D^4_\Sigma;\Lambda)$ by an immersed surface $T_i$ in $D^4_{\Sigma}$ obtained by surgering the torus $g_i \times S^1$,  using a surface $S_i$ representing the relative homology class $x_i$. The top left picture shows a neighbourhood of a portion of $g_i$, intersected with a carefully chosen 3-dimensional subspace that contains the intersections of $\Sigma$ and $S_i$ with this neighbourhood.  Bottom, a 4-dimensional picture of $T_i$, consisting of three 3-dimensional slices at certain special values of the fourth coordinate (time), in which $\Sigma$, $-S_i$, $S_i'$, and most of $B$ appear. The annulus $B$ joins $-S_i$ and $S_i'$ together; in intermediate time values only a line of $B$ appears, including in the middle slice.  The top right picture shows a cross section of $T_i$, looking along $g_i$.}
\label{fig:surgering-for-Ti}
\end{figure}

%arises from using two parallel copies of the surface~$S_i$ in the construction of~$T_i$. The opposite orientation accounts for the minus sign. The~$t$ arises since the boundaries of the two copies of~$S_i$ are connected by an annulus in~$g_k \times S^1$ whose intersection with each fibre of~$\{\pt\} \times S^1$ is a meridian of~$\Sigma$ minus a small arc.
%The construction of the $\lbrace y_i \rbrace_{i=1}^{2g},\lbrace x_i \rbrace_{i=1}^{2g}$ and $\lbrace \gamma_i \rbrace_{i=1}^{2g}$
% gives rise to
Consider the following diagram of short exact sequences
$$
\xymatrix@R0.5cm{
0 \ar[r] & \Lambda^{\oplus 2g} \ar[r]^{(t-1)\id}\ar[d]^y& \Lambda^{\oplus 2g}  \ar[r]^-{\operatorname{proj}}\ar[d]^x&(\Lambda/(t-1))^{\oplus 2g}  \ar[r]\ar[d]^-\gamma& 0 \\
0 \ar[r] & H_2(D^4_\Sigma;\Lambda) \ar[r]^-q& H_2(D^4_\Sigma,\partial D^4_\Sigma;\Lambda) \ar[r]& H_1(\partial D^4_\Sigma;\Lambda) \ar[r]& 0,
}
$$
where the map $y$ (resp.\ $x$) sends the canonical basis $\lbrace e_i \rbrace_{i=1}^{2g}$ of $\Lambda^{2g}$ to $\lbrace y_i \rbrace_{i=1}^{2g}$ (resp.\ to~$\lbrace x_i \rbrace_{i=1}^{2g}$) and $\gamma$ maps the $[e_i]$ to the $\gamma_i$.
This diagram commutes thanks to~\eqref{eq:Descriptionq}.
By construction~$x$ and~$\gamma$ are isomorphisms and therefore so is~$y$ by the five lemma.
Thus $\lbrace y_i \rbrace_{i=1}^{2g}$ forms a basis of $H_2(D^4_\Sigma;\Lambda)$ and $q$ is represented by $(t-1)\id$.
This completes the proof of Claim~\ref{claim:y_i-basis}.
\end{proof}

Recall from~\eqref{eq:StableIsometryNoCancel2} that there is an isometry
%$\alpha \colon\lambda_{N_{\Sigma_g^t}} \oplus \mathcal{H}_2^{\oplus n} \cong \mathcal{H}_2^{\oplus (g+n)}$.
$\alpha \colon\lambda_{D^4_{\Sigma}} \oplus \mathcal{H}_2^{\oplus n} \cong \mathcal{H}_2^{\oplus (g+n)}$.
% by Lemma~\ref{lem:Unknot-2}.
Denote the matrices of~$ \lambda_{D^4_\Sigma}$ and~$ \lambda^\partial_{D^4_\Sigma}$ with respect to the previously described bases by~$L$ and~$L^\partial$ respectively. Also represent the isometry~$\alpha$ by a matrix~$A$ with respect to these bases.
Therefore we have a congruence
\begin{equation}
\label{eq:StableIso}
A^T \left( L \oplus \begin{pmatrix}
0&t-1 \\
t^{-1}-1&0
\end{pmatrix}^{\oplus n}  \right) \overline{A}=\begin{pmatrix}
0&t-1 \\
t^{-1}-1&0
\end{pmatrix}_.^{\oplus (g+n)}
\end{equation}
Our aim is now to factor out a~$(t^{-1}-1)$ from both sides of this equation.
Recall from~\eqref{eq:Triangle} that the adjoints of the pairings~$\lambda_{D^4_\Sigma}$ and~$\lambda_{D^4_\Sigma}^\partial$ are related by~$\widehat{\lambda}_{D^4_\Sigma}^\partial \circ q=\widehat{\lambda}_{D^4_\Sigma}.$
With respect to our bases (and recalling~\eqref{eq:Descriptionq}), for every~$x,y \in H_2(D^4_\Sigma;\Lambda)$, we therefore obtain
\begin{align*}
 x^TL^\partial  (t^{-1}-1)\overline{y}
&= x^TL^\partial \overline{(t-1) y}
=\lambda_{D^4_\Sigma}^\partial(x,q(y))
=\widehat{\lambda}^\partial(q(y))(x)\\
&=\widehat{\lambda}(y)(x)
=\lambda(x,y)
=x^TL\overline{y}.
\end{align*}
It follows that~$L^\partial (t^{-1}-1)=L$.
Combine this with~$\mathcal{H}_2=\bsm
0&t-1 \\
t^{-1}-1&0
\esm
=\bsm
0&-t \\
1&0
\esm
(t^{-1}-1)$ to rewrite~\eqref{eq:StableIso} as:
%{MP: I removed the  subscript $\Sigma$ from $L^{\partial}$ and $\Psi^{\partial}$. $L^{\partial}$ is defined above without the subscript, then it seemed to appear half way through the proof. I removed it to simplify the notation.  Also $\Psi^{\partial}$ is a unique object without needing an extra subscript, right? }
$$  A^T \left(L^\partial  (t^{-1}-1)\oplus \begin{pmatrix}
0&-t\\
1 & 0
\end{pmatrix}(t^{-1}-1)\right)\overline{A}=
\begin{pmatrix}
0&-t\\
1 & 0
\end{pmatrix}^{\oplus g+n}(t^{-1}-1).$$
As~$(t^{-1}-1)\cdot \id$ is diagonal with constant diagonal coefficients, it is central and thus we obtain
$$(t^{-1}-1) \cdot A^T\left(L^\partial \oplus \begin{pmatrix}
0&-t \\
1&0
\end{pmatrix}^{\oplus n}\right)\overline{A} =(t^{-1}-1)\begin{pmatrix}
0&-t \\
1&0
\end{pmatrix}^{\oplus(g+n)}_.$$
As~$(t^{-1}-1)\cdot \id$ is nondegenerate over~$\Lambda$, it follows that
\begin{equation}
\label{eq:SymmetricDesired}
A^T\left(L^\partial \oplus \begin{pmatrix}
0&-t \\
1&0
\end{pmatrix}^{\oplus n}\right) \overline{A}=\begin{pmatrix}
0&-t \\
1&0
\end{pmatrix}^{\oplus(g+n)}_.
\end{equation}
In particular~$L^\partial$ stabilises, via~$(-t)$-Hermitian matrices, to a~$(-t)$-Hermitian form, so it follows that~$L^\partial$ is itself~$(-t)$-Hermitian.

In order to apply cancellation, we refine~\eqref{eq:SymmetricDesired} to a statement about quadratic forms.
Set
$$\Psi:=\begin{pmatrix} 0&0 \\ 1&0 \end{pmatrix}.$$
Then $(1+T_{-t})\Psi = \bsm 0&-t \\ 1&0 \esm$, so $\Psi$ is a quadratic refinement for $\bsm 0&-t \\ 1&0 \esm$. Therefore $\Psi^{\oplus (g+n)}$ is a quadratic refinement for $\bsm 0&-t \\ 1&0 \esm^{\oplus (g+n)}$, which is the right hand side of~\eqref{eq:SymmetricDesired}. Thus $\Psi^{\oplus (g+n)}$ is a quadratic refinement for \[A^T\left(L^\partial \oplus \begin{pmatrix}
0&-t \\
1&0
\end{pmatrix}^{\oplus n}\right) \overline{A}.\]
 Rearranging,
% so that~$\mathcal{H}_2=(t^{-1}-1)\Psi+(t-1)\Psi^T$ and~$(t^{-1}-1)\Psi^{\oplus (g+n)}$ provides a quadratic refinement for the Hermitian form~$\mathcal{H}_2^{\oplus (g+n)}$.
%It follows from~\eqref{eq:StableIso} that
%\[L \oplus \mathcal{H}_2^{\oplus g} = A^{-T}\mathcal{H}_2^{\oplus(g+n)}\overline{A}^{-1}.\]
%As~$(t^{-1}-1) \Psi^{\oplus(g+n)}$ is a quadratic refinement for~$\mathcal{H}_2^{\oplus(g+n)}$, it follows that~$(t^{-1}-1)A^{-T}\Psi^{\oplus(g+n)}\overline{A}^{-1}$ provides a quadratic refinement for
%\[L \oplus \mathcal{H}_2^{\oplus n} =(t^{-1}-1)
%(L^\partial \oplus \bsm 0&-t \\ 1&0 \esm^{\oplus n}).\]
%Dividing both sides by~$(t^{-1}-1)$,
we deduce that~$A^{-T}\Psi^{\oplus(g+n)}\overline{A}^{-1}$ is a quadratic refinement for the~$(-t)$-Hermitian matrix
$L^\partial \oplus \bsm 0&-t \\ 1&0 \esm^{\oplus n}$.
Now, corresponding to this decomposition,  write:
$$A^{-T}\Psi^{\oplus(g+n)}\overline{A}^{-1}=\begin{pmatrix}
\Psi^\partial &U \\ V &\Psi_{\operatorname{ind}}
\end{pmatrix}
$$
where this defined the block entries on the right hand side.
Since this is a quadratic refinement of the block diagonal~$(-t)$-Hermitian matrix~$L^\partial \oplus \bsm 0 & -t \\ 1 & 0 \esm^{\oplus n}$, we deduce that~$V-t\ol{U}^T=0$ and therefore
\begin{equation}\label{eqn:another-one}
A^{-T}\Psi^{\oplus(g+n)}\overline{A}^{-1}=\begin{pmatrix}
\Psi^\partial &0 \\ 0&\Psi_{\operatorname{ind}}
\end{pmatrix}+\begin{pmatrix}
0&U \\ t\ol{U}^T & 0
\end{pmatrix}=\Psi^\partial  \oplus \Psi_{\operatorname{ind}} \in Q_{-t}(\Lambda^{\oplus 2(g+n)}).
\end{equation}
This procedure therefore endows both the~$(-t)$-Hermitian matrices~$L^\partial$ and~$\bsm 0&-t \\ 1&0 \esm^{\oplus n}$ with quadratic refinements, which we denoted by~$\Psi^\partial$ and~$\Psi_{\operatorname{ind}}$ respectively.
Rearranging \eqref{eqn:another-one} we obtain the following isometry of quadratic forms:
 \begin{equation}
\label{eq:QuadraticNearlyDesired}
 A^T(\Psi^\partial \oplus \Psi_{\operatorname{ind}}) \overline{A}=\Psi^{\oplus (g+n)} \in Q_{-t}(\Lambda^{\oplus 2(g+n)}).
 \end{equation}
In order to apply cancellation, we need that~$\Psi_{\operatorname{ind}}$ agrees with the quadratic refinement~$\Psi^{\oplus n}$ of~$\bsm 0&-t \\ 1&0 \esm^{\oplus n}$ in the~$Q$-group~$Q_{-t}(\Lambda^{2n})$.
That both are quadratic refinements of the form $\bsm 0&-t \\ 1&0 \esm^{\oplus n}$ in $Q^{-t}(\Lambda^{2n})$ can be restated as
\[(1+T_{-t}) (\Psi_{\operatorname{ind}}) = (1+T_{-t}) (\Psi^{\oplus n}) \in  Q^{-t}(\Lambda^{2n}).\]
By Lemma~\ref{lem:Q-group-map}, $(1+T_{-t})$ is injective, and so $\Psi_{\operatorname{ind}} = \Psi^{\oplus n}$ in $Q_{-t}(\Lambda^{2n})$, as desired.
We deduce from~\eqref{eq:QuadraticNearlyDesired} that
\begin{equation}
\label{eq:QuadraticNearlyNearlyDesired}
 A^T\left(\Psi^\partial \oplus \Psi^{\oplus n}\right) \overline{A}=\Psi^{\oplus (g+n)} \in Q_{-t}(\Lambda^{2(g+n)}).
 \end{equation}
Since~$\bsm
   0&1 \\1 &0
\esm \bsm
  0 & 0\\1 &0
\esm \bsm
   0& 1\\1 & 0
\esm = \bsm
   0& 1\\0 & 0
\esm,$
we see that~$\bsm
0&0 \\
1&0
\esm$ is congruent to the hyperbolic~$(-t)$-quadratic form~$H_{-t}(\Lambda)$.
%Blurring the distinction between forms and their matrices,
We have established the following isometry of~$(-t)$-quadratic forms:
\begin{equation}
\label{eq:ApplyCancellation}
\Psi^\partial \oplus H_{-t}(\Lambda)^{\oplus n}  \cong  H_{-t}(\Lambda)^{\oplus g} \oplus H_{-t}(\Lambda)^{\oplus n}  \in Q_{-t}(\Lambda^{2(g+n)}).
 \end{equation}
%\begin{equation}
%\label{eq:ApplyCancellation}
% L^\partial \oplus H^{-t}(\Lambda)^{\oplus n}  \sim H^{-t}(\Lambda)^{\oplus g} \oplus H^{-t}(\Lambda)^{\oplus n}.
% \end{equation}
Since~$g \geq 3$, we now apply the cancellation result of Proposition~\ref{prop:StablyHyperbolicImpliesHyperbolic} to~\eqref{eq:ApplyCancellation} and symmetrise the result to deduce that~$ L^\partial~$ is congruent to~$H^{-t}(\Lambda)^{\oplus g}$, and therefore to~$\bsm 0&-t \\ 1&0 \esm^{\oplus g}.$
Multiplying both sides of this congruence by~$t^{-1}-1$ shows that ~$L$ is congruent to~$\mathcal{H}_2^{\oplus g}$.
This concludes the proof of Theorem~\ref{thm:HyperbolicIntersectionForm}.
\end{proof}

\subsection{Surfaces in $S^4$ and $D^4$.}
\label{sub:SurfacesS4D4}

We collect and prove our results concerning (rel.\ boundary) ambient isotopy of surfaces in $D^4$ and~$S^4$.
Since the proofs of these results were already outlined in the introduction or elsewhere, we proceed swiftly.
%We prove Theorem~\ref{thm:WithBoundaryIntro}~\eqref{item:main-thm-3} from the introduction.
\begin{theorem}
\label{thm:D4Summary}
Let $\Sigma_0,\Sigma_1 \subseteq D^4$ be $\Z$-surfaces of genus $g$ with boundary $K$.
\begin{enumerate}
\item Suppose there is an isometry~$F \in \Iso(\lambda_{D^4_{\Sigma_0}},\lambda_{D^4_{\Sigma_1}})$ and write~$\partial F=h_K \oplus h_\Sigma$.
If $h_K$ is represented by an orientation-preserving homeomorphism that is the identity on $\partial E_K$, then~$\Sigma_0$ and~$\Sigma_1$ are topologically ambiently isotopic rel.\ boundary.
\item If $\Delta_K=1,g= 1,2$ and $\lambda_{D^4_{\Sigma_0}} \cong \lambda_{D^4_{\Sigma_1}}$, then~$\Sigma_0$ and~$\Sigma_1$ are topologically ambiently isotopic rel.\ boundary.
\item If $\Delta_K=1$ and $g \neq 1,2$, then~$\Sigma_0$ and~$\Sigma_1$ are topologically ambiently isotopic rel.\ boundary.
\end{enumerate}
\end{theorem}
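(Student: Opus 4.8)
The plan is to deduce all three items from Theorem~\ref{thm:WithBoundary} in the case $N = D^4$, together with the intersection-form computations of Section~\ref{sec:IntersectionForms} and Theorem~\ref{thm:HyperbolicIntersectionForm}; no genuinely new ingredient is needed, the argument is assembly. For item~(1) I would simply invoke the second bullet of Theorem~\ref{thm:WithBoundary}. We are handed an isometry $F \in \Iso(\lambda_{D^4_{\Sigma_0}},\lambda_{D^4_{\Sigma_1}})$ with $\partial F = h_K \oplus h_\Sigma$, and we assume $h_K$ is realised by an orientation-preserving homeomorphism $f_K \colon E_K \to E_K$ that is the identity on $\partial E_K$. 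Theorem~\ref{thm:WithBoundary} then produces, for a choice of isotopy $\Theta(f_K)$, an ambient isotopy of $D^4$ carrying $\Sigma_0$ to $\Sigma_1$ and extending $\Theta(f_K)$; since $f_K$ restricts to the identity on $\partial E_K$, the isotopy may be arranged so that the resulting isotopy of $\Sigma_0$ to $\Sigma_1$ is rel.\ boundary.

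For item~(2), the role of the hypothesis $\Delta_K = 1$ is to force $h_K = \id$. Indeed $\Delta_K = 1$ implies $H_1(E_K;\Lambda) = 0$, so $\Bl_{E_K}$ is the trivial linking form on the zero module and $\Aut(\Bl_{E_K}) = \{\id\}$; hence, by Proposition~\ref{prop:AutomorphismInTheBoundaryCase}, any isometry $F$ of the $\Lambda$--intersection forms of the exteriors satisfies $\partial F = \id \oplus h_\Sigma$. Thus $h_K = \id$ is realised by $f_K = \id_{E_K}$, and item~(1) applies, giving a rel.\ boundary ambient isotopy.

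For item~(3), with $\Delta_K = 1$ and $g \neq 1,2$, the strategy is to produce an isometry of the equivariant intersection forms and then quote item~(2). If $g = 0$ the surfaces are $\Z$-slice discs, whose exteriors have $\Lambda$--homology concentrated in degree $0$ (a rank count as in the proof of Theorem~\ref{thm:HyperbolicIntersectionForm} gives $H_2(D^4_{\Sigma_i};\Lambda) = 0$), so $\lambda_{D^4_{\Sigma_0}}$ and $\lambda_{D^4_{\Sigma_1}}$ are both the form on the zero module and are trivially isometric. If $g \geq 3$, Theorem~\ref{thm:HyperbolicIntersectionForm} gives $\lambda_{D^4_{\Sigma_0}} \cong \mathcal{H}_2^{\oplus g} \cong \lambda_{D^4_{\Sigma_1}}$. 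In either case item~(2) supplies the rel.\ boundary ambient isotopy, completing the proof.

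The genuine obstacles lie entirely in the inputs. Theorem~\ref{thm:WithBoundary} passes through the $s$-cobordism argument of Theorem~\ref{thm:MainTechnical}, and Theorem~\ref{thm:HyperbolicIntersectionForm} rests on the algebraic cancellation of Proposition~\ref{prop:StablyHyperbolicImpliesHyperbolic} applied to $(-t)$-quadratic refinements of the \emph{relative} intersection pairing. The only subtlety internal to the present statement is that this cancellation is available only when $g \geq 3$; that is exactly why the cases $g = 1,2$ in item~(2) must carry the extra assumption that the intersection forms are already isometric, rather than having it provided for free as in item~(3).
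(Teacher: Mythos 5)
Your proposal is correct and follows exactly the same route as the paper: item (1) is the second bullet of Theorem~\ref{thm:WithBoundary}, item (2) reduces to item (1) because $\Delta_K=1$ forces $H_1(E_K;\Lambda)=0$ and hence $h_K=\id$, and item (3) supplies the needed isometry via Theorem~\ref{thm:HyperbolicIntersectionForm} for $g\geq 3$ (with the $g=0$ case trivial since $H_2(D^4_{\Sigma_i};\Lambda)=0$). Your explicit treatment of the $g=0$ case is a small expository improvement over the paper's one-line proof, but the argument is the same.
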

\begin{proof}
The first assertion is the second item of Theorem~\ref{thm:WithBoundary}.
When $\Delta_K=1$, we have~$H_1(E_K;\Lambda)=0$, so $h_K=\id$ is automatic, and the second assertion follows from the first.
For the third assertion, additionally use that $\lambda_{D^4_{\Sigma_0}} \cong \lambda_{D^4_{\Sigma_1}}$ when $g \neq 1,2$ by Theorem~\ref{thm:HyperbolicIntersectionForm}.
\end{proof}

\begin{theorem}
Let $\Sigma_0,\Sigma_1 \subseteq S^4$ be closed $\Z$-surfaces of genus $g$.
\begin{enumerate}
\item For $g = 1,2$, if $\lambda_{\Sigma_0} \cong \lambda_{\Sigma_1}$, then $\Sigma_0$ and $\Sigma_1$ are topologically ambiently isotopic.
\item For $g \neq 1,2$, the surfaces $\Sigma_0$ and $\Sigma_1$ are topologically ambiently isotopic.
\end{enumerate}
\end{theorem}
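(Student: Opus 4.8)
The plan is to deduce both parts from Theorem~\ref{thm:Unknotting4Manifold} applied with $X = S^4$. The key simplification in the closed case $X = S^4$ is that $H_2(S^4) = 0$, so the induced isometry $F_\Z$ of Theorem~\ref{thm:Unknotting4Manifold}~\eqref{item:closed-case-2-maintext} is automatically the identity; hence any isometry $F \colon \lambda_{S^4_{\Sigma_0}} \cong \lambda_{S^4_{\Sigma_1}}$ of the $\Lambda$--intersection forms of the exteriors produces a topological ambient isotopy from $\Sigma_0$ to $\Sigma_1$. Thus in both parts it remains only to exhibit such an isometry.

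For part (1), with $g \in \{1,2\}$, the required isometry is exactly the hypothesis, so Theorem~\ref{thm:Unknotting4Manifold} applies immediately. For part (2), with $g \neq 1,2$, I would first pass to the $D^4$ setting as in Subsection~\ref{subsection:surfaces-in-closed-4-manifolds}: fixing a disc $D^2 \subseteq \Sigma_i$ and removing a tubular neighbourhood $(\mathring{D}^4, \mathring{D}^2)$ of it from $(S^4, \Sigma_i)$ yields a pair $(D^4, \wt\Sigma_i)$, where $\wt\Sigma_i \subseteq D^4$ is a properly embedded genus $g$ surface with $\partial \wt\Sigma_i$ the unknot $U \subseteq S^3$ and $S^4_{\Sigma_i} = D^4_{\wt\Sigma_i}$; choosing the removed discs and their complementary trivial discs compatibly we may assume both $\wt\Sigma_i$ sit in the same $(D^4, S^3)$ with the same boundary $U$. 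Since $\pi_1(D^4_{\wt\Sigma_i}) = \pi_1(S^4_{\Sigma_i}) = \Z$, each $\wt\Sigma_i$ is a $\Z$-surface whose boundary $U$ has Alexander polynomial one. When $g = 0$ the surface exterior has vanishing $\Lambda$--coefficient second homology, so $\lambda_{S^4_{\Sigma_0}} = 0 = \lambda_{S^4_{\Sigma_1}}$. When $g \geq 3$, Theorem~\ref{thm:HyperbolicIntersectionForm} gives $\lambda_{D^4_{\wt\Sigma_i}} \cong \mathcal{H}_2^{\oplus g}$ for $i = 0,1$, whence $\lambda_{S^4_{\Sigma_0}} \cong \mathcal{H}_2^{\oplus g} \cong \lambda_{S^4_{\Sigma_1}}$. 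In either case the resulting isometry feeds into Theorem~\ref{thm:Unknotting4Manifold} and the proof is complete.

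There is no serious obstacle here, since all of the hard analysis is already contained in Theorems~\ref{thm:HyperbolicIntersectionForm}, \ref{thm:WithBoundary} and \ref{thm:Unknotting4Manifold}; the only points that need care are the compatible choice of removed $(D^4, D^2)$ pairs so that the two exteriors genuinely coincide, and recording that $\Delta_U \doteq 1$ makes $\wt\Sigma_i$ eligible for Theorem~\ref{thm:HyperbolicIntersectionForm}. Alternatively, one could route the whole argument through Theorem~\ref{thm:D4Summary}~(2) and (3): these directly give a rel.\ boundary ambient isotopy of $D^4$ carrying $\wt\Sigma_0$ to $\wt\Sigma_1$, which then extends by the identity over the removed $(D^4, D^2)$ to an ambient isotopy of $S^4$ taking $\Sigma_0$ to $\Sigma_1$; this avoids invoking the Kreck--Perron--Quinn isotopy classification a second time.
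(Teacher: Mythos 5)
Your proposal is correct, and the alternative you sketch in your final sentences is verbatim the paper's proof: both assertions are deduced from Theorem~\ref{thm:D4Summary} by removing a $(D^4,D^2)$-pair from $(S^4,\Sigma_i)$, with the isometries supplied by the hypothesis in case (1) and by Theorem~\ref{thm:HyperbolicIntersectionForm} (plus the vanishing of $H_2(S^4_\Sigma;\Lambda)$ when $g=0$) in case (2). Your primary route through Theorem~\ref{thm:Unknotting4Manifold} and the observation that $F_\Z=\id$ automatically on $H_2(S^4)=0$ is equally valid, but as you note it invokes the Kreck--Perron--Quinn isotopy classification where the Alexander trick already suffices, so the paper's $D^4$ route is the leaner of the two.
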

\begin{proof}
Both assertions follow from Theorem~\ref{thm:D4Summary} by removing a $(D^4,D^2)$-pair from $(S^4,\Sigma_i)$.
\end{proof}

\subsection{Pushed-in Seifert surfaces for Alexander polynomial one knots.}
\label{sub:PushedIn}

We prove Theorem~\ref{thm:PushedInIntro} from the introduction, which states that any two pushed-in Seifert surfaces of the same genus for an Alexander polynomial one knot are topologically ambiently isotopic in $D^4$.
\medbreak
Recall that a \emph{sublagrangian} for a nonsingular $\varepsilon$-Hermitian form $(H,\lambda)$ is a direct summand $M \subseteq H$ such that $\lambda(M \times M)=0$ (i.e.\ $M \subseteq M^\perp$).
A sublagrangian is a \emph{Lagrangian} if~$M=M^\perp$.
For a nonsingular $\varepsilon$-quadratic form $(H,\lambda,\mu)$, the definitions are identical, with the additional requirement that~$\mu(M)=0$.
The following lemma is known to those familiar with L-theory. For instance, given a sublagrangian $i \colon M \hookrightarrow H$, the result follows quite promptly from~\cite[Proposition 2.2]{RanickiAlgebraicSurgery}, provided one assumes that $i^* \widehat{\lambda} \colon H \to M^*$ is surjective.
Since our argument is elementary (Ranicki's proof is more involved because his statement is more general) and since most of our work goes into establishing that $i^* \widehat{\lambda}$ is surjective, we include a proof for the readers' convenience.

% in both definitions.
\begin{lemma}
\label{lem:HalfRankLemma}
Let~$(H,\lambda)$ be a nonsingular~$\varepsilon$-Hermitian form on a free $\Lambda$-module~$H$.
%%We defined Hermitian forms as being on free modules. %MP: But it will be confusing if we don't specify the ring $\Lambda$.
If~$M \subseteq H$ is a half-rank sublagrangian, then~$M$ is a Lagrangian.
\end{lemma}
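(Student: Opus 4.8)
The plan is to play off the nonsingularity of $\lambda$ against the fact that $M$ is a half-rank \emph{direct summand}, the upshot being that $M^\perp$ is forced to have the same rank as $M$ and to sit between $M$ and a torsion-free quotient of $H$. Concretely, I would first record that nonsingularity gives an isomorphism $\wh{\lambda}\colon H \xrightarrow{\cong} H^*$, under which $M^\perp$ is identified with the preimage $\wh{\lambda}^{-1}\big(\operatorname{ann}_{H^*}(M)\big)$, where $\operatorname{ann}_{H^*}(M) = \{\varphi \in H^* \mid \varphi|_M = 0\}$; equivalently $M^\perp = \ker\big(H \xrightarrow{\wh{\lambda}} H^* \to M^*\big)$, the second map being restriction of functionals.

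Next I would carry out a rank count. Since $M$ is a direct summand, write $H = M \oplus N$ with $N$ a finite-rank complement; $N$ is a submodule of the free module $H$, hence torsion-free, and $\rk_\Lambda N = \rk_\Lambda H - \rk_\Lambda M$. Then $H^* = M^* \oplus N^*$ and $\operatorname{ann}_{H^*}(M)$ is precisely the summand $N^*$, so it is a direct summand of $H^*$ of rank $\rk_\Lambda N$. Transporting through the isomorphism $\wh{\lambda}$ shows that $M^\perp$ is a direct summand of $H$ with $\rk_\Lambda M^\perp = \rk_\Lambda H - \rk_\Lambda M = \rk_\Lambda M$, the last equality being the half-rank hypothesis.

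Finally I would combine the two facts. The sublagrangian condition gives $M \subseteq M^\perp$, and we have just seen that $M$ and $M^\perp$ have the same $\Lambda$-rank, so the finitely generated module $M^\perp/M$ is $\Lambda$-torsion. On the other hand $M^\perp/M$ is a submodule of $H/M \cong N$, which is torsion-free. A module that is simultaneously torsion and torsion-free vanishes, so $M^\perp = M$; that is, $M$ is a Lagrangian.

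I do not expect a real obstacle here: every step beyond the identification of $\rk_\Lambda M^\perp$ is bookkeeping, and that identification is exactly where nonsingularity of $\lambda$ enters, becoming purely formal once one fixes a complement $H = M \oplus N$.
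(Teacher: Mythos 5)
Your proposal is correct and follows essentially the same route as the paper: identify $M^\perp$ as the kernel of $H \xrightarrow{\wh{\lambda}} H^* \to M^*$, use the splitting $H = M \oplus N$ (the paper phrases this via $H/M$ being projective, hence free, so that $\operatorname{Ext}^1_\Lambda(H/M,\Lambda)=0$) to see that $M^\perp$ is a half-rank direct summand, and conclude that $M^\perp/M$ is both torsion and torsion-free, hence zero. No gaps.
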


\begin{proof}
We know that~$M \subseteq M^\perp$ and must show that~$M=M^\perp$.
Equivalently, we must show that~$M/M^\perp=0$.
We will show that~$M/M^\perp$ is both torsion and torsion-free.
\begin{claim}
The inclusion~$M^\perp \hookrightarrow H$ is split and~$M^\perp \subseteq H$ is a half-rank summand.
\end{claim}
\begin{proof}
Consider the exact sequence~$0 \to M^\perp \to H \xrightarrow{\iota^* \widehat{\lambda}} M^*$.
It now suffices to show that~$\iota^* \widehat{\lambda}$ is surjective: since $M^*$ is free and~$\operatorname{rk}_\Lambda(M^*)=\operatorname{rk}_\Lambda(M)=\frac{1}{2}\operatorname{rk}_\Lambda(H)$, the result would then follow.
Since~$\lambda$ is nonsingular,~$\widehat{\lambda}$ is surjective and so it suffices to prove that~$\iota^*$ is injective.
This occurs if and only if~$\operatorname{Ext}_\Lambda^1(H/M,\Lambda)=0$.
But since~$M$ is a summand, it follows that~$H \cong M \oplus H/M$.
Since~$H$ is free,~$H/M$ is projective over $\Lambda$ and therefore free over $\Lambda$~\cite[Chapter V, Corollary 4.12]{LamSerre}.
Thus~$\operatorname{Ext}_\Lambda^1(H/M,\Lambda)=0$, so ~$\iota^* \widehat{\lambda}$ is surjective and thus~$M^\perp \hookrightarrow H$ is a split injection, as claimed.
\end{proof}

The fact that~$M/M^\perp$ is torsion follows from the claim because~$M^\perp \subseteq M$ and $\operatorname{rk}_\Lambda(M^\perp)=\operatorname{rk}_\Lambda(M)$.
We now show that~$M/M^\perp$ is torsion-free.
Since submodules of torsion-free modules are torsion-free, it suffices to show that~$H/M^\perp \supseteq M/M^\perp$ is torsion-free.
But now by the claim, we know that~$H/M^\perp$ is (isomorphic to) a submodule of the free module~$H \cong M^\perp \oplus H/M^\perp$ and so it is indeed torsion-free.
This concludes the proof of the lemma.
\end{proof}

Given a Seifert matrix~$A$ for a genus $g$ Seifert surface $S$, we write~$(H,A):=(\Z^{2g},A)$.
Use $F \subseteq D^4$ to denote the result of pushing $S$ into $D^4$.
After some choice of bases, the equivariant intersection form~$(H_2(D^4_F;\Lambda),\lambda_{D^4_F})$ is isometric to~$(H \otimes_\Z \Lambda, (1-t)A^T+(1-t^{-1})A)$~\cite[Section~3]{KoSeifert} (see also~\cite[proof of Lemma 5.4]{CochranOrrTeichnerStructure}).
% and}~\cite[Theorem 1.3]{ConwayFriedlToffoli}).

\begin{remark}
\label{rem:NonSingular}
Observe that $(H \otimes_\Z \Lambda,A-tA^T,A)$ defines a $(-t)$-quadratic form.
Furthermore, if~$A$ is a Seifert matrix for an Alexander polynomial
one knot $K$, then this quadratic form is nonsingular since $ \det(A-tA^T)\doteq \Delta_K(t)  \doteq 1$.
\end{remark}

%Observe that~$H_A(t):=(1-t)A^T+(1-t^{-1})A=(1-t^{-1})(A-tA^T)$ and~$\mathcal{H}_2=-(1-t^{-1})\bsm 0&-t \\ 1 &0 \esm.$
%Since~$\mathcal{H}_2$ is congruent to~$-\mathcal{H}_2$ {Indeed,~$\bsm 1&0 \\ 0&-1 \esm \mathcal{H}_2 \bsm 1&0 \\ 0&-1 \esm=-\mathcal{H}_2$.}, we can equivalently show that~$H_A(t)$ is congruent to~$-\mathcal{H}_2^{\oplus g}$.
%Summarising, we wish to show the following result.
Recall that a \emph{metaboliser} for a Seifert matrix~$(\Z^{2g},A)$ is a half-rank direct summand~$L \subseteq \Z^{2g}$ such that~$x^TAy=0$ for all~$x,y \in L$.
Since Alexander polynomial one knots are (in particular) algebraically slice, their Seifert matrices admit metabolisers.

\begin{proposition}
\label{prop:Lagrangian}
Let~$(H,A):=(\Z^{2g},A)$ be a Seifert matrix for an Alexander polynomial one knot~$K$.
If~$L \subseteq H$ is a metaboliser for~$(H,A)$, then~$L \otimes_\Z \Lambda$ is a Lagrangian for~$(H \otimes_\Z \Lambda,A-tA^T,A)$ and so this latter $(-t)$-quadratic form is hyperbolic.
\end{proposition}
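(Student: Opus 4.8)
The plan is to verify directly that $L \otimes_\Z \Lambda$ satisfies the defining properties of a Lagrangian for the nonsingular $(-t)$-quadratic form $(H \otimes_\Z \Lambda, A - tA^T, A)$, and then to conclude via the standard fact that a nonsingular $\varepsilon$-quadratic form admitting a Lagrangian is hyperbolic.

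First I would check the three properties. For \emph{direct summand}: a $\Z$-module splitting $H = L \oplus L'$ induces a $\Lambda$-module splitting $H \otimes_\Z \Lambda = (L \otimes_\Z \Lambda) \oplus (L' \otimes_\Z \Lambda)$. For \emph{half-rank}: $\rk_\Lambda(L \otimes_\Z \Lambda) = \rk_\Z L = g = \tfrac12 \rk_\Lambda(H \otimes_\Z \Lambda)$. For \emph{isotropy}: the metaboliser condition $x^T A y = 0$ for all $x,y \in L$ says precisely that the matrix $A$ vanishes identically on $L$; hence the quadratic refinement $A$ restricts to $0$ on $L \otimes_\Z \Lambda$, and therefore so does its symmetrisation $A - tA^T = (1+T_{-t})(A)$. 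Thus $L \otimes_\Z \Lambda$ is a half-rank sublagrangian of the $(-t)$-Hermitian form $(H \otimes_\Z \Lambda, A - tA^T)$, on which moreover the quadratic refinement vanishes. It is worth stressing that this is exactly the point at which the \emph{Seifert-matrix} metaboliser condition $x^TAy = 0$ (rather than the weaker symmetric one $x^TAy + y^TAx = 0$) is used: it is what produces a Lagrangian for the quadratic form and not merely for its underlying symmetric form, and is the reason passing to the $(-t)$-quadratic refinement is advantageous.

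Next I would upgrade ``sublagrangian'' to ``Lagrangian''. By Remark~\ref{rem:NonSingular}, since $\det(A - tA^T) \doteq \Delta_K \doteq 1$, the matrix $A - tA^T$ lies in $\operatorname{GL}_{2g}(\Lambda)$, so the $(-t)$-Hermitian form $(H \otimes_\Z \Lambda, A - tA^T)$ is nonsingular, and $H \otimes_\Z \Lambda \cong \Lambda^{2g}$ is free. Lemma~\ref{lem:HalfRankLemma} then shows that the half-rank sublagrangian $L \otimes_\Z \Lambda$ is in fact a Lagrangian of $A - tA^T$; combined with the vanishing of the quadratic refinement on it, $L \otimes_\Z \Lambda$ is a Lagrangian of the nonsingular $(-t)$-quadratic form $(H \otimes_\Z \Lambda, A - tA^T, A)$. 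Being a direct summand of the free module $\Lambda^{2g}$, it is itself free (finitely generated projective $\Lambda$-modules are free, see \cite[Chapter~V, Corollary~4.12]{LamSerre}), so by the classification of $\varepsilon$-quadratic forms equipped with a Lagrangian (see e.g.\ \cite{KnusQuadratic} or \cite{RanickiExact}) the form is isometric to $H_{-t}(\Lambda)^{\oplus g}$, i.e.\ hyperbolic. The only delicate points are the bookkeeping that turns the metaboliser condition into vanishing of the \emph{quadratic} refinement (as highlighted above) and the fact that over $\Lambda$ a nonsingular $\varepsilon$-quadratic form with a Lagrangian is hyperbolic with no evenness hypothesis needed; neither is a genuine obstacle.
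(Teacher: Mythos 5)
Your proof is correct and follows essentially the same route as the paper's: verify that $L\otimes_\Z\Lambda$ is a half-rank isotropic summand on which the quadratic refinement $A$ vanishes, invoke Lemma~\ref{lem:HalfRankLemma} (using nonsingularity from $\Delta_K\doteq 1$) to upgrade the sublagrangian to a Lagrangian, and conclude hyperbolicity from the standard fact that a nonsingular $\eps$-quadratic form with a Lagrangian is hyperbolic. Your explicit remark on why the full Seifert condition $x^TAy=0$ (rather than its symmetrisation) is what makes the quadratic refinement vanish is a nice clarification but not a different argument.
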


\begin{proof}
We verify the three points in the definition of a Lagrangian.
Since~$A$ vanishes on~$L$, the extended form (also denoted~$A$) vanishes on~$M:=L \otimes_\Z \Lambda$.
It follows that the $(-t)$-Hermitian form $A-tA^T$ vanishes on $M \times M$.
%Since~$L$ is isotropic for~$(H,A)$, so is~$L \otimes_\Z \Lambda$ for~$A-tA^T$.
Since~$L \subseteq H$ is a summand, so is~$M \subseteq H \otimes_\Z \Lambda$.
Thus~$M \subseteq (H \otimes_\Z \Lambda,A-tA^T)$ is a sublagrangian of a~$(-t)$-Hermitian form, with~$H \otimes_\Z \Lambda$ a free module.
We showed in Lemma~\ref{lem:HalfRankLemma} that since~$M$ is half-rank, this forces~$M^\perp =M$.
%Thus~$M$ is a Lagrangian and this concludes the proof of the proposition.
%One can check that~$(L \otimes_\Z \Lambda)^\perp=L^\perp \otimes_\Z \Lambda$
%The notion of~$L^\perp$ is not entirely clear, once it is~$A$ another it is~$H_A$.

The last sentence of the proposition follows because it is known that if a nonsingular~$\varepsilon$-quadratic form~$(H,\lambda,\mu)$ admits a Lagrangian,
%is hyperbolic with Lagrangian~$L$,
 then it is isometric to the standard hyperbolic form~\cite[Proposition 2.2]{RanickiAlgebraicSurgery}.
 %\cite[Lemma~7.92]{CrowleyLuckMacko20}.
 %~$H_\varepsilon(L)$~\cite[Lemma~7.92]{CrowleyLuckMacko20}.
%{This would be wrong if we only considered~$\lambda$ without~$\mu$: not all metabolic~$\varepsilon$-symmetric forms are isometric.}
\end{proof}

\begin{proposition}
\label{prop:Hyperbolic}
If $A$ is a Seifert matrix for an Alexander polynomial one knot~$K$, then
% the~$(-t)$-quadratic form~$(H \otimes_\Z \Lambda,H_A,A)$ is hyperbolic:
%$$ (H \otimes_\Z \Lambda,A-tA^T,A) \cong H_{-t}(\Lambda)^{\oplus g}.$$
%In particular,
%the Hermitian form $(H \otimes_\Z \Lambda,(1-t)A^T+(1-t^{-1})A)$ is hyperbolic:
$$ (H \otimes_\Z \Lambda,(1-t)A^T+(1-t^{-1})A) \cong \mathcal{H}_2^{\oplus g}.$$
\end{proposition}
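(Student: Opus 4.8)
The plan is to identify $\lambda_{D^4_F}$ with $(1-t^{-1})$ times the $(-t)$-symmetrisation of the Seifert form $A$ and then feed in the hyperbolicity already established in Proposition~\ref{prop:Lagrangian}. First I would record the matrix identity over $\Lambda$
\[(1-t)A^T + (1-t^{-1})A = (1-t^{-1})(A - tA^T).\]
Combined with the isometry $(H_2(D^4_F;\Lambda),\lambda_{D^4_F}) \cong (H\otimes_\Z\Lambda,\,(1-t)A^T + (1-t^{-1})A)$ recalled above, this shows that $\lambda_{D^4_F}$ is represented by $(1-t^{-1})(A - tA^T)$. Since $A$ has integer entries, $\overline{A}^T = A^T$, and hence $A - tA^T = (1+T_{-t})(A)$; that is, $A - tA^T$ is the symmetrisation of the $(-t)$-quadratic form $(H \otimes_\Z \Lambda, A - tA^T, A)$, which is nonsingular by Remark~\ref{rem:NonSingular} because $\det(A - tA^T) \doteq \Delta_K \doteq 1$.

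Next, since $K$ has Alexander polynomial one it is algebraically slice, so its Seifert matrix $(H,A)$ admits a metaboliser. Proposition~\ref{prop:Lagrangian} then says that the $(-t)$-quadratic form $(H \otimes_\Z \Lambda, A - tA^T, A)$ is isometric to the standard hyperbolic $(-t)$-quadratic form $H_{-t}(\Lambda)^{\oplus g} = (\Lambda^{2g}, \bsm 0&1\\0&0\esm^{\oplus g})$. Applying the symmetrisation map $1+T_{-t}$, which turns an isometry of $(-t)$-quadratic forms into a congruence of the associated $(-t)$-Hermitian forms, yields an invertible matrix $P$ over $\Lambda$ with
\[P^T(A - tA^T)\overline{P} = \begin{pmatrix} 0 & 1 \\ -t & 0 \end{pmatrix}^{\oplus g},\]
the right-hand side being $(1+T_{-t})\bsm 0&1\\0&0\esm^{\oplus g}$.

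Finally I would multiply this congruence through by the central scalar $1 - t^{-1}$; since multiplication by $1-t^{-1}$ commutes with $P$,
\[P^T\big((1-t^{-1})(A - tA^T)\big)\overline{P} = (1-t^{-1})\begin{pmatrix} 0 & 1 \\ -t & 0 \end{pmatrix}^{\oplus g} = \begin{pmatrix} 0 & 1-t^{-1} \\ 1-t & 0 \end{pmatrix}^{\oplus g}.\]
A further change of basis by $\bsm 0&-1\\1&0\esm$ on each $2\times 2$ block converts the right-hand side into $\bsm 0 & t-1 \\ t^{-1}-1 & 0\esm^{\oplus g} = \mathcal{H}_2^{\oplus g}$. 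Together with the first paragraph this gives $\lambda_{D^4_F} \cong \mathcal{H}_2^{\oplus g}$, as desired.

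I do not anticipate a genuine obstacle: the substantive input — that the $(-t)$-quadratic Seifert form of an Alexander polynomial one knot is hyperbolic — is exactly Proposition~\ref{prop:Lagrangian}. The only care required is bookkeeping in the $\varepsilon$-Hermitian/$\varepsilon$-quadratic formalism: verifying that $A - tA^T$ is the $(-t)$-symmetrisation of $A$, that symmetrisation is functorial enough to promote an isometry of quadratic forms to a congruence of the underlying Hermitian forms, and that extracting the factor $1-t^{-1}$ together with the final basis change lands precisely on $\mathcal{H}_2$ rather than on a sign or transpose variant of it.
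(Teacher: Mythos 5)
Your proof is correct and follows essentially the same route as the paper's: both rest on Proposition~\ref{prop:Lagrangian} to get hyperbolicity of the $(-t)$-quadratic form $(H\otimes_\Z\Lambda, A-tA^T, A)$, then symmetrise and multiply through by the scalar $1-t^{-1}$ (the paper inserts the block swap $\bsm 0&1\\-t&0\esm \cong \bsm 0&-t\\1&0\esm$ before scaling, where you instead do a final change of basis after scaling — the same bookkeeping in a different order). Your explicit verification that the last basis change lands exactly on $\mathcal{H}_2$ is in fact slightly more careful than the paper's one-line "multiplying both sides" conclusion.
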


\begin{proof}
%We show how Proposition~\ref{prop:Hyperbolic} implies Proposition~\ref{prop:Main}.
%The quadratic form~$(H \otimes_\Z \Lambda,A-tA^T,A)$ is non-singular because~$\det(A-tA^T)=\det(A^T-tA)\doteq \Delta_K\doteq1$.
%It is known that if a non-singular~$\varepsilon$-quadratic form~$(P,\lambda,\mu)$ is hyperbolic with Lagrangian~$L$, then it is isometric to the standard hyperbolic form~$H_\varepsilon(L)$.
Using Proposition~\ref{prop:Lagrangian}, we deduce the following sequence of isometries:
\begin{align*} (H \otimes_\Z \Lambda,A-tA^T,A) \cong H_{-t}(\Lambda)^{\oplus g}
&=
\left(\Lambda^{2g}, \begin{pmatrix}
0&1 \\
-t&0
\end{pmatrix}^{\oplus g},
 \begin{pmatrix}
0&1 \\
0&0
\end{pmatrix}^{\oplus g}
\right) \\
%%%
&\cong
\left(\Lambda^{2g}, \begin{pmatrix}
0&-t \\
1&0
\end{pmatrix}^{\oplus g},
 \begin{pmatrix}
0&0 \\
1&0
\end{pmatrix}^{\oplus g}
\right).
\end{align*}
Multiplying both sides by~$(1-t^{-1})$ then gives the assertion.
\end{proof}

We are now ready to prove Theorem~\ref{thm:PushedInIntro} from the introduction.

\begin{theorem}
\label{thm:PushedIn}
If~$F_0,F_1 \subseteq D^4$ are genus~$g$ pushed-in Seifert surfaces for an Alexander polynomial one knot~$K$, then they are topologically ambiently isotopic rel.\ boundary.
\end{theorem}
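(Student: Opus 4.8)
The plan is to reduce Theorem~\ref{thm:PushedIn} to the third item of Theorem~\ref{thm:D4Summary} (equivalently, Theorem~\ref{thm:WithBoundary}), using that pushed-in Seifert surfaces are $\Z$-surfaces with boundary the Alexander polynomial one knot $K$. First I would record why a pushed-in Seifert surface $F_i$ is a $\Z$-surface: pushing a Seifert surface $S_i$ for $K$ into $D^4$ produces a properly embedded surface whose exterior deformation retracts onto the complement of $S_i$ pushed in, and a Mayer--Vietoris or Alexander-duality computation shows $\pi_1(D^4_{F_i}) \cong \Z$ generated by a meridian (this is standard; the exterior fibres over nothing but the abelianisation is $\Z$ and the Wirtinger-type presentation collapses because the Seifert surface is connected and lies in a collar). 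Having established that $F_0, F_1 \subseteq D^4$ are genus $g$ $\Z$-surfaces with the same boundary $K$, if $g \neq 1,2$ the result is immediate from Theorem~\ref{thm:D4Summary}(3). So the real content is the cases $g = 1, 2$, and also $g = 0$ which is trivial since then $K$ is the unknot.

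For $g=1,2$, by Theorem~\ref{thm:D4Summary}(2) it suffices to show that the equivariant intersection forms $\lambda_{D^4_{F_0}}$ and $\lambda_{D^4_{F_1}}$ are isometric; in fact I would show each is isometric to $\mathcal{H}_2^{\oplus g}$. This is exactly what Proposition~\ref{prop:Hyperbolic} gives: for a Seifert matrix $A$ of $K$, the form $\lambda_{D^4_{F}}$ is isometric to $(H \otimes_\Z \Lambda, (1-t)A^T + (1-t^{-1})A)$ by \cite[proof of Lemma 5.4]{CochranOrrTeichnerStructure} (see also \cite[Theorem 1.3]{ConwayFriedlToffoli}), and Proposition~\ref{prop:Hyperbolic} identifies this with $\mathcal{H}_2^{\oplus g}$. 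The argument there chains together Proposition~\ref{prop:Lagrangian} (a metaboliser $L$ of the Seifert form gives a Lagrangian $L \otimes_\Z \Lambda$ of the $(-t)$-quadratic form $(H \otimes_\Z \Lambda, A - tA^T, A)$, using Lemma~\ref{lem:HalfRankLemma} to upgrade half-rank sublagrangian to Lagrangian, and Remark~\ref{rem:NonSingular} for nonsingularity since $\det(A - tA^T) \doteq \Delta_K \doteq 1$), the hyperbolicity of quadratic forms admitting a Lagrangian \cite[Lemma~7.92]{CrowleyLuckMacko20}, and then multiplication by $(1-t^{-1})$ to pass to the Hermitian form $\mathcal{H}_2^{\oplus g}$. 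Since this holds for \emph{any} Seifert matrix of $K$, we get $\lambda_{D^4_{F_0}} \cong \mathcal{H}_2^{\oplus g} \cong \lambda_{D^4_{F_1}}$.

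Finally I would assemble the pieces: with $g$ arbitrary, if $g \neq 1,2$ apply Theorem~\ref{thm:D4Summary}(3) directly; if $g = 1,2$ use the isometry $\lambda_{D^4_{F_0}} \cong \lambda_{D^4_{F_1}}$ just established together with Theorem~\ref{thm:D4Summary}(2); and if $g=0$ note $K$ is the unknot and both surfaces are $\Z$-discs, handled by the genus zero case. In all cases the conclusion is that $F_0$ and $F_1$ are topologically ambiently isotopic rel.\ boundary. The one point requiring a little care, and the main (though modest) obstacle, is verifying cleanly that a pushed-in Seifert surface really does have knot group $\Z$ and that the identification of its exterior's equivariant intersection form with the Seifert-matrix formula applies in the topological locally flat category with the correct sign and variance conventions; once that bookkeeping is in place, everything else is a direct appeal to the results already assembled in this section. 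I would therefore structure the written proof as: (1) pushed-in Seifert surfaces are $\Z$-surfaces; (2) cite Proposition~\ref{prop:Hyperbolic} to get $\lambda_{D^4_{F_i}} \cong \mathcal{H}_2^{\oplus g}$; (3) conclude via Theorem~\ref{thm:D4Summary}.
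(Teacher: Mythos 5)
Your proposal is correct and follows essentially the same route as the paper: identify the equivariant intersection form of each pushed-in Seifert surface exterior with the Seifert-matrix formula, apply Proposition~\ref{prop:Hyperbolic} to see that both are isometric to $\mathcal{H}_2^{\oplus g}$, and conclude via Theorem~\ref{thm:D4Summary}. The extra care you flag about verifying the knot group is $\Z$ and about conventions is reasonable but does not change the argument.
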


\begin{proof}
Let $A_0$ and $A_1$ be Seifert matrices for $F_0$ and $F_1$.
As mentioned above, the equivariant intersection form~$(H_2(D^4_{F_i};\Lambda),\lambda_{D^4_{F_i}})$ is isometric to~$(H \otimes_\Z \Lambda, (1-t)A_i^T+(1-t^{-1})A_i)$.
By Proposition~\ref{prop:Hyperbolic}, both forms are isometric to $\mathcal{H}_2^{\oplus g}$.
Since $K$ is an Alexander polynomial one knot, the result now follows from Theorem~\ref{thm:D4Summary}.
\end{proof}

\section{Rim surgery on surfaces with knot group~$\Z$.}
\label{sec:RimSurgery}

We prove Theorems~\ref{thm:RimSurgeryClosedIntro} and~\ref{thm:RimSurgeryIntro} which concern rim surgery on surfaces in~$4$-manifolds whose knot group is~$\Z$.
In Subsection~\ref{sub:RimSurgeryDef}, we review the definition of rim surgery, in Subsection~\ref{sub:RimSurgeryIntersectionForm}, and in Subsection~\ref{sub:RimSurgeryProof}, we prove the main results.

\subsection{Knot surgery and twist rim surgery}
\label{sub:RimSurgeryDef}
We review some facts about knot surgery and rim surgery. References include~\cite{FintushelStern,KimHJ, KimRuberman, BaykurSunukjian}.
\medbreak
Let~$Z$ be a compact, oriented~$4$-manifold containing a locally flat embedded torus~$T$ with trivial normal bundle, and let~$J \subseteq S^3$ be a knot.
Use~$\mu_T$ to denote the meridian of~$T\subseteq Z$ and~$\mu_J,\lambda_J$ for the meridian and~$0$-framed longitude of~$J$.
Let~$\varphi \colon \partial \nu (T) \to S^1 \times \partial E_J$ be any diffeomorphism such that~$\varphi_*(\mu_T)=\lambda_J$.
The~$4$-manifold obtained by \emph{knot surgery} along~$J$ and~$\varphi$ is defined as
$$ Z_J(\varphi)=(Z \setminus \nu(T) )\cup_\varphi (S^1 \times E_J).$$
%The boundary of~$Z$ is not affected by this operation.
%We now consider a particular knot surgery on surface exteriors.
Given a compact~$4$-manifold~$W$ and a locally flat embedded orientable surface~$\Sigma \subseteq W$, we assume either that~$W$ and~$\Sigma$ are closed or that~$\Sigma \subseteq W$ is a properly embedded.
As we now describe, rim surgery arises from a particular type of knot surgery on the surface exterior~$W_\Sigma :=W \setminus \nu(\Sigma)$.
Choose a simple closed curve~$\alpha \subseteq \Sigma$ and a trivialisation of the normal bundles over~$\alpha$ and~$\Sigma$ so that~$(\nu (\Sigma),\nu (\alpha))=(\Sigma \times D^2,\alpha \times I \times D^2)$; in other words the normal bundle of~$\alpha$ inside~$\Sigma$ is~$\alpha \times I$.
This way, it is understood that~$\mu_\Sigma=\lbrace \operatorname{pt} \rbrace \times \partial D^2$, and the \emph{rim torus}~$T$ is~$\partial (\nu(\Sigma)|_\alpha)=\alpha \times \mu_\Sigma$.
The rim torus is framed: if we write~$\nu(\Sigma)|_{\alpha \times I}=\alpha \times I \times D^2$, then a framing of~$\nu T$ is given by the $I$-direction and the radial direction in the polar coordinates of $D^2$.

In order to perform a knot surgery on~$W_\Sigma$ along~$J$ using the rim torus~$T$, 
%for integers~$m$ and~$n$ 
we consider the homeomorphism~$\varphi \colon \partial \overline{\nu}(T) \to S^1 \times \partial E_J$ determined by
\begin{align*}
\varphi(\alpha)&=\mu_J + S^1 \\
%=m\mu_J+n\lambda_J+S^1, \\
\varphi(\mu_\Sigma)&=\mu_J, \\
\varphi(\mu_T)&=\lambda_J.
\end{align*}
Gluing~$\Sigma \times D^2$ back into the result of this knot surgery produces a manifold~$W_J(\varphi)$ that is homeomorphic to $W$, and in fact if $W$ is smooth then $W_J(\varphi)$ is diffeomorphic to~$W$; see~\cite[Lemma 2.4]{KimHJ}, in which a specific diffeomorphism is constructed.
%And the discussion after (3.4) of KimRuberman for rolls-spinning.
Thus, we obtain a new embedding~$\Sigma_n^m(\alpha,J) \subseteq~W$.

\begin{definition}
\label{def:RimSurgery}
Let~$\alpha \subseteq \Sigma$ be a simple closed curve, and let~$J$ be a knot.
The \emph{1-twist} rim surgery of a locally flat, properly embedded, orientable surface~$\Sigma \subseteq W$ is the image~$\Sigma(\alpha,J)$ of~$\Sigma$ under the homeomorphism~$W \cong W_J(\varphi)$ mentioned above.
If~$W$ is smooth and~$\Sigma$ is smoothly embedded, then since $W \cong W_J(\varphi)$ is a diffeomorphism, $\Sigma(\alpha,J)$ is smoothly embedded.
\end{definition}

The exterior~$W_{\Sigma(\alpha,J)}$ of the rim surgery~$\Sigma(\alpha,J)$ is the knot surgery on~$W_\Sigma$ along the homeomorphism~$\varphi$.
The proof of the following lemma can be found in~\cite[Proposition~3.3]{KimHJ} and~\cite[Proposition 2.5]{KimRuberman-2}.

\begin{lemma}
\label{lem:HomologyEquivalence}
For a compact~$4$-manifold~$W$ and a locally flat, embedded, orientable surface~$\Sigma \subseteq~W$, we assume either that~$W$ and~$\Sigma$ are closed or that~$\Sigma \subseteq W$ is properly embedded.
Let~$\alpha \subseteq \Sigma$ be a simple closed curve,  and let~$J$ be a knot.
If~$\pi_1(W_\Sigma)=\Z$, then
$$\pi_1(W_{\Sigma(\alpha,J)})=\Z.$$
\end{lemma}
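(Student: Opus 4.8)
\textbf{Proof plan for Lemma~\ref{lem:HomologyEquivalence}.} The plan is to compute $\pi_1(W_{\Sigma_n^m(\alpha,J)})$ directly from the description of the exterior as a knot surgery on $W_\Sigma$ along the rim torus $T$, using van Kampen's theorem. First I would write
\[
W_{\Sigma_n^m(\alpha,J)} = (W_\Sigma \setminus \nu(T)) \cup_{\varphi_n^m} (S^1 \times E_J),
\]
and apply van Kampen along the common boundary torus $\partial \overline{\nu}(T) = \partial(S^1 \times E_J)$. Since $T$ has trivial normal bundle, $\pi_1$ of the glued-in piece $S^1 \times E_J$ is $\Z\langle S^1\rangle \times \pi_1(E_J)$, and the gluing map $\varphi_n^m$ tells us exactly how the meridian $\mu_T$ and the longitude of $T$ (represented by $\alpha$, together with the $S^1$ factor) are identified with $\lambda_J$, $\mu_J$, and the $S^1$ factor. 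The key homological/group-theoretic input is that $\mu_T$ bounds in $\nu(T)$, so in $W_\Sigma \setminus \nu(T)$ we have a Mayer–Vietoris / van Kampen description of $\pi_1(W_\Sigma)$ as the quotient of $\pi_1(W_\Sigma \setminus \nu(T))$ obtained by killing $\mu_T$.

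The main steps, in order: (1) Observe $\pi_1(W_\Sigma \setminus \nu(T))$ surjects onto $\pi_1(W_\Sigma) = \Z_d$ with kernel normally generated by $\mu_T$; moreover the class $\alpha \in \pi_1(\Sigma) \subseteq \pi_1(W_\Sigma)$ maps to a known element — since $\alpha \subseteq \Sigma$ and $\pi_1(\Sigma)$ maps into $\pi_1(W_\Sigma)$ via the abelianization to $\Z_d$, in fact $\alpha$ maps to the class of some power of a meridian, but the crucial point is that $\alpha$ lies in the image of $\pi_1(\partial \nu(\Sigma))$. (2) In the knot surgery, van Kampen gives
\[
\pi_1(W_{\Sigma_n^m(\alpha,J)}) = \big(\pi_1(W_\Sigma \setminus \nu(T)) * (\Z \times \pi_1(E_J))\big)\big/ \langle\langle \text{gluing relations}\rangle\rangle.
\]
The relation $\varphi_n^m(\mu_T) = \lambda_J$ identifies $\mu_T$ with $\lambda_J$, which is in the commutator subgroup of $\pi_1(E_J)$; the relation $\varphi_n^m(\mu_\Sigma) = \mu_J$ identifies the meridian of $\Sigma$ with $\mu_J$; and $\varphi_n^m(\alpha) = m\mu_J + n\lambda_J + S^1$ identifies the image of $\alpha$ (up to the $S^1$ generator, which is itself killed as it equals $\mu_T$-direction data — actually here one uses that the $S^1$ factor is identified with a curve on the surface side) appropriately. (3) Abelianize: $H_1(E_J) = \Z\langle\mu_J\rangle$, $\lambda_J \mapsto 0$, so the abelianized relations say $\mu_T \mapsto 0$, $\mu_\Sigma \mapsto \mu_J$, and $\alpha \mapsto m\mu_J$ (plus the $S^1$ contribution). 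Since $\alpha$ already maps to a specific multiple of $\mu_\Sigma$ in $\pi_1(W_\Sigma) = \Z_d$ and $\mu_\Sigma$ generates $\Z_d$, the new relation forces a relation involving $m$ and $d$; the hypothesis $d \equiv \pm 1 \bmod m$ is precisely what makes this new relation redundant (it already holds in $\Z_d$), so no collapse occurs. (4) Conclude that the natural map $\pi_1(W_\Sigma) \to \pi_1(W_{\Sigma_n^m(\alpha,J)})$ is an isomorphism by checking it is both surjective (every new generator $\mu_J = \mu_\Sigma$ and the $S^1$-generator and $\pi_1(E_J)$ are killed or expressed in old generators — here one uses that $\pi_1(E_J)$ is normally generated by $\mu_J$) and injective (the only new relations are consequences of the old ones, by the $d \equiv \pm1 \bmod m$ computation).

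The step I expect to be the main obstacle is step (3)–(4): carefully tracking the $S^1$ generator of $S^1 \times E_J$ and the longitude-direction curve on the surface side through the gluing, and verifying that the congruence $d \equiv \pm 1 \bmod m$ is exactly the condition under which the extra relation $m\mu_J = \alpha$ introduces nothing new (versus, say, killing part of $\Z_d$ or identifying $\mu_J$ with a unit multiple of $\mu_\Sigma$). In the case $d = \pm 1$ (the one relevant to our applications, where $W_\Sigma$ is simply connected or $\pi_1 = \Z$ — though strictly this lemma as stated is about finite $\Z_d$, and our actual use is the $\pi_1 = \Z$ analogue obtained by the same argument with $d = \infty$, i.e. $m$ arbitrary and the congruence vacuous) the computation is cleanest: $\pi_1(W_\Sigma)$ is trivial or $\Z$, $\alpha$ is null-homotopic or a meridian power, and the knot surgery relations collapse $\pi_1(E_J)$ entirely. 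Since the reference \cite[Proposition~3.3]{KimHJ} already carries out this van Kampen computation in detail, I would cite it for the general statement and only reproduce the short argument in the special cases $\pi_1(W_\Sigma) \in \{1, \Z\}$ that we actually need.
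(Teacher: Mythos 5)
Your proposal lands in the same place as the paper: the paper's entire proof of this lemma is a citation to \cite[Proposition~3.3]{KimHJ}, which is exactly the reference you defer to at the end, and your van Kampen outline (knot surgery decomposition along the rim torus, tracking $\mu_T\mapsto\lambda_J$, $\mu_\Sigma\mapsto\mu_J$, $\alpha\mapsto m\mu_J+n\lambda_J+S^1$, then using that $\pi_1(E_J)$ is normally generated by $\mu_J$) is a faithful sketch of what that reference does. So this is correct and essentially the same approach.
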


\begin{remark}\label{remark:U-rim-surgery-does-nothing}
We note that if $J=U$ is the unknot then $\Sigma(\alpha,U)= \Sigma$, by \cite[Lemma~2.2]{KimRuberman}.
\end{remark}

\subsection{The \texorpdfstring{$\Lambda$-}{equivariant }intersection form}
\label{sub:RimSurgeryIntersectionForm}
Now we focus on the case where the ambient manifold~$N$ has a boundary and show that the~$\Lambda$-intersection form of~$N_{\Sigma(\alpha,J)}$ agrees with that of~$N_\Sigma$.
\medbreak

We start with an observation concerning the rim torus.
\begin{lemma}
\label{lem:RimTorus}
%Let~$N$ be a compact, simply-connected~$4$-manifold and let~$\Sigma \subseteq N$ be a locally flat, properly embedded, orientable surface with knot group~$\Z$.
Let $\Sigma \subseteq N=X \setminus \mathring{D}^4$ be a $\Z$-surface.
The exterior of a rim torus~$T$ satisfies
$$H_1(N_\Sigma \setminus \nu(T))=\Z\la \mu_\Sigma \ra \oplus \la \Z\mu_T \ra.$$
\end{lemma}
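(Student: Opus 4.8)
The plan is to compute $H_1$ of $N_\Sigma \setminus \nu(T)$ via a Mayer--Vietoris argument for the decomposition $N_\Sigma = (N_\Sigma \setminus \nu(T)) \cup_{\partial \nu(T)} \nu(T)$, where $\nu(T) \cong T^2 \times D^2$ and $\partial \nu(T) \cong T^2 \times S^1 = T^3$. First I would recall that $\pi_1(N_\Sigma) = \Z\langle \mu_\Sigma\rangle$ since $\Sigma$ is a $\Z$-surface, so $H_1(N_\Sigma) = \Z$. Writing the rim torus as $T = \alpha \times \mu_\Sigma$ with framing as described before Lemma~\ref{lem:RimTorus}, the torus $\partial \nu(T) = T \times \mu_T$ has $H_1 = \Z\langle \alpha \rangle \oplus \Z\langle \mu_\Sigma \rangle \oplus \Z\langle \mu_T \rangle$, and $H_1(\nu(T)) = \Z\langle \alpha \rangle \oplus \Z\langle \mu_\Sigma \rangle$.

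The key steps, in order: (1) identify the relevant terms of the Mayer--Vietoris sequence
\[
H_2(N_\Sigma) \to H_1(T^3) \xrightarrow{(\iota_1,\iota_2)} H_1(N_\Sigma \setminus \nu(T)) \oplus H_1(\nu(T)) \to H_1(N_\Sigma) \to 0;
\]
(2) observe $H_1(N_\Sigma \setminus \nu(T))$ is generated by $\mu_\Sigma$ and $\mu_T$ together with the image of $\alpha$, but $\alpha$ is isotopic in $\Sigma$ hence in $\nu(\Sigma)$, so in $N_\Sigma$ the class $[\alpha]$ is already supported near $\Sigma$; more precisely $\alpha$ bounds (up to the $\Z$-cover consideration) and maps to $0 \in H_1(N_\Sigma) = \Z\langle\mu_\Sigma\rangle$ because $\Sigma$ is a $\Z$-surface (curves on $\Sigma$ die in $\pi_1(N_\Sigma)$, cf.\ the proof of Lemma~\ref{lem:RibbonBoundary}); (3) use a meridional disc argument: pushing $\alpha$ off $T$ into a parallel copy inside $\partial\nu(\Sigma)$, one sees $[\alpha]$ in $H_1(N_\Sigma \setminus \nu(T))$ lies in the subgroup generated by $\mu_\Sigma$ and $\mu_T$; (4) conclude that $H_1(N_\Sigma \setminus \nu(T))$ is generated by $\mu_\Sigma$ and $\mu_T$, and that these are $\Z$-linearly independent by exhibiting maps detecting each — the total linking number with $\Sigma$ detects $\mu_\Sigma$, and linking with $T$ (or the fact that capping off $\mu_T$ recovers $N_\Sigma$ where $\mu_T$ was nontrivial in $H_2$ duality) detects $\mu_T$.

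The main obstacle I expect is verifying that no unexpected relation is introduced between $\mu_\Sigma$ and $\mu_T$ (i.e.\ that the rank is exactly $2$, not $1$), and dually that the map $H_2(N_\Sigma) \to H_1(T^3)$ has image precisely the subgroup needed to kill $[\alpha]$ (and only $[\alpha]$, up to the $\mu_\Sigma$ relation). This requires understanding the image of $H_2(N_\Sigma)$ in $H_1(\partial\nu(T)) = \Z^3$: the class $[\Sigma \setminus \nu(\alpha)]$-type surfaces and the torus $T$ itself being null-homologous (since $\Sigma$ is null-homologous by Lemma~\ref{lem:Nullhomologous}, and $T$ is a rim torus hence bounds) pin down that the boundary map hits $\langle \alpha \rangle$ (and possibly a $\mu_\Sigma$-multiple). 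I would handle this by a careful bookkeeping of which classes in $H_2(N_\Sigma)$ restrict to which boundary curves, using that a Seifert-surface-type cap for $\alpha$ inside $\Sigma$ gives a $2$-chain in $N_\Sigma$ whose boundary in $\partial\nu(T)$ is $\alpha$ plus a meridional correction. Everything else is routine once the boundary map is understood, and the final answer $H_1(N_\Sigma \setminus \nu(T)) = \Z\langle\mu_\Sigma\rangle \oplus \Z\langle\mu_T\rangle$ drops out of exactness.
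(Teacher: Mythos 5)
Your starting point --- the Mayer--Vietoris sequence for $N_\Sigma=(N_\Sigma\setminus\nu(T))\cup\overline{\nu}(T)$ --- is exactly the paper's, but the way you propose to handle the connecting map $\partial\colon H_2(N_\Sigma)\to H_1(\partial\overline{\nu}(T))$ contains a genuine error, and this map is the whole content of the lemma. You assert that the image of $\partial$ ``hits $\langle\alpha\rangle$ (and possibly a $\mu_\Sigma$-multiple)''. This is impossible: by exactness $\operatorname{im}\partial=\ker\theta$, where $\theta\colon H_1(T\times S^1)\to H_1(\overline{\nu}(T))\oplus H_1(N_\Sigma\setminus\nu(T))$, and the composition of $\theta$ with projection to $H_1(\overline{\nu}(T))\cong H_1(T)=\Z\la\alpha\ra\oplus\Z\la\mu_\Sigma\ra$ is just the projection killing $\mu_T$. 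Hence any element of $\ker\theta$ has vanishing $H_1(T)$-component, so $\operatorname{im}\partial\subseteq\Z\la\mu_T\ra$. The correct (and only remaining) computation is of the $\mu_T$-component: for a closed surface $S$ representing a class in $H_2(N_\Sigma)$, one has $\partial[S]=(0,Q_N([T],[S]))$, and this intersection number vanishes because the rim torus $T$ is isotopic into $\Sigma\times S^1\subseteq\partial N_\Sigma$. So $\partial=0$ outright; nothing gets killed, and in particular $[\alpha]$ is not killed (it simply happens to land in the $\Z\la\mu_T\ra$ summand of the answer, by exactness at the next spot). Your plan to ``pin down'' a nonzero image containing $\alpha$ would send you in the wrong direction.

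The second gap is in your independence argument (step 4). Once $\partial=0$, injectivity of $\theta$ immediately yields the short exact sequence $0\to\Z\la\mu_T\ra\to H_1(N_\Sigma\setminus\nu(T))\to H_1(N_\Sigma)\to 0$ (quotient out the $H_1(T)$-summand, which maps isomorphically onto $H_1(\overline{\nu}(T))$), and this splits because $H_1(N_\Sigma)=\Z\la\mu_\Sigma\ra$ is free --- giving both generation and independence at once. By contrast, your proposed ``linking with $T$'' homomorphism detecting $\mu_T$ is never constructed, and it is not clear how to build it directly: $T$ need not bound a $3$-chain in $N_\Sigma\setminus\nu(T)$ in any obvious way, and the parenthetical alternative (``capping off $\mu_T$ recovers $N_\Sigma$'') only shows that $\mu_T$ dies after regluing, which is the opposite of what you need. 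The map $H_1(N_\Sigma\setminus\nu(T))\to H_1(N_\Sigma)$ does detect $\mu_\Sigma$ and kill $\mu_T$, but without a second functional the possibility that $[\mu_T]$ is torsion, or a multiple of $[\mu_\Sigma]$, is not excluded. Both issues are repaired by proving $\partial=0$ first, as above.
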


\begin{proof}
Consider the Mayer-Vietoris sequence of~$N_\Sigma=(N_\Sigma \setminus \nu(T)) \cup \overline{\nu}(T)$:
\begin{equation}
\label{eq:MV}
\cdots \to H_2(N_\Sigma) \xrightarrow{\partial} H_1(T \times S^1) \xrightarrow{\theta} H_1(\overline{\nu}(T)) \oplus H_1(N_\Sigma \setminus \nu(T)) \to H_1(N_\Sigma) \to 0.
\end{equation}
We show that~$\partial=0$.
Note that \[H_1(T \times S^1) \cong H_1(T) \oplus H_1(S^1) \xrightarrow{\theta} H_1(\overline{\nu}(T)) \cong H_1(T)\]
is projection onto the first factor, so for~$x \neq 0$ in~$H_1(T)$,~$\theta(x,n) \neq 0$.  Therefore in the image of~$\partial$, the first coordinate vanishes.
Now for a closed surface~$S \subseteq N$, the definition of the connecting homomorphism implies that \[\partial([S])=(0,Q_N([T],[S])) \in H_1(T \times S^1) \cong H_1(T) \oplus H_1(S^1).\]
%Write~$S_\Sigma + S_\nu$ where~$S_\Sigma \in C_2(N_\Sigma)$ and~$S_\nu \in C_2(\nu \Sigma)$.
%$\partial S=(\partial S_\Sigma,S_\nu)$; since~$S$ is closed the first component vanishes; the second is given by signed meridians around the intersection points.
Since~$T$ is isotopic to a torus in~$\Sigma \times S^1 \subseteq \partial N_\Sigma$, we deduce that~$Q_N([T],[S])=0$. It follows that~$\partial=0$ as asserted.
The Mayer-Vietoris sequence in~\eqref{eq:MV} then reduces to
$$ 0 \to  \Z\la \mu_T \ra \to  H_1(N_\Sigma \setminus \nu(T)) \to H_1(N_\Sigma) \to 0.$$
Since~$H_1(N_\Sigma)=\Z\la \mu_\Sigma \ra$ is free, this sequence splits, establishing the lemma.
\end{proof}

Next, we extend the coefficient system on~$\pi_1(N_\Sigma)$ over~$\pi_1(N_{\Sigma(\alpha,J)})$.
\begin{remark}
\label{rem:CoefficientSystem}
%Let~$N$ be a simply-connected~$4$-manifold with boundary, let~$\Sigma \subseteq N$ be a locally flat properly embedded orientable surface, and let~$\alpha \subseteq \Sigma$ be a simple closed curve with associated rim torus~$T$.
The map~$\pi_1(N_\Sigma) \to \Z$ restricts to a map~$\pi_1(N_\Sigma\setminus\nu(T)) \to \Z$ that sends~$\mu_\Sigma$ to~$1$ and~$\mu_T$ to~$0$ (recall from Lemma~\ref{lem:RimTorus} that $H_1(N_\Sigma\setminus\nu(T)) \cong \Z\langle \mu_\Sigma \rangle \oplus \Z\langle \mu_T \rangle)$.
Consider the map~$\pi_1(S^1 \times E_J) \to \Z$ that sends the class of the~$S^1$-factor to~$-1$ and that is the abelianisation on~$\pi_1(E_J)$.
We verify that these maps extend to a map~$\pi_1(N_{\Sigma(\alpha,J)}) \to~\Z$.
We are identifying the~$3$-torus~$\partial \overline{\nu}(T) \subseteq \partial N_\Sigma\setminus\nu(T)$ with~$S^1 \times \partial E_J$ via the homeomorphism~$\varphi$.
We have the identifications~$[\alpha] \sim [S^1][\mu_J], [\mu_\Sigma] \sim [\mu_J]$ and~$[\mu_T] \sim [\lambda_J]$.
The loop~$\alpha \subseteq \Sigma$ is mapped to zero under~$\pi_1(N_\Sigma) \to \Z$; the same is true for~$[S^1][\mu_J]$ under the map~$\pi_1(S^1 \times E_J) \to \Z$.
%factors through abelianisation and therefore longitude doesn't matter.
Similarly, both~$\mu_\Sigma$ and~$\mu_J$ (resp~$\mu_T$ and~$\lambda_J$) are sent to~$1$ (resp.~$0$) under the respective maps to~$\Z$.
%This is consistent with our definition of the coefficient system on~$\pi_1(S^1 \times E_J)$.
\end{remark}

Arguing as in~\cite[Proof~of~Lemma~2.3]{KimRuberman}, we construct a degree one map \[\Psi \colon N_{\Sigma(\alpha,J)} \to~N_\Sigma.\]
Using obstruction theory, one can construct a degree one map~$E_J \to E_U$, where $U \subseteq S^3$ is the unknot, that takes meridian to meridian and longitude to longitude.
Cross it with the identity to obtain a degree one map~$S^1 \times E_J \to S^1 \times E_U$.
This can be glued to the identity map~$\id_{N_\Sigma \setminus \nu(T)}$ to obtain a degree one map~$N_{\Sigma(\alpha,J)} \to N_{\Sigma_n(\alpha,U)}$.
By~\cite[Lemma 2.2]{KimRuberman}, there is a homeomorphism~$N_{\Sigma_n(\alpha,U)} \cong N_\Sigma$ and we have therefore obtained a degree one map
$$ \Psi \colon N_{\Sigma(\alpha,J)} \to N_{\Sigma(\alpha,U)} \cong N_\Sigma.$$
By construction,~$\Psi$ is the identity on~$\partial N_\Sigma$.
%We used \id_{N_\Sigma \setminus \nu(T)} and~$T$ is in the interior.
Furthermore, since the fundamental groups of both~$N_{\Sigma(\alpha,J)}~$ and~$N_\Sigma$ are generated by the meridians of the surface,~$\Psi$ induces an isomorphism on fundamental groups: indeed by construction~$\Psi$ takes meridian to meridian.
In particular~$\Psi$ induces homomorphisms on the~$\Lambda$-homology groups.

In order to prove that $\Psi$ in fact induces an isometry of the equivariant intersection forms, we compute the~$\Lambda$-homology of~$S^1 \times E_J$.

\begin{lemma}
\label{lem:KunnethSS}
Let~$ J\subseteq S^3$ be a knot.
With respect to the coefficient system on~$S^1 \times E_J$ introduced in Remark~\ref{rem:CoefficientSystem}, we have~$H_2(S^1 \times E_J;\Lambda)=0$ and~$H_1(S^1 \times E_J;\Lambda)=\Z$.
Additionally, the degree one map $\Psi$ induces an isometry
$$\Psi_* \colon ( H_2(N_{\Sigma(\alpha,J)};\Lambda),\lambda_{N_{\Sigma(\alpha,J)}} )  \xrightarrow{\cong} ( H_2(N_\Sigma;\Lambda),\lambda_{N_\Sigma} ).$$
\end{lemma}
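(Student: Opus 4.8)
The plan is to prove the homological computation first and then deduce the isometry statement from it.

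For the first assertion I would identify the relevant infinite cyclic cover of $S^1\times E_J$ explicitly. The space $S^1\times E_J$ is aspherical, being a product of aspherical spaces (knot exteriors are aspherical), and the coefficient epimorphism $\varphi\colon\pi_1(S^1\times E_J)=\Z\times\pi_1(E_J)\to\Z$ of Remark~\ref{rem:CoefficientSystem} has kernel equal to the image of the injective homomorphism $\pi_1(E_J)\to\Z\times\pi_1(E_J)$, $\gamma\mapsto(\mathrm{ab}(\gamma),\gamma)$, where $\mathrm{ab}$ denotes the abelianisation. Hence the cover of $S^1\times E_J$ defining $H_*(-;\Lambda)$ is a $K(\pi_1(E_J),1)$, so it is homotopy equivalent to $E_J$, and the generating deck transformation acts on $\pi_1$ of this cover by conjugation by a meridian of $J$, hence trivially on its homology. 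Therefore $H_*(S^1\times E_J;\Lambda)\cong H_*(E_J;\Z)$ with trivial $t$-action; since $H_1(E_J;\Z)=\Z$ and $H_2(E_J;\Z)=0$ (Alexander duality), we get $H_1(S^1\times E_J;\Lambda)=\Z$ and $H_2(S^1\times E_J;\Lambda)=0$, and the same computation with the unknot $U$ in place of $J$ gives $H_1(S^1\times E_U;\Lambda)=\Z$, $H_2(S^1\times E_U;\Lambda)=0$. (Alternatively, the Leray--Serre spectral sequence of $S^1\times E_J\to E_J$ collapses, since $t^{-1}-1$ is injective on $\Lambda$ and so the $\Lambda$-coefficient homology of an $S^1$-fibre is concentrated in degree $0$.)

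Next I would show $\Psi_*$ is an isomorphism on $H_2(-;\Lambda)$. Set $P:=N_\Sigma\setminus\nu(T)$; then $N_{\Sigma_n(\alpha,J)}=P\cup_{\varphi_n}(S^1\times E_J)$ and, since $\Sigma_n(\alpha,U)=\Sigma$ by Remark~\ref{remark:U-rim-surgery-does-nothing}, also $N_\Sigma\cong P\cup_{\varphi_n}(S^1\times E_U)$, both glued along the $3$-torus $\partial\nu(T)$, with $\Psi$ equal to the identity on $P$ and on $\partial\nu(T)$ and equal to $\operatorname{id}_{S^1}\times g$ on $S^1\times E_J$, where $g\colon E_J\to E_U$ is the chosen degree-one map sending meridian to meridian and longitude to longitude; in particular $\Psi$ respects these decompositions, so it induces a map of Mayer--Vietoris sequences. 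Comparing the two sequences in $\Lambda$-coefficients, the terms indexed by $P$ and by $\partial\nu(T)$ are carried by identity maps, the $H_2$-terms of $S^1\times E_J$ and $S^1\times E_U$ both vanish by the first part, and on the $H_1$-terms $\Psi_*$ restricts to the isomorphism $g_*\colon H_1(E_J;\Z)\xrightarrow{\cong}H_1(E_U;\Z)$; the five lemma then shows $\Psi_*\colon H_2(N_{\Sigma_n(\alpha,J)};\Lambda)\to H_2(N_\Sigma;\Lambda)$ is an isomorphism. An identical argument with the long exact sequences of the pairs $(M,\partial M)$ shows $\Psi_*$ is an isomorphism on $H_2(M,\partial M;\Lambda)$, and since $H_1(-;\Lambda)$ vanishes for both exteriors the universal coefficient spectral sequence then forces $\Psi^*\colon H^2(N_\Sigma;\Lambda)\to H^2(N_{\Sigma_n(\alpha,J)};\Lambda)$ to be an isomorphism as well.

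To upgrade this to an isometry, recall that for a $4$-manifold $M$ as above (with $H_1(M;\Lambda)=0$) the adjoint $\widehat{\lambda}_M$ of the $\Lambda$-intersection form is the composite of the inclusion-induced map $H_2(M;\Lambda)\to H_2(M,\partial M;\Lambda)$, Poincar\'e--Lefschetz duality $H_2(M,\partial M;\Lambda)\cong H^2(M;\Lambda)$ (cap product with the ordinary $\Z$-coefficient fundamental class, using the $\Lambda$-module structure), and the now-invertible universal-coefficient edge map $H^2(M;\Lambda)\cong H_2(M;\Lambda)^*$. The inclusion-induced map and the edge map are natural in $M$, and because $\Psi$ is a degree-one map restricting to the identity on the boundary one has $\Psi_*[N_{\Sigma_n(\alpha,J)},\partial]=[N_\Sigma,\partial]$, so the projection formula $\Psi_*(\Psi^*\alpha\cap z)=\alpha\cap\Psi_*z$ expresses the Poincar\'e--Lefschetz duality of $N_\Sigma$ in terms of that of $N_{\Sigma_n(\alpha,J)}$ together with $\Psi^*$ and $\Psi_*$. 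Chasing the resulting commutative diagram yields $\widehat{\lambda}_{N_{\Sigma_n(\alpha,J)}}=(\Psi_*)^*\circ\widehat{\lambda}_{N_\Sigma}\circ\Psi_*$, which is exactly the assertion that $\Psi_*$ is an isometry $\lambda_{N_{\Sigma_n(\alpha,J)}}\cong\lambda_{N_\Sigma}$.

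I expect the last step to be the main obstacle: one must be careful that twisted Poincar\'e--Lefschetz duality is used correctly, since the infinite cyclic covers are noncompact, so the fundamental class appearing is the ordinary $\Z$-coefficient one and the duality isomorphism is cap product with it via the $\Lambda$-module structure, and the degree-one-map naturality (the projection formula) must be invoked in that generality. A subsidiary point worth checking is that $\Psi$ genuinely restricts to the identity on $P$ and on the overlap $\partial\nu(T)$, which is what legitimises the Mayer--Vietoris comparison.
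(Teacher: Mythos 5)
Your proposal is correct and its overall architecture matches the paper's: first compute $H_*(S^1\times E_J;\Lambda)$, then compare the two Mayer--Vietoris sequences for $N_{\Sigma_n(\alpha,J)}=P\cup(S^1\times E_J)$ and $N_\Sigma\cong P\cup(S^1\times E_U)$ via the five lemma, and finally upgrade the isomorphism on $H_2(-;\Lambda)$ to an isometry using that $\Psi$ has degree one and is the identity on the boundary. The one place where you take a genuinely different route is the homology computation itself: the paper runs the K\"unneth spectral sequence for $S^1\times E_J$ with the twisted coefficient system, where the key input is that $t-1$ acts invertibly on the Alexander module $H_1(E_J;\Lambda)$ (so all Tor terms involving it vanish) and the surviving $\Z$ in degree one is $\operatorname{Tor}_1^\Lambda(\Z_\varepsilon,\Z_\varepsilon)$. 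You instead identify the relevant infinite cyclic cover directly: since $S^1\times E_J$ is aspherical and $\ker\varphi\cong\pi_1(E_J)$ via $\gamma\mapsto(\mathrm{ab}(\gamma),\gamma)$, the cover is a $K(\pi_1(E_J),1)\simeq E_J$ with the deck transformation acting by conjugation by a meridian, hence trivially on homology. This is correct and arguably more transparent; it also makes the identification $H_1(S^1\times E_J;\Lambda)\cong H_1(E_J;\Z)$ explicit, so that the map induced by $\operatorname{id}\times g$ on these $H_1$ groups is visibly $g_*$ (meridian to meridian), whereas the paper instead traces the surviving $\Z$ back to $H_0(E_J;\Lambda)$ in the spectral sequence to see that $\Psi_*$ is an isomorphism there. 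Your expanded treatment of the final isometry step (naturality of the inclusion-induced and evaluation maps together with the projection formula for the degree-one map) is exactly what the paper's one-sentence conclusion is implicitly relying on, so no gap there.
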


\begin{proof}
We use the K\"unneth spectral sequence with \[E^2_{p,q}=\bigoplus_{q_1+q_2=q}\operatorname{Tor}_p(H_{q_1}(S^1;\Lambda),H_{q_2}(E_J;\Lambda))\] and which converges to~$H_*(S^1 \times E_J;\Lambda \otimes_\Lambda \Lambda) \cong H_*(S^1 \times E_J;\Lambda)$, where $\Lambda \otimes_\Lambda \Lambda$ is a right $\Z[\pi_1(E_K) \times \pi_1(S^1)]$-module via the diagonal action induced by $p \otimes q \cdot (e,s)=pe \otimes qs$ for $p,q\in \Lambda$ and $(e,s) \in \pi_1(E_K) \times \pi_1(S^1)$.
Write~$\varepsilon \colon \Lambda \to \Z,p(t) \mapsto p(1)$ for the augmentation map and~$\Z_\varepsilon$ for the resulting~$\Lambda$-module structure on~$\Z$.
By definition of the coefficient system in Remark~\ref{rem:CoefficientSystem}, we have~$H_1(S^1;\Lambda)=0$ and~$H_0(S^1;\Lambda)=\Lambda/(t-1)=\Z_\varepsilon$.
We also have~$H_0(E_J;\Lambda)=\Z$ and~$H_i(E_J;\Lambda)=0$ for~$i \geq 2$.
This implies that~$E^2_{0,2}=0$.
In fact, we also deduce that~$E_{1,0}^2=\Z$ and~$E_{2,0}^2=0$ because
$$E_{i,0}^2=\operatorname{Tor}_i^{\Z[\Z]}(\Z,\Z)=H_i(\Z;\Z)=H_i(S^1).$$
We claim that~$\operatorname{Tor}_i^\Lambda(\Z_\varepsilon,H_1(E_J;\Lambda))=0$ for~$i=0,1$.
These Tor groups are computed as the homology of the complex obtained tensoring the resolution~$0 \to \Lambda \xrightarrow{t-1} \Lambda \to \Z_\varepsilon \to 0~$ with the Alexander module~$H_1(E_J;\Lambda)$.
Since multiplication by~$t-1$ induces an isomorphism on the Alexander module~\cite[Proposition 1.2]{LevineKnotModules}, these Tor groups vanish and the claim is proved.

Using the claim, we deduce that~$E^2_{0,1}=0$ and~$E^2_{1,1}=0$.
It follows that~$d_{2,0}=0$ and now the first assertion is a consequence of a standard spectral sequence computation.

We now assert that $\Psi$ induces an isomorphism on $H_2(-;\Lambda)$.
Recall the decompositions $N_{\Sigma(\alpha,J)}=(N_\Sigma \setminus \nu(T)) \cup_{\varphi} (S^1 \times E_J)$ and $N_\Sigma \cong N_{\Sigma_n(\alpha,U)}= (N_\Sigma \setminus \nu(T)) \cup_{\varphi} (S^1 \times E_U)$.
By construction,~$\Psi$ restricts to the identity on $N_\Sigma \setminus \nu(T)$ and $\partial \overline{\nu}(T)$ and therefore induces the identity on the $\Lambda$-homology of these spaces.
Next, $\Psi_* \colon H_1(S^1 \times E_J;\Lambda) \to H_1(S^1 \times E_U;\Lambda)$ is also an isomorphism: indeed, the first assertion shows that both modules are isomorphic to $\Z=\Lambda/(t-1)$, and this term comes from $H_0(E_J;\Lambda)=\Z_\varepsilon$, on which $\Psi$ does indeed induce an isomorphism.
Since we showed that $H_2(S^1 \times E_J;\Lambda)=0$, the assertion now follows from the five lemma applied to the following commutative diagram, where $\Lambda$-coefficients are understood:
$$
\xymatrix@R0.5cm@C0.5cm{
H_2(\partial \overline{\nu}(T)) \ar[r]\ar[d]^=_{\Psi_*}& H_2(N_\Sigma \setminus \nu(T)) \ar[r]\ar[d]_{\Psi_*}^=& H_2(N_{\Sigma(\alpha,J)}) \ar[r]\ar[d]_{\Psi_*}& H_1(\partial \overline{\nu}(T)) \ar[r]\ar[d]_{\Psi_*}^=& H_1(N_\Sigma\setminus \nu(T)) \oplus \Z \ar[r]\ar[d]_{\Psi_*}^\cong& 0 \\
H_2(\partial \overline{\nu}(T)) \ar[r]& H_2(N_\Sigma \setminus \nu(T)) \ar[r]& H_2(N_\Sigma) \ar[r]& H_1(\partial \overline{\nu}(T)) \ar[r]& H_1(N_\Sigma\setminus \nu(T)) \oplus \Z \ar[r]& 0.
}
$$
Since $\Psi$ induces a $\Lambda$-isomorphism on $H_2(-;\Lambda)$, it induces one on $H_2(-;\Lambda)^*$, and therefore on $H^2(-;\Lambda)$ (recall the UCSS argument from Lemma~\ref{lem:Homology}) and thus on the second relative $\Lambda$-homology groups (because $\operatorname{deg}(\Psi)=1$).
%Applying the long exact sequence of the pair and the $5$-lemma, we deduce that $\Psi$ also induces an isomorphism $H_2(N_\Sigma;\partial N_\Sigma;\Lambda) \to H_2(N_{\Sigma(\alpha,J)};\partial N_{\Sigma(\alpha,J)};\Lambda)$; \textcolor{red}{Unclear.}
%{Start here. Use instead that $H_2(N,\partial N)\cong H^2(N;\Lambda) \cong H_2(N)^*.$}
We conclude that $\Psi_*$ in fact induces an isometry of the $\Lambda$-intersection forms.
\end{proof}

\subsection{Topological triviality of rim surgery on surfaces with knot group~$\Z$}
\label{sub:RimSurgeryProof}
Next we prove Theorem~\ref{thm:RimSurgeryIntro} from the introduction.

\begin{theorem}
\label{thm:RimSurgery}
%Let~$X$ be a closed, simply-connected~$4$-manifold, and let~$N:=X \setminus \mathring{D}^4$ be a punctured~$X$.
Let~$\Sigma \subseteq N=X \setminus \mathring{D}^4$ be a $\Z$-surface, let~$\alpha \subseteq \Sigma$ be a simple closed curve and let~$J$ be a knot,
% and let~$n \in \Z$ be an integer.
There is a rel.\ boundary orientation-preserving homeomorphism of pairs
$$\Phi \colon (N,\Sigma(\alpha,J))  \xrightarrow{\cong} (N,\Sigma).$$
that induces the same isometry as the degree one map $\Psi$ on the equivariant intersection forms of the surface exteriors.
If~$N=D^4$, then the surfaces are topologically isotopic rel.\ boundary.
\end{theorem}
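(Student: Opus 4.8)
The plan is to apply Theorem~\ref{thm:WithBoundary} to the pair of $\Z$-surfaces $\Sigma_n(\alpha,J)$ and $\Sigma$, using the isometry produced by the degree one map $\Psi$ of Subsection~\ref{sub:RimSurgeryIntersectionForm}. First I would observe that rim surgery does not alter the boundary: by Definition~\ref{def:RimSurgery} the surface $\Sigma_n(\alpha,J)$ is the image of $\Sigma$ under a homeomorphism of $N$ supported away from $\partial N$, so $\Sigma_n(\alpha,J)$ has the same genus $g$ as $\Sigma$ and $\partial\Sigma_n(\alpha,J)=\partial\Sigma=:K$. Moreover Lemma~\ref{lem:HomologyEquivalence}, applied with $m=1$ and $d=0$, gives $\pi_1(N_{\Sigma_n(\alpha,J)})=\Z$, so $\Sigma_n(\alpha,J)$ is again a $\Z$-surface. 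By Lemma~\ref{lem:KunnethSS} the degree one map $\Psi\colon N_{\Sigma_n(\alpha,J)}\to N_\Sigma$ induces an isometry $F:=\Psi_*\colon \lambda_{N_{\Sigma_n(\alpha,J)}}\xrightarrow{\cong}\lambda_{N_\Sigma}$, and by construction $\Psi$ restricts to the identity on $\partial N_{\Sigma_n(\alpha,J)}=\partial N_\Sigma$, identified with $M_{K,g}$ as in Remark~\ref{rem:IdentifBoundary}.

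The next, and main, step is to identify the boundary isometry $\partial F$. I would rerun the naturality argument from the proof of Proposition~\ref{prop:necessary} with the degree one map $\Psi$ in place of a homeomorphism; this is legitimate because that diagram chase uses only naturality of the long exact sequence of a pair, Poincar\'e--Lefschetz duality, the universal coefficient spectral sequence, and the degree one hypothesis, all of which $\Psi$ satisfies, and because $\Psi$ induces isomorphisms on all the relevant $\Lambda$-(co)homology groups, as established inside the proof of Lemma~\ref{lem:KunnethSS}. Under the identifications $H_2(-,\partial;\Lambda)\cong H^2(-;\Lambda)\cong H_2(-;\Lambda)^*$ the map induced by $\Psi$ on relative second homology is then $F^{-*}$, and naturality of the connecting homomorphism $\delta$ forces $\partial F=(\Psi|_{\partial N_\Sigma})_*=\id$ on $H_1(\partial N_\Sigma;\Lambda)$, that is $\partial F=\id\in\Aut(\Bl_{M_{K,g}})$. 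In particular, writing $\partial F=h_K\oplus h_\Sigma$ via Proposition~\ref{prop:AutomorphismInTheBoundaryCase}, we get $h_K=\id$, which is realised by the identity homeomorphism $f_K=\id\colon E_K\to E_K$, the identity on $\partial E_K$.

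Now Theorem~\ref{thm:WithBoundary}, applied with $\Sigma_0=\Sigma_n(\alpha,J)$, $\Sigma_1=\Sigma$, the isometry $F$, and $f_K=\id$, produces an orientation-preserving homeomorphism of pairs $\Phi\colon (N,\Sigma_n(\alpha,J))\xrightarrow{\cong}(N,\Sigma)$ inducing $F=\Psi_*$; since $f_K=\id$, as noted after Theorem~\ref{thm:WithBoundaryIntro} this $\Phi$ can be taken to fix $\partial N$ pointwise, giving the asserted rel.\ boundary homeomorphism of pairs. When $N=D^4$ I would additionally take $\Theta(f_K)$ to be the constant isotopy, so that the second bullet of Theorem~\ref{thm:WithBoundary} upgrades $\Phi$ to a topological ambient isotopy of $D^4$ rel.\ boundary carrying $\Sigma_n(\alpha,J)$ to $\Sigma$.

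The hard part will be the identification $\partial F=\id$ in the middle step: one must check carefully that the algebraically defined boundary of the isometry $\Psi_*$ agrees with the geometric restriction $\Psi|_{\partial N_\Sigma}$, even though $\Psi$ is only a degree one map and not a homeomorphism, so Proposition~\ref{prop:necessary} cannot be quoted verbatim but rather its proof must be reexamined to confirm that invertibility at the chain level is never used. Everything else is a routine assembly of results already in place.
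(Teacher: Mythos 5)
Your proposal is correct and follows essentially the same route as the paper: Lemma~\ref{lem:HomologyEquivalence} for the knot groups, Lemma~\ref{lem:KunnethSS} for the isometry $\Psi_*$, the observation that $\Psi$ restricts to the identity on the boundary so that $\partial F=\id$ (hence $h_K=\id$, realised by $f_K=\id$), and then Theorem~\ref{thm:WithBoundary}. The paper states the step $\partial F=\id$ in one line, whereas you correctly flag and justify it by rerunning the naturality argument of Proposition~\ref{prop:necessary} for a degree one map; this is exactly the verification implicit in the paper's proof.
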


\begin{proof}
By Lemma~\ref{lem:HomologyEquivalence} both surfaces have knot group~$\Z$.
Lemma~\ref{lem:KunnethSS} states that
the degree one map $\Psi \colon N_{\Sigma(\alpha,J)} \to N_\Sigma$ induces
% there is
an isometry~$\Psi_* \colon \lambda_{N_{\Sigma(\alpha,J)}} \cong \lambda_{N_{\Sigma}}$.
As~$\Psi$ is the identity on~$\partial N_\Sigma$, we see that~$\partial \Psi$ induces the identity isometry between the boundary Blanchfield forms.
% whose boundary is the identity.
The result now follows from Theorem~\ref{thm:WithBoundary}.
%Theorem~\ref{thm:WithBoundary} now ensures that there is a rel.\ boundary orientation-preserving homeomorphism of pairs
\end{proof}

The remaining result we owe a proof of is Theorem~\ref{thm:RimSurgeryClosedIntro} from the introduction.
To prove this result, which involves obtaining an isotopy between closed surfaces, we need to control the map $F_\Z$ discussed in Lemma~\ref{lem:InducedIsometryOfH2(X)}.
This will rely on the following result.

\begin{lemma}
\label{lem:RimLemma2}
%Let~$N$ be a compact, simply-connected~$4$-manifold with boundary~$S^3$, let~$\Sigma \subseteq N$ be a locally flat, properly embedded, orientable surface with knot group~$\Z$,
Let $\Sigma \subseteq N$ be a $\Z$-surface, let~$\alpha \subseteq \Sigma$ be a simple closed curve with associated rim torus~$T$, and let~$J$ be a knot.
The following sequences of inclusion induced maps are exact:
\begin{align*}
&0 \to \Z\la \alpha \times \mu_T\ra \to H_2(N_\Sigma\setminus\nu(T);\Lambda) \to H_2(N_{\Sigma(\alpha,J)};\Lambda) \to 0, \\
&0 \to \Z\la \alpha \times \mu_T\ra \to H_2(N_\Sigma\setminus\nu(T);\Lambda) \to H_2(N_{\Sigma};\Lambda) \to 0.
\end{align*}
\end{lemma}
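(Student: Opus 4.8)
The plan is to establish the second exact sequence by a direct Mayer--Vietoris computation, and then to deduce the first from the second using the degree one map $\Psi$ and Lemma~\ref{lem:KunnethSS}.

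First I would record the $\Lambda$-homology of the pieces of the decomposition $N_\Sigma = (N_\Sigma\setminus\nu(T))\cup_{\partial\overline{\nu}(T)}\overline{\nu}(T)$. As $T = \alpha\times\mu_\Sigma$ has trivial (framed) normal bundle, $\overline{\nu}(T) = T\times D^2$ and $\partial\overline{\nu}(T) = T\times S^1$ is the $3$-torus with circle factors $\alpha$, $\mu_\Sigma$, $\mu_T$. The coefficient homomorphism on $N_\Sigma\setminus\nu(T)$ (Remark~\ref{rem:CoefficientSystem}) sends $\mu_\Sigma\mapsto 1$, $\mu_T\mapsto 0$ (Lemma~\ref{lem:RimTorus}) and $\alpha\mapsto 0$ (because $\pi_1(\Sigma)\to\pi_1(N_\Sigma)$ is trivial; see the proof of Lemma~\ref{lem:RibbonBoundary}). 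Hence the relevant infinite cyclic covers are homotopy equivalent to $\alpha\times\R$ and $(\alpha\times\mu_T)\times\R$, giving $H_2(\overline{\nu}(T);\Lambda)=0$, $H_1(\overline{\nu}(T);\Lambda)=\Z\la\alpha\ra$, $H_2(\partial\overline{\nu}(T);\Lambda)=\Z\la\alpha\times\mu_T\ra$, $H_1(\partial\overline{\nu}(T);\Lambda)=\Z\la\alpha\ra\oplus\Z\la\mu_T\ra$, and vanishing of $H_3$ in both cases. Also $H_3(N_\Sigma;\Lambda)=0$ by the second item of Lemma~\ref{lem:Homology}, using Lemmas~\ref{lem:RibbonBoundary} and~\ref{lem:AlexanderModuleSigmaS1}.

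Feeding this into the $\Lambda$-coefficient Mayer--Vietoris sequence of the decomposition, the one point needing an argument is that the connecting map $\partial\colon H_2(N_\Sigma;\Lambda)\to H_1(\partial\overline{\nu}(T);\Lambda)$ vanishes, equivalently that the map $H_1(\partial\overline{\nu}(T);\Lambda)\to H_1(N_\Sigma\setminus\nu(T);\Lambda)\oplus H_1(\overline{\nu}(T);\Lambda)$ is injective. Its $\overline{\nu}(T)$-component kills $\mu_T$ and sends $\alpha$ to a generator of the free module $\Z\la\alpha\ra$, so injectivity reduces to showing that the inclusion-induced class $\mu_T\in H_1(N_\Sigma\setminus\nu(T);\Lambda)$ has infinite order. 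I expect this to be the main (though brief) obstacle: it follows because $\mu_T$ lifts to a loop in the infinite cyclic cover, whose image under the covering projection is the class $[\mu_T]\in H_1(N_\Sigma\setminus\nu(T);\Z)$, which generates a $\Z$-summand by Lemma~\ref{lem:RimTorus}; so $d\mu_T=0$ in $H_1(N_\Sigma\setminus\nu(T);\Lambda)$ forces $d[\mu_T]=0$ in $H_1(N_\Sigma\setminus\nu(T);\Z)$, hence $d=0$. Granting $\partial=0$, and using $H_2(\overline{\nu}(T);\Lambda)=0$ and $H_3(N_\Sigma;\Lambda)=0$, the Mayer--Vietoris sequence collapses to exactly the second short exact sequence, whose first map is the inclusion-induced map carrying the generator to the class of the torus $\alpha\times\mu_T\subseteq N_\Sigma\setminus\nu(T)$.

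Finally, for the first sequence I would invoke that $N_{\Sigma_n(\alpha,J)}$ and $N_\Sigma\cong N_{\Sigma_n(\alpha,U)}$ are both obtained by gluing $N_\Sigma\setminus\nu(T)$ via $\varphi_n$ to $S^1\times E_J$, respectively $S^1\times E_U$, and that the degree one map $\Psi\colon N_{\Sigma_n(\alpha,J)}\to N_\Sigma$ of Subsection~\ref{sub:RimSurgeryIntersectionForm} restricts to the identity on $N_\Sigma\setminus\nu(T)$. Writing $\iota_J$ and $\iota_U$ for the inclusions of $N_\Sigma\setminus\nu(T)$ into $N_{\Sigma_n(\alpha,J)}$ and into $N_\Sigma$, one has $\Psi\circ\iota_J=\iota_U$, so $(\iota_J)_*=\Psi_*^{-1}\circ(\iota_U)_*$ on $H_2(-;\Lambda)$; since $\Psi_*$ is an isomorphism by Lemma~\ref{lem:KunnethSS}, the maps $(\iota_J)_*$ and $(\iota_U)_*$ have the same kernel and image, and the first short exact sequence follows immediately from the second.
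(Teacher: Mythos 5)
Your proof is correct, but it runs in the opposite direction to the paper's and uses a different decomposition. The paper establishes the first sequence directly, from the Mayer--Vietoris sequence of the knot-surgery decomposition $N_{\Sigma_n(\alpha,J)}=(N_\Sigma\setminus\nu(T))\cup_{\varphi_n}(S^1\times E_J)$; there the sequence only truncates after one shows that the penultimate map $\Z\la\alpha\ra\oplus\Z\la\mu_T\ra\to H_1(N_\Sigma\setminus\nu(T);\Lambda)\oplus\Z$ is an isomorphism, which the paper gets from the stronger claim that $\Z\la\mu_T\ra\to H_1(N_\Sigma\setminus\nu(T);\Lambda)$ is an isomorphism, proved via a rationalised universal coefficient spectral sequence computation comparing against $H_1(N_\Sigma\setminus\nu(T);\Q)\cong\Q^2$. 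The second sequence is then the case $J=U$. You instead prove the second sequence first, from the Mayer--Vietoris sequence of $N_\Sigma=(N_\Sigma\setminus\nu(T))\cup\overline{\nu}(T)$, where the only issue is the vanishing of the connecting map, and for that you need only the weaker fact that $\mu_T$ has infinite order in $H_1(N_\Sigma\setminus\nu(T);\Lambda)$; your covering-projection argument for this (pushing a relation forward under $p_*$ to $H_1(N_\Sigma\setminus\nu(T);\Z)$ and invoking Lemma~\ref{lem:RimTorus}) is valid and pleasantly elementary compared with the paper's spectral sequence computation. You then transport the sequence to general $J$ using that $\Psi$ restricts to the identity on $N_\Sigma\setminus\nu(T)$ and that $\Psi_*$ is an isomorphism on $H_2(-;\Lambda)$; both facts are established in Lemma~\ref{lem:KunnethSS}, which precedes this lemma and does not depend on it, so there is no circularity. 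The trade-off is that your route leans on Lemma~\ref{lem:KunnethSS} where the paper's is self-contained, but in exchange you avoid the $\operatorname{Tor}$ computation of Claim~\ref{claim:ItisZ} entirely. One cosmetic slip: $H_1(\overline{\nu}(T);\Lambda)\cong\Z\la\alpha\ra$ is free as an abelian group but is the torsion module $\Lambda/(t-1)$ over $\Lambda$, so calling it a ``free module'' is loose; your argument only uses freeness over $\Z$, so nothing breaks.
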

\begin{proof}
As we noted in Lemma~\ref{lem:HomologyEquivalence} that~$\pi_1(N_{\Sigma(\alpha,J)})=\Z$ and since~$N_{\Sigma(\alpha,J)}$ has ribbon boundary, we know that~$H_1(N_{\Sigma(\alpha,J)};\Lambda)=0$ as well as~$H_3(N_{\Sigma(\alpha,J)};\Lambda)=0$, again by Lemma~\ref{lem:Homology}.
%We also noted in Remark~\ref{rem:KunnethSS} that~$H_2(E_J \times S^1)$
Consequently, the Mayer-Vietoris sequence for~$N_{\Sigma(\alpha,J)}=N_\Sigma\setminus\nu(T) \cup_{\varphi} (S^1 \times E_J)$ with~$\Lambda$ coefficients gives
\begin{align*}
 0 &\to H_2(\partial \overline{\nu}(T);\Lambda) \to H_2(N_\Sigma\setminus\nu(T);\Lambda) \oplus H_2(S^1 \times E_J;\Lambda) \to H_2(N_{\Sigma(\alpha,J)};\Lambda)  \\
& \to H_1(\partial \overline{\nu}(T);\Lambda) \to H_1(N_\Sigma\setminus\nu(T);\Lambda) \oplus H_1(S^1 \times E_J;\Lambda) \to 0.
\end{align*}
Since there is a homotopy equivalence~$\overline{\nu}(T) \simeq \alpha \times \mu_\Sigma$, the discussion of coefficient systems from Remark~\ref{rem:CoefficientSystem} implies that~$H_i(\partial \overline{\nu}(T);\Lambda)=H_i(\alpha \times \mu_\Sigma \times \mu_T;\Lambda)=H_i(\alpha \times \mu_T)$ for~$i=1,2$.
We saw in Lemma~\ref{lem:KunnethSS} that~$H_2(E_J \times S^1;\Lambda)=0$ and~$H_1(E_J \times S^1;\Lambda)=\Z$.
%As a consequence, we have
%\begin{align*}
% 0 &\to \Z[\alpha \times \mu_T] \to H_2(N_\Sigma\setminus\nu(T);\Lambda)  \to H_2(N_{\Sigma(\alpha,J)};\Lambda)  \\
%& \to \Z[\alpha] \oplus \Z[\mu_T] \to H_1(N_\Sigma\setminus\nu(T);\Lambda) \oplus \Z \to 0.
%\end{align*}
The previous sequence therefore reduces to
\begin{align}
\label{eq:Rim2}
 0 \to \Z\la\alpha \times \mu_T\ra &\to H_2(N_\Sigma\setminus\nu(T);\Lambda) \to H_2(N_{\Sigma(\alpha,J)};\Lambda)  \to \Z\la\alpha\ra \oplus \Z\la\mu_T\ra \nonumber \\
  &\to H_1(N_\Sigma\setminus\nu(T);\Lambda) \oplus \Z \to 0.
\end{align}
When $J=U$ is the unknot, $H_1(S^1 \times E_J;\Lambda)=H_1(\alpha)=\Z\langle \alpha \rangle$,
and therefore $\Z\la\mu_T\ra$ surjects onto $H_1(N_\Sigma\setminus\nu(T);\Lambda) $.
In particular, as an abelian group,~$H_1(N_\Sigma\setminus\nu(T);\Lambda)$ is cyclic.

%H_1(S^1 \times E_J)=H_1(\overline{\nu}(T))=H_1(\alpha \times \mu_\Sigma;\Lambda)=H_1(\alpha))

\begin{claim}
\label{claim:ItisZ}
The map~$\Z\la \mu_T \ra \stackrel{\iota}{\twoheadrightarrow} H_1(N_\Sigma\setminus\nu(T);\Lambda)$ is an isomorphism.
\end{claim}
\begin{proof}
Since we already noted that, as an abelian group,~$H_1(N_\Sigma\setminus\nu(T);\Lambda)$ is cyclic, it suffices to show that~$H_1(N_\Sigma\setminus\nu(T);\Lambda) \otimes_\Z \Q \neq 0$.
%Tor^\Z(\Z,\Q)=0 because \Z is free over \Z.
Equivalently, we must establish that \[H:=H_1(N_\Sigma\setminus\nu(T);\Q[t^{\pm 1}]) \neq~0.\]
By way of contradiction, assume that~$H=0$.
Use once again~$\varepsilon \colon \Lambda \to \Z$ to denote the augmentation map and write~$\Z_\varepsilon$ and~$\Q_\varepsilon$ for the resulting~$\Lambda$-module structures.
Since~$\operatorname{Tor}_2^\Lambda(\Z_\varepsilon,\Q_\varepsilon)=0$, the universal coefficient spectral sequence applied to~$H_1(N_\Sigma\setminus\nu(T);\Q)=H_1(N_\Sigma\setminus\nu(T);\Q[t^{\pm 1}] \otimes_{\Lambda}\Q_\varepsilon)$ produces the short exact sequence
%\Q[t^{\pm 1}] \otimes_{\Lambda}\Q_\varepsilon
%=( \Q \otimes_\Z \Lambda ) \otimes_{\Lambda}\Q_\varepsilon
%=\Q \otimes_\Z  \Q
%\Q
%https://math.stackexchange.com/questions/458264/dimension-of-mathbbq-otimes-mathbbz-mathbbq-as-a-vector-space-over/458276
$$0 \to H \otimes_{\Lambda} \Q_\varepsilon \to H_1(N_\Sigma\setminus\nu(T);\Q) \to \operatorname{Tor}^\Lambda_1(\Z_{\varepsilon},\Q_{\varepsilon}) \to 0.$$
We assumed that~$H=0$, so~$H \otimes_{\Lambda} \Q_\varepsilon=0$, and thus~$H_1(N_\Sigma\setminus\nu(T);\Q) \cong  \operatorname{Tor}^\Lambda_1(\Z,\Q)$.
Using Lemma~\ref{lem:RimTorus},~$H_1(N_\Sigma\setminus\nu(T);\Q)=\Q^2$.
On the other hand, we have~$ \operatorname{Tor}^\Lambda_1(\Z_{\varepsilon},\Q_{\varepsilon})=H_1(S^1;\Q)=~\Q$.
This is a contradiction and the claim is established.
\end{proof}

Using Claim~\ref{claim:ItisZ}, the penultimate map in~\eqref{eq:Rim2} is a surjection~$\Z^2 \to \Z^2$ and therefore an isomorphism.
The first short exact sequence in the statement of the lemma now follows from the one displayed in~\eqref{eq:Rim2}.
The exactness of the second sequence follows from exactness of the first by taking $J=U$ and recalling that $\Sigma_n(\alpha,U) = \Sigma$ by Remark~\ref{remark:U-rim-surgery-does-nothing}.
\end{proof}

\begin{theorem}
\label{thm:RimSurgeryClosed}
Let~$\Sigma \subseteq X$ be a closed $\Z$-surface, let~$\alpha \subseteq \Sigma$ be a simple closed curve and let~$J$ be a knot.
% and let~$n \in \Z$.
The surfaces~$\Sigma$ and~$\Sigma(\alpha,J)$ are topologically ambiently isotopic.
%There is an orientation-preserving homeomorphism of pairs
%$$ (X,\Sigma) \xrightarrow{\cong}(X,\Sigma(\alpha,J)).$$
%If~$X=S^4$, then the surfaces are topologically isotopic rel.\ boundary. {MP: This theorem should now be updated to include ambiently isotopic for all~$X$.}
%The surfaces~$\Sigma$ and~$\Sigma(\alpha,J)$ are topologically ambient isotopic.
\end{theorem}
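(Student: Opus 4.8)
The plan is to deduce the closed case from Theorem~\ref{thm:RimSurgery} (the version with boundary) together with the classification of self-homeomorphisms used in Theorem~\ref{thm:Unknotting4Manifold}.

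\emph{Step 1: reduce to the bounded case.} After an ambient isotopy I may assume that the simple closed curve $\alpha \subseteq \Sigma$ is disjoint from a small disc $D^2 \subseteq \Sigma$. As in the proof of Theorem~\ref{thm:Unknotting4Manifold}, I would remove a pair $(\mathring D^4,\mathring D^2)$, supported over this disc, from $(X,\Sigma)$ to obtain $(N,\widetilde\Sigma)$ with $\partial\widetilde\Sigma$ an unknot in $S^3$ and $X_\Sigma = N_{\widetilde\Sigma}$. Since the rim torus $T$ associated to $\alpha$ and the entire rim surgery construction take place in a neighbourhood of $T$ disjoint from the removed region, the surface $\Sigma_n(\alpha,J) \subseteq X$ agrees with $\Sigma$ near $D^2$, the result of removing the same pair is $\widetilde\Sigma_n(\alpha,J) = (\widetilde\Sigma)_n(\alpha,J)$, and $X_{\Sigma_n(\alpha,J)} = N_{(\widetilde\Sigma)_n(\alpha,J)}$. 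Theorem~\ref{thm:RimSurgery} then provides a rel.\ boundary orientation-preserving homeomorphism of pairs $\Phi' \colon (N,\widetilde\Sigma_n(\alpha,J)) \to (N,\widetilde\Sigma)$ inducing the same isometry $F := \Psi_*$ on equivariant intersection forms as the degree one map $\Psi$. Gluing $\Phi'$ with the identity on $(D^4,D^2)$ produces a homeomorphism of pairs $\Phi \colon (X,\Sigma_n(\alpha,J)) \to (X,\Sigma)$ inducing $F$.

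\emph{Step 2: reduce to $F_\Z = \id$ and prove a surjectivity statement.} By Lemma~\ref{lem:InducedIsometryOfH2(X)} the isometry $F$ induces an isometry $F_\Z$ of $(H_2(X),Q_X)$ with $F_\Z = \Phi_*$, and by Theorem~\ref{thm:Unknotting4Manifold}\,(2) it then suffices to show $F_\Z = \id$. Set $Y := X_\Sigma \setminus \nu(T)$; because rim surgery only modifies $\nu(T)$, the space $Y$ is a subspace of each of $X_\Sigma$, $X_{\Sigma_n(\alpha,J)}$, and $X$, and by construction $\Psi$ restricts to the identity on $Y$ (and on $\partial X_\Sigma$). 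Tensoring the two exact sequences of Lemma~\ref{lem:RimLemma2} down to $\Z$ via the augmentation (using $H_1(Y;\Lambda)\cong\Z$ from the proof of that lemma together with the universal coefficient computation of Lemma~\ref{lem:InducedIsometryOfH2(X)}) shows that the inclusion induced maps $H_2(Y;\Z) \to H_2(X_\Sigma;\Z)$ and $H_2(Y;\Z) \to H_2(X_{\Sigma_n(\alpha,J)};\Z)$ are surjective. Composing with the surjective ``cap off'' maps $p_\Sigma \colon H_2(X_\Sigma;\Z) \twoheadrightarrow H_2(X;\Z)$ and $p_n \colon H_2(X_{\Sigma_n(\alpha,J)};\Z) \twoheadrightarrow H_2(X;\Z)$ (each with kernel $\Z^{2g}$, as in Lemma~\ref{lem:InducedIsometryOfH2(X)}), I conclude that the inclusion induced map $k_* \colon H_2(Y;\Z) \to H_2(X;\Z)$ is surjective.

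\emph{Step 3: conclude.} By naturality, $p_\Sigma$ precomposed with $H_2(Y;\Z)\to H_2(X_\Sigma;\Z)$ and $p_n$ precomposed with $H_2(Y;\Z)\to H_2(X_{\Sigma_n(\alpha,J)};\Z)$ both compute $k_*$, while the defining relation $F_\Z\circ p_n = p_\Sigma\circ(F\otimes_\Lambda\id_\Z)$ from~\eqref{eq:InducedIsomOfX}, combined with the fact that $F = \Psi_*$ carries $H_2(Y;\Z)\to H_2(X_{\Sigma_n(\alpha,J)};\Z)$ to $H_2(Y;\Z)\to H_2(X_\Sigma;\Z)$ (since $\Psi|_Y=\id$), gives $F_\Z\circ k_* = k_*$. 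As $k_*$ is surjective, $F_\Z = \id$, and Theorem~\ref{thm:Unknotting4Manifold}\,(2) yields that $\Sigma$ and $\Sigma_n(\alpha,J)$ are topologically ambiently isotopic. The step I expect to be the main obstacle is the bookkeeping in Steps 2--3 that the subspace $Y$ really does sit inside $X_\Sigma$ and inside $X_{\Sigma_n(\alpha,J)}$ compatibly with their inclusions into $X$ (so that ``the'' map $k_*$ is well defined and equals $p_\Sigma$ and $p_n$ on the image of $Y$): this rests on the explicit feature of the knot surgery / rim surgery construction and of the degree one map $\Psi$ that everything happens inside $\nu(T)$ and is the identity elsewhere.
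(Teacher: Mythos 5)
Your proposal is correct and follows essentially the same route as the paper's proof: reduce to the bounded case by removing a $(D^4,D^2)$-pair away from $\alpha$, apply Theorem~\ref{thm:RimSurgery} to get a homeomorphism of pairs inducing $F=\Psi_*$, and then use Lemma~\ref{lem:RimLemma2} together with the fact that $\Psi$ restricts to the identity on $N_{\widetilde\Sigma}\setminus\nu(T)$ to see that every class in $H_2(X)$ is represented by a surface on which $F$ acts as the identity, forcing $F_\Z=\id$ and allowing Theorem~\ref{thm:Unknotting4Manifold}~\eqref{item:closed-case-2-maintext} to upgrade to an ambient isotopy. Your Steps 2--3 are just a slightly more formal packaging (via surjectivity of $k_*$ and the relation $F_\Z\circ k_*=k_*$) of the paper's concluding argument.
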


\begin{proof}
After an ambient isotopy, we may assume that the surfaces~$\Sigma_0 := \Sigma(\alpha,J)$ and~$\Sigma_1 := \Sigma$ coincide on a disc~$D^2 \subseteq \Sigma \cap \Sigma(\alpha,J)$ that does not intersect~$\alpha \subseteq \Sigma$.
Assume that the normal bundles also coincide over this~$D^2$.
Consider the preimage~$\mathring{D}^2 \times \R^2 \subseteq \nu \Sigma_i$.
This is homeomorphic to an open 4-ball~$\mathring{D}^4$.
Remove this~$(\mathring{D}^4,\mathring{D}^2)$ from~$(X,\Sigma)$ and~$(X,\Sigma(\alpha,J))$ to obtain properly embedded surfaces~$\widetilde{\Sigma} \subseteq N$ and~$\widetilde{\Sigma}(\alpha,J) \subseteq N$ with the unknot as a common boundary.
By construction,~$\widetilde{\Sigma}(\alpha,J)$ is obtained by rim surgery on~$\widetilde{\Sigma}$.
%We did stuff far from~$\alpha$.
Apply Theorem~\ref{thm:RimSurgery} to obtain a rel.\ boundary homeomorphism of pairs~$\Phi \colon (N,\widetilde{\Sigma}(\alpha,J)) \to  (N,\widetilde{\Sigma})$.
Recall that on $H_2(-;\Lambda)$, we have $\Phi_*=\Psi_*$, where $\Psi$ is the degree one map described above Lemma~\ref{lem:KunnethSS}.
Construct a homeomorphism of pairs~$(X,\Sigma(\alpha,J)) \to (X,\Sigma)$  by gluing~$\Phi$ with the identity homeomorphism~$(D^4,D^2) \to (D^4,D^2)$.

We refine this argument to obtain the required ambient isotopy.
Use~$F$ to denote the isometry induced by these homeomorphisms on~$H_2(-;\Lambda)$ of the surface exteriors.
In Lemma~\ref{lem:KunnethSS}, we argued that this isometry fits into the following commutative diagram with exact rows (here we also used Lemma~\ref{lem:RimLemma2} to simplify the Mayer-Vietoris sequences):
\begin{equation}
\label{eq:DiagramOfIdenties}
\xymatrix@R0.5cm{
0\ar[r] &\Z\langle \alpha \times \mu_T \rangle \ar[r]\ar[d]_=^{\id}& H_2(N_{\widetilde{\Sigma}} \setminus \nu(T);\Lambda) \ar[r]\ar[d]^{\id}_=& H_2(N_{\widetilde{\Sigma}(\alpha,J)};\Lambda)  \ar[d]^{F=\Psi_*} \ar[r] &0 \\
0\ar[r] & \Z\langle \alpha \times \mu_T \rangle \ar[r]& H_2(N_{\widetilde{\Sigma}} \setminus \nu(T);\Lambda) \ar[r]& H_2(N_{\widetilde{\Sigma}};\Lambda)  \ar[r] &0.
}
\end{equation}
%We refrained from calling~$F$ the identity since~$H_2(N_{\widetilde{\Sigma}};\Lambda)~$ and~$H_2(N_{\widetilde{\Sigma}(\alpha,J)};\Lambda)$ are different modules.
Recall that~$\Sigma_0 := \Sigma(\alpha,J)$ and~$\Sigma_1 := \Sigma$, so~$X_{\Sigma_0} \cong N_{\widetilde{\Sigma}(\alpha,J)}$ and~$X_{\Sigma_1} \cong N_{\widetilde{\Sigma}}$.
In Lemma~\ref{lem:InducedIsometryOfH2(X)},~$F_\Z$ was defined by the following commutative diagram with exact rows
\begin{equation}
\label{eq:InducedIsomOfXForIsotopy}
\xymatrix@R0.5cm{
0 \ar[r] & \Z^{2g} \ar[r]\ar[d]^{(F \otimes_\Lambda \id_{\Z})_|} & H_2(X_{\Sigma_0}) \ar[r]_{p_0}\ar[d]^{F \otimes_\Lambda \id_{\Z}}  & H_2(X) \ar[r] \ar@{->}[d]^{F_\Z} &0  \\
0 \ar[r] & \Z^{2g} \ar[r] & H_2(X_{\Sigma_1}) \ar[r]^{p_1} & H_2(X) \ar[r]&0.
}
\end{equation}
%where~$s_0$ is any splitting of~$p_0$; we proved that~$F_\Z$ does not depend on the choice of~$s_0$.
By~\eqref{eq:InducedIsomOfXForIsotopy}, every element~$x$ of~$H_2(X)$ can be represented by a class in~$H_2(X_{\Sigma_0})$, and by~\eqref{eq:DiagramOfIdenties} every class here can be represented by a surface~$S$ in~$N_{\widetilde{\Sigma}(\alpha,J)} \setminus \nu(T) \subseteq N_{\widetilde{\Sigma}(\alpha,J)} \cong X_{\Sigma_0}$.  Since~$F$ is induced
% by~$\Phi$, which by Proposition~\ref{prop:RimSameIntersection} coincides with the map on homology induced
by the degree one map~$\Psi$, mapping our surface to the same surface~$S$ in~$N_{\widetilde{\Sigma}} \setminus \nu(T) \subseteq N_{\widetilde{\Sigma}} \cong X_{\Sigma_1}$ yields~$(F\otimes_{\La} \id_{\Z})([S]) \in H_2(X_{\Sigma_1})$.
The inclusion induced map~$p_1 \colon H_2(X_{\Sigma_1}) \to H_2(X)$ then sends~$(F\otimes_{\La} \id_{\Z})([S])$ to~$x$.
% (in both cases these classes can be thought of as coming from~$N_{\Sigma} \setminus \nu(T)$)
Therefore~$F_\Z \colon H_2(X) \to H_2(X)$ is indeed the identity, so the second item of Theorem~\ref{thm:Unknotting4Manifold} implies that~$\Sigma_0$ and~$\Sigma_1$ are topologically ambiently isotopic.
\end{proof}

\appendix
\section{Stable isotopy for surfaces in a topological~$4$-manifold with boundary}
\label{sec:BS}

%\begin{theorem}
%\label{thm:BS}
%Let~$X$ be an oriented, connected, topological 4-manifold, possibly non-compact.
%  Let~$\Sigma$ be a compact, connected, orientable surface with either~$0$ or~$1$ boundary components.  Let~$\Sigma_0$,~$\Sigma_1$ be two properly embedded, locally flat, oriented surfaces homeomorphic to~$\Sigma$ with~$\partial \Sigma_0 = \partial \Sigma_1$ and
%  \[[\Sigma_0] = [\Sigma_1] \in H_2(X,\partial X;\Z). \]
%Suppose that for each~$i$,~$\pi_1(X \sm \Sigma_i)$ is cyclic generated by a meridian to~$\Sigma_i$, or~$\pi_1(X \sm \Sigma_i)$ is trivial.  Then some finite number of trivial 1-handle stabilisations results in ambiently isotopic surfaces~$\Sigma_0'$ and~$\Sigma_1'$ with~$\partial \Sigma_i = \partial \Sigma_i'$ for~$i=0,1$.
%\end{theorem}

The next result is an extension of \cite[Theorem~5]{BaykurSunukjian} to the topological case, and allowing nonempty boundary, but restricting the ambient 4-manifolds somewhat.
Since we need the given extension, we provide details of the proof.
The main ideas are due to~\cite{BaykurSunukjian}.  We fill in some details in their argument for constructing a map to~$S^1$ in the course of the proof. The case with nonempty boundary was also stated as~\cite[Proposition~2.13]{JuhaszZemke} in the smooth category.

\begin{theorem}\label{thm:BS}
Let~$N$ be an oriented, connected, simply connected, compact topological 4-manifold with boundary~$S^3$.
Let~$\Sigma$ be a compact, connected, orientable surface with one boundary component.
Let~$\Sigma_0, \Sigma_1 \subseteq N$ be two properly embedded, locally flat, oriented surfaces homeomorphic to~$\Sigma$, and suppose that~$\partial \Sigma_0 = \partial \Sigma_1$.
Assume that for each~$i$,~$\pi_1(N \sm \Sigma_i) \cong \Z$ is infinite cyclic generated by an oriented meridian to~$\Sigma_i$.
Then some finite number of trivial 1-handle stabilisations results in ambiently isotopic surfaces~$\Sigma_0'$ and~$\Sigma_1'$ with~$\partial \Sigma_i = \partial \Sigma_i'$ for~$i=0,1$.
\end{theorem}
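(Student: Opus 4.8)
The plan is to follow the strategy of Baykur--Sunukjian~\cite{BaykurSunukjian}, adapted to the topological category and to properly embedded surfaces. First I would observe that both surfaces are null-homologous: since $N$ is simply connected, $H_2(N,\partial N;\Z)$ is paired nonsingularly with $H_2(N;\Z)$, and the same argument as in Lemma~\ref{lem:Nullhomologous} (intersecting a representative of any class in $H_2(N)$ with $\Sigma_i$ and using that $H_1(N_{\Sigma_i})=\Z$ is torsion-free) shows $[\Sigma_i,\partial\Sigma_i]=0$. Having fixed the common boundary curve $\partial\Sigma_0=\partial\Sigma_1 =: K \subseteq S^3$, I would cap off: glue a collar $S^3\times[0,1]$ with a fixed pushed-in Seifert surface $F_K$ for $K$ onto $(N,\Sigma_i)$, producing closed surfaces $\wh\Sigma_i$ in a closed $4$-manifold $\wh N = N\cup (S^3\times[0,1])$ glued to a copy of $D^4$ along the other end --- i.e.\ effectively work in $X := N\cup_{S^3} D^4$ with the Seifert surface region inside the added ball. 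This reduces the problem to the closed case at the cost of keeping track that all stabilisations and isotopies can be taken disjoint from a neighbourhood of $F_K$, hence rel.\ boundary downstairs.

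Next I would carry out the core of the Baykur--Sunukjian argument. The first step is to make the two surfaces \emph{smoothly} comparable: after stabilising each surface enough times, one arranges (using that $\Z$ is a good group and Freedman--Quinn surgery, together with the fact that surface exteriors with $\pi_1=\Z$ are smoothable after connect-summing with $S^2\times S^2$'s, which is exactly what $1$-handle stabilisation achieves on the surface side) that both $N_{\Sigma_i}$ become smooth and the surfaces become smooth submanifolds; here Lemma~\ref{lem:Handle} and the fact that stabilisations can be taken trivial because $\pi_1(\partial\nu\Sigma_i)\to\pi_1(N_{\Sigma_i})$ is onto (Lemma~\ref{lem:RibbonBoundary}) are what is needed. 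Then, in the smooth category, one picks a common generic handle decomposition and uses the fact that the two surface complements have isomorphic fundamental group (infinite cyclic, generated by a meridian) to build, on each exterior $N_{\Sigma_i}$, a map to $S^1$ realising the abelianisation; the surfaces $\Sigma_i$ are then isotopic to fibres-with-handles of nearby Morse maps, and one matches up the critical data after further trivial stabilisations. I would fill in the detail, glossed over in~\cite{BaykurSunukjian}, that the classifying map $N_{\Sigma_i}\to S^1$ can be chosen to restrict correctly on $\partial N_{\Sigma_i} = M_{K,g}$ and to agree with a fixed model near the Seifert-surface region, using Lemma~\ref{lem:AlexanderModuleSigmaS1} to control $H_1$ of the boundary.

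The main obstacle I anticipate is precisely this construction of the $S^1$-valued Morse function and the book-keeping needed to keep everything rel.\ boundary and compatible with the capping-off. In the closed simply connected case Baykur--Sunukjian can be somewhat cavalier, but here I must ensure: (i) the map to $S^1$ on $N_{\Sigma_i}$ extends the chosen map on the collar/Seifert region; (ii) the stabilisations used to smooth and to match critical points are all of the \emph{trivial} type in the sense of Subsection~\ref{sub:IntersectionFormHyperbolic}, so that Lemma~\ref{lem:Handle} applies and the ambient manifold is genuinely unchanged; and (iii) the final ambient isotopy of $\wh N$ can be cut back down to an ambient isotopy of $N$ rel.\ $\partial N=S^3$. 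For (iii) I would use that the isotopy, being supported away from the $D^4$ and the Seifert region, restricts to $N$ and fixes a collar of the boundary. Once the two stabilised surfaces are realised as the same fibre-plus-handles data of a common $S^1$-valued Morse function, an ambient isotopy identifying them follows by the usual uniqueness-of-Morse-data argument, completing the proof.

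For the write-up I would state the capping-off as a short lemma, cite~\cite{BaykurSunukjian,JuhaszZemke} for the closed smooth skeleton of the argument, and spend the bulk of the section on points (i)--(iii) above, since those are exactly the places where the boundary and the topological category require new input beyond the existing literature.
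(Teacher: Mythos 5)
Your proposal has two genuine gaps. The first is the smoothing step: a trivial $1$-handle stabilisation of $\Sigma_i$ does not connect-sum the exterior with $S^2\times S^2$ (it glues in the exterior of an unknotted torus in $S^4$ along a thickened meridian, cf.\ Lemma~\ref{lem:Handle}), and in any case stabilising the surface leaves the ambient manifold $N$ unchanged, and $N$ need not be smoothable at all since $\ks(N)$ may be nonzero. So you cannot pass to the smooth category this way. The paper stays topological throughout: it uses topological transversality to extract an embedded $3$-manifold from a map to $S^1$, and smooths only $N\setminus\{\mathrm{pt}\}$ at the very last step, in order to quote Hudson's theorem (via Boyle) that homotopic cores of $1$-handles are ambiently isotopic --- which is also exactly where the \emph{triviality} of the stabilisations comes from.

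The second gap is the core mechanism. The Baykur--Sunukjian argument does not realise each $\Sigma_i$ as a fibre-with-handles of an $S^1$-valued Morse map on its own exterior $N_{\Sigma_i}$; those exteriors are not fibred and the surfaces are not fibres, so there is no ``uniqueness of Morse data'' to invoke. The actual mechanism is: first make $\Sigma_0$ and $\Sigma_1$ disjoint by tubing $\Sigma_0$ to itself along arcs in $\Sigma_1$ (possible because the algebraic intersection number vanishes, by null-homology); then construct a single map $F\colon N\setminus(\nu\Sigma_0\cup\nu\Sigma_1)\to S^1$ representing $\mu_{\Sigma_0}^*-\mu_{\Sigma_1}^*$, whose boundary restriction is pinned down by a careful choice of framings so that the preimage $Y$ of a regular value is a compact embedded $3$-manifold with $\partial Y=\Sigma_0\cup A\cup-\Sigma_1$, where $A$ is the Seifert-framed annulus joining the two boundary curves in $S^3$. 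A Heegaard splitting of $Y$ relative to $A$ then produces one surface that is simultaneously a stabilisation of $\Sigma_0$ and of $\Sigma_1$ up to ambient isotopy. This embedded relative cobordism $Y$ is the object your sketch is missing. Your capping-off reduction to the closed case is also not how the paper proceeds (it works rel.\ boundary throughout, which is where the framing bookkeeping genuinely lives), but that choice could in principle be made to work; the two gaps above are the substantive ones.
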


We begin with a lemma on framings, some of which is a recollection from Section~\ref{sec:Knotted}.
Note that any locally flat embedded surface in a 4-manifold admits a normal bundle with linear structure group~\cite[Theorem~9.3]{FreedmanQuinn}.

\begin{lemma}\label{lem:induced-framing}
 For~$i=0,1$ the normal bundle of~$\Sigma_i$ is trivial and for any choice of framing there is a well-defined induced~$($homotopy class of$)$ framing on the normal bundle of~$\partial \Sigma_i$.
The surface~$\Sigma_i$ is null-homologous in~$H_2(N,\partial N)$, and the induced framing on~$\partial \Sigma_i$ equals the Seifert framing.
\end{lemma}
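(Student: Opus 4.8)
The plan is to dispatch the four assertions in order, mostly by invoking results from Section~\ref{sec:Knotted}. \textbf{Triviality of the normal bundle.} Each $\Sigma_i\cong\Sigma$ is a compact, connected, oriented surface with one boundary component, hence homotopy equivalent to a wedge of circles; an oriented rank-two real bundle over a $1$-complex is trivial, since its classifying map lands in $H^2(\Sigma_i;\Z)=0$. This is Lemma~\ref{lem:trivial-normal-bundle}. Fixing a framing $\nu\Sigma_i\cong\Sigma_i\times\R^2$ and restricting it to $\partial\Sigma_i$ gives a framing $\nu(\partial\Sigma_i)\cong\partial\Sigma_i\times\R^2$ of the boundary normal bundle.

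\textbf{Independence of the chosen framing.} Two framings of $\nu\Sigma_i$ differ by a map $g\colon\Sigma_i\to SO(2)\simeq S^1$, and the two induced boundary framings differ by $g|_{\partial\Sigma_i}\colon\partial\Sigma_i\to S^1$. The degree of this circle map equals $\langle g^*[\omega],[\partial\Sigma_i]\rangle$, where $[\omega]$ generates $H^1(S^1;\Z)$; but $[\partial\Sigma_i]=0\in H_1(\Sigma_i;\Z)$, because the boundary curve of a once-punctured orientable surface is a product of commutators in $\pi_1(\Sigma_i)$. Hence $g|_{\partial\Sigma_i}$ is null-homotopic, the two boundary framings are homotopic, and the induced homotopy class of framing on $\nu(\partial\Sigma_i)$ depends only on $\Sigma_i$, not on the chosen framing of $\nu\Sigma_i$.

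\textbf{Null-homology and the Seifert framing.} Gluing $D^4$ along $S^3=\partial N$ produces a closed, simply connected $X$ with $N=X\setminus\mathring{D}^4$, so each $\Sigma_i$ is a $\Z$-surface in $N$ (properly embedded, with knot group $\Z$); Lemma~\ref{lem:Nullhomologous} then gives $[\Sigma_i,\partial\Sigma_i]=0\in H_2(N,\partial N)$. For the framing identification, Lemma~\ref{lem:trivial-normal-bundle} moreover states that the triviality holds relative to the Seifert framing of $\partial\Sigma_i=K$, i.e.\ the Seifert (equivalently $0$-) framing of $K$ extends to a framing of $\nu\Sigma_i$; together with the previous step this forces the well-defined induced boundary framing to be the Seifert framing. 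If one prefers a self-contained argument, take a normal push-off $\Sigma_i'\subseteq N$ of $\Sigma_i$ whose boundary is the $0$-framed longitude $\lambda_K\subseteq S^3$; then $\Sigma_i\cap\Sigma_i'$ is a signed set of $e$ points, where $e$ is the relative Euler number of $\nu\Sigma_i$ rel.\ the Seifert framing, and $\Sigma_i'\cap N_{\Sigma_i}$ is a properly embedded surface whose oriented boundary represents $[\lambda_K]+e\,[\mu_{\Sigma_i}]=e\,[\mu_{\Sigma_i}]$ in $H_1(N_{\Sigma_i})=\Z\langle\mu_{\Sigma_i}\rangle$, using that $\lambda_K$ is null-homologous in $E_K\subseteq N_{\Sigma_i}$. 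Since this boundary bounds, $e\,[\mu_{\Sigma_i}]=0$, so $e=0$ and the Seifert framing extends, exactly as in the proof of Lemma~\ref{lem:Nullhomologous}.

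\textbf{Main difficulty.} Almost everything here is a citation of Section~\ref{sec:Knotted}; the only genuinely new content is the independence statement of the second step, and the only point requiring care is the homological bookkeeping of the third step—matching the relative Euler number of $\nu\Sigma_i$ with the signed count $\Sigma_i\cap\Sigma_i'$ and checking that the Seifert framing of $K$ makes $\lambda_K$ null-homologous in the surface exterior. I do not anticipate a real obstacle.
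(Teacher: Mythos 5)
Your proposal is correct and follows essentially the same route as the paper: triviality of the normal bundle from orientability and nonempty boundary, well-definedness of the induced boundary framing because $\partial\Sigma_i$ is a product of commutators so the difference map to $SO(2)$ is null-homotopic on the boundary, and the remaining claims by citing Lemmas~\ref{lem:Nullhomologous} and~\ref{lem:trivial-normal-bundle}. Your added self-contained push-off/relative-Euler-number argument is just an unpacking of the proof of Lemma~\ref{lem:trivial-normal-bundle}, so nothing is genuinely different.
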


\begin{proof}
Since the surface~$\Sigma_i$ has nonempty boundary and is oriented, its normal bundle is trivial.
Two choices of framings on~$\Sigma_i$ differ by a map~$\Sigma_i \to SO(2)$.  The difference between the two framings on the boundary~$\partial \Sigma_i$ is governed by the homotopy class of the composite
\[S^1 \xrightarrow{\cong} \partial \Sigma_i \to \Sigma_i \to SO(2) \cong S^1.\]
Since the boundary determines a commutator in~$\pi_1(\Sigma_i)$, the displayed map is null-homotopic. It follows that the two framings induced on ~$\partial \Sigma_i$ agree up to homotopy. This proves that the induced framing is well-defined.
The second sentence was already shown in Lemmas~\ref{lem:Nullhomologous} and~\ref{lem:trivial-normal-bundle}.
\end{proof}

\begin{proof}[Proof of Theorem~\ref{thm:BS}]
Push the boundary of~$\Sigma_1$ off the boundary of~$\Sigma_0$, using the Seifert framing, to arrange that~$\partial \Sigma_0$ and~$\partial \Sigma_1$ in~$\partial N \cong S^3$ are parallel circles.
We may assume that there is a collar~$\partial N \times I$ with~$\partial N \times \{0\} = \partial N$, and an embedding
 ~$g \colon S^1 \times I \times I \hookrightarrow \partial N \times I$ corresponding to the Seifert framing push-off, with
  %{I want to make sure I understand the second~$I$ factor is. Is it correct that it results from the Seifert framing?  Yes.}
  \begin{align*}
 g(S^1 \times I \times \{i\}) &= \Sigma_i \cap (\partial N \times I), \,\,\, i=0,1 \text{, and}\\
 g(S^1 \times \{t\} \times I) &\subseteq \partial N \times \{t\} \text{ for all } t \in I.
 \end{align*}
Let~$A \subseteq \partial N \cong S^3$ be the annulus
  \[A:= g(S^1 \times \{0\} \times I)\]
  connecting~$\partial \Sigma_0$ and~$\partial \Sigma_1$, arising from the trace of the push.
Note that~$A$ induces the Seifert framing on~$\partial \Sigma_i$, for~$i=0,1$, and that by Lemma~\ref{lem:induced-framing} this equals the framing induced by some choice of framing of the normal bundle of~$\Sigma_i$.

\begin{figure}[!htb]
\centering
\labellist
    \small
    \pinlabel {$\partial \Sigma_0$} at 75 140
        \pinlabel {$\partial\Sigma_1$} at 200 140
    \pinlabel {$\Sigma_0$} at 77 9
        \pinlabel {$\Sigma_1$} at 202 9
    \pinlabel {$\Sigma_0 \cap (\partial N \times I)$} at 20 90
        \pinlabel {$\Sigma_1 \cap (\partial N \times I)$} at 250 90
    \pinlabel {$\partial N$} at 15 130
    \pinlabel {$g(S^1 \times I \times I)$} at 135 90
    \pinlabel {$A$} at 137 140
   \endlabellist
\includegraphics[scale=0.7]{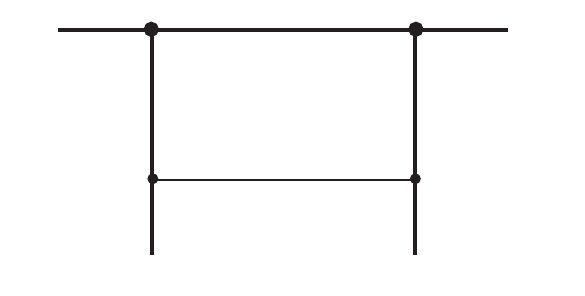}
\caption{A schematic diagram indicating the relation of $g(S^1 \times I \times I) \subseteq \partial N \times I$ to $\partial \Sigma_0$, $\partial \Sigma_1$, and the annulus $A \subseteq S^3$ that joins them.}
\label{fig:notation-schematic}
\end{figure}

By an isotopy, arrange further that~$\Sigma_0$ and~$\Sigma_1$ intersect transversely in their interiors~\cite[Theorem~9.5]{FreedmanQuinn}, and since~$\Sigma$ is compact we may also assume there are finitely many intersection points.

Cap off each~$\partial \Sigma_i$ with a Seifert surface, and by a small isotopy of the capped-off~$\Sigma_1$, arrange that the capped-off surfaces are disjoint in~$\partial N \times I$.  By Lemma~\ref{lem:induced-framing} (and since~$H_2(\partial N) = H_2(S^3)=0$), the capped-off surfaces are null-homologous in~$H_2(N)$, and so intersect algebraically zero times.  By tubing~$\Sigma_0$ to itself, that is by 1-handle stabilisations, arrange that~$\Sigma_0$ and~$\Sigma_1$ are disjoint. To achieve this, pair up points with opposite signs in $\Sigma_0 \pitchfork \Sigma_1$ and for each pair $\{p,q\}$ choose a path $\gamma$ in $\Sigma_1$ connecting the two intersection points and away from the other intersection points.  We can choose these paths to be mutually disjoint, but this is not obligatory.  For each pair of points $\{p,q\}$, remove two open discs from $\Sigma_0$, one for each pair of points, and add a tube, coming from the normal circle bundle of $\Sigma_1$ restricted to~$\gamma$, as shown in Figure~\ref{fig:1-handle-stab}. In case different paths on $\Sigma_1$ intersect, vary the radii of the tubes to keep them disjoint. This stabilises $\Sigma_0$ to a surface disjoint from $\Sigma_1$.

\begin{figure}[!htb]
\centering
\labellist
    \small
    \pinlabel {$\Sigma_0$} at 90 39
        \pinlabel {$\Sigma_1$} at 140 20
    \pinlabel {$\Sigma_0'$} at 320 39
        \pinlabel {$\Sigma_1$} at 373 20
    \endlabellist
\includegraphics[scale=0.7]{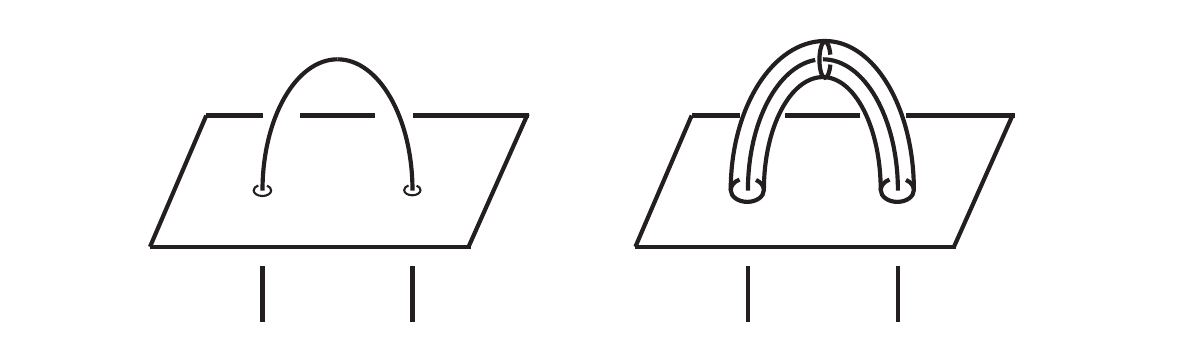}
\caption{Stabilising $\Sigma_0$ to $\Sigma_0'$ by adding a 1-handle in a neighbourhood of an arc~$\gamma$ in~$\Sigma_1$.}
\label{fig:1-handle-stab}
\end{figure}

  Perform the same number of trivial 1-handle stabilisations to~$\Sigma_1$, so that the surfaces are still abstractly homeomorphic.  We will abuse notation and still refer to the resulting surfaces as~$\Sigma_0$ and~$\Sigma_1$.
Let \[N_{\Sigma_0,\Sigma_1} := N \sm (\nu \Sigma_0 \cup \nu\Sigma_1).\]
We will construct an oriented 3-dimensional relative Seifert manifold~$Y \subseteq N_{\Sigma_0,\Sigma_1}$, that is a rel.\ boundary cobordism between~$\Sigma_0$ and~$\Sigma_1$, embedded in~$N$ and with~$\partial Y = S$, where
\[S := \Sigma_0 \cup A \cup -\Sigma_1.\]

\begin{claim*}
  There is a locally flat, embedded, compact, orientable~$3$-manifold~$Y$ with~$\partial Y=S$ and corners at~$\Sigma_i \cap A$, as described in the preceding paragraph.
\end{claim*}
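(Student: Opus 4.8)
The surface $S = \Sigma_0 \cup A \cup -\Sigma_1$ is a closed (with corners smoothed) oriented surface embedded in $N$, sitting in the boundary of the exterior $N_{\Sigma_0,\Sigma_1}$. To produce the relative Seifert $3$-manifold $Y$ we want a properly embedded, locally flat, compact orientable hypersurface of $N_{\Sigma_0,\Sigma_1}$ whose boundary is $S$. The standard way to build such a Seifert hypersurface is to realize $[S]$ as the pre-image of a regular value of a map to $S^1$, so the plan is: (1) show $[S] = 0 \in H_2(N_{\Sigma_0,\Sigma_1}, \partial N_{\Sigma_0,\Sigma_1})$, hence dually that $S$ is Poincar\'e dual to an element of $H^1(N_{\Sigma_0,\Sigma_1};\Z) = [N_{\Sigma_0,\Sigma_1}, S^1]$; (2) choose a map $\rho \colon N_{\Sigma_0,\Sigma_1} \to S^1$ representing this class, arranged to be transverse to a regular value and to restrict on the boundary collar $\partial\nu\Sigma_i$ and on a neighborhood of $A$ in the prescribed way, so that $\rho^{-1}(\mathrm{pt})$ is exactly $S$ near the boundary; (3) take $Y := \rho^{-1}(\mathrm{pt})$, a locally flat properly embedded $3$-manifold with $\partial Y = S$; orientability follows because $Y$ is the pre-image under a map to the orientable $S^1$ inside the orientable $N$ (equivalently its normal bundle is the pullback of the trivial normal bundle of a point), and then smooth the corners along $\Sigma_i \cap A$.

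The homological input is where the knot-group hypothesis enters. The key point is that $\pi_1(N\setminus\Sigma_i) \cong \Z$ is generated by a meridian $\mu_i$, so $H^1(N_{\Sigma_i};\Z) \cong \Z$ is generated by the class $\omega_i$ counting intersection number with $\Sigma_i$; the map to $S^1$ classifying $\omega_i$ is (up to homotopy) the meridional map, and $\Sigma_i$ itself is a fibre. First I would verify that $H_1(N_{\Sigma_0,\Sigma_1};\Z) \cong \Z$, generated by a single meridian, using a Mayer--Vietoris argument: $N_{\Sigma_0,\Sigma_1}$ is obtained from $N_{\Sigma_1}$ by removing $\nu\Sigma_0$, and since $\pi_1(N_{\Sigma_1}) = \Z\langle\mu_1\rangle$ while $\Sigma_0$ is null-homologous in $N$ (Lemma~\ref{lem:Nullhomologous}, which gives $[\Sigma_0,\partial\Sigma_0]=0$ in $H_2(N,\partial N)$, so a meridian $\mu_0$ of $\Sigma_0$ bounds in $N_{\Sigma_1}$ after capping with the punctured-surface-minus-disc) one deduces $\mu_0$ is homologous to a power of $\mu_1$; tracking orientations and the parallel-boundary setup shows $\mu_0 \simeq \mu_1$. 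Hence there is a well-defined map $\rho_0 \colon N_{\Sigma_0,\Sigma_1} \to S^1$ detecting total linking number with $\Sigma_0 \cup (-\Sigma_1)$, and $S$ is the relative cycle Poincar\'e--Lefschetz dual to the generator $\rho_0^*(\text{gen})$. That $[S]$ is a boundary in the pair, equivalently that $\rho_0^{-1}(\mathrm{pt})$ can be chosen with the right boundary behaviour, follows from the fact that $\Sigma_i$ is a fibre-like hypersurface: on the collar $\partial\nu\Sigma_i \cong \Sigma_i \times S^1$ the relevant class is dual to $\Sigma_i\times\{\mathrm{pt}\}$, and on $\nu A \cong A \times S^1$ (using the annulus $A$ with its Seifert framing, which by Lemma~\ref{lem:induced-framing} is the framing induced from $\Sigma_i$) it is dual to $A \times\{\mathrm{pt}\}$, so these local pictures glue to a global transverse map whose pre-image is precisely $S$ near the boundary.

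The main technical step — and the one I expect to be the principal obstacle — is item (2): producing the map $\rho \colon N_{\Sigma_0,\Sigma_1} \to S^1$ with the correct, carefully controlled behaviour on the boundary and in the collar $\partial N\times I$ near $A$, together with transversality in the locally flat (topological) category. In the smooth case one simply takes a generic smooth representative and applies parametrized transversality; topologically one must invoke topological transversality for the pre-image of a submanifold (Freedman--Quinn, \cite[Section 9]{FreedmanQuinn}), being careful that the meridional normal bundles of $\Sigma_i$ and of $A$ have consistent linear structures (available by \cite[Theorem~9.3]{FreedmanQuinn}) so that $\rho$ can be taken to be a submersion-like projection on these neighbourhoods and transverse on the complement. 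A secondary subtlety is smoothing the corners of $Y$ along $\Sigma_i \cap A$: one does this by choosing $\rho$ so that near the corner it looks like the standard corner model $\R^2_{\geq 0} \times \R \to S^1$, $(x,y,z)\mapsto$ (angle of $(x,y)$ rescaled), whose regular pre-image is a smooth bent half-plane that can be rounded. Once $Y$ is constructed, the remainder of the argument (absorbing $Y$ by further trivial $1$-handle stabilisations to make $\Sigma_0$ isotopic to $\Sigma_1$, following Baykur--Sunukjian) proceeds as in \cite{BaykurSunukjian}; the construction of $Y$ is the one genuinely new ingredient requiring care in the topological, boundary-nonempty setting.
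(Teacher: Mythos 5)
Your overall strategy is the same as the paper's: realise $Y$ as the preimage of a regular value of a map $N_{\Sigma_0,\Sigma_1}\to S^1$ whose restriction to the boundary is controlled so that the preimage there is exactly $S$, invoking topological transversality. But the homological input as you state it is wrong, and wrong in a way that would destroy the construction. From the Mayer--Vietoris sequence for $N=N_{\Sigma_0}\cup N_{\Sigma_1}$ one gets $H_1(N_{\Sigma_0,\Sigma_1})\cong \Z\langle\mu_0\rangle\oplus\Z\langle\mu_1\rangle\oplus\im\delta$: the two meridians are linearly \emph{independent}, since $\mu_i$ maps to a generator of $H_1(N_{\Sigma_i})\cong\Z$ but to $0$ in $H_1(N_{\Sigma_{1-i}})$ (it bounds its meridian disc there). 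Your claim that $\mu_0$ is homologous to a power of $\mu_1$, hence that $H_1(N_{\Sigma_0,\Sigma_1})\cong\Z$, is false -- already for two disjoint parallel slice discs in $D^4$ the complement has $H_1\cong\Z^2$. Worse, if your claim were true the class you need, $\mu_0^*-\mu_1^*$ (your ``total linking number with $\Sigma_0\cup(-\Sigma_1)$''), would evaluate to $1-1=0$ on the single generator and thus vanish; the corresponding map to $S^1$ would be null-homotopic and its preimage could never have boundary $S$, which represents the nonzero class $(1,-1,0,0)$ in $H^1(\partial N_{\Sigma_0,\Sigma_1})$. The correct statement is that $\alpha:=\mu_0^*-\mu_1^*$ exists because of the splitting above; this is where the knot-group hypothesis is used.

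Second, you assert that on $\partial\overline{\nu}\Sigma_i\cong\Sigma_i\times S^1$ the restriction of the class ``is dual to $\Sigma_i\times\{\mathrm{pt}\}$,'' i.e.\ is the projection class. This is precisely what must be arranged and is not automatic: a priori $\iota^*\alpha=(1,-1,x,y)\in\Z\oplus\Z\oplus\Z^{2g}\oplus\Z^{2g}$ with possibly nonzero components $x,y$ dual to the curves $\gamma_k\times\{-1\}$, and if these do not vanish then no homotopy of the map makes its restricted preimage equal to $\Sigma_i\times\{\mathrm{pt}\}$. The paper kills $x$ and $y$ by re-choosing the framing of $\nu\Sigma_i$ along a symplectic basis of curves, using Lemma~\ref{lem:induced-framing} to check that this does not disturb the Seifert framing on $\partial\Sigma_i$ (and hence the matching with the annulus $A$). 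This framing adjustment is the genuinely delicate boundary-control step; the transversality and corner issues you identify as the ``principal obstacle'' are, by contrast, dispatched by citing standard results. With the corrected $H_1$ computation and the framing adjustment supplied, your plan becomes the paper's proof.
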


\begin{proof}
The strategy to construct~$Y$ is as follows: define a suitable map~$\partial N_{\Sigma_0,\Sigma_1} \to S^1$, extend it to a map~$N_{\Sigma_0,\Sigma_1} \to S^1$ while controlling the restriction to~$\partial N_{\Sigma_0,\Sigma_1}$, and then take~$Y$ to be the inverse image of a transverse regular point in~$S^1$.

The first step is to construct a map~$\alpha \colon N_{\Sigma_0,\Sigma_1} \to S^1$.
Recall that~$N_{\Sigma_i} := N \sm \nu \Sigma_i$.
We can express~$N = N_{\Sigma_0} \cup N_{\Sigma_1},$ with~$N_{\Sigma_0,\Sigma_1} = N_{\Sigma_0} \cap N_{\Sigma_1}.$
A portion of the resulting Mayer-Vietoris sequence (with~$\Z$ coefficients) is
\[H_2(N) \xrightarrow{\delta} H_1(N_{\Sigma_0,\Sigma_1}) \to H_1(N_{\Sigma_0}) \oplus H_1(N_{\Sigma_1}) \to H_1(N).\]
By hypothesis~$H_1(N) = 0$ and~$H_1(N_{\Sigma_i})\cong \Z$, so that we obtain a short exact sequence
\[0 \to \im \delta \to H_1(N_{\Sigma_0,\Sigma_1}) \to \Z \oplus \Z \to 0.\]
Since~$\Z \oplus \Z$ is free abelian, this splits and we have that
\[H_1(N_{\Sigma_0,\Sigma_1}) \cong \Z \oplus \Z \oplus \im \delta.\]
By the hypotheses of Theorem~\ref{thm:BS}, the first two summands are generated by meridians~$\mu_{\Sigma_0}$ and~$\mu_{\Sigma_1}$ to the surfaces~$\Sigma_0$ and~$\Sigma_1$ respectively.
Consider the dual element
\[\alpha := \mu_{\Sigma_0}^* - \mu_{\Sigma_1}^* \in \Hom(H_1(N_{\Sigma_0,\Sigma_1}),\Z) \cong H^1(N_{\Sigma_0,\Sigma_1}) \cong [N_{\Sigma_0,\Sigma_1},S^1],\]
 sending the~$\im \delta$ summand identically to~$0$.
 We will also write~$\alpha \colon N_{\Sigma_0,\Sigma_1} \to S^1$ for a corresponding representing map.
 We use that since~$N_{\Sigma_0,\Sigma_1}$ is a topological 4-manifold, it is homotopy equivalent to a CW-complex~\cite[Theorem~4.5]{FriedlNagelOrsonPowell}, and therefore we may identify~$H^1(N_{\Sigma_0,\Sigma_1}) \cong [N_{\Sigma_0,\Sigma_1},S^1]$.

Now we consider the restriction to~$\partial N_{\Sigma_0,\Sigma_1}$ under the inclusion-induced map
\[\iota^* \colon H^1(N_{\Sigma_0,\Sigma_1}) \to H^1(\partial N_{\Sigma_0,\Sigma_1}) \cong [\partial N_{\Sigma_0,\Sigma_1},S^1],\]
where~$\iota \colon \partial N_{\Sigma_0,\Sigma_1} \to N_{\Sigma_0,\Sigma_1}$ is the inclusion map.
We have a decomposition \[\partial N_{\Sigma_0,\Sigma_1} = \Sigma_0 \times S^1 \cup \partial_0 N_{\Sigma_0,\Sigma_1} \cup \Sigma_1 \times S^1.\]
An elementary Mayer-Vietoris argument yields that
\[H^1(\partial N_{\Sigma_0,\Sigma_1}) \cong \Z \oplus \Z \oplus \Z^{2g} \oplus \Z^{2g}.\]
The first two summands are generated by duals to the meridians,~$\mu_{\Sigma_0}^*$ and ~$\mu_{\Sigma_1}^*$.
By our work with framings above, for~$i=0,1$ there is a framing of the normal bundle of~$\Sigma_i$, determining an identification of the tubular neighbourhood~$\ol{\nu} \Sigma_i$ with~$\Sigma_i \times D^2$, that agrees on~$\partial \Sigma_i$ with a corresponding identification induced from the Seifert framing on~$\partial \Sigma_i$.
This determines an identification
\[\partial \ol{\nu} \Sigma_i \sm \nu \partial \Sigma_i \cong \Sigma_i \times S^1.\]
The two~$\Z^{2g}$ summands of $H^1(\partial N_{\Sigma_0,\Sigma_1})$ are generated by dual classes to curves of the form~$\gamma_k \times \{-1\}$ where~$\gamma_k$ is simple closed curve forming part of a symplectic basis for~$H_1(\Sigma_i)$, for some~$i$.  Here we use the chosen framing of the normal bundle of~$\Sigma_i$ to fix representatives for the~$\Z^{2g}$ summands.

The restriction~$\iota^*\alpha \in  H^1(\partial N_{\Sigma_0,\Sigma_1})$ is~$(1,-1,x,y)$, for some~$x,y \in \Z^{2g}$.  But by changing the choice of framing of~$\nu \Sigma_i$ along the curves~$\gamma_k$, we can arrange that~$x=y=0$.  By changing the framing of the normal bundle of~$\Sigma_i$ by some number of full twists along some basis curve~$\gamma_k$ in~$H_1(\Sigma_i)$, we change the meaning of~$\gamma_k \times \{-1\}$ in the previous paragraph.

Let us explain the operation of ``changing the framing'' in more detail. Such a change is occasioned by the action of~$[\cup_k \gamma_k,SO(2)]$ on the set of framings of the normal bundle, to alter the given framing on the union of the curves~$\{\gamma_k\}$  by some number of full twists for each curve. Any such alteration automatically extends over the 2-skeleton since the attaching map of the 2-cell of~$\Sigma_i$ is a commutator in the~$\gamma_k$ times~$\partial \Sigma_i$. Any two choices of extension over the 2-cells are homotopic, since~$\pi_2(SO(2))=0$.  Therefore we have a well-defined notion of altering the framing along the curves~$\gamma_k$.
This changes the entry of~$(x,y)$ corresponding to~$\gamma_k$, since the map~$\alpha \colon N_{\Sigma_0,\Sigma_1} \to S^1$ now sends ~$\gamma_k \times \{-1\}$ to a curve representing a different element of~$H_1(S^1)$.
By Lemma~\ref{lem:induced-framing}, changing the choice of framing on a basis element for~$H_1(\Sigma_i)$ does not change the induced framing on the boundary.

Now we define a map~$f \colon \partial N_{\Sigma_0,\Sigma_1} \to S^1$.
 On~$\Sigma_i\times S^1 \subseteq \partial N_{\Sigma_0,\Sigma_1}$, define the map  to~$S^1$ by the projection~$f|_{\Sigma_i \times S^1} \colon \Sigma_i \times S^1 \to S^1$ onto the second factor.
On the remainder of~$\partial N_{\Sigma_0,\Sigma_1}$, namely
\[\partial_0 N_{\Sigma_0,\Sigma_1} := \partial N \sm (\nu \Sigma_0 \cup \nu \Sigma_1),\]
or in other words the link exterior~$S^3 \sm (\nu \partial \Sigma_0 \cup \nu \partial \Sigma_1)$,
 define a Pontryagin-Thom style collapse map $f|_{\partial_0 N_{\Sigma_0,\Sigma_1}} \colon \partial_0 N_{\Sigma_0,\Sigma_1} \to S^1$ by choosing a tubular neighbourhood~$A \times [-1,1] \subseteq \partial_0 N_{\Sigma_0,\Sigma_1}$ and sending~$(a,x) \mapsto e^{\pi i x} \in S^1$ for~$a \in A$ and~$x \in [-1,1]$, then sending~$\partial_0 N_{\Sigma_0,\Sigma_1} \sm (A \times [-1,1])$ to~$-1 \in S^1$.
 Then~$f|_{\partial_0 N_{\Sigma_0,\Sigma_1}}^{-1}(\{1\}) = A \cap \partial_0 N_{\Sigma_0,\Sigma_1}$.
Note that we may assume that our maps to~$S^1$ agree on the torus overlaps~$\partial \Sigma_i \times S^1$, since the framings on~$\partial \Sigma_i$ all agree up to homotopy.  This completes the construction of a map~$f \colon \partial N_{\Sigma_0,\Sigma_1} \to S^1$.
 Note that~$f \colon \partial N_{\Sigma_0,\Sigma_1} \to S^1$ corresponds to the element \[(1,-1,0,0) \in \Z \oplus \Z \oplus \Z^{2g} \oplus \Z^{2g}.\]
% so $f$ and $\iota^* \alpha$ agree in $[\partial N_{\Sigma_0,\Sigma_1},S^1]$.
Therefore the cohomology classes~$\iota^*\alpha$ and~$f$ agree in ~$H^1(\partial N_{\Sigma_0,\Sigma_1}) \cong [\partial N_{\Sigma_0,\Sigma_1},S^1]$, and so by a homotopy of~$\alpha$ supported in a collar of~$\partial N_{\Sigma_0,\Sigma_1}$ we obtain a map~$F \colon N_{\Sigma_0,\Sigma_1} \to~S^1$ with~$F|_{\partial N_{\Sigma_0,\Sigma_1}} = f \colon \partial N_{\Sigma_0,\Sigma_1} \to S^1$.

%We want to extend this to a map~$F \colon N_{\Sigma_0,\Sigma_1} \to S^1$.  To achieve this, first we construct some suitable map~$N_{\Sigma_0,\Sigma_1} \to S^1$, and then study its restriction to the boundary. We might have to go back and change~$f \colon \partial N_{\Sigma_0,\Sigma_1} \to S^1$, in order to achieve that~$F|_{\partial N_{\Sigma_0,\Sigma_1}}$ and~$f$ are homotopic.
%Our argument is similar in spirit to the proof of~\cite[Theorem~VIII.3]{Kirby89}, but we include several more details.

  %The change in framing necessitates a change in the map~$f$, so by definition~$f$ still corresponds to~$(1,-1,0,0)$.
%  As mentioned above the change in framing does change the map~$f \colon \partial N_{\Sigma_0,\Sigma_1} \to S^1$, since it changes the choice of identification of part of the boundary with~$\Sigma_i \times S^1$.
%After the change in~$f$,

The inverse image under $F$ of a transverse regular point in~$S^1$ yields a 3-dimensional relative Seifert manifold, locally flatly embedded in~$N_{\Sigma_0,\Sigma_1} = N \sm (\nu \Sigma_0 \cup \nu\Sigma_1)$. See \cite[Section~10.2]{FriedlNagelOrsonPowell} for information on map transversality in the topological category.
Add collars~$\Sigma_0 \times I$ and~$\Sigma_1 \times I$ in~$\nu \Sigma_0$ and~$\nu \Sigma_1$ respectively, to obtain the 3-manifold~$Y$ that we seek.  This completes the proof of the claim.
\end{proof}

Morse theory on~$Y$ gives rise to a Heegaard decomposition relative to the annulus~$A$. The Heegaard surface can be obtained from both~$\Sigma_0$ and~$\Sigma_1$ by 1-handle stabilisations and ambient isotopy.

Since by our assumptions~$\pi_1(N_{\Sigma_i})$ is generated by a meridian to~$\Sigma_i$, Boyle's~\cite{Boyle} proof shows that every 1-handle stabilisation is a trivial stabilisation.  He applied \cite[Theorem~4]{Hudson-72} of Hudson for the statement that~$D^1$ cores of handle additions that are homotopic rel.\ endpoints are in fact smoothly ambiently isotopic fixing the endpoints.
To apply Boyle's work to topologically embedded surfaces in a compact 4-manifold~$N$, remove a point from~$N$ and smooth~$N \sm \{\pt\}$ in such a way that~$\Sigma$ is smoothly embedded.  Then Boyle's application of Hudson's result yields a smooth ambient isotopy, which gives rise to a topological ambient isotopy once the point is added back to~$N$.
We note that Boyle works in~$S^4$, but his proof applies to any oriented ambient~$4$-manifold.

We therefore have that after finitely many trivial stabilisations,~$\Sigma_0$ and~$\Sigma_1$ are ambiently isotopic in~$N$ relative to the constant isotopy on the boundary, as desired.
%an isotopy of the boundaries in the~$\partial D^4_j$ guided by the~$A_j$.  Combine this with the standard isotopy in~$D^4_j$ between the discs~$D^4_j\cap \Sigma_i$,~$i=0,1$,~$j=1,\dots,n$, extending the isotopies guided by the~$A_j$, to obtain an isotopy between~$\Sigma_0$ and~$\Sigma_1$ in~$N^1 = N \sm \{\pt\}$.
\end{proof}

\bibliographystyle{alpha}
\bibliography{bibliopi1Z}
\end{document}